
\documentclass[11pt,notitlepage]{amsart} 
\usepackage{amsmath}
\usepackage{amsthm}
\usepackage{amsfonts}
\usepackage{amssymb,amscd,epsf,verbatim,mathtools,framed}
\usepackage{mathrsfs}
\usepackage{graphicx}
\usepackage{latexsym}
\usepackage{lscape}
\usepackage[colorlinks=true]{hyperref}
\hypersetup{colorlinks, citecolor=blue, filecolor=black, linkcolor=purple, urlcolor=violet}
\usepackage{tikz}
\usetikzlibrary{calc}
\usetikzlibrary{matrix,arrows,decorations.pathmorphing}
\usepackage{tikz-cd}
\usepackage{color}
\usepackage{multirow}  
\usepackage{scalefnt}
\usepackage{fancyhdr}
\usepackage[margin=1.2in]{geometry}
\usepackage[T1]{fontenc}

\usepackage{relsize} 
\usepackage[bbgreekl]{mathbbol} 
\DeclareSymbolFontAlphabet{\mathbb}{AMSb} 
\DeclareSymbolFontAlphabet{\mathbbl}{bbold} 
\newcommand{\Prism}{{\mathlarger{\mathbbl{\Delta}}}}

\newcommand{\Z}{\mathbb{Z}}
\newcommand{\F}{\mathbb{F}}
\newcommand{\N}{\mathbb{N}}

\newcommand{\Q}{\mathbb{Q}}

\renewcommand{\L}{\mathbb{L}}

\newcommand{\mC}{\mathcal{C}}
\newcommand{\mD}{\mathcal{D}}

\newcommand{\mF}{\mathcal{F}}
\newcommand{\mG}{\mathcal{G}}

\newcommand{\mI}{\mathcal{I}}

\newcommand{\mM}{\mathcal{M}}

\newcommand{\mO}{\mathcal{O}}

\newcommand{\fm}{\mathfrak{m}} 

\newcommand{\tu}{\textup}
\newcommand{\cl}{\overline}
\newcommand{\ul}{\underline}

\newcommand{\ra}{\rightarrow}

\newcommand{\sq}{\widetilde}
\newcommand{\minus}{\backslash}

\DeclareMathOperator{\im}{im}
\DeclareMathOperator{\Hom}{Hom}

\DeclareMathOperator{\midwedge}{\mathlarger{\mathlarger{\mathlarger{\wedge}}}} 

\DeclareMathOperator{\Ainf}{\text{A}_{\textup{inf}}}
\DeclareMathOperator{\Acrys}{\text{A}_{\textup{crys}}}
\DeclareMathOperator{\crys}{crys}
\DeclareMathOperator{\QSyn}{QSyn}
\DeclareMathOperator{\qSyn}{qSyn}
\DeclareMathOperator{\QRSP}{QRSPerfd} 
\DeclareMathOperator{\grade}{gr}
\DeclareMathOperator{\Fil}{Fil}
\DeclareMathOperator{\opp}{\textup{op}}
\DeclareMathOperator{\lra}{\: \longrightarrow \:} 
\DeclareMathOperator{\isom}{\;\xrightarrow{\: {}_{\sim} \:} \;}
\DeclareMathOperator{\perf}{perf}
\DeclareMathOperator{\Perf}{Perf}

\newcommand{\gr}[1]{\langle {#1} \rangle} 
\DeclareMathOperator{\spec}{Spec}
\DeclareMathOperator{\spf}{Spf}
\DeclareMathOperator{\spa}{Spa}
\DeclareMathOperator{\spd}{Spd}
 
\DeclareMathOperator{\init}{init}
\DeclareMathOperator{\ett}{\textup{\'et}}
\DeclareMathOperator{\ket}{\textup{k\'et}}
\DeclareMathOperator{\proet}{\textup{pro\'et}}
\DeclareMathOperator{\qproet}{\textup{qpro\'et}}
\DeclareMathOperator{\qpket}{\textup{qpk\'et}}


\newcommand{\bi}{\begin{itemize}}
\newcommand{\ei}{\end{itemize}}
\newcommand{\bt}{\begin{theorem}}
\newcommand{\et}{\end{theorem}}
\newcommand{\bbt}{\begin{theorem*}}
\newcommand{\eet}{\end{theorem*}}
\newcommand{\bp}{\begin{proposition}}
\newcommand{\ep}{\end{proposition}}
\newcommand{\bl}{\begin{lemma}}
\newcommand{\el}{\end{lemma}}
\newcommand{\bbl}{\begin{lemma*}}
\newcommand{\eel}{\end{lemma*}}
\newcommand{\bc}{\begin{corollary}}
\newcommand{\ec}{\end{corollary}}
\newcommand{\beg}{\begin{example}}
\newcommand{\eeg}{\end{example}}
\newcommand{\br}{\begin{remark}}
\newcommand{\er}{\end{remark}}
\newcommand{\bbr}{\begin{remark*}}
\newcommand{\eer}{\end{remark*}}
\newcommand{\bd}{\begin{definition}}
\newcommand{\ed}{\end{definition}}
\newcommand{\be}{\begin{enumerate}}
\newcommand{\ee}{\end{enumerate}}
\newcommand{\bex}{\begin{exercise}}
\newcommand{\eex}{\end{exercise}}
\newcommand{\bproof}{\begin{proof}}
\newcommand{\eproof}{\end{proof}}

\theoremstyle{theorem}
\newtheorem{theorem}{Theorem}[section]
\newtheorem{mainthm}{Theorem} 
\newtheorem{maindef}[mainthm]{Definition}

\newtheorem*{theorem*}{Theorem}

\theoremstyle{definition}
\newtheorem{mainrmk}[mainthm]{Remark}
\newtheorem{example}[theorem]{Example}

\newtheorem{definition}[theorem]{Definition}
\newtheorem{proposition}[theorem]{Proposition}

\newtheorem{construction}[theorem]{Construction}

\newtheorem{lemma}[theorem]{Lemma}
\newtheorem{corollary}[theorem]{Corollary}

\newtheorem{notation}[theorem]{Notation}
\newtheorem{remark}[theorem]{Remark}

\newtheorem{assumption}[theorem]{Assumption}
\newtheorem{convention}[theorem]{Convention}


 \usepackage{etoolbox}
\patchcmd{\section}{\scshape}{\bfseries}{}{}
\makeatletter
\renewcommand{\@secnumfont}{\bfseries}
\makeatother

\setcounter{tocdepth}{1}

\title{Logarithmic prismatic cohomology II}
\author{Teruhisa Koshikawa}
\email{teruhisa@kurims.kyoto-u.ac.jp}
\address{Research Institute for Mathematical Sciences, Kyoto University}
\author{Zijian Yao}
\email{zijian.yao.math@gmail.com}
\address{Department of Mathematics, University of Chicago}


\setcounter{tocdepth}{1}
\numberwithin{equation}{section}
\begin{document}

\begin{abstract}
We continue to study the logarithmic prismatic cohomology defined by the first author, and complete the proof of the de Rham comparison and \'etale comparison generalizing those of Bhatt and Scholze. We prove these comparisons for a derived version of logarithmic prismatic cohomology, and, along the way, we construct a suitable Nygaard filtration and explain a relation between $F$-crystals and $\Z_p$-local systems in the logarithmic setting. 
\end{abstract}

\maketitle
 \thispagestyle{empty}
 
\vspace{-0.2in}

\tableofcontents

\setlength{\parskip}{0.3em}



\section{Introduction} \label{section:intro}

This article is a continuation of \cite{Koshikawa} and further develops the theory of logarithmic prismatic cohomology, generalizing the work of Bhatt and Scholze in \cite{BS}.  In \cite{Koshikawa}, the first author introduced the notion of log prisms and log prismatic site, and showed that the log prismatic cohomology enjoys crystalline and Hodge--Tate comparisons. 

Let us briefly recall the setup. Roughly, a $\delta_{\log}$-\textit{ring} is pre-log ring $(A, \alpha\colon M_A \ra A)$ equipped with additional structures: a map $\delta\colon A \ra A$ such that $(A, \delta)$ becomes a $\delta$-ring and a map $\delta_{\log}\colon M \ra A$ which satisfies the following conditions 
\bi
\item $\delta_{\log} (0) = 0,\quad $ $\delta_{\log}(m+m') = \delta_{\log} (m) + \delta_{\log}(m') + p \delta_{\log} (m) \delta_{\log}(m') $ 
\item $\alpha(m)^p \cdot \delta_{\log}(m) = \delta (\alpha(m))$
\ei
(see \cite[Definition 2.2]{Koshikawa}). A \textit{pre-log prism} is a triple $(A, I, M_A)$ where $(A, M_A)$ is a $\delta_{\log}$-ring and $(A, I)$ is a prism in the sense of \cite{BS}. A \textit{log prism} is a bounded\footnote{This means that $(A, I)$ is a bounded prism in the sense of \cite[Defintiion 1.6]{BS}, in other words, $A/I$ has bounded $p^{\infty}$-torsion.} pre-log prism $(A, I, M_A)$ considered up to taking the associated log structure. To a pre-log prism $(A, I, M_A)$, one naturally associates a log prism $(A, I, M_A)^a=(A, I, M_{\spf A})$ (\cite[Corollary 2.15]{Koshikawa}). 

For the rest of the introduction, we fix a bounded pre-log prism $(A, I, M_A)$ with $M_A$ being an integral monoid. Let $(X,M_X)$ be an integral log $p$-adic formal scheme that is smooth\footnote{Here smoothness is defined in \cite[Definition A.11]{Koshikawa}, which is slightly different from the usual notion of log smoothness defined in \cite[Section 3.3]{Kato}, in the sense that it relaxes the usual finiteness condition but requires the maps on monoids to be integral (see Conventions in the end of the introduction for more details).} over $(A/I, M_A)$. To ease notation, in the introduction, we denote the log formal scheme by $X = (\mathring{X}, M_X)$, where $\mathring{X}$ denotes its underlying formal scheme. 

\begin{maindef}[The logarithmic prismatic site {\cite[Definition 4.1]{Koshikawa}}]
 Let  $(X/(A,M_A))_{\Prism}$ be the opposite of the category of integral log prisms $(B, IB, M_{\spf B})$ equipped with 
\be
\item a map $g\colon (A, I, M_A) \ra (B, IB, \Gamma (\spf B, M_{\spf B}))$ of pre-log prisms, 
\item a map $f\colon \spf (B/I) \ra \mathring{X}$ of formal schemes, and
\item a strict\footnote{Recall that strict here means that the pullback of $M_{\spf B}$ is isomorphic to $f^*M_X$ as log structures on $\spf B/I$. Here this condition is equivalent to saying that the closed immersion is exact. Recall that a map of integral monoids $h\colon P \ra Q$ is exact if $P = (h^{\textup{gp}})^{-1} (Q)$ in $P^{\textup{gp}}$. A map between integral log (formal) schemes is exact if at every point the induced map of monoids is exact stalk-wise. A closed immersion of integral quasi-coherent log schemes is exact precisely when it is strict.} 
closed immersion $(\spf (B/I), f^* M_X) \ra (\spf B, M_{\spf B})$ of log formal schemes (here the target is a log $(p, I)$-adic formal scheme).
\ee 
A morphism $(B, IB, M_{\spf B}) \ra (C, IC, M_{\spf C})$ in $(X/(A,M_A))_{\Prism}$ is an \'etale cover if $B \ra C$ is $(p, I)$-completely \'etale and faithfully flat, and the map on the associated log $(p, I)$-adic formal schemes is strict. 
The logarithmic prismatic site of $X$ over $(A, I, M_A)$ is $(X/(A,M_A))_{\Prism}$ equipped with the \'etale topology. It has a structure sheaf $\mO_{\Prism}$ (resp. reduced structure sheaf $\cl \mO_{\Prism}$) defined by sending 
$(B, IB, M_{\spf B}) \mapsto B$ (resp. $(B, IB, M_{\spf B}) \mapsto B/IB$). The logarithmic prismatic cohomology of $X$ is defined as  
\[
R \Gamma_{\Prism}(X/(A, M_A)) \coloneqq R \Gamma ((X/(A, M_A))_{\Prism}, \mO_{\Prism}).
\]
This is an $E_\infty$-$A$-algebra equipped with a $\phi_A$-semilinear endomorphism $\phi$, where $\phi_A$ denotes the Frobenius on $A$ given by $\phi_A(x)= x^p + p \delta(x)$. 
\end{maindef}  

One of the major goals of this article is to prove the analogue of \cite[Theorem 1.8 and  Theorem 1.16]{BS}. Let us start with the following 

\begin{mainthm} \label{mainthm:comparisons}
Let $X = (\mathring{X},M_X)$ be an integral log $p$-adic formal scheme that is smooth over $(A/I, M_A)$. Assume moreover that the underlying formal scheme of $X$ is quasicompact and quasiseparated (qcqs). 

\be 
\item Hodge--Tate comparison \textup{(}\cite{Koshikawa}\textup{)}: There exists a canonical isomorphism 
\[
\eta_X\colon \Omega^\bullet_{X/(A/I, M_A)} \isom H^\bullet (\cl \Prism_{X/(A, M_A)})\{\bullet\}
\]
of sheaves of differential graded $A/I$-algebras. Here 
$\cl \Prism_{X/(A, M_A)} $ denotes the \'etale sheaf $R \nu_* \cl \mO_{\Prism}$ where 
\begin{equation}\label{eq:map_of_topoi_from_prismatic}
\nu\colon \textup{Shv} ((X/(A, M))_{\Prism}) \ra \textup{Shv} (\mathring{X}_{\ett})
\end{equation} is the morphism of topoi constructed in \cite[Remark 4.4]{Koshikawa}.  
The right hand side of the isomorphism is equipped with the Bockstein differential, where $\{i\}$ denotes the Breuil--Kisin twist $\otimes I^i/I^{i+1}$.  
\item Base change \textup{(}\cite{Koshikawa}\textup{)}: Let $(A, I, M_A) \ra (A', I A', M_{A'})$ be a map of bounded pre-log prisms with integral monoids, and let \[
X' \coloneqq X \times_{(\spf A/I, M_A)^a} (\spf A'/I, M_{A'})^a
\] be the (integral) log $p$-adic formal scheme obtained by base change. Then we have a natural base change isomorphism 
\[ 
R \Gamma_{\Prism} (X/(A, M_A)) \widehat \otimes_{A}^\L A' \isom R \Gamma_{\Prism} (X'/(A', M_{A'})).
\]
\item Crystalline comparison \textup{(}\cite{Koshikawa}\textup{)}: Suppose $I = (p)$ and further assume that $X$ is of Cartier type over $(A/I, M_A)$. Then there is a canonical $\phi$-equivariant isomorphism 
\[
R \Gamma_{\textup{logcrys}} (X/(A, M_A)) \cong R \Gamma_{\Prism} (X/(A, M_A)) \widehat \otimes^\L_{A, \phi_A} A. 
\]
of $E_\infty$-A-algebras. 
\item 
de Rham comparison: Assume that the mod $p$ fiber of $X$ is of Cartier type over $(A/(p, I), M_A)$. There is a canonical isomorphism
\[
R \Gamma_{\textup{logdR}} (X/(A/I, M_A)) \cong R \Gamma_{\Prism} (X/(A, M_A)) \widehat \otimes^\L_{A, \phi_A} A/I
\]
of $E_\infty$-A-algebras. 
\item \'Etale comparison: Suppose that the associated log prism $(A, I, M_A)^a$ is a perfect\footnote{This means the Frobenius lift is an isomorphism of log formal scheme.} log prism. 
Further assume that $X \ra \spf (A/I, M_A)^a$ is the base change of an fs log $p$-adic formal scheme 
\[X_0 \lra \spf (A/I, M_0)^a\] 
that is smooth over $(A/I, M_0)$ whose mod $p$ fiber is of Cartier type,  where $M_0$ is an fs monoid. Let $X_{\eta}$ be the generic fiber of $X$  over $\spa (\Q_p, \Z_p)$.\footnote{Strictly speaking, the generic fiber $X_{\eta}$ may not exist as a log adic space since the underlying pre-adic space might not be sheafy. Instead, the generic fiber $X_\eta$ is viewed as a log diamond over $\spd \Q_p$. See Corollary \ref{etale_comp_diaomnd_global} for a precise statement of the \'etale comparison. Also see Theorem \ref{thm:etale_comp} for a cleaner statement in the affine case (with no restrictions on singularities)}
For every $n \ge 1$, there is a canonical isomorphism
\[
R \Gamma_{\textup{k\'et}} (X_{\eta}, \Z/p^n) \cong \Big( R \Gamma_{\Prism} (X/(A, M_A)) [\frac{1}{I}] /p^n \Big)^{\phi = 1}. 
\]
of $E_\infty$-$\Z/p^n\Z$-algebras. 
\item Assume that the mod $p$ fiber of $X$ is of Cartier type over $(A/(p,I), M_A)$, then Frobenius is an isogeny: the linearized Frobenius map 
\[
\phi\colon R \Gamma_{\Prism} (X/ (A, M_A)) \widehat \otimes^\L_{A, \phi_A} A \lra R \Gamma_{\Prism} (X/ (A, M_A))
\]
becomes an isomorphism upon inverting $I$. 
More precisely, suppose that $I = (d)$ is principal, then for each $i \ge 0$ there is a map 
\[V_i\colon H^i_{\Prism} (X/ (A, M_A)) \ra H^i (R \Gamma_{\Prism} (X/ (A, M_A)) \widehat \otimes^\L_{A, \phi_A} A)
\] such that $\phi \circ V_i = V_i \circ \phi = d^i$.  
Moreover, if $\mathring{X}$ is affine, then the map $\phi$ factors as 
\[
R \Gamma_{\Prism} (X/ (A, M_A)) \widehat \otimes^\L_{A, \phi_A} A \isom L \eta_I R \Gamma_{\Prism} (X/ (A, M_A)) \lra R \Gamma_{\Prism} (X/ (A, M_A))
\]
where $L\eta_I$ denotes the d\'ecalage operator (see Subsection \ref{ss:derived_Nygaard_and_dR}). 
\ee
\end{mainthm}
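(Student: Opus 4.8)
The plan is to prove the sharper statement that, when $\mathring X$ is affine, $\phi$ factors as
\[
R\Gamma_{\Prism}(X/(A,M_A))\widehat\otimes^{\L}_{A,\phi_A}A \xrightarrow{\psi} L\eta_{I}R\Gamma_{\Prism}(X/(A,M_A)) \longrightarrow R\Gamma_{\Prism}(X/(A,M_A))
\]
with $\psi$ an equivalence and the second arrow the canonical comparison map; the isogeny assertion and the maps $V_{i}$ then follow formally, and since those assertions are \'etale-local on $\mathring X$ it is enough to treat them there as well. First I would reduce to the case $\mathring X=\spf R$ with a log framing, so that $R\Gamma_{\Prism}(X/(A,M_A))$ is represented by the explicit complex computing prismatic cohomology for a framed smooth formal scheme --- the logarithmic version, built in \cite{Koshikawa} following \cite{BS}, of the Bhatt--Scholze local model. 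Its terms are $(p,I)$-completely free $A$-modules, in particular $I$-torsion-free, so that $L\eta_{I}$ of it is computed by the naive d\'ecalage subcomplex, while $\phi$ is given on it in closed form via $\delta$ and $\delta_{\log}$.

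The key computation is that, writing $I=(d)$, the Frobenius $\phi$ carries the degree-$i$ term of this complex into $d^{i}$ times the degree-$i$ term. The degree-one generators are the $\operatorname{dlog}$ of the chart monoid and the $\tfrac{dt_{j}}{t_{j}}$ of the torus coordinates, and $\phi$ is divisible by $d$ on each of these --- the prismatic shadow of the Breuil--Kisin twist $\{1\}$, exactly as in \cite{BS}, with the monoid case governed by $\delta_{\log}$. Granting this, the fact that $\phi$ is a chain map forces the rescaled map $\psi$ (divide by $d^{i}$ in degree $i$) to be a well-defined chain map into the d\'ecalage subcomplex, through which $\phi$ factors as displayed.

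It remains to show that $\psi$ is an equivalence, which is the crux. As both sides are derived $(p,I)$-complete, by derived Nakayama it suffices to show that $\psi\widehat\otimes^{\L}_{A}A/(p,I)$ is an isomorphism. Now $R\Gamma_{\Prism}(X/(A,M_A))\widehat\otimes^{\L}_{A,\phi_A}A/I$ is the logarithmic de Rham complex of $X$ over $(A/I,M_A)$ --- by inspection of the mod-$I$ reduction of the local model (equivalently, the affine case of the de Rham comparison, which one may establish here en route, or for $I=(p)$ deduce from the crystalline comparison) --- while $L\eta_{I}R\Gamma_{\Prism}(X/(A,M_A))\widehat\otimes^{\L}_{A}A/I$ has cohomology computed by the Bockstein complex of $\cl\Prism_{X/(A,M_A)}$, which the Hodge--Tate comparison identifies with (a Breuil--Kisin twist of) the same logarithmic de Rham complex; under these identifications $\psi\bmod I$ is the logarithmic inverse Cartier map. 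Reducing further modulo $p$ and using that the mod $p$ fiber of $X$ is of Cartier type over $(A/(p,I),M_A)$, the logarithmic Cartier isomorphism shows this map is an isomorphism, hence so is $\psi\widehat\otimes^{\L}_{A}A/(p,I)$.

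Given all of this, the canonical map $L\eta_{I}R\Gamma_{\Prism}(X/(A,M_A))\to R\Gamma_{\Prism}(X/(A,M_A))$ is an isomorphism after inverting $I$ (a general feature of $L\eta_{I}$, $I$ being invertible), which with the factorization gives the isogeny assertion and, for $\mathring X$ affine, the displayed $L\eta_{I}$-factorization. For the $V_{i}$, one transports along $\psi$ the standard description of the cohomology of $L\eta_{I}$ --- under which the canonical map $H^{i}(L\eta_{I}K)\to H^{i}(K)$ is multiplication by $d^{i}$ --- to obtain $V_{i}\colon H^{i}_{\Prism}(X/(A,M_A))\to H^{i}\big(R\Gamma_{\Prism}(X/(A,M_A))\widehat\otimes^{\L}_{A,\phi_A}A\big)$ with $\phi\circ V_{i}=V_{i}\circ\phi=d^{i}$; these descriptions are \'etale-local and are pinned down using the boundedness of the $p^{\infty}$- and $I$-power-torsion of $H^{i}_{\Prism}$ furnished by the Hodge--Tate comparison, so the $V_{i}$ glue over a general qcqs $\mathring X$. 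The main obstacle is the third paragraph: extracting the closed form of $\phi$ on the logarithmic local model --- especially the $d$-divisibility of $\phi$ on $\operatorname{dlog}$-classes via $\delta_{\log}$ --- and identifying $\psi\bmod I$ with the logarithmic inverse Cartier isomorphism in the stated generality, where only the mod $p$ fiber (not the mod $I$ fiber) is assumed of Cartier type, which is precisely what forces the further reduction modulo $p$.
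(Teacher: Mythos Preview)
Your route is genuinely different from the paper's. The paper first invests in constructing the derived Nygaard filtration (Subsections~\ref{ss:lci_quotients_nonperfect}--\ref{ss:derived_Nygaard_and_dR}), navigating the exactification subtleties illustrated in the toy example of Subsection~\ref{ss:toy_example}, and then obtains the $L\eta_I$ factorization from the connectivity of the Nygaard filtration with respect to the Beilinson $t$-structure, exactly as in \cite[5.8]{BMS2} and the proof of \cite[Theorem~15.3]{BS}. To check that $\psi$ is an isomorphism under the Cartier type hypothesis, the paper does not invoke the log Cartier isomorphism directly: it reduces to the free case $(A/I\langle \N^{T}\rangle, M_A\oplus\N^{T})$ with trivial $M_A$, then to the universal oriented prism, then to $(\Z_p,(p))$ via \cite[Construction~6.1]{BS}, and finally appeals to the crystalline comparison of \cite{Koshikawa}. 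Your argument instead bypasses the Nygaard machinery, works with an explicit framed complex, establishes $d^i$-divisibility of $\phi$ in degree $i$ by hand, and checks the isomorphism modulo $(p,I)$ via the log Cartier isomorphism directly. This is considerably more elementary if it works, while the paper's route has the advantage that the Nygaard filtration it builds is reused throughout (for the Hodge filtration comparison, for Nygaard-completeness, and in the applications).

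Two points deserve attention. First, the ``explicit complex computing prismatic cohomology for a framed smooth formal scheme --- the logarithmic version, built in \cite{Koshikawa} following \cite{BS}'' is not quite packaged in \cite{Koshikawa} in the form you need: that paper works through log prismatic envelopes and \v{C}ech--Alexander complexes, not a log $q$-de Rham complex with $\phi$ in closed form. Such a complex ought to exist, and the $d$-divisibility of $\phi$ on $\operatorname{dlog}$-classes should indeed come down to the identity $\phi(m)=m^p(1+p\,\delta_{\log}(m))$, but this step is where you would have to do real work --- it is not a citation. Second, your globalization of the $V_i$ by ``gluing'' is imprecise: $V_i$ is a map on cohomology groups, not a sheaf map, and cohomology does not glue naively. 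The paper handles this by first proving the $L\eta_I$-factorization at the level of \'etale sheaves (Proposition~\ref{prop:decalage_global}), then applying \cite[Lemma~6.9]{BMS1} sheafwise to obtain a map $V_i$ on $\tau_{\le i}$ of the sheaves, and only then passing to cohomology over the qcqs $X$ (Corollary~\ref{cor:image_Frob_global}); your argument should be reorganized the same way.
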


As indicated above, Part (1), (2), (3) of the comparisons  have already been established in \cite{Koshikawa}, see Theorem 5.3, Theorem 5.5 and Theorem 6.3 of \textit{loc.cit.} respectively.

The primary goal of this article, therefore, is to study the \'etale comparison of log prismatic cohomology and to develop the theory of derived log prismatic cohomology, which we shall use to study the Nygaard filtration and to prove the de Rham comparison theorem, as well as the isogeny statement on Frobenius. In the rest of the introduction, we explain some of these ideas in slightly more detail.

The first part of the article is devoted to establish a ``flat descent'' result for log cotangent complexes, which generalizes a theorem of Bhatt in \cite{Bhatt_descent} (also see \cite{BMS2}) and is of independent interest. In this direction, we say that a map $(R, P) \ra (S, M)$ between pre-log rings is \textit{homologically log flat}\footnote{This terminology is suggested to us by Gabber.} if $R \ra S$ is flat and for any monoid map $P \ra Q$, the homotopy pushout $M \oplus^\L_{P} Q$ agrees with the naive pushout. An important class of examples is when $P \hookrightarrow M$ is an injective integral map of integral monoids. A homologically log flat map $(R, P) \ra (S, M)$ is called a homologically log flat covering if the underlying map $R \ra S$ on rings is in addition faithfully flat. 
We upgrade the category $\text{Alg}^{\textup{prelog}}$ of pre-log rings to a site with covering maps given by homologically log flat coverings and refer to this topology as the hlf (= homologically log flat) topology. In this article we show that the log cotangent complex (recalled in Section \ref{sec:cotangent_complex}) introduced by Gabber satisfies homologically log faithfully flat descent. More precisely, 

\begin{mainthm} \label{mainthm:flat_descent}
Fix a base pre-log ring $(R, P)$. The functor $(S, M) \mapsto \L_{(S, M)/(R, P)}$ is an hlf sheaf on $\textup{Alg}^{\textup{prelog}}_{(R, P)/}$ taking values in $\mD(R)$.
\end{mainthm}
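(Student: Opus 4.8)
The plan is to follow Bhatt's proof of flat descent for the non-logarithmic cotangent complex \cite{Bhatt_descent}, using the standard properties of Gabber's log cotangent complex recalled in Section \ref{sec:cotangent_complex} (transitivity triangles, base change, and the values on strict and on free maps): reduce by transitivity to a relative vanishing, then split the log cotangent complex into a ring contribution, governed by Bhatt's theorem, and a monoid contribution, where the homologically log flat hypothesis does the real work. Concretely, let $(S,M)\ra(S',M')$ be a homologically log flat covering and $(S^\bullet,M^\bullet)$ its \v{C}ech nerve, so $S^n=(S')^{\otimes_S(n+1)}$, $M^n=(M')^{\oplus_M(n+1)}$, and $S^\bullet$ is the \v{C}ech nerve of the faithfully flat ring map $S\ra S'$. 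From the transitivity triangle
\[
S^n\otimes^{\L}_S\L_{(S,M)/(R,P)}\lra\L_{(S^n,M^n)/(R,P)}\lra\L_{(S^n,M^n)/(S,M)}
\]
and the fact that $N\isom\lim_{\Delta}\bigl(N\otimes^{\L}_S S^\bullet\bigr)$ for every $N\in\mD(S)$ (faithfully flat descent of modules along $S\ra S'$), the first cosimplicial term totalizes to $\L_{(S,M)/(R,P)}$. Hence the theorem is equivalent to the assertion $\lim_{\Delta}\L_{(S^\bullet,M^\bullet)/(S,M)}=0$; that is, we may take $(S,M)$ as the base and must show the augmented cosimplicial object $0\ra\L_{(S^\bullet,M^\bullet)/(S,M)}$ is a limit diagram (this gives \v{C}ech descent, which is what ``hlf sheaf'' means here).

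Next I would factor $(S,M)\ra(S^n,M^n)$ as $(S,M)\ra(S^{\flat}_n,M^n)\ra(S^n,M^n)$ with $S^{\flat}_n\coloneqq S\otimes_{\Z[M]}\Z[M^n]$ carrying its tautological pre-log structure. The homologically log flat hypothesis, applied with test objects assembled from $M\ra M'$, is exactly what ensures that $M^n$ is also the iterated \emph{homotopy} pushout and that the square with corners $\Z[M],\Z[M^n],S,S^{\flat}_n$ is derived cocartesian, so $S^{\flat}_n=S\otimes^{\L}_{\Z[M]}\Z[M^n]$ is flat over $S$. The second map is strict, so $\L_{(S^n,M^n)/(S^{\flat}_n,M^n)}=\L_{S^n/S^{\flat}_n}$; and base change for the log cotangent complex along that cocartesian square identifies the first map's contribution as $\L_{(S^{\flat}_n,M^n)/(S,M)}\simeq S^{\flat}_n\otimes^{\L}_{\Z[M^n]}\L_{(\Z[M^n],M^n)/(\Z[M],M)}$. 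The transitivity triangle of the factorization then presents $\L_{(S^n,M^n)/(S,M)}$ as an extension of the ring contribution $\L_{S^n/S^{\flat}_n}$ by the monoid contribution $S^n\otimes^{\L}_{\Z[M^n]}\L_{(\Z[M^n],M^n)/(\Z[M],M)}$, so it suffices to kill the totalizations of the two corresponding cosimplicial objects. For the ring contribution, transitivity along $S\ra S^{\flat}_n\ra S^n$ puts $\L_{S^n/S^{\flat}_n}$ in a triangle with $\L_{S^n/S}$, whose totalization vanishes by Bhatt's theorem \cite{Bhatt_descent} applied to $S\ra S'$ (again via transitivity, exactly as above), and with $S^n\otimes^{\L}_{\Z[M^n]}\L_{\Z[M^n]/\Z[M]}$, which is again of monoid type; so everything reduces to showing that for the relevant cosimplicial $\Z[M^\bullet]$-algebra $C^\bullet$, the cosimplicial objects $n\mapsto C^n\otimes^{\L}_{\Z[M^n]}\L_{\Z[M^n]/\Z[M]}$ and $n\mapsto C^n\otimes^{\L}_{\Z[M^n]}\L_{(\Z[M^n],M^n)/(\Z[M],M)}$ have vanishing totalization.

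For the monoid contribution I would argue as follows. Since $M^\bullet$ is the \v{C}ech nerve of $M\ra M'$ in ordinary commutative monoids, $M^n=(M')^{\oplus_M(n+1)}$ is a coproduct over $M$ of $n+1$ copies of $M'$, and the cotangent complex turns such coproducts into direct sums of base-changed cotangent complexes; thus $\L_{(\Z[M^n],M^n)/(\Z[M],M)}\simeq\bigoplus_{j=0}^{n}\bigl(\L_{(\Z[M'],M')/(\Z[M],M)}\otimes^{\L}_{\Z[M']}\Z[M^n]\bigr)$, and similarly for $\L_{\Z[M^n]/\Z[M]}$, with the \v{C}ech cofaces inserting a zero summand into the new slot. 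So the monoid contribution is, levelwise, a direct sum of copies of a fixed module indexed by the slots, and the combinatorial input is that the cosimplicial abelian group $[n]\mapsto W^{\oplus(n+1)}$ whose cofaces insert $0$ into a slot (and whose codegeneracies add adjacent slots) has vanishing totalization — which holds because ``delete the zeroth slot'' is a contracting homotopy on the associated normalized cochain complex, and a contracting homotopy persists under every additive operation, hence survives tensoring against $C^\bullet$. (The geometric point is that after passing from $M^\bullet$ to its relative group completion the restricted coproduct $\oplus_M$ becomes an honest direct sum, which admits the extra degeneracy that $M^\bullet$ itself lacks. When $M\ra M'$ is not injective and integral one first replaces it by a levelwise-free simplicial resolution $F_\bullet\simeq M'$ over $M$, forms the bisimplicial monoid $F_\bullet^{\oplus_M(\bullet+1)}$, and runs the same argument, using that the log cotangent complex of a free monoid extension is free in degree $0$.) Assembling the two contributions through the factorization gives $\lim_{\Delta}\L_{(S^\bullet,M^\bullet)/(S,M)}=0$, which by the first step proves the theorem.

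The hard part is not the appeals to Bhatt's theorem but the translation of the homologically log flat hypothesis — a statement about homotopy pushouts of monoids against arbitrary test objects — into the exact flatness and underivedness needed to make the \v{C}ech nerve and the factorization behave, and the monoid bookkeeping in the last step: pinning down the log cotangent complex of a free monoid extension, controlling the simplicial resolution and the ensuing bisimplicial totalization, and organizing it all so as to reduce cleanly to the ``delete the zeroth slot'' contracting homotopy.
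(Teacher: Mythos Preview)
Your first paragraph — reducing via transitivity and module descent to the vanishing $\lim_\Delta \L_{(S^\bullet,M^\bullet)/(S,M)}=0$ — matches the paper exactly. But from there you take a substantially harder road than necessary.

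The paper's observation is that Bhatt's non-log argument goes through \emph{verbatim} for the log cotangent complex, because it uses only transitivity, base change, and $\L_{\text{id}}=0$, all of which hold in the log setting. Concretely: module descent along $S\to S'$ gives $\textup{Tot}_n\,\L_{(S^n,M^n)/(S,M)}\simeq\textup{Tot}_m\textup{Tot}_n\bigl(\L_{(S^n,M^n)/(S,M)}\otimes^{\L}_S S^m\bigr)$; base change identifies the inner term with $\L_{(S^{n+m+1},M^{n+m+1})/(S^m,M^m)}$; and the cosimplicial pre-log ring $[n]\mapsto(S^{n+m+1},M^{n+m+1})$, viewed over $(S^m,M^m)$, is \emph{split} augmented with augmentation $(S^m,M^m)$ — the standard fact that a \v{C}ech nerve becomes split after base change to any of its own terms. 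Since any functor preserves split augmented diagrams, applying $T\mapsto\L_{T/(S^m,M^m)}$ gives a split diagram with augmentation $\L_{(S^m,M^m)/(S^m,M^m)}=0$. No ring/monoid decomposition is needed, and the log structure plays no role beyond the formal properties of $\L$. (The paper in fact runs this for all $\midwedge^i\L$ at once.)

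Your route through $S^{\flat}_n=S\otimes_{\Z[M]}\Z[M^n]$ has a genuine gap: monoid-flatness of $M\to M'$ only guarantees $\Z[M']\otimes^{\L}_{\Z[M]}\Z[P]\simeq\Z[M']\otimes_{\Z[M]}\Z[P]$ for monoid algebras $\Z[P]$, not for an arbitrary $\Z[M]$-algebra like $S$, so the square need not be derived cocartesian and your base-change identification for $\L_{(S^{\flat}_n,M^n)/(S,M)}$ is unjustified. (One could carry the animated $S\otimes^{\L}_{\Z[M]}\Z[M^n]$ through instead, but the downstream bookkeeping multiplies.) The monoid-side contracting homotopy is also shakier than you suggest: ``delete the zeroth slot'' does contract $[n]\mapsto W^{\oplus(n+1)}$, but your summands are tensored against $S^n$, which varies with $n$ and does \emph{not} itself admit an extra degeneracy; the homotopy is not cosimplicially natural against that tensoring. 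This is precisely the obstruction the paper's base change to $S^m$ removes — it manufactures the missing extra degeneracy uniformly, rather than trying to locate one inside the cotangent complex.
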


Theorem \ref{mainthm:flat_descent} plays a key role in our study of a derived version of log prismatic cohomology, which is ``derived'' (or ``animated'') from the functor that sends the $p$-completed ``log free'' algebra $\Sigma_{S, T} \coloneqq (A/I [\N^{S}, \N^{T}]^{\wedge}, M_A \oplus \N^T)$ to its log prismatic cohomology 
\[
 \Prism_{\Sigma_{S, T}/(A, M_A)} \coloneqq R \Gamma_{\Prism} (\spf (\Sigma_{S, T}, \N^T)^a/(A, M_A)) \in \mD (A).
\]
From Theorem \ref{mainthm:comparisons} Part (1) one obtains a derived version of Hodge--Tate comparison (see Proposition \ref{prop:derived_HT}), which essentially says that to control the derived log prismatic cohomology of $(R, P)$ it often suffices to control the log cotangent complex $\L_{(R, P)/(A/I, M_A)}$.  It is in this context that we shall apply Theorem \ref{mainthm:flat_descent}. 
Moreover, we introduce a logarithmic version of the quasisyntomic site studied by \cite{BMS2}. 

\begin{maindef}[The logarithmic quasisyntomic site] \noindent 

\noindent 
\be
\item  A pre-log ring $(A, M_A)$ is quasisyntomic if $A$ is $p$-complete and has bounded $p^\infty$-torsion and $\L_{(A, M_A)/\Z_p} \otimes^\L_{A} A/p \in \mD(A/p)$ has Tor amplitude in  $[-1, 0]$. 
\item A map $(A, M_A) \ra (B, M_B)$ of quasisyntomic  pre-log rings is a quasisyntomic  map (resp. cover) if $B \otimes_A^\L A/p \cong B/p$, the map $(A/p, M_A) \ra (B/p, M_B)$ is homologically log flat (resp. log faithfully flat) and $\L_{(B/p, M_B)/(A/p, M_A)}  \in \mD(B/p)$ has Tor amplitude in  $[-1, 0]$.
\ee  Let $\QSyn^{\textup{prelog}}$ denote the category of quasisyntomic pre-log rings, (the opposite category of) which forms a site with topology given by quasisyntomic covers (see Subsection \ref{subsec:log_qSyn}).  
\end{maindef}

By Theorem \ref{mainthm:flat_descent}, we know that log cotangent complexes (and thus derived log prismatic cohomology) satisfies quasisyntomic descent. Similar to the (nonlog) quasisyntomic site, an important feature of $\QSyn^{\textup{prelog}}$ is that it is locally log quasiregular semiperfectoid. In other words, $\QSyn^{\textup{prelog}}$ has a basis given by pre-log rings $(S, M)$ that are semiperfect which admits a map $\sq S \ra S$ from a perfectoid ring $\sq S$ such that $\L_{(S, M)/\sq S} \otimes^\L_S S/p \in \mD(S/p)$ has Tor amplitude concentrated in degree $[-1]$ (see Subsection \ref{subsec:log_qRSP}, in particular Definition \ref{definition:qrspd_log} and Lemma \ref{lemma:log_BMS_4.25}). Such pre-log rings are particularly nice since by the last condition (and the derived Hodge--Tate comparison) its derived log prismatic cohomology relative to $\Ainf (\sq S)$ is a discrete ring (concentrated in degree $0$).

Using the derived theory, we are able to equip log prismatic cohomology with the Nygaard filtration. To state the result, let $(R, P)$ a (simplicial) pre-log ring over $(A/I, M_A)$ and let $\Prism^\L_{(R, P)/(A, M_A)}$ denote its derived log prismatic cohomology (when the context is clear we often drop the supscript and write $\Prism_{(R, P)/(A, M_A)}$ for $\Prism^\L_{(R, P)/(A, M_A)}$). 

\begin{mainthm} \label{mainthm:Nygaard}
There is a functorial decreasing multiplicative filtration $\Fil_N$ by $(p, I)$-complete objects on $\Prism_{(R, P)/(A, M_A)}^{(1)}\coloneqq \Prism_{(R, P)/(A, M_A)} \widehat \otimes^\L_{A, \phi_A} A$, equipped with an isomorphism
\[
\grade^i_N \Prism_{(R, P)/(A, M_A)}^{(1)}\cong \Fil_i \overline{\Prism}_{(R, P)/(A, M_A)}\{i\}
\]
on the $i^{th}$ graded piece. 
\end{mainthm}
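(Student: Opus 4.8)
The plan is to construct the Nygaard filtration first on the quasiregular semiperfectoid (log qrsp) basis of the logarithmic quasisyntomic site, where derived log prismatic cohomology is concentrated in degree zero, and then left Kan extend / unfold by quasisyntomic descent, exactly mirroring the strategy of \cite{BMS2} (and \cite{BS}). So the first step is to reduce to the case where $(R,P)$ is a log qrsp pre-log ring $(S,M)$ equipped with a map from a perfectoid ring $\sq S$ as in the local structure statement recalled above, with base $(A,I,M_A) = (\Ainf(\sq S), \ker\theta, M_{\sq S})$ (or more precisely one works relatively over a fixed perfect log prism and uses that $\QSyn^{\textup{prelog}}$ is locally log qrsp). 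By the derived Hodge--Tate comparison (Proposition \ref{prop:derived_HT}) together with the Tor-amplitude hypothesis on $\L_{(S,M)/\sq S}$, the object $\Prism_{(S,M)/(A,M_A)}$ is a discrete $(p,I)$-complete $A$-algebra; its Frobenius twist $\Prism^{(1)}_{(S,M)/(A,M_A)} = \Prism_{(S,M)/(A,M_A)}\widehat\otimes^\L_{A,\phi_A}A$ is then also discrete.

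On such a log qrsp ring I would define $\Fil_N^i \Prism^{(1)}_{(S,M)/(A,M_A)}$ to be the preimage under the relative Frobenius
\[
\phi \colon \Prism^{(1)}_{(S,M)/(A,M_A)} \lra \Prism_{(S,M)/(A,M_A)}
\]
of the $I$-adic filtration $I^i \Prism_{(S,M)/(A,M_A)} \subseteq \Prism_{(S,M)/(A,M_A)}$; concretely $\Fil_N^i = \phi^{-1}(I^i\Prism_{(S,M)/(A,M_A)})$, $(p,I)$-completed. This is visibly a decreasing, multiplicative filtration by $(p,I)$-complete submodules of a discrete ring, so there are no higher homotopy subtleties to check at this stage. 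The key computation is to identify the graded pieces: one has a short exact sequence relating $\grade_N^i$ to $\ker$ and $\coker$ of $\phi$ modulo $I^i$ versus $I^{i+1}$, and the point is that, because $X$ (resp.\ its mod $p$ fiber) is of Cartier type, Part (6) of Theorem \ref{mainthm:comparisons} (Frobenius is an isogeny, with the factorization through $L\eta_I$) applies; combined with the derived Hodge--Tate comparison this forces
\[
\grade_N^i \Prism^{(1)}_{(S,M)/(A,M_A)} \cong \Fil_i^{\textup{conj}}\,\overline{\Prism}_{(S,M)/(A,M_A)}\{i\},
\]
where $\Fil_i^{\textup{conj}}$ is the conjugate (Hodge--Tate) filtration, which on a log qrsp ring is just the increasing filtration by the truncations, computed by the log cotangent complex $\L_{(S,M)/(A/I,M_A)}$. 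Functoriality in $(S,M)$ is immediate from functoriality of $\phi$ and of the $I$-adic filtration.

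Next, to globalize, I would invoke quasisyntomic descent: by Theorem \ref{mainthm:flat_descent} the log cotangent complex — hence derived log prismatic cohomology and its Hodge--Tate specialization — is a sheaf on $\QSyn^{\textup{prelog}}$, and every quasisyntomic pre-log ring admits a quasisyntomic cover by a log qrsp one (the local structure recalled above), whose \v{C}ech nerve again consists of log qrsp rings. Since the filtration $\Fil_N^\bullet$ and the target filtered object $\Fil_\bullet^{\textup{conj}}\overline{\Prism}\{\bullet\}$ are both functorial on the log qrsp basis and the latter satisfies descent, one defines $\Fil_N^i \Prism^{(1)}_{(R,P)/(A,M_A)}$ for general (simplicial) $(R,P)$ as the totalization/left Kan extension of the values on a log qrsp hypercover, with the graded-piece identification inherited termwise; completeness is preserved by totalization of $(p,I)$-complete objects, and multiplicativity passes to the limit. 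The main obstacle, I expect, is not the descent bookkeeping but verifying the graded-piece formula on the log qrsp basis: this requires knowing that on a log qrsp ring the linearized Frobenius behaves exactly as in the nonlog case — i.e.\ that $\Prism^{(1)}$ maps isomorphically onto $L\eta_I\Prism$ and that the resulting filtration jumps match the conjugate filtration — and the log subtlety is to ensure the Cartier-type hypothesis is what makes the relevant terms in the log cotangent complex line up (so that $\L_{(S,M)/(A/I,M_A)}$ has the correct Tor amplitude and the conjugate filtration has the expected graded pieces $\wedge^i \L_{(S,M)/(A/I,M_A)}[-i]$). Once that local computation is pinned down, everything else is formal.
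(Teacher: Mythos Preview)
Your proposal has a genuine gap at precisely the point the paper singles out as the essential new difficulty. On a log qrsp ring $(S,M)$ over a perfect base $(A,I,M_A)$, you define $\Fil_N^i$ as the preimage of $I^i\Prism_{(S,M)/(A,M_A)}$ under the relative Frobenius on $\Prism^{(1)}_{(S,M)/(A,M_A)}\cong\Prism_{(S,M)/(A,M_A)}$. This is exactly the naive filtration $\sq\Fil^i$ that Example~\ref{example:log_Nygaard} analyzes, and the paper shows explicitly that its graded pieces are \emph{not} $\Fil_i\overline{\Prism}\{i\}$: for $\ul B=(\mO_C\gr{x^{1/p^\infty},y^{1/p^\infty}}/(x-y),\,\N[1/p]\oplus_\N\N[1/p])$, the element $\frac{u-1}{\xi}\in\Fil_1\overline{\Prism}_{\ul B/\Ainf}$ does not lie in the image of the divided Frobenius from $\sq\Fil^1$. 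The reason is structural: the identification $\Prism_{(S,M)/A}\cong\Prism_{S/\sq A}$ goes through a base prism $\sq A$ arising from exactification, and $\sq A$ is typically \emph{not} perfect; the correct Nygaard filtration lives on $\Prism_{S/\sq A}\widehat\otimes_{\sq A,\phi_{\sq A}}\sq A$, which is genuinely different from $\Prism_{(S,M)/A}\widehat\otimes_{A,\phi_A}A$. So your local computation ``$\grade_N^i\cong\Fil_i^{\textup{conj}}\overline{\Prism}\{i\}$'' simply fails on the log qrsp basis, and the descent step has nothing correct to globalize. (Relatedly, your invocation of Part~(6) of Theorem~\ref{mainthm:comparisons} to justify the graded-piece formula is circular: the $L\eta_I$-factorization and the isogeny statement are \emph{consequences} of the Nygaard filtration, proved in Corollary~\ref{decalage_affine} and Corollary~\ref{cor:image_Frob_global}.)

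The paper's route is different in kind, not just in bookkeeping. Rather than defining $\Fil_N$ on log qrsp rings and descending, it constructs $\Fil_N$ directly on the free generators $\ul\Sigma_{S,T}=(A/I\gr{(X_s)_{s\in S},\N^T},\,M_A\oplus\N^T)$ and then left Kan extends. For the free case (Subsection~\ref{ss:log_free_case}) one passes to the \v{C}ech nerve of the quasisyntomic cover obtained by extracting $p$-power roots, computes the exactification to get cosimplicial non-perfect prisms $\sq A_\infty^\bullet$, and puts the Nygaard filtration on $\Prism_{(A/I)_\infty^\bullet/\sq A_\infty^\bullet}\widehat\otimes_{\sq A_\infty^\bullet,\phi}\sq A_\infty^\bullet$ via Proposition~\ref{lci case} (a mild extension of \cite[Theorem~12.2]{BS} to lci quotients over non-perfect bases). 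The crucial step is the ``Frobenius descent'' Lemma~\ref{lemma:Niziol}, which shows that the totalization of this cosimplicial object with its Frobenius twist taken over $\sq A_\infty^\bullet$ agrees with $\Prism_{(R,P)/(A,M_A)}^{(1)}$ (Frobenius twist over $A$). This is what reconciles the two different Frobenius twists and is the ingredient your approach is missing.
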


The presence of log structures causes additional complications for the construction of the log Nygaard filtration, compared to the construction given in \cite{BS} in the nonlog case. The main issue is the following: suppose that  $(S, M)$ is a quasiregular semiperfectoid pre-log ring and let $(A, I, M_A)$ be a perfect prism with $M_A$ being perfectoid (see Definition \ref{def:perfectoid_monoid}), equipped with a map $(A/I, M_A) \ra (S, M)$, so 
\begin{equation} \label{eq:Frob_twist_of_derived_prismatic}
\Prism_{(S, M)/(A, M_A)}^{(1)} \coloneqq  \Prism_{(S, M)/(A, M_A)} \otimes^\L_{A, \phi_A} A \cong \Prism_{(S, M)/(A, M_A)}
\end{equation}
lives in degree $0$. However, in contrast with \cite[Proposition 12.11]{BS}, the derived Nygaard filtration in Theorem \ref{mainthm:Nygaard} may fail to be discrete (in some natural way) even in this case. This is essentially caused by the exactification procedure in the process of forming log prismatic envelops, which introduces non-perfect base prisms into the picture. To be slightly more precise, suppose that the map $(A/I, M_A) \ra (S, M)$ is surjective on both rings and monoids.  Let 
\[\sq M \coloneqq (h^{\textup{gp}})^{-1} (M) \subset (\sq M)^{\textup{gp}}
\]be the exactification of the surjection $h\colon M_A  \ra M$. Set $\sq A \coloneqq A \widehat \otimes_{\Z_p} \Z_p[\sq M]$, which we naturally promote to an integral pre-log prism $(\sq A, I, \sq M)$. We remark that the prism $\sq A$ is usually not perfect. Also note that we naturally get a map $(\sq A/I, \sq M) \ra (S, M)$ which is surjective both on rings and monoids. From Subsection \ref{ss:universal_object_bdd}, we have $\Prism_{(S, M)/(A, M_A)} \cong \Prism_{S/\sq A}$ where the right hand side is the usual nonlog derived prismatic cohomology.  It turns out that the condition 
\[
\phi(-) \in I^{i} \Prism_{(S, M)/(A, M_A)}\] does not pick out the correct log Nygaard filtration on it. 
We refer the reader to the example in Subsection \ref{ss:toy_example} for more detail. The key observation is that, in order to understand the  Nygaard filtrations in the logarithmic world, it suffices to understand the (nonlog) Nygaard filtrations on 
\[\Prism_{S/\sq A} \otimes_{\sq A, \phi_{\sq A}} \sq A
\]
for sufficiently many such maps $(A/I, M_A) \ra (S, M)$ described above (note that this is different from $\Prism_{(S, M)/(A, M_A)}^{(1)}$ in (\ref{eq:Frob_twist_of_derived_prismatic}) since $\sq A$ is generally not perfect).  On $\Prism_{S/\sq A} \otimes_{\sq A, \phi_{\sq A}} \sq A$, the $i^{th}$ Nygaard filtration \textit{is} in fact defined by the condition $\phi (-) \in I^i \Prism_{S/\sq A}$. Theorem \ref{mainthm:Nygaard} will then be proven using quasisyntomic descent and a certain ``Frobenius descent'' lemma. We refer the reader to Section \ref{sec:Nygaard} for more detail.

The existence of the Nygaard filtration will eventually allow us to establish Part (4) and (6) of Theorem \ref{mainthm:comparisons} on de Rham comparison and the image of the Frobenius. 

Next let us briefly explain the proof of the \'etale comparison in the affine case. This time, the additional complication due to the presence of log structures is caused by the saturation procedure. Recall that an integral monoid $M$ is \textit{saturated} if for any $a \in M^{\text{gp}}$, $a \in M$ whenever $na \in M$ for some integer $n \ge 1$. For an integral monoid $M$, its saturation is given by $M^{\text{sat}}\coloneqq\{x \in M^{\textup{gp}} \: | \:  n x \in M \text{ for some } n \ge 1 \},$ which describes the left adjoint of the forgetful functor from saturated monoids to integral monoids. Now let $(A, I, M_A)$ be a pre-log prism with $(A, I)$ being  perfect and $M_A$ being perfectoid, and let $(R, P)$ be a pre-log ring over $(A/I, M_A)$. Suppose that $(R, P) \ra (R_\infty, P_\infty)$ is a sufficiently nice covering using which we can write the right hand side of the \'etale comparison as a limit of 
\begin{equation} \label{eq:rhs_etale}
(\Prism_{(R_\infty^\bullet, P_\infty^\bullet)/(A, M_A)}[\frac{1}{I}]/p^n)^{\phi = 1},
\end{equation}
where $(R_\infty^\bullet, P_\infty^\bullet)$ denotes the naive \v{C}ech nerve of the cover. However, when dealing with log (or Kummer log) \'etale cohomology of log schemes, one needs to work with the category of saturated log schemes. In other words, we need to consider the saturated \v{C}ech nerve $(R_\infty^{\bullet, \text{sat}}, P_\infty^{\bullet, \text{sat}})$. The key observation is that the  Frobenius fixed points of the  log prismatic cohomology in (\ref{eq:rhs_etale}) does not change when we replace $(R_\infty^i, P_\infty^i)$ by its ($p$-)saturation (this uses the fact from \cite{BS} that the expression (\ref{eq:rhs_etale}) does not change under taking the colimit perfection of the derived prismatic cohomology). To relate the Kummer \'etale cohomology and the Frobenius fixed points of the  log prismatic cohomology for $(R_\infty^{i, \text{sat}}[\frac{1}{p}], P_\infty^{i, \text{sat}})$, we further reduce to the nonlog situation and apply the nonlog version of the \'etale comparison to conclude. See Section \ref{sec:etale} for more detail and in particular, see Subsection \ref{ss:example_etale_free} for an illustrative example of the logarithmic affine line. To globalize the \'etale comparison, we extend the theory of diamonds to the logarithmic world and introduce the notion of quasi-pro-Kummer-\'etale topology on (saturated) log diamonds. For a $p$-complete fs pre-log ring $(R, P)$ with bounded $p^{\infty}$-torsion, the Kummer \'etale cohomology of the log scheme associated to $(R[\frac{1}{p}], P)$ with torsion coefficients agrees with the quasi-pro-Kummer-\'etale cohomology of the associated log diamond  (see Theorem \ref{comparing_diamond_to_spec}), which also agrees with the Kummer \'etale cohomology of the associated log adic space if the latter exists (see Lemma \ref{lemma:comparing_adic_generic_to_spec} for the precise statement). The quasi-pro-Kummer-\'etale cohomology of log diamonds then allows us to formulate the global version of the \'etale comparison as stated in Theorem \ref{mainthm:comparisons}. 

The theory of log diamonds also provides a convenient framework to study $F$-crystals on the absolute (saturated) log prismatic site. In particular, we prove the following generalization of a result of Bhatt and Scholze in \cite{BS_crystal}, which can be viewed as a first step towards understanding a logarithmic version of crystalline local systems. 

\begin{mainthm} 
Let $X$ be an fs log bounded $p$-adic formal scheme and let $X_\eta$ denote its log diamond generic fiber.  
There is a natural equivalence
\[
\textup{Vect}(X_{\Prism}, \mO_{\Prism}[1/\mI]_p^{\wedge})^{\phi=1} \simeq 
\textup{Loc}_{\Z_p}(X_{\eta})  
\]
from the category  $\textup{Vect}(X_{\Prism}, \mO_{\Prism}[1/\mI]_p^{\wedge})^{\phi=1}$  of Laurent $F$-crystals over $X$ (see Definition \ref{def:Laurent_F_crystal}) to the category of (quasi-pro-Kummer-\'etale) $\Z_p$-local systems on $X_\eta$. 
\end{mainthm}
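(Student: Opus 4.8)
The plan is to reduce the statement about Laurent $F$-crystals on the absolute log prismatic site to the corresponding non-log statement of Bhatt--Scholze \cite{BS_crystal}, by exploiting the fact that, locally on the log quasisyntomic site, log prisms become non-log prisms after exactification. First I would set up both sides as sheaves of categories on a suitable site — the (saturated) log quasisyntomic site of $X$, or a log-diamond analogue of the quasi-pro-Kummer-\'etale site of $X_\eta$ — and observe that both $\textup{Vect}(X_{\Prism}, \mO_{\Prism}[1/\mI]_p^{\wedge})^{\phi=1}$ and $\textup{Loc}_{\Z_p}(X_\eta)$ satisfy descent for the relevant covers (the former because vector bundles with Frobenius structure on a ringed site glue, the latter because $\Z_p$-local systems are a stack for the quasi-pro-Kummer-\'etale topology, using the theory of log diamonds developed earlier in the paper). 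Thus it suffices to produce the equivalence locally, i.e.\ on affines $(R,P)$ which are fs, $p$-complete, bounded, and log quasiregular semiperfectoid (or more precisely on a basis of such), and then check that the local equivalences are compatible with the transition maps so that they glue.

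On such a local piece, the key input is the identification, recalled in the excerpt, of log prismatic cohomology with non-log prismatic cohomology over an exactified base: if $(R,P)$ receives a surjection from $(A/I,M_A)$ with $(A,I,M_A)$ a perfect prism with $M_A$ perfectoid, then after forming $\sq M = (h^{\textup{gp}})^{-1}(M)$ and $\sq A = A\widehat\otimes_{\Z_p}\Z_p[\sq M]$ one has a comparison of log prismatic data over $(A,M_A)$ with ordinary prismatic data over $\sq A$. I would check that this identification is functorial enough to carry over not just the cohomology but the whole category of Laurent $F$-crystals: a Laurent $F$-crystal on the log prismatic site of the local piece corresponds to a Laurent $F$-crystal on the (non-log) prismatic site of the exactified base $\spf \sq A/I$ (or of an appropriate Kummer-\'etale-localized version). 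On the Kummer-\'etale side, the key geometric fact is that Kummer-\'etale covers of $X_\eta$ are, locally, pulled back from finite Kummer covers which become finite \'etale after the same exactification/saturation of monoids — this is the log-diamond incarnation of the ``saturation does not change Frobenius fixed points'' observation already used for the \'etale comparison. So both sides, locally, are computed by the non-log theory over the exactified base, where Bhatt--Scholze \cite{BS_crystal} gives the equivalence $\textup{Vect}(-_{\Prism},\mO_{\Prism}[1/\mI]_p^\wedge)^{\phi=1}\simeq \textup{Loc}_{\Z_p}((-)_\eta)$.

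I expect the main obstacle to be twofold. First, one must be careful that the exactification procedure is compatible with the \v{C}ech nerves on both sides: the exactified base $\sq A$ depends on the chosen surjection, and to glue the local equivalences one needs to know that different choices give canonically isomorphic answers and that the transition maps in the (saturated) \v{C}ech nerve induce the expected maps of non-log prismatic sites — this is exactly the kind of bookkeeping that made the Nygaard filtration and the \'etale comparison delicate, and it is where the fs/saturated hypotheses are essential (one works with $M^{\textup{sat}}$ throughout so that the log-diamond generic fiber and its Kummer-\'etale site are well behaved). Second, one must match the two a priori different ``generic fibers'': the generic fiber of $\spf \sq A/I$ in the non-log sense versus the log-diamond generic fiber $X_\eta$ with its quasi-pro-Kummer-\'etale topology; concretely, $\Z_p$-local systems on $X_\eta$ should correspond to $\Z_p$-local systems on the non-log generic fiber of the exactified cover that are equivariant for the deck group of the Kummer cover, and one needs the comparison between Kummer-\'etale cohomology of $X_\eta$ and \'etale cohomology of this cover (the analogue of \cite[Theorem]{comparing_diamond_to_spec} at the level of categories of local systems, not just cohomology). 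Once these compatibilities are in place, the equivalence follows by descent from the non-log case; the remaining verifications (full faithfulness, essential surjectivity, $\phi$-equivariance) are then formal consequences of the corresponding statements in \cite{BS_crystal}.
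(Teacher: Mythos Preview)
Your proposal takes a genuinely different route from the paper, and the obstacles you anticipate are real --- but the paper sidesteps them entirely rather than overcoming them.

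The paper's proof (Theorem \ref{etale_realization_is_equivalence}) does not use exactification or the log quasisyntomic site at all. Instead it proceeds as follows: by the Drinfeld--Mathew result \cite[Theorem 5.8]{Mathew}, the category of Laurent $F$-crystals is the limit, over all saturated log prisms $(A, I, M_{\spf A})$ in $(X, M_X)_{\Prism}$, of $\textnormal{Vect}(A[1/I]^{\wedge}_p)^{\phi_A=1}$. Next, by \cite[Proposition 3.5, Corollary 3.7]{BS_crystal}, each such category is unchanged upon replacing $(A,I,M_A)$ by its perfection, so the limit may be restricted to perfect log prisms. Perfect log prisms correspond exactly to perfectoid log rings $(A/I, M_A)^a$, and for these $A[1/I]^{\wedge}_p = W(S^\flat)$ where $S = A/I[1/p]$ --- a ring that no longer sees the log structure at all. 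The non-log Bhatt--Scholze equivalence then gives $\textnormal{Vect}(W(S^\flat))^{\phi=1} \simeq \textnormal{Loc}_{\Z_p}(\spd(S, S^+))$ for each such piece. Finally, a direct basis argument on $(X, M_X)^{\diamondsuit}_{\eta, \qpket}$ (using strictly totally disconnected log perfectoid spaces and the equivalence $(\spd(S,S^+), M_S)^a_{\qpket} \cong \spd(S,S^+)_{\qproet}$ when $M_S$ is divisible) identifies the limit of these local-system categories with $\textnormal{Loc}_{\Z_p}(X, M_X)$.

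The conceptual point you are missing is that for \emph{Laurent} $F$-crystals --- where one has inverted $I$ and imposed $\phi$-equivariance --- passing to the perfection costs nothing, and on perfect log prisms the log structure becomes invisible in $A[1/I]^\wedge_p$. This is far cleaner than exactification: the exactified base $\sq A$ is non-perfect and choice-dependent, and your identification $\Prism_{(S,M)/(A,M_A)} \cong \Prism_{S/\sq A}$ is a statement about \emph{relative} prismatic cohomology, not about categories of crystals on the \emph{absolute} log prismatic site. Promoting that cohomological identification to a categorical equivalence of Laurent $F$-crystals, compatibly over the \v{C}ech nerve, is exactly the delicate bookkeeping you flag --- and the paper's route shows it is unnecessary here. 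Your approach might be salvageable, but it would require substantial new arguments (a categorical analogue of the exactification comparison, plus descent for Laurent $F$-crystals along log quasisyntomic covers) that the paper neither proves nor needs.
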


Next consider an algebraically closed nonarchimedean extension $C$ of $\Q_p$. The perfectoid ring $\mO_C$ corresponds to a perfect prism $(A, I)$ where $A = \Ainf (\mO_C) = W(\mO_C^\flat)$ and $I = (\xi)$. Here $\xi = \mu/\phi_A^{-1} (\mu)$ and $\mu = [\epsilon] - 1$ are elements in $A$, where $\epsilon = (1, \zeta_p, \zeta_{p^2}, ...) \in \mO_C^\flat$ for a fixed choice of $p$-power roots of unity. Now suppose that $(A, I, M_A)^a$ is a perfect log prism and let $X$ be a log $p$-adic formal scheme as in the statement of the \'etale comparison (cf.  Theorem \ref{mainthm:comparisons} Part (5)). Then the log prismatic cohomology is an $\Ainf$-module equipped with a Frobenius operator. In this setting, we prove the following ``\'etale comparison'' and a structural result of the log prismatic cohomology. 

\begin{mainthm} \label{mainthm:BKF}
Suppose that $X$ is a smooth log $p$-adic formal scheme over $(A/I, M_A)$ satisfying the assumptions in Theorem \ref{mainthm:comparisons} Part (5). Assume that $X$ is proper over $A/I = \mO_C$ and let  $X_{\eta}$ be its log (diamond) generic fiber. Then for each $i \ge 0$, we have a natural isomorphism\footnote{Again, the Kummer \'etale cohomology of $X_{\eta}$ should be interpreted as the quasi-pro-Kummer-\'etale cohomology of the associated log diamond (cf. Theorem \ref{etale_comp_diaomnd_global}). }
\[
H^i_{\Prism} (X/(A, M_A)) \otimes_{A} A [ {1}/{\phi^{-1}(\mu)}] \cong H^i_{\ket} (X_{\eta}, \Z_p) \otimes_{\Z_p} A [{1}/{\phi^{-1} (\mu)}]. 
\]
Moreover, the (Frobenius twisted) log prismatic cohomology group 
\[
\phi_A^* H^i_{\Prism} (X/(A, M_A)) \cong H^i_{\Prism} (X/(A, M_A)) \otimes_{A, \phi_A} A. 
\]
is a Breuil--Kisin--Fargues module (cf. Definition \ref{definiton:BKF_module}). 
\end{mainthm}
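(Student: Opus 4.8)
The plan is to deduce both statements from the étale comparison (Theorem \ref{mainthm:comparisons} Part (5)) together with the finiteness properties of log prismatic cohomology in the proper case, mimicking the strategy of Bhatt--Morrow--Scholze for the nonlog Breuil--Kisin--Fargues module. First I would record the finiteness input: since $X$ is proper and smooth over $(A/I, M_A) = \mathcal{O}_C$ (in the appropriate log sense, with mod $p$ fiber of Cartier type), the log prismatic cohomology $R\Gamma_{\Prism}(X/(A, M_A))$ is a perfect complex of $\Ainf$-modules. This should follow from the Hodge--Tate comparison (Part (1)), which identifies the cohomology of $\cl\Prism_{X/(A,M_A)}$ with (twisted) log differentials, plus the fact that $\Omega^\bullet_{X/(\mathcal{O}_C, M_X)}$ consists of coherent sheaves on the proper formal scheme $\mathring X$ — so each $H^i_{\Prism}(X/(A,M_A))$ is a finitely presented $\Ainf$-module, and boundedness of the complex follows from the dimension bound on $X$ together with the derived Nachbin/completeness arguments as in \cite{BMS2}.

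Next I would establish the isomorphism after inverting $\phi^{-1}(\mu)$. Note $\Ainf[1/\phi^{-1}(\mu)]$ is a localization in which $I = (\xi) = (\mu/\phi^{-1}(\mu))$ becomes generated by $\mu$; more importantly, one has $\Ainf[1/\phi^{-1}(\mu)]^{\wedge}_p \cong W(C^\flat)$-type localizations on which the étale comparison becomes visible. Concretely, from Theorem \ref{mainthm:comparisons} Part (5) we get, for each $n$,
\[
R\Gamma_{\ket}(X_\eta, \Z/p^n) \cong \big(R\Gamma_{\Prism}(X/(A,M_A))[\tfrac1I]/p^n\big)^{\phi=1},
\]
and taking a homotopy limit over $n$ and using properness (finiteness of Kummer étale cohomology of the proper log diamond $X_\eta$, via Theorem \ref{comparing_diamond_to_spec} and the log version of Scholze's finiteness results) yields $R\Gamma_{\ket}(X_\eta, \Z_p) \cong (R\Gamma_{\Prism}(X/(A,M_A))[1/I]^{\wedge}_p)^{\phi=1}$. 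Now the standard mechanism — base changing the $\phi$-module $R\Gamma_{\Prism}(X/(A,M_A))[1/I]^{\wedge}_p$ over $\Ainf[1/I]^{\wedge}_p$ along the cyclic covering that trivializes $\phi$, exactly as in the nonlog Breuil--Kisin--Fargues picture — shows that after inverting $\phi^{-1}(\mu)$ (equivalently, working over $\Ainf[1/(\mu\phi^{-1}(\mu))]$ and then using that the two sides already agree integrally away from $\mu$ via Part (6), the Frobenius isogeny statement) the prismatic cohomology becomes the constant $\phi$-module on its $\phi$-fixed points, i.e. on $H^i_{\ket}(X_\eta,\Z_p)$. I would carry this out degreewise using that each $H^i_{\Prism}$ is finitely presented, so that the spectral sequence / universal coefficient arguments degenerate after the localization.

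For the Breuil--Kisin--Fargues claim, I would verify the defining conditions (Definition \ref{definiton:BKF_module}) for $M := \phi_A^* H^i_{\Prism}(X/(A,M_A))$: (i) $M$ is a finitely presented $\Ainf$-module — from the perfectness established above, together with the fact that $\phi_A^*$ preserves finite presentation; (ii) $M[1/p]$ is a finite free $\Ainf[1/p]$-module — this uses that $\Ainf[1/p]$ has the property that finitely presented modules with the relevant torsion-freeness are free, and torsion-freeness after inverting $p$ follows because the de Rham comparison (Part (4)) and crystalline-type inputs control $M \otimes \Ainf/p$; (iii) the Frobenius-semilinear isomorphism $M[1/\xi] \xrightarrow{\sim} \phi^*M[1/\xi]$ — equivalently that $\phi$ on $R\Gamma_{\Prism}$ is an isogeny with cokernel killed by a power of $\xi$ — which is precisely the content of Theorem \ref{mainthm:comparisons} Part (6) (including the explicit maps $V_i$ with $\phi V_i = V_i \phi = d^i$, here $d = \xi$). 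Assembling (i)--(iii) gives the Breuil--Kisin--Fargues structure.

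\textbf{Main obstacle.} The genuinely delicate step is the finiteness/perfectness of $R\Gamma_{\Prism}(X/(A,M_A))$ in the log-proper setting: one must know that log prismatic cohomology of a proper smooth (fs, Cartier type) log formal scheme is a perfect $\Ainf$-complex, which in the nonlog case rests on the Hodge--Tate degeneration and a careful completeness argument, and in the log case additionally requires that the sheaves of log differentials $\Omega^i_{X/(\mathcal{O}_C, M_X)}$ are coherent and that the $L\eta$-formalism behaves well with the exactification procedure. I expect the bulk of the work to be in reducing this to Part (1) plus a Koszul-complex/completeness bookkeeping, and in checking that inverting $\phi^{-1}(\mu)$ genuinely suffices to trivialize the $\phi$-module — i.e. that there is no extra $\mu$-torsion obstruction beyond what Part (6) already controls — so that the comparison is an isomorphism and not merely an isogeny after the localization.
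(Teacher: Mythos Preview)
Your outline has the right shape (perfectness of $R\Gamma_{\Prism}$, \'etale comparison input, Frobenius isogeny from Part~(6)), but two of the key steps are genuine gaps rather than routine bookkeeping, and the paper handles them by a route quite different from what you sketch.

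\textbf{The comparison after inverting $\phi^{-1}(\mu)$.} The \'etale comparison (Part~(5)) only identifies $R\Gamma_{\ket}(X_\eta,\Z/p^n)$ with $\bigl(R\Gamma_{\Prism}[1/I]/p^n\bigr)^{\phi=1}$; passing to the limit gives information over $\Ainf[1/I]^{\wedge}_p = W(C^\flat)$, not over $\Ainf[1/\phi^{-1}(\mu)]$. Your ``standard mechanism \dots\ cyclic covering that trivializes $\phi$'' does produce $R\Gamma_{\ket}\otimes W(C^\flat)\cong R\Gamma_{\Prism}\otimes W(C^\flat)$ (this is Corollary~\ref{cor:etale_comparison_over_W(C_flat)}), but this says nothing integrally near the closed point of $\spec\Ainf$. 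The paper's proof of Proposition~\ref{inverting_mu} is different and is singled out in the introduction as new even in the nonlog case: one first passes to the $p$-completed perfection $(M_{\perf})^{\wedge}_p$ and uses the Bhatt--Lurie mod~$p$ Riemann--Hilbert correspondence over $\spec\mO_C^\flat$ (Lemma~\ref{lemma:RH}) to build $\phi$-equivariant maps between $(M_{\perf})^{\wedge}_p$ and $T\otimes\Ainf$ whose cones are $[\fm^\flat]$-torsion, hence die after inverting $\phi^{-1}(\mu)$. Then one must return from $(M_{\perf})^{\wedge}_p$ to $M$: this is Lemma~\ref{lemma:inverting_mu_on_perfection}, which uses the $L\eta_I$ factorization and the uniform Frobenius-pole bound (Remark~\ref{remark:uniform_bound_of_I_power}) to show that each iterate of $\phi^{-1}$ introduces a pole only along $\phi^{-j}(d)$, and the infinite product $\prod_{j\ge 1}\phi^{-j}(d)$ divides $\phi^{-1}(\mu)$. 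Your identification of this as the ``main obstacle'' is correct, but the mechanism you propose does not supply it.

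\textbf{The Breuil--Kisin--Fargues structure.} You propose to check Definition~\ref{definiton:BKF_module} directly, in particular that $M[1/p]$ is free over $\Ainf[1/p]$. This is not a formal consequence of de~Rham comparison or torsion-freeness: $\Ainf[1/p]$ is neither a PID nor noetherian, and finitely presented torsion-free modules need not be free. The paper instead invokes the structure theorem \cite[Corollary~4.20]{BMS1}, which requires as input (a) Proposition~\ref{inverting_mu} and (b) that $H^i(M\otimes^\L_{\Ainf,\phi}\Acrys[1/p])$ is finite free over $\Acrys[1/p]$ (Proposition~\ref{crys_free}). The latter is proved via a Hyodo--Kato-type argument: use the crystalline comparison and iterate Frobenius to pass to the special fiber over $k$, yielding an isomorphism with $R\Gamma_{\crys}((Y,M_Y)/(W(k),N))\otimes_{W(k)}\Acrys[1/p]$, which is visibly free. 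Your sketch does not contain this ingredient.
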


In \cite{DY}, a similar result is obtained by comparing the Frobenius twisted log prismatic cohomology to a logarithmic version of $\Ainf$-cohomology introduced in \cite{BMS1}, making use of a primitive comparison theorem for Kummer \'etale cohomology of log adic spaces. Our proof of Theorem \ref{mainthm:BKF} is rather different (even in the non-log case, compared to \cite{BS}). Let $M = R \Gamma_{\Prism} (X/(A, M_A))$ denote the log prismatic cohomology of $X$ and $T = R \Gamma_{\ket} (X_{\eta}, \Z_p)$ denote the Kummer \'etale cohomology of $X_{\eta}$. To prove Theorem \ref{mainthm:BKF}, we pass to the $p$-completed perfection $(M_{\perf})_p^\wedge$ of $M$ and construct an isomorphism 
\[ 
(M_{\perf})_p^\wedge \otimes_{A}^\L A [{1}/{\phi^{-1} (\mu)}] \isom T \otimes_{\Z_p}^\L A[{1}/{\phi^{-1} (\mu)}].
\]
For this we make use of a mod $p$ Riemann--Hilbert correspondence from \cite{Bhatt_Lurie}. To relate $(M_{\perf})_p^\wedge$ back to $M$, we observe that, the pole of the Frobenius along $\phi^{-1}(\xi)$ is bounded by our study of the Nygaard filtration. Therefore, taking the ($p$-completed) perfection introduces bounded amount of poles along $\phi^{-1} (\mu)$,  thus we have 
\[
M \otimes_{A}^\L A [{1}/{\phi^{-1} (\mu)}] \isom (M_{\perf})_p^\wedge \otimes_{A}^\L A [{1}/{\phi^{-1} (\mu)}].  
\]
We refer the reader to Section \ref{section:BKF_module} for the detail of the proof.

Finally, we remark that, the theory of log prismatic cohomology (in particular, of the existence of the Nygaard filtration) allows extensions of many recent developments in integral $p$-adic Hodge theory to the logarithmic world. To showcase some of these immediate generalizations, let us record some simple applications, which we prove in Section \ref{section:apps}.

Our first example is an extension of the main result in \cite{Yao_generic_fiber} to the logarithmic setting. For the setup, let us retain the notation and assumptions of Theorem \ref{mainthm:BKF}. Let $k = \mO_C/\fm$ be the residue field of $C$.  Recall that $X$ is a smooth and proper log $p$-adic formal scheme over $(\mO_C, M_A)$. Let $X_s$ be the log special fiber of $X$ over $\ul k = (k, M_A)$ and $X_{\eta}$ be the log generic fiber over $\ul C = (C, M_A)$ as before. 

Let us state a rough version of Theorem \ref{theorem:Newton_Hodge}. 
\begin{mainthm} \label{mainthm:Newton_Hodge}
Let $X$ be as above. For each $i$, the Newton polygon of the $F$-isocrystal 
\[H^i_{\textup{logcrys}}(X_s/W(\ul k))[\frac{1}{p}]
\]
lies on or above the Hodge polygon of $X_{\eta},$ defined by the Hodge numbers 
\[h^j \coloneqq \dim_K H^{i-j} (X_{\eta}, \Omega^j_{X_\eta/\ul C}).
\]
\end{mainthm}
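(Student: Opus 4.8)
The plan is to transport the $\Ainf$-cohomology proof of Mazur's inequality (as in \cite{BMS1} and \cite{Yao_generic_fiber}) to the logarithmic setting, using as input the log Breuil--Kisin--Fargues module furnished by Theorem~\ref{mainthm:BKF} together with the crystalline and de Rham comparisons. Since $C$ is algebraically closed so is its residue field $k = \mO_C/\fm$, and $(A, I) = (\Ainf(\mO_C), (\xi))$. By Theorem~\ref{mainthm:BKF}, for each $i$ the module
\[
N^i \;:=\; \phi_A^* H^i_\Prism(X/(A, M_A)) \;\cong\; H^i_\Prism(X/(A, M_A)) \otimes_{A, \phi_A} A
\]
is a Breuil--Kisin--Fargues module over $A$, and I would attach to it two polygons sharing endpoints: its \emph{Newton polygon}, that of the $F$-isocrystal $N^i \otimes_A W(k)[1/p]$, and its \emph{Hodge polygon}, recording the $\xi$-adic relative position of the lattices $\phi^* N^i$ and $N^i$ inside $N^i[1/\xi]$ (equivalently, after inverting $p$, the jumps of the Hodge filtration on $(N^i/\xi)[1/p]$). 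The theorem will then follow once one identifies the first polygon with the Newton polygon of $H^i_{\textup{logcrys}}(X_s/W(\ul k))[1/p]$, the second with the Hodge polygon of $X_\eta$, and applies the algebraic fact that Newton dominates Hodge for such modules.

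For the Newton side, I would base change $(A, I, M_A)$ along the canonical map to the crystalline pre-log prism $(W(k), (p), M_A)$; by the base change isomorphism (Theorem~\ref{mainthm:comparisons} Part~(2)) and the crystalline comparison (Theorem~\ref{mainthm:comparisons} Part~(3), applicable over $(W(k), (p), M_A)$ since the log special fiber $X_s$ is of Cartier type, being a base change of the Cartier-type mod $p$ fiber of $X$), one obtains after inverting $p$ a $\phi$-equivariant isomorphism
\[
N^i \otimes_A W(k)[1/p] \;\cong\; H^i_{\textup{logcrys}}(X_s/W(\ul k))[1/p]
\]
of $F$-isocrystals --- the Frobenius twist $\phi_A^*$ being invertible rationally, it does not alter the underlying isocrystal --- so the Newton polygon of $H^i_{\textup{logcrys}}(X_s/W(\ul k))[1/p]$ is that of $N^i$.

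For the Hodge side, I would use the de Rham comparison (Theorem~\ref{mainthm:comparisons} Part~(4), whose hypothesis holds since the mod $p$ fiber of $X$ is of Cartier type): rationally (all further $\mathrm{Tor}$-terms vanishing, since $H^i_\Prism(X/(A, M_A))[1/p]$ is free over $A[1/p]$), $N^i \otimes_A A/I$ is isomorphic to the log de Rham cohomology $H^i_{\textup{logdR}}(X/(\mO_C, M_A))[1/p]$ with the Hodge filtration coming from the stupid filtration on the log de Rham complex. Granting that the log Hodge-to-de Rham spectral sequence of the smooth proper log object $X_\eta$ over $\ul C$ degenerates at $E_1$, its graded pieces are the $H^{i-j}(X_\eta, \Omega^j_{X_\eta/\ul C})$ of dimension $h^j$, with $\sum_j h^j = \dim_C H^i_{dR}(X_\eta/C) = \operatorname{rank} N^i$; tracking the Frobenius through the comparison then shows that this is the filtration computing the Hodge polygon of $N^i$, so the Hodge polygon of $N^i$ is exactly the Hodge polygon of $X_\eta$ in the statement. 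Combining the two identifications with the purely algebraic input that, for any Breuil--Kisin--Fargues module over $\Ainf(\mO_C)$ with $C$ algebraically closed, the Newton polygon lies on or above the Hodge polygon with matching endpoints (equivalently: for the associated modification of vector bundles on the Fargues--Fontaine curve the Harder--Narasimhan polygon of the source is dominated by that of the target twisted by the modification), one deduces the theorem.

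The hard part will be the Hodge side: pinning down the Hodge polygon of $N^i$ in terms of the \emph{generic-fiber} log Hodge numbers. This needs, first, the $E_1$-degeneration of the log Hodge-to-de Rham spectral sequence of $X_\eta/\ul C$ --- so that the Hodge and Newton polygons even share a right endpoint --- which is classical in characteristic zero (e.g.\ via a log smooth compactification and Deligne's Hodge theory), and is the only place the log structure genuinely enters; and second, the compatibility between the de Rham and Hodge--Tate specializations of $N^i$ needed to see that the Hodge filtration produced by the de Rham comparison is the one governing the elementary divisors of the BKF module --- a statement about $\Ainf$-modules with Frobenius, which transfers verbatim from \cite{BMS1} and \cite{Yao_generic_fiber}. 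Everything else is a formal consequence of the comparisons already established.
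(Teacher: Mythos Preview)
Your proposal is correct and follows essentially the same route as the paper: pass to $\mO_C$, take the Frobenius-twisted log prismatic cohomology $N^i=\phi_A^* H^i_\Prism$, invoke Theorem~\ref{mainthm:BKF} to know it is a Breuil--Kisin--Fargues module, identify its Newton polygon via the crystalline comparison, its Hodge polygon via the Hodge numbers of the generic fiber, and conclude by the algebraic Mazur-type inequality for BKF modules from \cite{Yao_generic_fiber}.

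The only substantive difference is on the Hodge side. You phrase it in terms of the de Rham comparison together with degeneration of the log Hodge--to--de Rham spectral sequence. The paper instead invokes directly the log Nygaard filtration (Corollary~\ref{cor:global_Nygaard}, giving $\grade_N^r\Prism^{(1)}\cong\tau_{\le r}\overline{\Prism}\{r\}$) and the Hodge--Tate comparison ($H^r(\overline{\Prism})\{r\}\cong\Omega^r$) as drop-in replacements for the non-log inputs of \cite{Yao_generic_fiber}. These are two packagings of the same structure: the Hodge polygon of the BKF module is governed by the image of Frobenius, i.e.\ by the Nygaard filtration, whose reduction mod $I$ \emph{is} the Hodge filtration on log de Rham cohomology (Proposition~\ref{prop:fiber_of_Nygaard_to_Hodge}). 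The paper's formulation has the advantage of making transparent that the new log-specific input is precisely Theorem~\ref{mainthm:Nygaard}, and it yields the inequality $\textup{Hdg}(N^i)\ge\textup{Hdg}(X_\eta)$ without a separate appeal to degeneration; your formulation is equivalent but hides the dependence on the Nygaard filtration behind the phrase ``transfers verbatim from \cite{BMS1} and \cite{Yao_generic_fiber}''.
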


Keep the notation from Theorem \ref{mainthm:Newton_Hodge}. One of the original motivations of \cite{BMS1} to introduce prismatic cohomology over $\Ainf$ is to study integral structures of $p$-adic cohomologies, for example, relations between various torsion submodules. In this direction, let us first record the following result on $p$-torsions, generalizing results from \cite{BMS1, CK_semistable}.

\begin{mainthm} \label{mainthm:torsion}
Let $X$ be as above. 
\be 
\item We have the following inequality. For each $i$, 
\[
\dim_{\F_p} H^i_{\ket}(X_{\eta}, \F_p) \le \dim_k H^i_{\textup{logdR}} (X_s/\ul k).
\]
More generally, for each $n \ge 1$, we have 
\[
\textup{length}_{\Z_p} (H^i_{\ket}(X_{\eta}, \Z/p^n))  \le \textup{length}_{W(k)} (H^i_{\textup{logcris}}(X_s/W_n(\ul{k}))).  
\]
A similar result holds for log de Rham cohomology over $\mO_C/p^n$ (using the normalized length, cf. Theorem \ref{theorem:torsion}).  
\item  $H^i_{\textup{logcris}}(X_s/W(\ul{k}))$ is $p$-torsion free if and only if $H^i_{\textup{logdR}}(X)$ is $p$-torsion free, in which case $H^i_{\ket}(X_{\eta}, \Z_p)$ is $p$-torsion free.
\ee 
\end{mainthm}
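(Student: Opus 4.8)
The plan is to deduce both parts from the structure of $M^{\bullet} \coloneqq R\Gamma_{\Prism}(X/(A, M_A))$ as a perfect complex of $A = \Ainf(\mO_C)$-modules with its Frobenius, using the comparisons of Theorem \ref{mainthm:comparisons} and the Breuil--Kisin--Fargues structure of Theorem \ref{mainthm:BKF} as specialization maps; the argument then runs parallel to \cite[\S 4, \S 14]{BMS1} and, in the log/semistable case, \cite{CK_semistable}, the log structure playing no extra role once it has been absorbed into $M^{\bullet}$. First I would assemble the inputs. Properness and smoothness give that $M^{\bullet}$ is perfect over $A$, and by Theorem \ref{mainthm:BKF} each $\phi_A^{*}M^{i}$, hence each $M^{i} \coloneqq H^{i}(M^{\bullet})$, is finitely presented with $M^{i}[\tfrac1p]$ finite free over $A[\tfrac1p]$, so $M^{i} \cong A^{r_i} \oplus N_i$ with $N_i$ finitely presented and $p$-power torsion. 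The relevant specializations are: crystalline, $M^{\bullet} \widehat{\otimes}_A^{\L} W(k) \cong R\Gamma_{\textup{logcrys}}(X_s/W(\ul k))$, obtained from base change (Theorem \ref{mainthm:comparisons}(2)) along the map of pre-log prisms $(A, (\xi), M_A) \to (W(k), (p), M_A)$ --- legitimate since $\xi$ maps to $p$ times a unit in $W(k)$ --- followed by the crystalline comparison (Theorem \ref{mainthm:comparisons}(3)) for $X_s$, which is of Cartier type by hypothesis, the Frobenius twist of the perfect field $k$ being harmless; hence $M^{\bullet} \otimes_A^{\L} W_n(k) \cong R\Gamma_{\textup{logcrys}}(X_s/W_n(\ul k))$, and for $n = 1$ this is $R\Gamma_{\textup{logdR}}(X_s/\ul k)$; de Rham, $M^{\bullet} \widehat{\otimes}_{A, \phi_A}^{\L} A/I \cong R\Gamma_{\textup{logdR}}(X/(\mO_C, M_A))$ by Theorem \ref{mainthm:comparisons}(4), and its mod $p^n$ reductions; \'etale, $R\Gamma_{\ket}(X_\eta, \Z/p^n) \cong (M^{\bullet}[\tfrac1I]/p^n)^{\phi = 1}$ by Theorem \ref{mainthm:comparisons}(5), interpreted via the log diamond $X_\eta$. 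I would also use that $\phi$ is an isogeny (Theorem \ref{mainthm:comparisons}(6)).

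Granting these, the inequalities of Part (1) become a purely module-theoretic statement: for a perfect complex $M^{\bullet}$ over $A$ whose Frobenius is an isomorphism after inverting $\xi$ and each of whose cohomology groups is free over $A[\tfrac1p]$,
\[
\textup{length}_{\Z_p}\, H^{i}\!\big((M^{\bullet}[\tfrac1\xi]/p^n)^{\phi = 1}\big) \;\le\; \textup{length}_{W(k)}\, H^{i}(M^{\bullet} \otimes_A^{\L} W_n(k)),
\]
and the same with the normalized $\textup{length}_{\mO_C}$ of $H^{i}(M^{\bullet} \widehat{\otimes}_{A, \phi_A}^{\L} (A/I)/p^n)$ on the right (the de Rham form, and for $n = 1$ the first displayed inequality of Part (1), whose right side is $\dim_k H^{i}_{\textup{logdR}}(X_s/\ul k)$). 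This is exactly the argument of \cite[\S 14]{BMS1} (and \cite{CK_semistable} in the log case): reduce by d\'evissage along the universal-coefficient sequences to the modules $M^{i} = A^{r_i} \oplus N_i$; the free summand contributes $r_i$ to each side mod $p^n$; and for $N_i$ invoke the classification of finitely presented $p$-power-torsion $A$-modules, the point being that specializing to $W(k)$ (killing $W(\fm^{\flat})$) does not decrease torsion length while inverting $\mu$ and passing to $\phi$-fixed points does not increase it. I would treat $n = 1$ first and bootstrap.

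For Part (2) I would compare the $p$-torsion in the $W(k)$- and $\mO_C$-specializations of $M^{\bullet}$, again by a d\'evissage using the perfect-complex universal-coefficient formalism of \cite[\S 4]{BMS1}. Since $\xi$ and $\widetilde{\xi} = \phi^{-1}(\xi)$ lie in the Jacobson radical of $A$ (as does $p$), for a finitely presented $A$-module $N$ one has $N \otimes_A W(k) = 0 \iff N = 0 \iff N \otimes_A A/\xi = 0$ by Nakayama; combined with the structure of $M^{\bullet}$, this makes $p$-torsion-freeness of $H^{i}_{\textup{logcrys}}(X_s/W(\ul k)) = H^{i}(M^{\bullet} \otimes_A^{\L} W(k))$ equivalent to the vanishing of the relevant torsion submodules $N_j$ ($j = i$, and possibly $i+1$), hence equivalent to $p$-torsion-freeness of $H^{i}_{\textup{logdR}}(X) = H^{i}(\phi_A^{*}M^{\bullet} \otimes_A^{\L} A/\xi)$ (the $\phi_A$-twist being harmless, $\phi_A$ being an automorphism of $A$). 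In that case $M^{\bullet}$ is represented near $V(p)$ by a bounded complex of free $A$-modules, so the finite groups $(M^{\bullet}[\tfrac1I]/p^n)^{\phi = 1}$, $n \ge 1$, assemble into a finite free $\Z_p$-module, i.e.\ $H^{i}_{\ket}(X_\eta, \Z_p)$ is $p$-torsion free.

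The genuinely non-formal ingredient I expect is the module-theoretic heart of the last two steps: transplanting the finitely-presented-$\Ainf$-module linear algebra of \cite[\S 4, \S 14]{BMS1} --- the classification of torsion modules and the control of $\phi$-fixed points of $\mu$-localizations under reduction mod $p^n$ --- to $M^{\bullet}$. The log structure itself does not interfere, everything having been pushed into $M^{\bullet}$, which behaves formally like an $\Ainf$-cohomology complex; the real work is to check that every input is available with the correct Frobenius normalization --- in particular that the crystalline comparison for $X_s$ is genuinely obtained over $W(k)$ through the base change $(A, (\xi), M_A) \to (W(k), (p), M_A)$, and that the Frobenius twist in Theorem \ref{mainthm:comparisons}(4) is compatible with the d\'evissage. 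Once these are confirmed, the proof assembles as in the non-log case; for the full statement, including normalized lengths over $\mO_C/p^n$, see Theorem \ref{theorem:torsion}.
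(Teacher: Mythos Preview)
Your proposal is correct and follows essentially the same route as the paper: the paper's proof of the corresponding Theorem~\ref{theorem:torsion} reduces to $K=C$, invokes Proposition~\ref{inverting_mu} (the $\Ainf[1/\phi^{-1}(\mu)]$-comparison underlying Theorem~\ref{mainthm:BKF}) together with the de Rham and crystalline comparisons, and then cites \cite[Theorems~7.7,~7.9,~7.12]{CK_semistable} for the $\Ainf$-module linear algebra, exactly as you outline. Your write-up simply unpacks these citations; the only cosmetic slip is that the crystalline specialization is $M^\bullet\widehat{\otimes}^{\L}_{A,\phi_A}W(k)$ rather than $M^\bullet\widehat{\otimes}^{\L}_A W(k)$, but as you note the Frobenius twist is harmless over $W(k)$.
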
 

In fact, in Section \ref{section:apps} we prove a stronger version of Theorem \ref{mainthm:torsion} that bounds the length of torsion submodule $ H^i_{\ket}(X_{\eta}, \Z_p)_{\textup{tor}}$ by that of log crystalline (and log de Rham) cohomology (cf. Theorem \ref{theorem:torsion} (1) and (2)). 

While inequalities in Theorem \ref{mainthm:torsion} can be strict in general as shown in \cite{BMS1}, it can be improved for small $i$ in the Breuil--Kisin setting following \cite{LiuLi1, LiuLi2}. 
Let $K$ be a discretely valued extension of $\Q_p$ with perfect residue field $k$. Fix a uniformizer $\pi \in \mO_K$. Let $\mathfrak S = W(k)[\![u]\!]$ and equip $\mathfrak S$ with a pre-log structure $\N \ra S$ sending $1 \mapsto u$. Then $(\mathfrak S, \N)$ upgrades to a $\delta_{\log}$-ring of rank $1$ with the Frobenius structure sending $u \mapsto u^p$ on $\mathfrak S$. Let $E$ be the Eisenstein polynomial of $\pi$, so $(\mathfrak S, (E), \N)$ becomes a pre-log prism. Let $X$ be a proper log  $p$-adic formal scheme which is smooth over $(\mO_K, \pi^\N)$ and with mod $p$ fiber being of Cartier type. We have the following generalization of \cite[Theorem 3.5]{LiuLi2}, which says that in small degrees, the log prismatic cohomology of $X$ over the Breuil--Kisin prism has a simple structure that resembles the structure of a $\Z_p$-module. 

\begin{mainthm} \label{mainthm:BK_module_structure}
Let $e$ be the ramification index of $K$, let $i$ be a positive integer satisfying 
\[e \cdot (i -1) < p -1, 
\]
then there is a (non-canonical) isomorphism 
\[
H^i_{\Prism}(X/(\mathfrak S, \N)) \cong H^i_{\ket}(X_{\cl \eta}, \Z_p) \otimes_{\Z_p} \mathfrak S,
\]
where $X_{\cl \eta}$ denotes the log (diamond) generic fiber of $X$ over $C_K = \widehat{\cl K}$. 
In particular, the log prismatic cohomology $H^i_{\Prism}(X/(\mathfrak S, \N))$ has the following structure 
\[
H^i_{\Prism}(X/(\mathfrak S, \N)) \cong \mathfrak S^{\oplus r_0} \oplus (\mathfrak S/p)^{\oplus r_1} \oplus \cdots \oplus (\mathfrak S/p^m)^{\oplus r_m}
\]
for some $r_0, ..., r_m \in \N$. Moreover, for each $n \ge 1$, we have a (non-canonical) isomorphism 
\[
H^i(R\Gamma_{\Prism}(X/(\mathfrak S, \N)) \otimes^\L_{\Z} \Z/p^n) \cong H^i_{\ket}(X_{\cl \eta}, \Z/p^n) \otimes_{\Z_p} \mathfrak S. 
\]
In particular, inequalities in Theorem \ref{mainthm:torsion} become equalities if $e\cdot i < p-1$. 
\end{mainthm}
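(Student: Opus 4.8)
The plan is to recognize $\mathfrak M\coloneqq H^i_{\Prism}(X/(\mathfrak S,\N))$ as a Breuil--Kisin module of $E$-height $\le i$, to identify its étale realization with $H^i_{\ket}(X_{\cl\eta},\Z_p)$, and then to invoke a rigidity theorem for Breuil--Kisin modules of small height, generalizing \cite{LiuLi1,LiuLi2}. For the first point, note that since $X$ is proper and log smooth and its mod $p$ fiber is of Cartier type, $R\Gamma_{\Prism}(X/(\mathfrak S,\N))$ is a perfect complex of $\mathfrak S$-modules equipped with a Frobenius (by the usual argument from properness together with the Hodge--Tate comparison), so each $\mathfrak M$ is finitely generated over $\mathfrak S$; and Theorem \ref{mainthm:comparisons}(6), applied with $I=(E)$, provides maps $V_i$ with $\phi\circ V_i=V_i\circ\phi=E^i$, which shows (as in the nonlog case) that $\mathfrak M$ is a Breuil--Kisin module of $E$-height $\le i$. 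It is important here that the log structure $\pi^{\N}$ is pulled back from the base $(\mathfrak S,\N)$, so the exactification occurring in the formation of log prismatic envelopes does not change the base prism; hence $\mathfrak M$ is a Breuil--Kisin module over $\mathfrak S$ in the usual sense.

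Next I would pin down the étale realization of $\mathfrak M$. The Kummer tower $K\subset K(\pi^{1/p^\infty})\subset\cdots$ gives a map of pre-log prisms from $(\mathfrak S,(E),\N)$ to a perfect log prism over $\mO_{C_K}$, with underlying prism $(\Ainf(\mO_{C_K}),(\xi))$ and $u\mapsto[\pi^\flat]$; by base change (Theorem \ref{mainthm:comparisons}(2)), $R\Gamma_{\Prism}(X/(\mathfrak S,\N))\widehat\otimes^{\L}_{\mathfrak S}\Ainf(\mO_{C_K})$ is the Breuil--Kisin--Fargues cohomology of $X_{\mO_{C_K}}$ analyzed in Theorem \ref{mainthm:BKF}. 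Combining this with the global Kummer-étale comparison for log diamonds (Theorem \ref{etale_comp_diaomnd_global}, i.e.\ the log-diamond form of Theorem \ref{mainthm:comparisons}(5) applied to $X_{\mO_{C_K}}$) expresses, for every $n\ge 1$, the group $H^i_{\ket}(X_{\cl\eta},\Z/p^n)$ as the Frobenius-fixed points of $H^i$ of $R\Gamma_{\Prism}(X/(\mathfrak S,\N))\widehat\otimes^{\L}_{\mathfrak S}\Ainf(\mO_{C_K})[1/\xi]/p^n$; that is, $H^i_{\ket}(X_{\cl\eta},\Z/p^n)$ is the étale realization of the mod $p^n$ reduction of $\mathfrak M$, and passing to the limit in $n$ recovers the étale realization of $\mathfrak M$ itself.

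The algebraic core is then the logarithmic counterpart of the main results of \cite{LiuLi1,LiuLi2}: if $e(i-1)<p-1$ and $\mathfrak N$ is a Breuil--Kisin module over $\mathfrak S$ of $E$-height $\le i$, then $\mathfrak N$ is non-canonically isomorphic, as an $\mathfrak S$-module, to $T_{\mathfrak S}(\mathfrak N)\otimes_{\Z_p}\mathfrak S$, where $T_{\mathfrak S}(\mathfrak N)$ is its étale realization, and likewise with $\Z/p^n$-coefficients. Following \emph{loc.\ cit.}, one reduces to the torsion case, shows that in this range $\mathfrak N$ has no nonzero $u$-torsion and has $\mathfrak S$-module structure $\mathfrak S^{\oplus r_0}\oplus\bigoplus_j\mathfrak S/p^{a_j}$, and matches elementary divisors with the étale realization by reducing modulo $p$ — the height bound making the Frobenius cokernel so small that the module structure is rigidified (a Fontaine--Laffaille-type phenomenon). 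Since $\mathfrak S$ and the étale realization functor here coincide with those of the nonlog theory, this step is essentially imported from \cite{LiuLi2}. Applying it to $\mathfrak N=\mathfrak M$ and combining with the previous paragraph gives $H^i_{\Prism}(X/(\mathfrak S,\N))\cong H^i_{\ket}(X_{\cl\eta},\Z_p)\otimes_{\Z_p}\mathfrak S$; writing the finitely generated $\Z_p$-module $H^i_{\ket}(X_{\cl\eta},\Z_p)$ as $\Z_p^{\oplus r_0}\oplus\bigoplus_{j\ge1}(\Z/p^j)^{\oplus r_j}$ yields the asserted module structure, the $\Z/p^n$-statement follows from the $\Z/p^n$-version of the rigidity theorem (or from the universal coefficient sequence together with the structural isomorphisms in degrees $i$ and $i+1$, the latter requiring $e\cdot i<p-1$), and when $e\cdot i<p-1$ every group appearing in Theorem \ref{mainthm:torsion} — the prismatic cohomologies in degrees $i$ and $i+1$, their crystalline and de Rham specializations from Theorem \ref{mainthm:comparisons}(3),(4), and $H^i_{\ket}$ — is a base change of the single $\Z_p$-module $H^i_{\ket}(X_{\cl\eta},\Z_p)$, so the length inequalities of Theorem \ref{theorem:torsion} become equalities.

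I expect the main obstacle to be the second step: transporting the étale realization of the log prismatic Breuil--Kisin module through the log-diamond generic fiber and Kummer-étale cohomology, and verifying that it agrees \emph{on cohomology}, not merely at the level of complexes, with the étale $\phi$-module to which the rigidity theorem of the third step applies. Because this hinges on the compatibility of the base change $\mathfrak S\to\Ainf(\mO_{C_K})$ with the formation of $H^i$, in practice the height bound of the first step and the rigidity of the third step must be brought to bear while establishing the second, so the three steps are run in tandem rather than in strict sequence.
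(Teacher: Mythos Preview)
Your overall architecture---Breuil--Kisin module structure, identification of the \'etale realization via base change to $\Ainf(\mO_{C_K})$, then a rigidity input from \cite{LiuLi2}---matches the paper's. The gap is in your step 3. You phrase the ``algebraic core'' as a theorem about \emph{abstract} Breuil--Kisin modules: if $e(i-1)<p-1$ and $\mathfrak N$ has $E$-height $\le i$, then $\mathfrak N\cong T_{\mathfrak S}(\mathfrak N)\otimes_{\Z_p}\mathfrak S$. No such purely algebraic statement is available at this sharpness. The results of \cite{LiuLi2} in the range $e(i-1)<p-1$ require a further geometric input beyond the height bound: the factorization of multiplication by $E^{i-1}$ on $H^i(\Prism^{(1)}/p^n)$ through $H^i(\Fil^{i-1}_N\Prism^{(1)}/p^n)$, combined with the fact that the divided Frobenius embeds the latter into $H^i(\Prism/p^n)$. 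This yields the inclusion
\[
E^{i-1}\cdot\textup{Ann}(T_{u,n})\subset\textup{Ann}(T_{u,n})\otimes_{\mathfrak S,\phi_{\mathfrak S}}\mathfrak S,
\]
hence the bound $\alpha(p-1)\le e(i-1)$ on the $u$-adic depth of the torsion annihilator. The height $\le i$ from your step 1 (via $V_i$) only produces $E^i$ in place of $E^{i-1}$ here, giving $\alpha(p-1)\le ei$ and thus only the weaker range $ei<p-1$ of \cite{Min}.

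Consequently, the sentence ``since $\mathfrak S$ and the \'etale realization functor here coincide with those of the nonlog theory, this step is essentially imported from \cite{LiuLi2}'' is where the argument breaks: what is importable is the \emph{algebra} of \cite{LiuLi2} once the Nygaard factorization is in hand, but that factorization itself must be re-established in the logarithmic setting. This is precisely what the paper supplies: the log Nygaard filtration of Section \ref{sec:Nygaard} (Construction \ref{construction:global_Nygaard}, the map $I^i\otimes\Fil^{\bullet}_N\to\Fil^{\bullet+i}_N$), the injectivity Lemma \ref{lemma:isom_inj_for_nygaard_filtration}, and then Corollary \ref{cor:image_of_I^(i+1)_on_Frob_twist} and Theorem \ref{theorem:boundary_u_torsion} giving $T_{u,n}=0$ when $e(i-1)<p-1$. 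After that, the structure theorem and the identification with Kummer \'etale cohomology follow exactly as you outline (and as the paper does, citing \cite[Proposition 2.6, Corollary 3.5]{LiuLi2} and Corollary \ref{cor:etale_comparison_over_W(C_flat)}).
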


In the non-log case, this was first obtained by Min \cite{Min} under a slightly stronger assumption $e\cdot i < p-1$, and then improved by Li--Liu \cite{LiuLi2}. It is related to more classical works of Fontaine--Messing, Kato, Breuil, and Caruso and we refer the reader to \cite{Min, LiuLi2} for related references and some history. 
Our proof of Theorem \ref{mainthm:BK_module_structure} follows the strategy of \cite{LiuLi2}. In particular, we establish the some analogous results on the structure of $u^\infty$-torsion inside $H^i (X/(\mathfrak S, \N))$, for example see Theorem \ref{theorem:boundary_u_torsion}. We refer the reader to Subsection \ref{ss:boundary_u_torsion} for more details. 
  
\begin{mainrmk}[Stacky approaches] 
Let us comment on our understanding of stacky approaches to log prismatic cohomology. We are aware of two approaches. 

The first approach, due to Bhatt--Mathew, uses (variants of) infinite root stacks of Talpo--Vistoli (at least) when the base log structure, i.e., $M_A$, is trivial: the principle is that the cohomology of a certain stack (without log structure) over the underlying scheme of a given log scheme gives the cohomology of the log scheme if the log structure is sufficiently nice. Therefore, one could use results of \cite{BS} once they are generalized to formal stacks. In particular, this would be enough to define and study the derived log prismatic cohomology as it only needs free pre-log rings, and it seems to fit well with \'etale comparison as \'etale comparison can be stated without base log structure. However, the formalism of derived log prismatic cohomology itself would not change. Let us also emphasize that log prisms and $\delta_{\log}$-rings are good classes of base objects or coefficients; a prism with a log structure may not have a $\delta_{\log}$-structure, and may not admit a Frobenius lift as a log ring.

The second approach is a combination of the stacky approach of Bhatt--Lurie and Drinfeld to prismatic cohomology, Olsson's stack classifying log structures, and an observation of Pridham from \cite{Pridham} (this idea is due to the first author). More precisely, Olsson's stack will allow us to generalize the (relative) prismatization \cite{prismatization} in the log setting: we will need a correct formalism of log derived geometry, which is much more subtle than one might expect in the first author's opinion; nevertheless, Pridham \cite{Pridham} pointed out that Olsson's stack can be used to define log structures of derived schemes and we believe this is the right approach here.\footnote{Pridham gives the ``third'' definition of log cotangent complexes for log schemes. It agrees with Olsson's cotangent complex under some flatness assumption. Contrary to what Pridham indicates, it seems to the first author that it agrees with Gabber's log cotangent complex in general (or under a mild assumption).} This second approach would clarify the proof of Hodge--Tate comparison, for instance, and it is potentially useful given a recent development of \emph{prismatic $F$-gauges} of Bhatt--Lurie. This approach seems to be less directly related to log \'etale cohomology, while two approaches seem to be very closely related. 
\end{mainrmk}

\subsection*{Conventions} \noindent 

\noindent 
We largely follow conventions and notations from \cite{BS} and \cite{Koshikawa}. In particular, all monoids and rings are assumed to be commutative with units. A pre-log ring is a pair $(R, \alpha\colon M \ra R)$ where $R$ is a ring and $\alpha$ is a map of monoids (where $R$ is viewed as a multiplicative monoid). We often abbreviate a pre-log ring as $(R, M)$ or even $\ul R$ to ease notations. A pre-log ring $(R, \alpha\colon M \ra R)$ is called a log ring if it satisfies the additional requirement that the map  $\alpha^{-1} (R^\times) \ra R^\times$ is an isomorphism. The category of pre-log rings will be denoted by $\tu{Alg}^{\tu{prelog}}$. 
For a pre-log ring $(R, M)$, the associated log ring is denoted by $(R, M)^a$, and we use both $\spec (R, M)^a$, $(\spec R, M)^a$ to denote the log scheme associated to $(\spec R, \ul{M} \ra \mO_{\spec R, \ett})$. If $(R, M)$ is classically $p$-complete, we similarly use both $\spf (R, M)^a$, $(\spf (R), M)^a$ to denote the associated log $p$-adic formal scheme. 
In order to simplify notations, we used $X$ to denote a log $p$-adic formal scheme in the introduction, and wrote its underlying formal scheme as $\mathring{X}$. In the rest of the article, we usually use the notation $(X, M_X)$ to denote a log $p$-adic formal scheme, with the underlying formal scheme $X$ and a map $\alpha\colon M_X \ra \mO_{X_{\ett}}$ of \'etale sheaves of monoids (where the map $\alpha$ is often hidden from the notation).  

For the notion of perfectoid rings, we follow the convention of \cite{BMS1}. 
In Section \ref{sec:cotangent_complex}, we define a notion of perfectoid monoid, which is related to the ``tilt'' $M^\flat \coloneqq \varprojlim_{x \mapsto x^p} M$ of a monoid $M$ (cf. Definition \ref{def:perfectoid_monoid}). A stronger notion than being perfectoid is being perfect, which means that the monoid is uniquely $p$-divisible (cf. Remark \ref{remark:perfect_monoid}). In addition, we define a convenient weaker notion for a monoid to be pseudo-perfectoid (cf. Remark \ref{remark:pseudo_perfectoid}). A pre-log ring $(R, M)$ is said to be perfectoid (resp. pseudo-perfectoid) if $R$ is a perfectoid ring and $M$ is a perfectoid (resp. pseudo-perfectoid) monoid. It turns out that for an integral log ring, being perfectoid is equivalent to being pseudo-perfectoid (cf. Remark \ref{remark:perfectoid_log_same_as_pseudo}), thus we never need to use the terminology ``pseudo-perfectoid log ring'' in the paper. 

Let us also point out the following potentially confusing terminology we use in the paper. We use the term ``log prism'' to denote a bounded pre-log prism $(A, I, M_A)$ where the underlying pre-log ring $(A, M_A)$ is a log ring. Here the quotation mark is a part of the notation (to distinguish it from the notion of a log prism), and the quotation mark disappears by taking the associated log structure. With this terminology, we will show that the category of perfect ``log prisms'' is equivalent to the category of perfectoid log rings. 

Suppose $I \subset A$ is an ideal in a ring $A$. Both the notion of being classically $I$-complete and derived $I$-complete appear in the article. We try to make the distinction clear in the text. Note that, if $I = (f)$ for some element $f \in A$, then the notion of being classically $f$-complete and derived $f$-complete is equivalent for an $A$-module $M$ that has bounded $f^\infty$-torsion (see, for example, \cite[Lemma III.2.4]{Bhatt_notes}).  
The notation $(-)^\wedge_{I}$ (or $(-)^\wedge$ if $I$ is understood) will be used to denote derived $I$-completion unless otherwise noted. Moreover, for the log cotangent complex $\L_{-}$ and its variants, we use $\widehat \L_{-}$ to denote the derived $p$-completion. We use the notation $\gr{-}$ to denote the classically completed polynomial (or monoid) algebras, except in Section \ref{sec:log_diamonds}, where $\Q_p \gr{-}$ is used to denote the $\Q_p$-algebra $\Z_p \gr{-}[\frac{1}{p}]$ (in particular, our notation $\Q_p \gr{t_1, ..., t_n}$ agrees with the standard notation for the Tate algebra with $n$-variables over $\Q_p$).

It is worth emphasizing that we adopt the following non-standard convention for the notion of smoothness of log formal schemes (introduced in \cite[Definition A.11]{Koshikawa}). More precisely, let $(B, M_B)$ be an integral pre-log ring where $B$ is classically $p$-complete, then we say that a log $p$-adic formal scheme $(X, M_X)$ is smooth over $(B, M_B)$ if \'etale locally on $X$, there exists a chart $P \ra \Gamma (X, M_X)$ over $M_B$ satisfying the following conditions
\be 
\item  $P$ is integral and finitely generated over $M_B$ (in other words, there exists a surjection $M_B \oplus \N^{\oplus k} \ra P$ for some integer $k \in \N$). Moreover, the morphism of monoids $M_B \ra P$ is integral.
\item the map $M_B^{\tu{gp}} \ra P^{\tu{gp}}$ is injective and the torsion part of its cokernel is a finite group of order coprime to $p$, and the induced map 
\[
X \ra \spf B\gr{P} \times_{\spf B\gr{M_B}} \spf B
\] 
is \'etale. 
\ee 
Note that Condition (1) is an additional requirement compared to Kato's definition of a ``log smooth morphism'' from \cite{Kato}, which, together with (2), guarantees that the morphism $X\to \spf B$ is $p$-completely flat. This class of morphisms is stable under (naive) base changes on $(B, M_B)$. In practice, we are only interested in the case base changed from some fine monoid. 

Finally, derived categories that appear in the paper are derived $\infty$-categories. For a ring (or more generally, an $E_\infty$-algebra) $A$, we use $\mD(A)$ to denote its derived $\infty$-category and $\mathcal{DF}(A)$ to denote the filtered derived $\infty$-category of $A$-modules in the sense of \cite[Section 5]{BMS2} (also see \cite[Appendix D]{APC}). We refer the reader to \cite{APC, BMS2} for the definition and further discussions of the Beilinson $t$-structure $(\mathcal{DF}^{\le 0}(A), \mathcal{DF}^{\ge 0}(A))$ on $\mathcal{DF}(A)$

\subsection*{Acknowledgements}
T.K. was supported by JSPS KAKENHI Grant Number 20K14284. Z.Y. was partially supported by CNRS and ERS Grant Number 851146. It is a pleasure to thank Dori Bejleri, K\k{e}stutis \v{C}esnavi\v{c}ius, Lin Chen, Yuchen Fu, Ofer Gabber, Ildar Gaisin, Luc Illusie, Akhil Mathew, Takeshi Tsuji for helpful discussions in the preparation of this paper.

\newpage

\newpage 

\section{Log cotangent complexes and flat descent} \label{sec:cotangent_complex}

In this section we recall the definition and basic properties of log cotangent complexes for pre-log rings constructed by Gabber, following \cite{Bhatt_dR, Olsson_log, SSV} and prove the flat descent for them. We also introduce perfectoid (pre-)log rings and perfect ``log prisms'' and relate them.

\begin{notation}
 
\be 
\item Let $\textup{Alg}^\textup{prelog}$ be the category of pre-log rings $(M_R\to R)$, which we often denote by $\ul R$ or $(R, M_R)$ if the monoid or the monoid map is understood.  
\item  For a map of pre-log rings
\[(M_R\overset{\beta_R}{\to} R)\to (M_S\overset{\beta_S}{\to} S),\] the $S$-module of \emph{log K\"ahler differentials} is denoted
by $\Omega^1_{\ul S/\ul R}$ and is given by 
\[
\Omega^1_{\underline{S}/\underline{R}} \cong  (\Omega^1_{S/R}\oplus (\tu{cok}(M_R^{\tu{gp}}\to M_S^{\tu{gp}})\otimes_{\mathbb{Z}} S)/( (d\beta_S(m),0)-(0,m\otimes \beta_S(m)) ).
\]
\item  We write $\Omega^{\bullet}_{\underline{S}/\underline{R}}$ for the corresponding \emph{log de Rham complex}, which is equipped with a multiplication and a descending Hodge filtration, with graded pieces given by $\Omega^i_{\underline{S}/\underline{R}}\coloneqq\midwedge^i_{S} \Omega^1_{\underline{S}/\underline{R}}$.
\ee 
\end{notation}

\subsection{Log cotangent complexes} \noindent 

\noindent 
The log differential $\Omega^1_{\underline{S}/\underline{R}}$ and log de Rham complex $\Omega^\bullet_{\underline{S}/\underline{R}}$ both have derived versions, which we now recall. 

\begin{notation} 
We write 
\begin{align*} 
 \textup{Forget}^{\textup{Alg}^\textup{prelog}}_{\textup{Set}\times \textup{Set}}&\colon\textup{Alg}^\textup{prelog} \lra \textup{Set}\times \textup{Set} \; \qquad  \textup{and}  \\
 \textup{Forget}^{\textup{Alg}^\textup{prelog}}_{\textup{Mon}\times \textup{Alg}}&\colon\textup{Alg}^\textup{prelog} \lra  \textup{Mon}\times \textup{Alg}  
\end{align*}
for the forgetful functors sending $(M_R\to R) \longmapsto (M_R,R)$. We use similar notation for simplicial objects.  
\end{notation}

The category $s\textup{Mon}$ (resp. $s\textup{Alg}$, resp. $s\textup{Alg}^\textup{prelog}$) of simplicial commutative monoids (resp. rings, resp. pre-log rings) admits the structure of a combinatorial model category such that $M_R\to M_S$ (resp. $R\to S$, resp.$\ul R\to \ul S$) is a (trivial) fibration if and only if so is $M_R\to M_S$ (resp. $R\to S$, resp. $R\to S$, $M_R\to M_S$) as simplicial sets. This is explained in \cite{Bhatt_dR, SSV}. In particular, the one on $s\textup{Alg}^\textup{prelog}$ is given in \cite[Proposition 5.3]{Bhatt_dR} and \cite[Proposition 3.3]{SSV}. 

\begin{proposition}
Under the model structures above, both forgetful functors $\textup{Forget}^{s\textup{Alg}^\textup{prelog}}_{s\textup{Set}\times s\textup{Set}}$ and $\textup{Forget}^{s\textup{Alg}^\textup{prelog}}_{s\textup{Mon}\times s\textup{Alg}}$ are right Quillen functors. Moreover, the functor $\textup{Forget}^{s\textup{Alg}^\textup{prelog}}_{s\textup{Mon}\times s\textup{Alg}}$ is also a \emph{left} Quillen functor. In particular, it commutes with homotopy colimits.
\end{proposition}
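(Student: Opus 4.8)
The plan is to deduce the two ``right Quillen'' claims almost formally from the way the model structures are set up, and to prove that $\textup{Forget}^{s\textup{Alg}^\textup{prelog}}_{s\textup{Mon}\times s\textup{Alg}}$ is left Quillen by writing down its right adjoint and checking it preserves fibrations.

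First I would record the relevant left adjoints. The functor $\textup{Forget}^{s\textup{Alg}^\textup{prelog}}_{s\textup{Mon}\times s\textup{Alg}}$ has a left adjoint, the ``free pre-log ring'' $(M,R)\mapsto\bigl(M\to R\otimes_{\Z}\Z[M]\bigr)$ (with structure map $M\to\Z[M]\to R\otimes_\Z\Z[M]$), and composing with the free-monoid and polynomial-ring functors produces a left adjoint to $\textup{Forget}^{s\textup{Alg}^\textup{prelog}}_{s\textup{Set}\times s\textup{Set}}$; all of these are defined levelwise on simplicial objects. On the other hand, the model structure on $s\textup{Alg}^\textup{prelog}$ is set up so that $\ul R\to\ul S$ is a (trivial) fibration precisely when $R\to S$ and $M_R\to M_S$ are (trivial) Kan fibrations of simplicial sets, and the model structures on $s\textup{Mon}$ and $s\textup{Alg}$ are transferred from $s\textup{Set}$ in the same way; hence both forgetful functors preserve (in fact reflect) fibrations and trivial fibrations. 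Combined with the left adjoints just described, each is right Quillen.

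For the left Quillen assertion, the key step is to identify the right adjoint $G$ of $\textup{Forget}^{s\textup{Alg}^\textup{prelog}}_{s\textup{Mon}\times s\textup{Alg}}$. I claim it is $G(M,R)=\bigl(M\times(R,\cdot)\xrightarrow{\mathrm{pr}_2}R\bigr)$, where $(R,\cdot)$ is the underlying multiplicative monoid of $R$ and $\mathrm{pr}_2$ is the projection (again taken levelwise). To see this, a map of pre-log rings $\ul A\to G(M,R)$ consists of a monoid map $M_A\to M\times(R,\cdot)$ together with a ring map $f\colon A\to R$, and compatibility with the structure maps forces the $(R,\cdot)$-component of the monoid map to equal $f\circ\alpha_A$; thus the remaining data is exactly a monoid map $M_A\to M$ and a ring map $f$, giving $\Hom(\ul A,G(M,R))\cong\Hom(M_A,M)\times\Hom(A,R)$. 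Next I would check that $G$ preserves (trivial) fibrations: given a (trivial) fibration $(g,f)\colon(M,R)\to(M',R')$ in $s\textup{Mon}\times s\textup{Alg}$, the ring component of $G(g,f)$ is $f$ itself, and its monoid component, viewed on underlying simplicial sets, is the product $g\times f$; since $g$ and $f$ are (trivial) Kan fibrations and $g\times f$ factors as a composite of pullbacks of $g$ and of $f$, it is again a (trivial) Kan fibration, so $G(g,f)$ is a (trivial) fibration in $s\textup{Alg}^\textup{prelog}$. Hence $G$ is right Quillen, i.e.\ $\textup{Forget}^{s\textup{Alg}^\textup{prelog}}_{s\textup{Mon}\times s\textup{Alg}}$ is left Quillen.

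Finally, a left Quillen functor preserves homotopy colimits (computed by cofibrantly replacing the diagram and taking the ordinary colimit); moreover, since $\textup{Forget}^{s\textup{Alg}^\textup{prelog}}_{s\textup{Mon}\times s\textup{Alg}}$ also preserves all weak equivalences (a weak equivalence of pre-log rings being, by definition of the transferred model structure, one whose monoid and ring components are weak equivalences), one even gets that it commutes with homotopy colimits on the nose, with no need to pass to a derived functor. The only step that is not bookkeeping with transferred model structures is the identification of $G$ via the multiplicative monoid $(R,\cdot)$; once that is in hand, preservation of fibrations reduces to the standard stability of (trivial) Kan fibrations under finite products.
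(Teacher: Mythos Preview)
Your argument is correct. The paper itself does not give a self-contained proof: it says the right Quillen assertions are ``clear by construction'' and defers the left Quillen claim entirely to \cite[Proposition 5.5]{Bhatt_dR}. Your proof fills in precisely what that reference establishes, by exhibiting the explicit right adjoint $G(M,R)=\bigl(M\times(R,\cdot)\xrightarrow{\mathrm{pr}_2}R\bigr)$ and checking that it preserves (trivial) fibrations via the factorization of $g\times f$ as a composite of pullbacks. So there is no genuine difference in approach, only in level of detail: you have unpacked the cited argument rather than black-boxing it.
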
 
\begin{proof} 
The first part is clear by construction, and the rest is \cite[Proposition 5.5]{Bhatt_dR}. 
\end{proof} 

\begin{notation} 
Following \cite{Bhatt_dR}, let us denote the left adjoint of the functor $\textup{Forget}^{s\textup{Alg}^\textup{prelog}}_{s\textup{Set}\times s\textup{Set}}$ (resp. $\textup{Forget}^{s\textup{Alg}^\textup{prelog}}_{s\textup{Mon}\times s\textup{Alg}}$) by  $\textup{Free}_{s\textup{Alg}^\textup{prelog}}^{s\textup{Set}\times s\textup{Set}}$ (resp.  $\textup{Free}_{s\textup{Alg}^\textup{prelog}}^{s\textup{Mon}\times s\textup{Alg}}$). 
\end{notation}

Now let $\underline{R}\to \underline{S}$ be a map in $\textup{Alg}^\textup{prelog}$. By a \emph{projective resolution} of $\underline{S}$ as an $\underline{R}$-algebra, we mean a trivial fibration $\underline{P}_\bullet\to \underline{S}$ with $\underline{P}_\bullet$ cofibrant in $s\textup{Alg}^\textup{prelog}_{\underline{R}/}$. By the previous proposition, it remains to be a projective resolution of rings and monoids after applying  $\tu{Forget}^{s\textup{Alg}^\textup{prelog}}_{s\tu{Set}\times s\tu{Set}}$. Such resolutions always exist  (for example, the adjunction 
\[(\tu{Free}_{s\textup{Alg}^\textup{prelog}_{\underline{R}/}}^{s\tu{Set}\times s\tu{Set}}, \:\:\: \tu{Forget}^{s\textup{Alg}^\textup{prelog}_{\underline{R}/}}_{s\tu{Set}\times s\tu{Set}})\] provides a projective resolution,  called the canonical free resolution). Any two projective resolutions are homotopy equivalent.

\bd 
 Let $\underline{R}\to \underline{S}$ be a map in $\textup{Alg}^\textup{prelog}.$ For a projective resolution $\underline{P}_\bullet\to \underline{S}$ as above, consider the simplicial $P_\bullet$-module $\Omega^1_{\underline{P}_\bullet/\underline{R}}$. The \emph{log cotangent complex} $\mathbb{L}_{\underline{S}/\underline{R}}$ is defined as 
 \[ \mathbb{L}_{\underline{S}/\underline{R}} = \Omega^1_{\underline{P}_\bullet/\underline{R}}\otimes_{ P_\bullet } S, \] viewed as an object in the derived $\infty$-category $\mathcal \mD(S)$ of $S$-modules (via the Dold-Kan correspondence). Note that the resulting object does not depend on the choice of the projective resolution.
\ed

This definition agrees with those in \cite[Section 8]{Olsson_log} (applied to pre-log rings), \cite{Bhatt_dR}, \cite{SSV}. 

\begin{construction} \label{construction:derived_log_dR}
Similarly, the \emph{derived log de Rham complex} $\mathbb{L}\Omega_{\underline{S}/\underline{R}}$ is the simple complex (in other words,  homotopy colimit) associated to the simplicial complex $n\mapsto \Omega^\bullet_{ \underline{P}_n/\underline{R} }$.  Equivalently, one may obtain a bicomplex from $\Omega^\bullet_{ \underline{P}_\bullet/\underline{R} }$ via the Dold-Kan correspondence, and take $\mathbb{L}\Omega_{\underline{S}/\underline{R}}$ to be the associated total complex. By construction, $\L \Omega_{\ul S/\ul R}$ is an $E_\infty$-$R$-algebra equipped with a decreasing multiplicative \emph{Hodge filtration} $\Fil^\bullet_{\tu{Hodge}} \L \Omega_{\underline{S}/\underline{R}}$ whose graded piece is given by $\mathbb{L}\Omega^i_{\underline{S}/\underline{R}}\coloneqq\midwedge^i_{S} \mathbb{L}_{\underline{S}/\underline{R}}$.  Note that $\L \Omega_{\ul S/\ul R}$ is in general different from the completion (in other words,  homotopy limit) 
with respect to the Hodge filtration, which we denote by $\L\Omega^{\tu{hc}}_{\underline{S}/\underline{R}}.$
The log de Rham complex $\L \Omega_{\ul S/\ul R}$ is also equipped with an increasing exhaustive \emph{conjugate filtration} $\Fil^{\tu{conj}}_{\bullet} \L \Omega_{\underline{S}/\underline{R}}$ (see \cite[Proposition 6.9]{Bhatt_dR}). When $R$ is an $\F_p$-algebra, the graded piece of the conjugate filtration can be described using the \emph{derived Cartier isomorphism} 
\begin{equation} \label{eq:derived_Cartier}
C_i: \grade_i^{\tu{conj}} \L \Omega_{\ul S/\ul R} \cong \L \Omega^i_{\ul S^{(1)}/\ul R}
\end{equation}
from \cite[Proposition 7.4]{Bhatt_dR}, where $\ul S^{(1)}$ denotes the (derived) Frobenius base change of $\ul S$ over $\ul R$ defined in Notation \ref{notation:Frob_base_change} below. 
We write $\widehat \L \Omega_{\ul S/\ul R}$ (resp. $\widehat \L \Omega^i_{\ul S/\ul R}$) for the derived $p$-adic completion of $ \L \Omega_{\ul S/\ul R}$ (resp. $ \L \Omega_{\ul S/\ul R}^i$), which is used throughout this article.   
\end{construction}

\begin{remark} The above constructions generalize to the case where $\underline{S}$ is replaced by an object $\underline{S}_\bullet$ in $s\textup{Alg}^\textup{prelog}_{\underline{R}/}$. Namely, we form the simplicial complex $n\mapsto \mathbb L \Omega_{\underline{S}_n/\underline{R}}$ and then take (homotopy) colimit. 
\end{remark}

\begin{remark}[Animated pre-log rings]\label{animated pre-log rings}
It is also possible to phrase the definition of cotangent complexes using animation \cite{CS}. Fix a pre-log ring $\ul R=(R, M_R)$. The category $\textup{Alg}_{\ul R/}^\textup{prelog}$ is cocomplete and generated under colimits by compact projective objects
\[
\Sigma_{T_0, T_1} \coloneqq (R [\N^{T_0}, \N^{T_1}], M_R \oplus \N^{T_1}), 
\]
where $T_0, T_1$ are finite sets. The animation of $\textup{Alg}_{\ul R/}^\textup{prelog}$ is the $\infty$-category freely generated under sifted colimits by such compact projective objects. It can be described as the $\infty$-category of functors from the opposite of the category of compact projective objects to anima that preserve finite products. Via \cite[Corollary 5.5.9.3]{Lurie}, this is equivalent to the $\infty$-category obtained from simplicial pre-log rings with the simplicial model structure from \cite[Proposition 5.5.9.1]{Lurie} and \cite[Proposition 1.1.5.10]{Lurie}.\footnote{$\mathbf{A}^{\circ}$ in \cite[Corollary 5.5.9.3]{Lurie} denotes the subcategory of fibrant-cofibrant objects of $\mathbf{A}$ and the simplicial nerve $\textup{N}(\mathbf{A}^{\circ})$ is the underlying $\infty$-category of $\mathbf{A}$. See the beginning of \cite[A.2]{Lurie}.} Indeed, in the notation there, our $\mathcal{C}$ is the category of compact projective objects and $\mathbf{A}$ identifies with the category of simplicial pre-log $\ul R$-algebras by evaluating on $(R[\N], M_R)$ and $(R[\N], M_R\oplus\N)$. Moreover, the class of (trivial) fibrations agrees with the one we introduced before. In this way, we regard animated pre-log $\ul R$-algebras as simplicial pre-log $\ul R$-algebras up to weak equivalences. (Let us omit the discussion on change of $\ul R$.) Note that the homotopy colimits can be used to compute colimits in $\infty$-categories \cite[Theorem 4.2.4.1]{Lurie}. 

The cotangent complex as an animated $R$-module is the left Kan extension of the functor
\[
\Sigma_{T_0, T_1}\mapsto \bigoplus_{t\in T_0} R[\N^{T_0}, \N^{T_1}]dX_t \oplus \bigoplus_{t\in T_1} R[\N^{T_0}, \N^{T_1}]d\log X_t,
\]
and a similar claim holds for exterior powers. As $\mathbb{L}\Omega^0_{\underline{S}/\underline{R}}\cong S$ for $\ul S=(S, M_S)$, we can regard $\L\Omega^i_{\ul S/\ul R}$ as an animated $S$-modules, cf. \cite[Remark B.2]{APC}, and this indeed recovers the definition above. Moreover, as explained in \cite[Remark 6.6]{Bhatt_dR} and \cite[Section 4]{SSV}, the animated $S$-module $\L_{\ul S/\ul R}$ co-represents
\[
\textup{Map}_{\ul R/ \ul S}(\ul S, (S\oplus J, N\oplus J))
\]
for animated $S$-modules $J$. 
This is analogous to \cite[5.1.8]{CS}.   
\end{remark}

\subsection{Functorial properties} \label{ss:log_cotangent_functoriality} \noindent 

\noindent 
Let $\ul R \ra \ul S$ be a map in $\tu{Alg}^{\textup{prelog}}$.  We have a morphism $\L_{S/R} \ra \L_{\ul S/\ul R}$ 
(resp. $\mathbb{L}\Omega_{S/R} \to  \mathbb L \Omega_{\underline{S}/\underline{R}}$) from the usual cotangent complex (resp. derived de Rham complex) to the log version induced by the obvious map $\Omega_{S/R}^1\to\Omega^1_{\underline{S}/\underline{R}}$.

\bl \label{lemma:no_additional_log}
The maps  
\[\L_{S/R} \ra \L_{\ul S/\ul R} \quad \text{ and } \quad \mathbb{L}\Omega_{S/R} \to  \mathbb L \Omega_{\underline{S}/\underline{R}}\]
are isomorphisms if $M_R\to M_S$ is an isomorphism.
\el 
\bproof
This is \cite[Lemma 8.22]{Olsson_log} (also see \cite[Proposition 6.5]{Bhatt_dR}). 
\eproof

\bl \label{lemma:pre_log_cotangent_same_as_log}
Let $(R, M) \ra (R, N)$ be a map of pre-log rings which induces an isomorphism on the associated log rings, then $\L_{(R, N)/(R, M)} = 0$. 
\el 

\bproof 
This follows from \cite[Theorem 8.20]{Olsson_log}. 
\eproof 

The log cotangent complex and derived log de Rham complex share similar functoriality properties as the usual ones, among which the following two are important for us. 

\begin{theorem} \label{thm:functorial_cotangent_complex}
\noindent 
\begin{itemize} 
	\item (transitivity triangle) For a sequence $\underline{R}\to \underline{S}_1 \to \underline{S}_2$ in $\textup{Alg}^\textup{prelog}$, the canonical sequence 
	\[\mathbb{L}_{\underline{S}_1/\underline{R}}\otimes^\L_{S_1} S_2 
	\lra \mathbb{L}_{\underline{S}_2/\underline{R}} 
	\lra  \mathbb{L}_{\underline{S}_2/\underline{S}_1}\] is a homotopy fiber sequence.

	\item (base-change)
Let $\underline{R}\to \underline{S}_i$ for $i=1,2$ be two objects in $\textup{Alg}^\textup{prelog}_{\underline{R}/}$, and let $\underline{R}\to \underline{S}$ be their homotopy coproduct, which is an object in $s\textup{Alg}^\textup{prelog}_{\underline{R}/}$. Then the natural map      
$\L_{\underline{S}_1/\underline{R}} \ra \L_{\underline{S}/\underline{S}_2}$ (resp. $\mathbb{L}\Omega_{\underline{S}_1/\underline{R}} \ra \mathbb{L}\Omega_{\underline{S}/\underline{S}_2}$) induces isomorphisms 
	\begin{align*} 
	& \mathbb{L}_{\underline{S}_1/\underline{R}}\otimes^\L_{R} S_2 \simeq \mathbb{L}_{\underline{S}_1/\underline{R}}\otimes^\L_{S_1} S \simeq \mathbb{L}_{\underline{S}/\underline{S}_2} \\
	(\tu{resp. } & \mathbb{L}\Omega_{\underline{S}_1/\underline{R}}\otimes^\L_{R} S_2 \simeq \mathbb{L}\Omega_{\underline{S}_1/\underline{R}}\otimes^\L_{S_1} S \simeq \mathbb{L}\Omega_{\underline{S}/\underline{S}_2}).
	\end{align*}
\end{itemize}
\end{theorem}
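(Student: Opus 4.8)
The plan is to imitate the classical arguments of Quillen and Illusie, reducing both assertions to the case of ``free'' pre-log ring extensions, where every module in sight is explicit. Two inputs make this reduction work in the logarithmic setting, and both are already available above: (a) by the preceding proposition the forgetful functor $\textup{Forget}^{s\textup{Alg}^\textup{prelog}}_{s\textup{Mon}\times s\textup{Alg}}$ is \emph{both} left and right Quillen, so that homotopy pushouts of simplicial pre-log rings are computed on the underlying rings and monoids and a levelwise pushout of cofibrant objects already computes the homotopy pushout; and (b) for a free pre-log ring $\ul P = (R[\N^{T_0},\N^{T_1}], M_R\oplus\N^{T_1})$ over $\ul R = (R,M_R)$, the explicit formula for $\Omega^1$ in the Notation above exhibits $\Omega^1_{\ul P/\ul R}$ as the free $P$-module on $\{dX_t : t\in T_0\}\cup\{d\log X_t : t\in T_1\}$. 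From (b) one reads off immediately that (i) for a further free extension $\ul Q = (P[\N^{U_0},\N^{U_1}], M_P\oplus\N^{U_1})$ the basis of $\Omega^1_{\ul Q/\ul R}$ splits as the part coming from $\ul P/\ul R$ together with the part coming from $\ul Q/\ul P$, the $d\log$-relations respecting this splitting; and (ii) $\Omega^1$ of a free pre-log ring is compatible with naive base change, $\Omega^1_{\ul P\otimes_{\ul R}\ul R'/\ul R'}\cong\Omega^1_{\ul P/\ul R}\otimes_P(P\otimes_R R')$. The corresponding statements for $\Omega^\bullet$ follow by taking exterior algebras, using that the de Rham differential is $R'$-linear and compatible with these identifications. (Equivalently, by Remark~\ref{animated pre-log rings} the functors in question are left Kan extended from the free objects $\Sigma_{T_0,T_1}$, so it suffices to check the statements on free pre-log rings; but one still has to be careful about resolutions, so I will phrase everything with explicit simplicial resolutions.)

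\textbf{Transitivity.} First I would choose the canonical free resolution $\ul P_\bullet\to\ul S_1$ over $\ul R$, and then a free resolution $\ul Q_\bullet\to\ul S_2$ over $\ul P_\bullet$; since cofibrations compose, $\ul Q_\bullet$ is also a free resolution of $\ul S_2$ over $\ul R$. Applying (i) levelwise gives a \emph{split} short exact sequence of free $Q_n$-modules
\[
0\to\Omega^1_{\ul P_n/\ul R}\otimes_{P_n}Q_n\to\Omega^1_{\ul Q_n/\ul R}\to\Omega^1_{\ul Q_n/\ul P_n}\to 0,
\]
which stays split short exact after $-\otimes_{Q_n}S_2$; passing to the associated simplicial $S_2$-modules (homotopy colimit) produces a fiber sequence. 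It then remains to identify the three terms. The outer two are $\Omega^1_{\ul P_\bullet/\ul R}\otimes_{P_\bullet}S_2\simeq\L_{\ul S_1/\ul R}\otimes^\L_{S_1}S_2$ and $\Omega^1_{\ul Q_\bullet/\ul R}\otimes_{Q_\bullet}S_2\simeq\L_{\ul S_2/\ul R}$, both by freeness. For the third, I would check that $\ul Q_\bullet\otimes_{\ul P_\bullet}\ul S_1$ is a free resolution of $\ul S_2$ over $\ul S_1$: it is cofibrant over $\ul S_1$ (base change of a cofibration), and it is weakly equivalent to $\ul S_2$ because $\ul Q_\bullet$ is levelwise flat over $\ul P_\bullet$ while $\ul P_\bullet\to\ul S_1$ is a weak equivalence — whence $\L_{\ul S_2/\ul S_1}\simeq\Omega^1_{\ul Q_\bullet/\ul P_\bullet}\otimes_{Q_\bullet}S_2$. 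I expect this last identification to be the one bookkeeping point that genuinely needs care (it is the familiar subtlety in any proof of a transitivity triangle); everything else is formal, and with the model structure of \cite{Bhatt_dR,SSV} it goes through exactly as in the non-log case.

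\textbf{Base change (and the de Rham variant).} Take a free resolution $\ul P_\bullet\to\ul S_1$ over $\ul R$. Using (a), the simplicial pre-log ring $\ul P_\bullet\otimes_{\ul R}\ul S_2$ has underlying ring $P_\bullet\otimes^\L_R S_2\simeq S_1\otimes^\L_R S_2$ and underlying monoid $M_{P_\bullet}\oplus^\L_{M_R}M_{S_2}\simeq M_{S_1}\oplus^\L_{M_R}M_{S_2}$, hence is a free resolution of the homotopy coproduct $\ul S$ over $\ul S_2$. Applying (ii) levelwise with $\ul R' = \ul S_2$ and tensoring down to $S$ gives $\L_{\ul S/\ul S_2}\simeq\Omega^1_{\ul P_\bullet/\ul R}\otimes_{P_\bullet}S$; since $S\simeq S_1\otimes^\L_R S_2$ the right-hand side equals both $\L_{\ul S_1/\ul R}\otimes^\L_{S_1}S$ and $\L_{\ul S_1/\ul R}\otimes^\L_R S_2$, yielding the two stated isomorphisms (and one checks they are induced by the natural map in the statement). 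The de Rham version is identical with $\Omega^1$ replaced by $\Omega^\bullet$: for free $\ul P$ one has $\Omega^\bullet_{\ul P\otimes_{\ul R}\ul R'/\ul R'}\cong\Omega^\bullet_{\ul P/\ul R}\otimes_R R'$ as complexes of $R'$-modules, and one then takes the homotopy colimit of $n\mapsto\Omega^\bullet_{\ul P_n\otimes_{\ul R}\ul S_2/\ul S_2}$. Overall I do not anticipate a serious obstacle: the real content is just the explicit behaviour of $\Omega^1$ and $\Omega^\bullet$ on free pre-log rings together with compatibility of the forgetful functors with homotopy colimits, and both assertions are in fact recorded, in one form or another, in \cite[Section 8]{Olsson_log}, \cite[Section 6]{Bhatt_dR} and \cite{SSV}.
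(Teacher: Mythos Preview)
Your proposal is correct and follows the classical Quillen--Illusie line adapted to pre-log rings: resolve by free objects, use the explicit form of $\Omega^1$ on free pre-log rings to get a levelwise split exact sequence (for transitivity) or a levelwise base-change identity (for the second part), and then identify the resulting simplicial modules with the cotangent complexes in question. The bookkeeping you flag---that $\ul Q_\bullet\otimes_{\ul P_\bullet}\ul S_1$ is a cofibrant resolution of $\ul S_2$ over $\ul S_1$, and that $\ul P_\bullet\otimes_{\ul R}\ul S_2$ models the homotopy coproduct---is handled exactly as you say, via the left-properness of the model structure and the fact that the forgetful functor to $s\textup{Mon}\times s\textup{Alg}$ preserves homotopy colimits.

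The paper itself does not give a proof at all: it simply cites \cite[Theorem 8.18]{Olsson_log} for the transitivity triangle and \cite[Proposition 4.12(ii)]{SSV} (resp.\ \cite[Proposition 6.12]{Bhatt_dR}) for base change of the cotangent complex (resp.\ derived log de Rham complex). Your sketch is essentially what those references contain, so there is no genuine difference in approach---you have just unpacked the citations. If anything, your remark that both statements can be seen as formal consequences of left Kan extension from the compact projective generators $\Sigma_{T_0,T_1}$ (Remark~\ref{animated pre-log rings}) is a slightly cleaner way to organize the argument than the explicit simplicial bookkeeping, though of course it amounts to the same thing.
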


\begin{proof} 
The first claim is \cite[Theorem 8.18]{Olsson_log}, 
and the second claim is \cite[Proposition 4.12 (ii)]{SSV} (resp. \cite[Proposition 6.12]{Bhatt_dR} for the claim on derived log de Rham complexes). 
\end{proof}

\br \label{remark:compare_Gabber_with_Olsson}
Gabber's construction works more generally for morphisms between pre-log ringed structures in a topos $\mathcal R$ with enough points (see \cite[Section 8]{Olsson_log}). In particular one has $\L_{X/Y}$ for a map $X \ra Y$ between log schemes \cite[8.29]{Olsson_log}. In the case where $X \ra Y$ comes from a map $(R, P) \ra (S, Q)$ of pre-log rings, where $Y = (\spec R, P)^a$ and  $X = (\spec S, Q)^a$ are affine log schemes on charts $P$ and $Q$, there is a natural isomorphism \cite[Theorem 8.20]{Olsson_log}:
\[
\L_{(\spec S, \ul{Q})/(\spec R, \ul{P})} \xrightarrow{\cong} \L_{X/Y}, 
\]
where $\ul{Q}$ and $\ul{P}$ are the associated constant \'etale \emph{sheaves} regarded as pre-log structures. In fact, the cotangent complex $\L_{X/Y}$ viewed as a sheaf of animated $S$-modules is the quasi-coherent complex over $X$ attached to the log cotangent complex $\L_{\ul S/\ul R} \in \mathcal \mD(S)$ of pre-log rings discussed above, and $\L_{\ul S/\ul R}$ may be recovered as the (derived) global section of $\L_{X/Y}$. Using that the same is true for cotangent complexes of affine schemes such as $\L_{\spec S/\spec R}$, one can deduce this from \cite[Lemma 8.28]{Olsson_log} (see also the proof of \cite[Theorem 8.32]{Olsson_log}).\footnote{Note that $\mO_X\otimes^\L_{\Z[\ul P]}\L_{(\Z [\ul P], \ul P)/(\Z [\ul Q], \ul Q)}$ is quasi-coherent and associated with $\L_{(\Z[P], P)/(\Z[Q], Q)}$. This is because one can use a free resolution of $P$ over $Q$ to compute it. See also the proof of \cite[8.23 (ii)]{Olsson_log}.}
\er

\br \label{remark:log_cotangent_complex_nondiscrete}
As we have seen, Gabber's log cotangent complexes as defined above enjoy similar functoriality properties as the usual cotangent complexes. However, even in the case where $\ul R \ra \ul S$ induces a log smooth map between the associated log schemes in the sense of Kato \cite[3.3]{Kato}, the log cotangent complex $\L_{\ul S/\ul R}$ may fail to be discrete. One such example is given in \cite[7.3]{Olsson_log}, where $\ul R = (k, P)$  with $P \subset \N^2$ being the submonoid generated by $(2, 0), (0, 2), (1, 1)$, and $\ul S = (k[x, y]/(x^2, y^2, xy), \N^2)$. The pre-log structure on $R$ sends all nontrivial elements of $P$ to $0$, while $\N^2 \ra S$ is given by $(1, 0) \mapsto x, (0, 1) \mapsto y$. The map $P \ra \N^2$ is given by the inclusion. When $k$ has characteristic away from $2$, the map $\ul R \ra \ul S$ induces the log \'etale map. However, $\L_{(\ul S/\ul R)} \ne \Omega^1_{(\ul S/\ul R)} = 0$ (in fact it is unbounded on the left).  

In the same paper \textit{loc.cit.}, Olsson introduces another version of log cotangent complexes for log schemes, which is simpler for log smooth maps, but in general it differs from Gabber's complex $\L_{X/Y}$ (except in small degrees). 
\er 

Nevertheless, we have the following 
\bl \label{lemma:compare_Gabber_with_Olsson}
Let $\ul R \ra \ul S$ be an integral morphism of integral pre-log rings such that the induced map on the associated log schemes $(\spec S, M_S)^a \ra (\spec R, M_R)^a$ is smooth, then 
\[ \L_{\ul S/\ul R} \cong \Omega^1_{\ul S/\ul R}. \]
\el 

\bproof 
Let $Y = (\spec S, M_S)^a$ and $X = (\spec R, M_R)^a$. By Remark \ref{remark:compare_Gabber_with_Olsson} it suffices to show that $\L_{X/Y} \cong \Omega^1_{X/Y}$. Since $X \ra Y$ is integral, Gabber's log cotangent complex $\L_{X/Y}$ agrees with Olsson's construction by \cite[Theorem 8.32]{Olsson_log}, which agrees with the log differentials for log smooth maps by \cite[1.1(iii)]{Olsson_log}. 
(One can argue more directly without using Olsson's construction: the proof of \cite[Theorem 8.32]{Olsson_log} actually implies $\L_{X/Y} \cong \Omega^1_{X/Y}$, cf. the proof of \cite[Proposition 5.1]{Koshikawa}.)
\eproof

\subsection{Log cotangent complexes for perfectoid pre-log rings}  \noindent 

\noindent
In this subsection, we introduce the notion of perfectoid pre-log rings, and show that they have trivial $p$-completed log cotangent complexes. 

\begin{notation} \label{notation:Frob_base_change}
\be
\item Let $\ul R = (M \ra R)$ be a pre-log ring over $\F_p$, then we have the (absolute) Frobenius map $F_{\ul R}\colon \ul R \ra \ul R $ given by multiplication by  $p$ on $M$ and the usual $p$-power Frobenius on $R$. \item For a map $f\colon \ul R \ra \ul S$ in $\textup{Alg}^{\textup{prelog}}_{\F_p/}$, denote  by $\ul S^{(1)}$  the homotopy pushout of $f$ along the Frobenius  $F_{\ul R}$ and by 
\[F =  F_{\ul S/\ul R}\colon \ul S^{(1)} \lra \ul S\]
the relative Frobenius. 
\ee 
\end{notation}

The following definition is a slight generalization of {\cite[Definition 7.9]{Bhatt_dR}}. (This generalization is included for completeness, and we will only need the original definition to study perfectoid log rings.) 

\bd
A map $f$ in $\text{Alg}^{\textup{prelog}}_{\F_p/}$ as above is \emph{relatively perfect} if the relative Frobenius\[ F_{\ul S/\ul R}\colon \ul S^{(1)}\ra \ul S
\]
is a log equivalence in the sense of \cite[p.379]{SSV}, i.e., induces a weak equivalence on the associated log simplicial rings via \cite[Construction 3.9]{SSV}. 
\ed

\bl  \label{lemma:cotangent_for_perfect_maps}
If $f\colon \ul R \ra \ul S$ is a relatively perfect map of pre-log $\F_p$-algebras, then $\L_{\ul S/\ul R} = 0 $ and the derived log de Rham complex is simply given by $\L \Omega_{\ul S/\ul R} \cong S$. 
\el 

\bproof 
This is a slight generalization of \cite[Corollary 7.11]{Bhatt_dR}. The map
\[
\L_{\ul S^{(1)}/\ul R} \to \L_{\ul S/\ul R}
\]
is always $0$ as this is clear if $\ul S$ is free over $\ul R$. On the other hand, 
\[
\L_{\ul S^{(1)}/\ul R} \xrightarrow{\cong} \L_{\ul S^{(1), a}/\ul R}, \quad
\L_{\ul S/\ul R} \xrightarrow{\cong} \L_{\ul S^a/\ul R}
\]
by the interpretation of log cotangent complexes using derivations \cite[Definition 4.7]{SSV} and \cite[Lemma 11.9]{Rognes}. Therefore,  
\[
\L_{\ul S^{(1)}/\ul R} \cong \L_{\ul S/\ul R} = 0. 
\]
\eproof

For any monoid $M$, let us define its \emph{tilt} by
\[M^\flat \coloneqq \varprojlim_{m\mapsto m^p} M.\]
Note that  $(M^\flat)^\times =\varprojlim_{m\mapsto m^p} M^\times$ as taking units in rings commutes with limits. Also note that $M^\flat$, and hence $M^\flat /(M^\flat)^\times$,  is uniquely $p$-divisible.

\bd[Perfectoid monoids and pre-log rings] \label{def:perfectoid_monoid}
A monoid $M$ is called perfectoid if the canonical map 
\[M^\flat /(M^\flat)^\times \lra M/M^\times\]
is an isomorphism. A pre-log ring $\ul R = (R, M)$ is \emph{perfectoid} if both $R$ and $M$ are perfectoid.  
If $M$ is in addition integral, we say that $(R, M)$ is a perfectoid integral pre-log ring. 
\ed

For a perfectoid pre-log ring $(R, M)$, the natural map 
\[ (R, M^\flat) \lra (R, M) \] 
induces an isomorphism on the associated log rings. 

\br[Perfect monoids] \label{remark:perfect_monoid} A variant of Definition \ref{def:perfectoid_monoid} is the following more restrictive notion: we say that a monoid $M$ is \emph{perfect} if it is uniquely $p$-divisible, in other words, if the canonical map $M^\flat \ra M$ is an isomorphism. Perfect monoids are clearly perfectoid. One advantage of Definition \ref{def:perfectoid_monoid} is that it includes monoids of the form $\mO_C \minus \{0\}$ (as a multiplicative monoid) where $C$ is an algebraically closed perfectoid field. 
\er 

\br[Uniquely divisible monoids] Our notion of perfect(oid) monoids differs from the one in \cite{DLLZ}. We say that a monoid $M$ is \emph{divisible} (resp. \emph{uniquely divisible}) if it is  $m$-divisible (resp. uniquely $m$-divisible) for all integers $m$. Such monoids play an important role in the development of the pro-Kummer-\'etale site in \textit{loc.cit} and Section \ref{sec:log_diamonds}. A prototypical example is the following: let $P$ be a sharp fs monoid and let $P_{\Q_{\geq 0}} \coloneqq \textup{colim}_{[m]} P$ where the colimit is taken over the multiplication by $m$ map $[m]$ over all integers $m \ge 1$. Then $P_{\Q_{\geq 0}}$ is uniquely divisible. 
\er

\br[Pseudo-perfectoid monoids] \label{remark:pseudo_perfectoid} 
There is also a less restrictive variant of  Definition \ref{def:perfectoid_monoid}: we say that a monoid $M$ is \emph{pseudo-perfectoid} if $M/ M^{\times}$ is perfect, namely, uniquely $p$-divisible.  Perfectoid monoids are pseudo-perfectoid but not conversely. For example, the following monoid 
\[
M=\gr{x_0, x_1, x_2, \dots, y_1^{\pm 1},  y_2^{\pm 1}, \dots}/\gr{(x_1)^p=x_0 \cdot y_1, (x_2)^p= x_1 \cdot y_2, \dots}
\]
generated by $\{x_i, y_j\}_{i \ge 0, j \ge 1}$ with relations $x_j^p = x_{j-1} y_j$ (written multiplicatively) is pseudo-perfectoid but not perfectoid ($M/M^\times \cong \N[\frac{1}{p}]$ but $M^\flat = 0$). We say that a pre-log ring $(R, M)$ is pseudo-perfectoid if $R$ is perfectoid and $M$ is pseudo-perfectoid. 
\er

\begin{construction}
Let  $(M \xrightarrow{\alpha} R)$ be a perfectoid pre-log ring.  As $M^\flat$ is uniquely $p$-divisible, we obtain a natural monoid map 
\[
\alpha^\flat\colon M^\flat \ra R^\flat,
\]
sending $m = (m_0, m_1, m_2, ...) \in M^\flat$ to \[ \alpha^\flat (m) \coloneqq  (\alpha(m_0), \alpha(m_1),  \alpha(m_2), ...) \in R^\flat.\]  
The perfectoid pre-log ring $(R^\flat, M^\flat)$ is the \emph{tilt} of $(R, M)$.   
If we further compose it with the Teichm\"uller lift, we obtain a monoid map
\[
[\alpha^\flat]\colon M^\flat \ra \Ainf (R) =  W(R^\flat)
\]
by sending $m \mapsto  [\alpha^\flat (m)] \in W(R^\flat)$. We denote the pre-log ring $(\Ainf(R), M^\flat)$ by $\Ainf(\ul R)$. 
\end{construction}

\br  \label{remark:tilting_perfectoid_log_rings}
This construction can be extended to the case where $(R, M)$ is an integral log ring and $M$ is pseudo-perfectoid in the sense of Remark \ref{remark:pseudo_perfectoid}.  Note that the monoid $M$ itself may not be $p$-divisible. However if we set
\[
(R/p^n, M_{R/p^n})\coloneqq (R/p^n, M)^a 
\]
for every integer $n\geq 1$, 
then $M_{R/p}$ is $p$-divisible and $M/R^\times \cong M_{R/p^n}/ (R/p^n)^{\times}$. 
We obtain a pre-log ring $(\Ainf(R), M_{R/p}^\flat)$. 
Define a map 
\[\theta_n \colon M_{R/p}^\flat \to M_{R/p^n}
\] by sending $(m_{i})_{i \geq 0}$ to $\widetilde{m}_{n-1}^{p^{n-1}}$, where $\widetilde{m}_{n-1} \in M_{R/p^n}$ is a lift of $m_{n-1}$ and $\widetilde{m}_{n-1}^{p^{n-1}}$ is independent of the choice of the lift, as any two such lifts differ by some 
\[1+pr \in (R/p^{n-1})^\times= M_{R/p^{n-1}}^\times. \]  
Since $M\cong \varprojlim_n M_{R/p^n}$, their limit gives rise to a map 
\begin{equation} \label{eq:monoid_theta} 
\theta\colon M_{R/p}^\flat\to M
\end{equation} compatible with the usual map $\theta \colon \Ainf (R)\to R$. This induces a map of pre-log rings
\[
\theta\colon (\Ainf (R), M_{R/p}^\flat) \to (R, M)
\]
and in turn an exact surjection of integral log rings
\[
\theta\colon (\Ainf(R), M_{\Ainf (R)}) \coloneqq (\Ainf (R), M_{R/p}^\flat)^a \to (R, M).  
\]
\er

\br \label{remark:perfectoid_log_same_as_pseudo} 
The construction in Remark \ref{remark:tilting_perfectoid_log_rings} implies that for an integral log ring $(R, M)$, the monoid $M$ is perfectoid if (and only if) it is pseudo-perfectoid in the sense of Remark \ref{remark:pseudo_perfectoid}. Suppose that $M$ is pseudo-perfectoid, then modulo units, the map 
 $\theta$ in (\ref{eq:monoid_theta}) from Remark \ref{remark:tilting_perfectoid_log_rings} factors as the composite map 
\[
M^\flat_{R/p}/(R^\flat_{R/p})^\times \ra  M^\flat/M^{\flat, \times} \ra M/M^\times,
\]
which is surjective as $M_{R/p}$ is $p$-divisible. Thus the map $M^\flat/M^{\flat, \times} \ra M/M^\times$ is surjective. It is also injective since $M/M^\times$ is uniquely $p$-divisible. The only if direction is clear from definition.  
\er

\bd[Perfectoid log rings] \label{definition:perfectoid_log_ring}
An integral log ring $(R, M)$ is \emph{perfectoid} if it is perfectoid as a pre-log ring, or, equivalently (by Remark \ref{remark:perfectoid_log_same_as_pseudo}), if $R$ is a perfectoid ring and $M/M^\times$ is uniquely $p$-divisible (in other words, if $(R, M)$ is pseudo-perfectoid). 
\ed

\br \label{remark:relative_perfect_over_invertible_submonoid}
Let $(R, M)$ be a perfectoid log ring.  Then $(R/p, R^\times)\ra (R/p, M)$ is relatively perfect\footnote{In fact, $M^{(1)}$ is discrete so that $M^{(1)}\cong M$ as $R^\times \ra M$ is injective and integral.} and $M$ is $p$-saturated. More generally, if $(S, M)$ is pre-log ring over  $\F_p$ where $M$ is pseudo-perfectoid, then $(S, M^{\times})\to (S, M)$ is relatively perfect. 
\er

\br 
Let $(R, P)$ be a perfectoid (or more generally, pseudo-perfectoid) integral pre-log ring, then its associated log ring is a perfectoid log ring.
\er

\bl \label{lemma:perfectoid_monoid}
 Let $\ul R = (R, M)$ be a perfectoid (or more generally, pseudo-perfectoid) pre-log ring.  Let $\Z_p = (0 \ra \Z_p)$ be the trivial pre-log ring. Then the natural map 
\[\widehat \L_{R/\Z_p} \isom \widehat \L_{\ul R/\Z_p}\]
is an isomorphism. Equivalently, $\widehat \L_{\ul R/ R}=0$. In particular, we have $\widehat \L_{\ul R/\Z_p}[-1] \{-1\} \cong R$. 
\el 

\bproof
We shall prove $\widehat \L_{\ul R/ R}=0$. By derived Nakayama, it suffices to show that \[\widehat \L_{\ul R/ R}\otimes^\L_R R/p\cong \L_{(R/p, M)/ R/p}=0.\] This follows from the  isomorphism
\[
\L_{(R/p, M)/ R/p} \cong \L_{(R/p, M)/ (R/p, M^\times)}
\]
and Lemma \ref{lemma:cotangent_for_perfect_maps}, since the map $(R/p, M^\times)\to (R/p, M)$ is relatively perfect (see Remark \ref{remark:relative_perfect_over_invertible_submonoid}). 
\eproof

\bc \label{cor:cotangent_for_perfectoid}
Let $f\colon \ul R \ra \ul S$ be a map of perfectoid pre-log rings, then the $p$-completed log cotangent complex $ \widehat \L_{\ul S/\ul R} = 0 $. 
\ec 

\bproof 
Using Lemma \ref{lemma:perfectoid_monoid}, one computes that
\[
\widehat \L_{\ul S/\ul R} = \widehat \L_{\ul S/ R} =\widehat \L_{ S/ R}=0. 
\]
\eproof

We conclude this subsection with a few examples. Let us start with a ``non-perfectoid'' example. 

\beg \label{example:2}
  Let $\ul R = (\N \ra R)$ be the pre-log ring with pre-log structure $1 \mapsto 0$, then $\L_{\ul R/R}$ is concentrated in degree $[-1, 0]$. To see this, consider the transitivity triangle
\[ \L_{\ul{R[t]}/R} \otimes^\L_{R[t]} R \lra \L_{\ul R/ R} \lra \L_{\ul R/\ul{R[t]}} \]
 for $R \ra \ul{R[t]} =  (\N \xrightarrow{1 \mapsto t} R[t]) \ra \ul R$ and observe that
\bi
\item $\L_{\ul{R[t]}/R} \cong \Omega^1_{\ul{R[t]}/R} = R[t] \textup{ dlog} (t) \cong R[t]$, thus $ \L_{\ul{R[t]}/R} \otimes^\L_{R[t]} R  \cong R$;
\item $\L_{\ul R/\ul{R[t]}} \cong \L_{R/R[t]} \cong R[1]$
\ei 
In fact, we have 
\begin{equation} \label{eq:log_ci_N}
\L_{\ul R/R} \cong (R \xrightarrow{ \: 0 \: } R). 
\end{equation}
More generally, let $\ul R, \ul S$ be any pre-log algebras and $\ul R \ra \ul S$ be a log complete intersection in the following sense: suppose that the map factors through 
\[
 \ul R = (R, M_R) \lra  \ul{\sq S} = (\sq S, M_S) \lra \ul S = (S, M_S), 
 \]
 where $\sq S \twoheadrightarrow S$ is surjective, $I = \ker (\sq S \ra S)$ is a regular ideal, and $\ul R \ra \ul{\sq S}$ is smooth (on the induced map of associated log schemes).  
Then 
 \begin{equation} \label{eq:log_ci}
 \L_{\ul S/\ul R} \cong \big(I/I^2 \xrightarrow{\: - d \:} \Omega^1_{\ul{\sq S}/\ul R} \otimes_{\sq S} S \big) 
 \end{equation}
 where $I/I^2$ is placed in degree $-1$, and the differential $d$ is induced from $d\colon \sq S \ra \Omega^1_{\ul{\sq S}/\ul R}$.  
 \eeg
 
Next, let $C$ be an algebraically closed perfectoid field over $\Q_p$ and fix a choice $p^{\Q} \in C$ of rational $p$-power roots in $C$.

\beg  First let us note that, if we write $\ul \mO_C = (\Q_{\ge 0} \xrightarrow{\alpha \mapsto p^{\alpha}} \mO_C)$, then  
\[\widehat \L_{\ul \mO_C/\Z_p} [-1] \cong \mO_C \{1\} \cong \xi \Ainf/\xi^2 \Ainf. \]
This is a special case of Lemma \ref{lemma:perfectoid_monoid}. 
\eeg

 \beg
   Let 
\bi 
\item  $\ul S = (\mO_C \gr{T^{1/p^\infty}}, \N[\frac{1}{p}])$ be the perfectoid ring equipped with $\frac{a}{p^b} \mapsto T^{\frac{a}{p^b}}$,  and 
\item  $\ul R = (\N \xrightarrow{1 \mapsto T} \mO_C \gr{T})$.
\ei 
Let $\ul R \ra \ul S$ be natural inclusion of pre-log rings. The $p$-completed log cotangent complex $\widehat \L_{\ul S/\ul R}$ is concentrated in degree $-1$. 
 To see this, consider the transitivity triangle
\[ \widehat \L_{\ul R /\mO_C} \widehat \otimes_{\mO_C\gr{T}} \mO_C \gr{T^{1/p^\infty}} \lra \widehat \L_{\ul S/\mO_C} \lra \widehat \L_{\ul S/\ul R} \]
for $\mO_C \ra \ul R \ra \ul S$.  By Corollary \ref{cor:cotangent_for_perfectoid}, we know that the middle term $\widehat \L_{\ul S/\mO_C} = 0$ vanishes. Now, as $\widehat \L_{\ul R/\mO_C} \cong \widehat \Omega^1_{(\mO_C[T], \N)/\mO_C} \cong \mO_C \gr{T}$, we conclude that 
\begin{equation} \label{eq:example_5}
\widehat \L_{\ul S/\ul R} \cong \mO_C \gr{T^{1/p^\infty}} [1].\footnote{Note that although $\widehat \L_{\ul S/\ul R}$ is isomorphic to the usual cotangent complex $\widehat \L_{S/R}$ as a complex over $\mO_C \gr{T^{1/p^\infty}}$, the natural map $\widehat \L_{S/R} \ra \widehat \L_{\ul S/\ul R}$ is not an isomorphism. In fact this map is the base change of $\widehat \L_{R/\mO_C}[1] \ra \widehat \L_{\ul R/\mO_C}[1]$, given by $\mO_C \gr{T} dT \hookrightarrow \mO_C \gr{T} \textup{ dlog} T$ after a degree shift.  
}
\end{equation}
In fact, this shows that $\widehat \L_{\ul S/\ul R}$ has $p$-complete Tor amplitude concentrated in $[-1]$. 
\eeg

\beg \label{example:5}
 In this example, we let $M^{(1)}\coloneqq \N[\frac{1}{p}] \oplus_{\N} \N[\frac{1}{p}]$ be the pushout of the natural inclusion $\N \ra \N[\frac{1}{p}]$ along itself. Let  
 \[S = \mO_C \gr{M^{(1)}} \cong \mO_C \gr{X^{1/p^\infty}, Y^{1/p^\infty}}/(X - Y)
 \]
 and $\ul S = (S, M^{(1)})$, where the pre-log structure is given by $(\alpha,\beta) \mapsto X^\alpha Y^\beta$. Let $\ul R = (\mO_C \gr{T}, \N)$ be as in the previous example and regard $\ul S$ as pre-log $\ul R$-algebra by the natural map $\N \ra M^{(1)}$.  Then the $p$-completed log cotangent complex $\widehat \L_{\ul S/\mO_C}$ is given by 
$\widehat \L_{\ul S/\mO_C} \cong  S [1].$ Moreover, both $\widehat \L_{\ul S/\Z_p}$ and $\widehat \L_{\ul S/\ul R}$ are concentrated in degree $-1$. To see this, let us consider the exact triangle given by 
\[\mO_C \ra \ul{S_0} \coloneqq (\mO_C \gr{\N[\frac{1}{p}]}, \N[\frac{1}{p}]) \ra \ul S,
\] where the second map is induced from either of the maps from $\N[\frac{1}{p}] \ra M^{(1)}$, so we have
\[
\widehat \L_{\ul S/\mO_C} \cong \widehat \L_{\ul S/\ul{S_0}} \cong \widehat \L_{\ul{S_0}/\ul R} \widehat \otimes_{S_0} S \cong \widehat \L_{\ul R/\mO_C} [1] \widehat \otimes_{R} S  \cong S [1]. 
\]
From this it follows that $\widehat \L_{\ul S/\Z_p}$ lives in degree $-1$. A similar argument shows that $\widehat \L_{\ul S/\ul R}$ also lives in degree $-1$. Again, similar to the previous example, even though $\widehat \L_{S/\mO_C} \cong S[1]$ as an $S$-module as well, the map $\widehat \L_{S/\mO_C} \ra \widehat \L_{\ul S/\mO_C}$ is not an isomorphism (the cofiber of this map is nonzero and lives in degree $-1$). 
\eeg 

\subsection{Perfect ``log prisms''} \noindent

\noindent
Recall that the category of perfect prisms and the category of perfectoid rings are equivalent \cite[Theorem 3.10]{BS}. We shall define perfect ``log prisms'' and relate them to perfectoid log rings. (Unlike \cite{Koshikawa}, we do not pass to the associated log structure. It is also easy to obtain results for them.)

\begin{convention}
A bounded pre-log prism $(A, I, M_A)$ is said to be a ``log prism'' if $(A, M_A)$ is a log ring. Any bounded pre-log prism has the associated ``log prism'' $(A, I, M_A)^a$; taking the associated log structure further, we then obtain the associated log prism $(A, I, M_{\spf A})$ in the sense of \cite{Koshikawa}. Note also that, if $(A, I, M_A)$ is a ``log prism'', the $\delta_{\log}$-structure provides a Frobenius lift $\phi$ of $(A, M_A)$ via the formula
\[
\phi(m)=m^p (1+p\delta_{\log}(m))
\]
as $A$ is classically $p$-adically complete \cite[Remark 2.3]{Koshikawa}.  
\end{convention}

\bd
Let $(A, I, M_A)$ be a ``log prism''. We say that $(A, I, M_A)$ is \emph{perfect} if $M_A$ is integral and $\phi$ on $(A, M_A)$ is an isomorphism. 
\ed

\br
If $(A, I, M_A)$ is a perfect ``log prism'', then $M_A$ is $p$-saturated. Indeed, $\phi$ on $M_A$ induces the $p$-th power on $M_A/A^\times$, so $M_A/A^\times$, hence $M_A$, is clearly $p$-saturated. By a similar reason, suppose that $(A, I)$ is perfect, then a ``log prism'' $(A, I, M_A)$ is perfect if and only if $M_A/A^\times$ is uniquely $p$-divisible. 
\er

\beg
Let $(A, I, M_A)$ be an integral ``log prism''. We can consider the associated pre-log prism $(A_{\perf}, IA_{\perf}, M_A)$ whose underlying prism is perfect and the $\delta_{\log}$-structure induces a map of monoids
\[
\varinjlim_{\phi} M_A \to A_{\perf}. 
\]
By the formula of $\phi$, its associated log ring $(A_{\perf}, M_{A, \perf})$ satisfies that $M_{A,\perf}/A_{\perf}^\times$ is uniquely $p$-divisible. Therefore, $(A_{\perf}, IA_{\perf}, M_{A, \perf})$ is a perfect ``log prism'', which we call the ``perfection'' of $(A, I, M_A)$. 
\eeg

\beg
If $(R, M)$ is a perfectoid integral pre-log ring, then $\delta ([\alpha^\flat (m)])=0$ for every $m\in M^\flat$. Therefore, we may regard $(\Ainf(R), M^\flat)$ as a $\delta_{\log}$-ring of rank 1 in the sense of \cite{Koshikawa} and it is easy to check that $(\Ainf (R), \ker \theta, M^\flat)^a$ is perfect in the above sense. 
\eeg

\bp
Consider the category of perfect ``log prisms'' $(A, I, M_A)$. 
Sending 
\[(A, I, M_A) \longmapsto (A/I, M_A)^a\] induces an equivalence of categories with the category of perfectoid log rings. 
In particular, any perfect ``log prism'' $(A, I, M_A)$ admits a chart $N\to A$ of rank 1, i.e., $\delta_{\log}(N)=0$. 
\ep

\bproof
For a perfectoid log ring $(R, M)$, a quasi-inverse is given by
\[
(\Ainf (R), \ker \theta, M_{\Ainf (R)})=(\Ainf (R), \ker\theta, M_{R/p}^{\flat})^a\cong (\Ainf (R), \ker\theta, M^{\flat})^a.
\]
To check that it is indeed the quasi-inverse, the nontrivial direction is to show that, for any $(A, I, M_A)$, the above functor sends $(A/I, M_A)^a$ to $(A, I, M_A)$. This follows from Remark \ref{remark:variant_category_C} below. 
\eproof

Fix a perfectoid integral pre-log ring $(R, M)$. Let  $\mC_{/(R, M)}$ denote the category consisting of diagrams of the form 
\[
 \begin{tikzcd}
& (A, I , M_A)  \arrow[d] \\
(R, M) \arrow[r] & (A/I, M_A),
\end{tikzcd}
\]
where $(A, I, M_A)$ is a ``log prism''. Then our next lemma, which is an analogue of \cite[Lemma 4.8]{BS}, says that the category $\mC_{/(R, M^\flat)}$ admits an initial object. 

\bl \label{perfectoid pre-log prism is initial}
Let $(R, M)$ be a perfectoid integral pre-log ring. 
Let $(A, I, M_A)$ be an integral ``log prism''. Then any map 
\[(R, M)\to (A/I, M_A)^a
\]of pre-log rings (such as ones attached to $(R, M)\to (A/I, M_A)$) lifts uniquely to a map
\[
(\Ainf(R), \ker \theta, M^\flat) \to (A, I, M_A), \quad
\]
of pre-log prisms. In particular, the category $\mC_{/(R, M^\flat)}$  admits an initial object,  given by 
\[
(\Ainf(R), \ker \theta, M^\flat)^a \ra (R, M^\flat)^a. 
\]
\el 

\br \label{remark:boundedness_initial}
The boundedness condition on $A$ seems to be a technical one and can be probably removed if \cite[Theorem 5.6]{SSV} extends to the non-discrete case; it is the case for simplicial rings and this is why \cite[Lemma 4.8]{BS}  does not assume boundedness on $(B, J)$. 
\er

\begin{proof}
This proof uses results from \cite{SSV}. For every integer $m\geq 1$, the cotangent complex 
\[
\L_{(\Ainf(\ul R)/p^m)^a /(\Z/p^m, (\Z/p^m)^\times)} \cong \L_{(\Ainf(\ul R)/p^m)/(\Z/p^m)} = 0  
\]
 vanishes. 
 Now apply \cite[Theorem 5.6]{SSV} (and its proof), we know that the map 
\[(\Z/p^m, (\Z/p^m)^\times) \ra (\Ainf (\ul R)/p^m)^a\]
is formally log-\'etale in the sense of \cite[Definition 5.1]{SSV}. As $A/I$, $A$ are both classically $p$-complete by the boundedness assumption, the composition 
\[ (\Ainf(\ul R))^a \to (A/I, M_A)^a \to (A/(p,I), M_A)^a \]
lifts uniquely to a map
\[ (\Ainf(\ul R))^a \to (A, M_A)^a =  (A, M_A). \]
In particular,  we get a map of pre-log rings $\Ainf(\ul R) \to (A, M_A).$ 
Since $\Ainf(\ul R) = (\Ainf(R), M^{\flat})$ and $(A, M_A)$ have Frobenius lifts as in \cite[Remark 2.3]{Koshikawa}, and $(\Ainf (R), M^{\flat})$ is perfectoid, we can use the argument of the proof of \cite[Lemma 4.8]{BS}  and consider the unique factorization
\[(\Ainf(R), M^{\flat}) \to \varprojlim_{\phi} (A, M_A) \to (A, M_A),  \]
where the second map is a map of $\delta_{\log}$-rings. 
Using that $\varprojlim_{\phi} A$ is $p$-torsionfree, we see that $\Ainf \to A$ is a map of $\delta$-rings. 
As the monoid $M_A$ is integral, the compatibility of Frobenius lifts on monoids implies that, for any $m \in M^{\flat}$ with the image $m'$ in $\varprojlim_{\phi} M_A$, we have $1+p\delta_{\log} (m')=1$. 
Again using that $\varprojlim_{\phi} A$ is $p$-torsionfree, we see that $\delta_{\log} (m')=0$. 
Thus, we conclude that $(\Ainf(R), M^{\flat})\to (A, M_A)$ is a map of $\delta_{\log}$-rings.  
For the last claim, note that perfect prisms are bounded. 
\end{proof}

\br[Variant of the category $\mC_{/(R, M)}$] \label{remark:variant_category_C}
From the point of view of log geometry, it is more natural to consider log rings rather than pre-log rings. To this end, for the perfectoid integral pre-log ring 
$(R,M)$, we let $\mC_{/(R, M)}^a$ denote the category whose objects are diagrams of the form 
\[
 \begin{tikzcd}
& (A, I , M_A)  \arrow[d] \\
(R, M) \arrow[r] & (A/I, M_A)^a,
\end{tikzcd}
\]
where $(A, I, M_A)$ is an integral ``log prism''. This category includes more general ``log prisms'' (since not all maps $(R, M) \ra (A/I, M_A)^a$ come from ones on the chosen chart $M_A \ra A/I$ of the target). The $\mC_{/(R, M)}^a$ shares the ``same'' initial object as $\mC_{/(R, M^\flat)}$, given by 
\[ (\Ainf(R), \ker \theta, M^\flat)^a \ra (R, M^\flat)^a.\] 
In particular, if $(R, M)$ is a perfectoid log ring and $(A, I, M_A)$ is an object of $\mC_{/(R, M)}^a$, the map $(R, M)\to (A/I, M_A)^a$ lifts uniquely to 
\[
(\Ainf(R), \ker \theta, M^\flat)^a \to (A, I, M_A). 
\]
\er 

\br[Another variant of the category $\mC_{/(R, M)}$] \label{remark:variant_category_C_2}
There is another variant of the category $\mC_{/(R, M)}$ where we only allow strict morphisms of log rings (i.e., without changing log structures) from $(R, M)$, which is related to the definition of relative prismatic site in \cite{Koshikawa}. Let $\mC_{/(R, M)}^{\tu{str}}$ denote the category whose objects are diagrams of the form 
\[
 \begin{tikzcd}
& (A, I , M_A)  \arrow[d, "i"] \\
(R, M) \arrow[r] & (A/I, M)^a,
\end{tikzcd}
\]
where $(A, I, M_A)$ is an integral ``log prism'' and $i$ is an exact surjection of log rings. The initial object of this category can be identified with the initial object in  $\mC_{/(R, M)}^a$, as the natural map $(R,M^\flat)^a \isom (R, M)^a$ is an isomorphism. 
\er

\subsection{Flat descent for log cotangent complexes}  \label{ss:flat_descent} 
\noindent 

\noindent In this subsection we prove a result on flat descent of log cotangent complexes.  First we define a suitable notion of (faithful) flatness for pre-log rings, under which it is convenient to form coproduct in the (nonabelian) derived category. This will be crucial in our definition of the log quasisyntomic site and for our proof of the descent result. 

\begin{definition}  \label{definition:hom_flat}
\be 
\item A map $\underline{R}\to\underline{S}$ of pre-log rings is \emph{homologically log flat}\footnote{This terminology is suggested to us by Gabber.} if for all maps $\underline{R}\to \underline{S'}$, the canonical map 
\[ \underline{S'}\oplus^{\L}_{\underline{R}} \underline{S} \lra \underline{S'}\oplus_{\underline{R}} \underline{S}\]
from the homotopy pushout to the naive pushout is an isomorphism. 
\item A map $\underline{R}\to\underline{S}$ in $\textup{Alg}^\textup{prelog}$ is \emph{homologically log faithfully flat} if it is homologically log flat and the underlying map $R\to S$ is faithfully flat.
\ee 
\end{definition}

\begin{remark}
Since $\textup{Forget}^{s\textup{Alg}^\textup{prelog}}_{s\textup{Mon}\times s\textup{Alg}}$ commutes with homotopy colimits, $\underline{R}\to\underline{S}$ is homologically log flat if and only if $R\to S$ is a flat map of rings and $M_R\to M_S$ is a flat map of monoids in the sense of \cite[Definition 4.8]{Bhatt_dR}. In particular, by \cite[Proposition 4.9]{Bhatt_dR} and \cite[Proposition 4.1]{Kato}, if $R\to S$ is flat and $M_R\to M_S$ is an injective and integral morphism of integral monoids, then $\underline{R}\to\underline{S}$ is homologically log flat. 
\end{remark}

\begin{remark} 
The notion of homologically log flat is different from Kato's notion of log flatness (naturally extended to pre-log rings) from \cite[Definition 1.10]{Kato2}. For example, a map $(k[P], P) \ra (k[Q], Q)$ induced from a map of monoids $P \ra Q$ that is injective but non-integral is log flat but not homologically log flat. In particular, we may take $P \subset Q = \N^2$ to be the submonoid generated by $(2, 0), (0, 2), (1, 1)$ as in Remark \ref{remark:log_cotangent_complex_nondiscrete}. On the other hand, let $(k, \N)$ (resp. $(k, \N^2)$) be the pre-log ring where all nonzero elements in the monoid map to $0 \in k$. Then the map $(k, \N) \ra (k, \N^2)$ induced by the diagonal $\N \xrightarrow{1 \mapsto (1, 1)} \N^2$ is homologically log flat but not log flat. 
\end{remark}

By definition, homologically log flat maps are closed under base change and compositions. Therefore $\textup{Alg}^\textup{prelog}$ can be upgraded to a site with covering maps given by homologically log faithfully flat maps. We refer this Grothendieck topology as the hlf topology, where ``hlf'' stands for homologically log flat. 

Our goal is to show that the log cotangent complexes satisfy descent for the hlf topology, which is analogous to the proof of \cite[Theorem 3.1]{BMS2}. 
 
\begin{proposition} 
\label{lem-descent-hlf-hodge-graded-piece} Fix a base pre-log ring $\underline{R}$. For each $i\ge 0$, the functor $\underline{S}\mapsto \midwedge^i_S \mathbb{L}_{\underline{S}/\underline{R}}$ is an hlf sheaf on $\textup{Alg}^\textup{prelog}_{\underline{R}/}$ with values in $\mD(R)$. In other words,  if $\underline{S}^{-1}\to \underline{S}^0$ is a homologically log faithfully flat map of $\underline{R}$-pre-log algebras, then we have a natural isomorphism
\[\midwedge^i_{S^{-1}} \mathbb{L}_{\underline{S}^{-1}/\underline{R}} \simeq \tu{Tot}( \midwedge^i_{S^\bullet} \mathbb{L}_{\underline{S}^\bullet/\underline{R}} )\] where $\underline{S}^\bullet$ is the homotopy \v{C}ech nerve (which coincides with the naive \v{C}ech nerve by the homologically log flat assumption) of $\underline{S}^{-1}\to \underline{S}^0$.
\end{proposition}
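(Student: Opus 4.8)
The plan is to mimic Bhatt's flat-descent argument for the (non-log) cotangent complex from \cite{Bhatt_descent} (see also \cite[Theorem 3.1]{BMS2}), replacing the use of faithfully flat descent of modules and the transitivity/base-change properties of $\L$ with their logarithmic counterparts, all of which are available from Theorem \ref{thm:functorial_cotangent_complex}. Fix a homologically log faithfully flat map $\underline S^{-1}\to\underline S^0$ of $\underline R$-algebras, and let $\underline S^\bullet$ be its \v{C}ech nerve; by the homologically log flat hypothesis (and the fact that $\textup{Forget}^{s\textup{Alg}^\textup{prelog}}_{s\textup{Mon}\times s\textup{Alg}}$ commutes with homotopy colimits), the naive cosimplicial pre-log ring $\underline S^\bullet$ \emph{is} the homotopy \v{C}ech nerve, and in each degree $S^n$ is a ring-theoretically faithfully flat $S^{-1}$-algebra. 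First I would reduce to the case $i=1$: using the standard filtration of $\midwedge^i_{S}$ of a two-term-amplitude complex — or more robustly, using that for $S^{-1}\to S^n$ flat one has $\midwedge^i_{S^n}(\mathbb L_{\underline S^n/\underline R}) \simeq (\midwedge^i_{S^{-1}}\mathbb L_{\underline S^{-1}/\underline R})\otimes_{S^{-1}}^\L S^n \oplus (\text{relative terms})$, assembled via the transitivity triangle and base change — one sees that the statement for all $i$ follows formally from the $i=1$ statement plus faithfully flat descent of $S^{-1}$-modules. Alternatively one argues directly as in \cite{BMS2}, decomposing $\midwedge^i_{S^n}\mathbb L_{\underline S^n/\underline R}$ into pieces pulled back from $\underline S^{-1}$ and pieces built from $\mathbb L_{\underline S^n/\underline S^{-1}}$.

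The heart of the argument, then, is the $i=1$ case. Here I would invoke the transitivity triangle of Theorem \ref{thm:functorial_cotangent_complex}(transitivity triangle) for $\underline R\to\underline S^{-1}\to\underline S^n$:
\[
\mathbb L_{\underline S^{-1}/\underline R}\otimes^\L_{S^{-1}} S^n \lra \mathbb L_{\underline S^n/\underline R}\lra \mathbb L_{\underline S^n/\underline S^{-1}}.
\]
Running this across the cosimplicial object $\underline S^\bullet$ and taking totalizations, one gets a fiber sequence in $\mD(R)$:
\[
\tu{Tot}\big(\mathbb L_{\underline S^{-1}/\underline R}\otimes^\L_{S^{-1}} S^\bullet\big)\lra \tu{Tot}\big(\mathbb L_{\underline S^\bullet/\underline R}\big)\lra \tu{Tot}\big(\mathbb L_{\underline S^\bullet/\underline S^{-1}}\big).
\]
The left-hand term is $\mathbb L_{\underline S^{-1}/\underline R}\otimes^\L_{S^{-1}}\tu{Tot}(S^\bullet)$, which by ordinary faithfully flat (Čech) descent of the ring map $S^{-1}\to S^0$ is just $\mathbb L_{\underline S^{-1}/\underline R}$. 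So it remains to show the right-hand term $\tu{Tot}(\mathbb L_{\underline S^\bullet/\underline S^{-1}})$ vanishes. Since $\underline S^\bullet$ is the \v{C}ech nerve of $\underline S^{-1}\to\underline S^0$, and homologically log flat maps are preserved under base change and composition, each $\underline S^{-1}\to\underline S^n$ is again homologically log flat; in particular the homotopy pushouts computing the cofaces are naive pushouts, and the base-change isomorphism Theorem \ref{thm:functorial_cotangent_complex}(base-change) applies. This reduces the claim to the following algebraic fact: if $\underline S^{-1}\to\underline S^0$ is homologically log faithfully flat, then the cosimplicial object $n\mapsto\mathbb L_{\underline S^n/\underline S^{-1}}$ has vanishing totalization. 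By base change, $\mathbb L_{\underline S^n/\underline S^{-1}}\simeq \mathbb L_{\underline S^0/\underline S^{-1}}\otimes^\L_{S^0}S^n$ via any of the $n$ coface-compatible identifications; more precisely one exhibits $n\mapsto\mathbb L_{\underline S^n/\underline S^{-1}}$ as the \v{C}ech nerve construction applied to the module $\mathbb L_{\underline S^0/\underline S^{-1}}$ over the \v{C}ech nerve of the ring $S^{-1}\to S^0$, whose totalization vanishes by faithfully flat descent (the augmented cosimplicial module is split/contractible after base change to $S^0$, hence has vanishing Tot by faithful flatness). This last point is exactly the mechanism in \cite[Theorem 3.1]{BMS2}; the log input is only used to guarantee that all the relevant homotopy pushouts are underived, so that the combinatorics of the \v{C}ech nerve of pre-log rings matches that of the \v{C}ech nerve of the underlying rings.

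\textbf{Main obstacle.} The conceptual content is light — the whole point is that the hlf hypothesis is precisely engineered so that everything reduces to ordinary faithfully flat descent on underlying rings — but the bookkeeping obstacle is the identification $n\mapsto\mathbb L_{\underline S^n/\underline S^{-1}}$ with ``the \v{C}ech nerve of $\mathbb L_{\underline S^0/\underline S^{-1}}$''. One must check that the two natural base-change isomorphisms $\mathbb L_{\underline S^0/\underline S^{-1}}\otimes^\L_{S^0,\,d^0}S^1 \simeq \mathbb L_{\underline S^1/\underline S^{-1}}\simeq \mathbb L_{\underline S^0/\underline S^{-1}}\otimes^\L_{S^0,\,d^1}S^1$ are compatible in the correct cosimplicial sense, i.e.\ that the cosimplicial structure maps on $\mathbb L_{\underline S^\bullet/\underline S^{-1}}$ are the ones induced from the cosimplicial $S^{-1}$-algebra structure on $S^\bullet$. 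This is a coherence check that is conceptually routine — it follows from the functoriality of the base-change isomorphism in Theorem \ref{thm:functorial_cotangent_complex} and the fact that all pushouts in sight are strict (underived) because of homological log flatness — but it must be done with some care, since it is exactly the step where the log structure could in principle misbehave. Once this is in place, the totalization $\tu{Tot}(\mathbb L_{\underline S^\bullet/\underline S^{-1}})$ is the totalization of the \v{C}ech complex of an $S^0$-module along the faithfully flat cover $S^{-1}\to S^0$, hence vanishes, and we are done. (Alternatively, and perhaps more cleanly, one can bypass the cosimplicial coherence issue by a direct comparison: the augmented object $\mathbb L_{\underline S^{-1}/\underline R}\to \tu{Tot}(\mathbb L_{\underline S^\bullet/\underline R})$ becomes an equivalence after $-\otimes^\L_{S^{-1}}S^0$, using transitivity and base change to compute both sides over $S^0$; then faithful flatness of $S^{-1}\to S^0$ upgrades this to an equivalence over $S^{-1}$. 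I would present the argument in whichever of these two forms turns out to be shorter.)
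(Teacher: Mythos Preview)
Your overall strategy is the paper's strategy: use the transitivity triangle for $\underline R\to\underline S^{-1}\to\underline S^\bullet$, take wedge powers and totalize, identify the ``pulled back from $S^{-1}$'' piece via faithfully flat descent of modules, and reduce to showing that the ``relative'' pieces $\tu{Tot}\bigl(\midwedge^{i-j}_{S^\bullet}\mathbb L_{\underline S^\bullet/\underline S^{-1}}\bigr)$ vanish. Two points where your write-up diverges from the paper deserve comment.

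First, your proposed reduction to $i=1$ is not how the paper proceeds and is not obviously valid; the paper treats all $i$ at once by applying $\midwedge^i$ to the transitivity fiber sequence and using the resulting length-$(i{+}1)$ filtration whose graded pieces are $\midwedge^j_{S^{-1}}\mathbb L_{\underline S^{-1}/\underline R}\otimes^\L_{S^{-1}}\midwedge^{i-j}_{S^\bullet}\mathbb L_{\underline S^\bullet/\underline S^{-1}}$.

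Second, and more seriously, there is a genuine gap in your vanishing argument. The displayed identification $\mathbb L_{\underline S^n/\underline S^{-1}}\simeq \mathbb L_{\underline S^0/\underline S^{-1}}\otimes^\L_{S^0}S^n$ is false: already for $n=1$ the left side is an extension of \emph{two} copies of the right side (one for each factor of $\underline S^0$ in $\underline S^1$), and in general it is a sum of $n{+}1$ such copies. More importantly, your fallback (``split after base change to $S^0$, hence vanishes by faithful flatness'') presupposes that $\tu{Tot}$ commutes with $-\otimes^\L_{S^{-1}}S^0$, which it does not in general: you can show $\tu{Tot}_{[n]}\bigl(\mathbb L_{\underline S^n/\underline S^{-1}}\otimes^\L_{S^{-1}}S^0\bigr)=0$, but that does not yield $\bigl(\tu{Tot}_{[n]}\mathbb L_{\underline S^n/\underline S^{-1}}\bigr)\otimes^\L_{S^{-1}}S^0=0$. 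The paper closes exactly this gap by a double-totalization trick: for any $S^{-1}$-complex $X$ one has $X\simeq \tu{Tot}_{[m]}(X\otimes^\L_{S^{-1}}S^m)$ by faithfully flat descent of modules, so
\[
\tu{Tot}_{[n]}\bigl(\midwedge^{i-j}_{S^n}\mathbb L_{\underline S^n/\underline S^{-1}}\bigr)\;\simeq\;\tu{Tot}_{[m]}\tu{Tot}_{[n]}\bigl(\midwedge^{i-j}_{S^n}\mathbb L_{\underline S^n/\underline S^{-1}}\otimes^\L_{S^{-1}}S^m\bigr),
\]
then one swaps the two totalizations (Fubini). For fixed $m$, base change identifies $\midwedge^{i-j}_{S^n}\mathbb L_{\underline S^n/\underline S^{-1}}\otimes^\L_{S^{-1}}S^m\simeq \midwedge^{i-j}_{S^{n+m+1}}\mathbb L_{\underline S^{n+m+1}/\underline S^m}$, and the key observation is that the cosimplicial diagram $[n]\mapsto \underline S^{n+m+1}$ over $\underline S^m$ is \emph{split augmented} with $[-1]$-term $\underline S^m$. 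Since any functor preserves split augmented diagrams, the inner $\tu{Tot}_{[n]}$ computes $\midwedge^{i-j}_{S^m}\mathbb L_{\underline S^m/\underline S^m}=0$. This is precisely the coherence issue you flagged as the main obstacle; the resolution is the extra cosimplicial variable, not a direct module identification.
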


\bproof The cosimplicial sequence $\underline{R}\to \underline{S}^{-1} \to \underline{S}^\bullet$ induces a cosimplicial fiber sequence 
\[
 \mathbb{L}_{\underline{S}^{-1}/\underline{R}} \otimes^\L_{S^{-1}} S^\bullet \lra \mathbb{L}_{\underline{S}^{\bullet}/\underline{R}} \lra \mathbb{L}_{\underline{S}^{\bullet}/\underline{S}^{-1}}. \] Applying $\midwedge^i$ to it, we obtain a length $i+1$ 
filtered cosimplicial object in $\mD(R)$:
\begin{multline*}
0=\tu{Fil}^{i+1}( \midwedge^i_{{S}^\bullet} \mathbb{L}_{\underline{S}^{\bullet}/\underline{R}} )  \to  \midwedge^i_{{S}^{-1}}\mathbb{L}_{\underline{S}^{-1}/\underline{R}} \otimes^\L_{S^{-1}} S^\bullet  = \tu{Fil}^i( \midwedge^i_{{S}^\bullet} \mathbb{L}_{\underline{S}^{\bullet}/\underline{R}} ) \to \cdots \\
\cdots \to \tu{Fil}^1( \midwedge^i_{{S}^\bullet} \mathbb{L}_{\underline{S}^{\bullet}/\underline{R}} ) \to \tu{Fil}^0( \midwedge^i_{{S}^\bullet} \mathbb{L}_{\underline{S}^{\bullet}/\underline{R}} ) = \midwedge^i_{{S}^\bullet} \mathbb{L}_{\underline{S}^{\bullet}/\underline{R}},
\end{multline*}
with graded pieces 
\[\tu{gr}^j( \midwedge^i_{{S}^\bullet} \mathbb{L}_{\underline{S}^{\bullet}/\underline{R}} ) \simeq \midwedge^j_{{S}^{-1}}\mathbb{L}_{\underline{S}^{-1}/\underline{R}} \otimes^\L_{S^{-1}}\midwedge^{i-j}_{{S}^\bullet} \mathbb{L}_{\underline{S}^{\bullet}/\underline{S}^{-1}}.\]
This induces a length $i+1$ filtration on $\tu{Tot}( \midwedge^i_{S^\bullet} \mathbb{L}_{\underline{S}^\bullet/\underline{R}} )$. 
Since homotopy totalization commutes with taking fibers (which is a homotopy pullback), we have 
\[\tu{gr}^j\tu{Tot}( \midwedge^i_{S^\bullet} \mathbb{L}_{\underline{S}^\bullet/\underline{R}} ) \simeq \tu{Tot}( \tu{gr}^j (\midwedge^i_{S^\bullet} \mathbb{L}_{\underline{S}^\bullet/\underline{R}}) ).\]
In particular, 
\[\tu{gr}^i\tu{Tot}( \midwedge^i_{S^\bullet} \mathbb{L}_{\underline{S}^\bullet/\underline{R}} ) \simeq \tu{Tot}( \tu{gr}^i (\midwedge^i_{S^\bullet} \mathbb{L}_{\underline{S}^\bullet/\underline{R}}) ) \simeq  \tu{Tot}(  \midwedge^i_{{S}^{-1}}\mathbb{L}_{\underline{S}^{-1}/\underline{R}} \otimes^\L_{S^{-1}} S^\bullet  ) \simeq \midwedge^i_{{S}^{-1}}\mathbb{L}_{\underline{S}^{-1}/\underline{R}},\]
where the last isomorphism comes from faithfully flat descent for modules. 

Therefore,  it suffices to show that $\tu{Tot}( \tu{gr}^j (\midwedge^i_{S^\bullet} \mathbb{L}_{\underline{S}^\bullet/\underline{R}}) )$ vanishes for $j<i$. By faithfully flat descent for modules, we have
\[ \tu{Tot}_{[n]\in\Delta}( \tu{gr}^j (\midwedge^i_{S^n} \mathbb{L}_{\underline{S}^n/\underline{R}})) \simeq \tu{Tot}_{[n]\in\Delta}\tu{Tot}_{[m]\in\Delta}( \tu{gr}^j (\midwedge^i_{S^n} \mathbb{L}_{\underline{S}^n/\underline{R}}) \otimes^\L_{S^{-1}} S^m ).\]
Tautologically, we have
\[ \tu{Tot}_{[n]\in\Delta}\tu{Tot}_{[m]\in\Delta}( \tu{gr}^j (\midwedge^i_{S^n} \mathbb{L}_{\underline{S}^n/\underline{R}}) \otimes^\L_{S^{-1}} S^m )\simeq \tu{Tot}_{[m]\in\Delta}\tu{Tot}_{[n]\in\Delta}( \tu{gr}^j (\midwedge^i_{S^n} \mathbb{L}_{\underline{S}^n/\underline{R}}) \otimes^\L_{S^{-1}} S^m ).\]
Hence it suffices to show that $\tu{Tot}_{[n]\in\Delta}( \tu{gr}^j (\midwedge^i_{S^n} \mathbb{L}_{\underline{S}^n/\underline{R}}) \otimes^\L_{S^{-1}} S^m )$ vanishes.  
Unwinding definitions, we have 
\[\tu{gr}^j (\midwedge^i_{S^n} \mathbb{L}_{\underline{S}^n/\underline{R}}) \otimes^\L_{S^{-1}} S^m \simeq \midwedge^j_{{S}^{-1}}\mathbb{L}_{\underline{S}^{-1}/\underline{R}} \otimes^\L_{S^{-1}} ( \midwedge^{i-j}_{{S}^n} \mathbb{L}_{\underline{S}^{n}/\underline{S}^{-1}}\otimes^\L_{S^{-1}} S^m).\]
By the base change isomorphisms 
\[
\midwedge^{i-j}_{{S}^n} \mathbb{L}_{\underline{S}^{n}/\underline{S}^{-1}} \otimes^\L_{S^{-1}} S^m \simeq \midwedge^{i-j}_{{S}^{n+m+1}} \mathbb{L}_{\underline{S}^{n+m+1}/\underline{S}^{m}},
\] 
and it is enough to show that 
\[\tu{Tot}_{[n]\in \Delta}(  \midwedge^j_{{S}^{-1}}\mathbb{L}_{\underline{S}^{-1}/\underline{R}} \otimes^\L_{S^{-1}} \midwedge^{i-j}_{{S}^{n+m+1}} \mathbb{L}_{\underline{S}^{n+m+1}/\underline{S}^{m}} )\] vanishes. 
At this point, note that this cosimplicial diagram is obtained by applying the functor (from the category $ \textup{Alg}^\textup{prelog}_{S^{-1}/} \to \mD(R)$)
\[T\mapsto \midwedge^j_{{S}^{-1}}\mathbb{L}_{\underline{S}^{-1}/\underline{R}} \otimes^\L_{S^{-1}} \midwedge^{i-j}_{T} \mathbb{L}_{\underline{T}/\underline{S}^{m}} \]
to the cosimplicial diagram 
\[\Delta\to \textup{Alg}^\textup{prelog}_{S^{-1}/},\;[n]\mapsto \underline{S}^{n+m+1},\]
where a map $f\colon [n_1]\to [n_2]$ is sent to the composition
\[ \underline{S}^{n_1+m+1} \simeq \underline{S}^{n_1}\oplus_{\underline{S}^{-1}}\underline{S}^{m} \to \underline{S}^{n_2}\oplus_{\underline{S}^{-1}}\underline{S}^{m} \simeq \underline{S}^{n_2+m+1} \]
with the middle map induced by $f$.

This cosimplicial diagram can be extended to a split augmented cosimplicial diagram with its $[-1]$-term given by $\underline{S}^m$. Since any functor preserves split augmented cosimplicial diagrams, the desired cosimplicial diagram $\tu{Tot}_{[n]\in \Delta}(  \midwedge^j_{{S}^{-1}}\mathbb{L}_{\underline{S}^{-1}/\underline{R}} \otimes^\L_{S^{-1}} \midwedge^{i-j}_{{S}^{n+m+1}} \mathbb{L}_{\underline{S}^{n+m+1}/\underline{S}^{m}} )$ can be extended to a split augmented cosimplicial diagram with its $[-1]$-term given by $\midwedge^j_{{S}^{-1}}\mathbb{L}_{\underline{S}^{-1}/\underline{R}} \otimes^\L_{S^{-1}} \midwedge^{i-j}_{S^m} \mathbb{L}_{\underline{S}^m/\underline{S}^{m}}\simeq 0$. This proves the desired vanishing result because the underlying augmented cosimplicial diagram of any split augmented cosimplicial diagram is a limit diagram (also see \cite[Lemma 6.1.3.16]{Lurie}). 
\eproof 

\br \label{remark:simplicial_base_change}
The same argument above in fact shows the following slight extension of Proposition \ref{lem-descent-hlf-hodge-graded-piece}. Let   $\underline{S}^{-1}\to \underline{S}^0$ be a homologically log faithfully flat map of pre-log algebras over $\ul R$. Let $\ul R \ra \ul R'$ be a map of pre-log rings and let ${\ul{S}^{-1,}}'$ (resp. ${\ul{S}^{0,}}'$) be the homotopy pushout of $\ul{S}^{-1}$ (resp. $\ul{S}^0$) along $\ul R \ra \ul R'$. Then we have a natural isomorphism 
\[\midwedge^i  \mathbb{L}_{{\ul{S}^{-1,}}'/\ul R'} \simeq \tu{Tot}( \midwedge^i  \mathbb{L}_{{\ul S^{\bullet,}}'/\ul R'})\]
where ${\ul S^{\bullet,}}'$ denotes the \v{C}ech nerve of ${\ul S^{-1,}}' \ra {\ul S^{0,}}'$.
\er 

The following is a log version of \cite[Proposition E.16]{APC}.

\begin{corollary} \label{cor:qsyn_descent_cotangent} Fix a base pre-log ring $\underline{R}$. The functor $\underline{S}\mapsto  \mathbb{L}\Omega^{\tu{hc}}_{\underline{S}/\underline{R}}$ sending $\ul S$ to the Hodge-completed derived log de Rham complex is an hlf sheaf on $\textup{Alg}^\textup{prelog}_{\underline{R}/}$ with values in $\mD(R)$ (or 
in the $\infty$-category of $E_\infty$-$R$-algebras).
\end{corollary}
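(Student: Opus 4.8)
The plan is to deduce this from the hlf descent for exterior powers of the log cotangent complex (Proposition~\ref{lem-descent-hlf-hodge-graded-piece}) by d\'evissage along the Hodge filtration, following the nonlog argument of \cite[Proposition E.16]{APC}. Recall from Construction~\ref{construction:derived_log_dR} that $\mathbb{L}\Omega^{\tu{hc}}_{\underline{S}/\underline{R}} = \varprojlim_n \big(\mathbb{L}\Omega_{\underline{S}/\underline{R}}/\Fil^{n}_{\tu{Hodge}}\big)$, and that for each $n$ the object $\mathbb{L}\Omega_{\underline{S}/\underline{R}}/\Fil^{n}_{\tu{Hodge}}$ is an $n$-step iterated extension, via the Hodge filtration, of the shifted graded pieces $\big(\midwedge^i_S \mathbb{L}_{\underline{S}/\underline{R}}\big)[-i]$ for $0\le i\le n-1$. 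All of this is functorial in $\underline{S}$, so for a homologically log faithfully flat cover $\underline{S}^{-1}\to\underline{S}^0$ with (naive $=$ homotopy) \v{C}ech nerve $\underline{S}^\bullet$ one may apply $\tu{gr}^i_{\tu{Hodge}}$ levelwise and obtain, on the $i$-th graded piece, the cosimplicial object $\big(\midwedge^i_{S^\bullet}\mathbb{L}_{\underline{S}^\bullet/\underline{R}}\big)[-i]$.

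By Proposition~\ref{lem-descent-hlf-hodge-graded-piece}, each graded piece satisfies hlf descent: $\big(\midwedge^i_{S^{-1}}\mathbb{L}_{\underline{S}^{-1}/\underline{R}}\big)[-i]\simeq \tu{Tot}\big((\midwedge^i_{S^\bullet}\mathbb{L}_{\underline{S}^\bullet/\underline{R}})[-i]\big)$. Since the objects of $\mD(R)$ satisfying descent along a fixed cover form a full subcategory closed under finite limits and extensions, it follows that $\underline{S}\mapsto \mathbb{L}\Omega_{\underline{S}/\underline{R}}/\Fil^{n}_{\tu{Hodge}}$ is an hlf sheaf for every $n$. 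Passing to the limit over $n$ and using that totalization over $\Delta$ commutes with the inverse limit over $n$ (both being homotopy limits), we obtain
\[
\tu{Tot}\big(\mathbb{L}\Omega^{\tu{hc}}_{\underline{S}^\bullet/\underline{R}}\big) \simeq \varprojlim_n \tu{Tot}\big(\mathbb{L}\Omega_{\underline{S}^\bullet/\underline{R}}/\Fil^{n}_{\tu{Hodge}}\big) \simeq \varprojlim_n \big(\mathbb{L}\Omega_{\underline{S}^{-1}/\underline{R}}/\Fil^{n}_{\tu{Hodge}}\big) \simeq \mathbb{L}\Omega^{\tu{hc}}_{\underline{S}^{-1}/\underline{R}}.
\]
Because the Hodge filtration is multiplicative all the maps above are maps of $E_\infty$-$R$-algebras, and since the forgetful functor from $E_\infty$-$R$-algebras to $\mD(R)$ detects limits, the same computation proves the statement for the sheaf valued in $E_\infty$-$R$-algebras.

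The argument is essentially formal once Proposition~\ref{lem-descent-hlf-hodge-graded-piece} is in hand; the one point I would treat carefully, and the main (mild) obstacle, is the functoriality and base-change compatibility of the Hodge filtration along the \v{C}ech-nerve structure maps, so that the d\'evissage is performed uniformly in all cosimplicial degrees and the levelwise graded pieces are genuinely the cosimplicial objects $(\midwedge^i_{S^\bullet}\mathbb{L}_{\underline{S}^\bullet/\underline{R}})[-i]$ to which Proposition~\ref{lem-descent-hlf-hodge-graded-piece} applies; this follows from the base-change property of the derived log de Rham complex recorded in Theorem~\ref{thm:functorial_cotangent_complex}. Preservation of Hodge-completeness under $\tu{Tot}$ needs no separate argument, as $\tu{Tot}$ commutes with the limit defining $\mathbb{L}\Omega^{\tu{hc}}$.
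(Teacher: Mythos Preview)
Your proposal is correct and follows essentially the same route as the paper: reduce to the finite quotients $\mathbb{L}\Omega_{\underline{S}/\underline{R}}/\Fil^{n}_{\tu{Hodge}}$ using that totalization commutes with the limit defining Hodge completion, then d\'evisse along the Hodge filtration and invoke Proposition~\ref{lem-descent-hlf-hodge-graded-piece} on the graded pieces. One small remark: the functoriality of the Hodge filtration along the \v{C}ech-nerve maps is automatic from the construction (it is a filtration defined levelwise on any resolution), so you do not actually need the base-change clause of Theorem~\ref{thm:functorial_cotangent_complex} here---plain functoriality of $\underline{S}\mapsto \Fil^{\bullet}_{\tu{Hodge}}\mathbb{L}\Omega_{\underline{S}/\underline{R}}$ suffices.
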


\proof Since  totalization commutes with completion for the Hodge filtration (which is a limit), it suffices to show that for each $i \ge 0$ the functor 
\[\underline{S}\longmapsto \L \Omega_{\underline{S}/\underline{R}}/ \tu{Fil}_{\tu{Hodge}}^{i}{\mathbb{L}}\Omega_{\underline{S}/\underline{R}}\] is an hlf sheaf with values in $\mD(R)$. The statement is clear for $i=0$.  
By induction it is then enough to show that the functor sending 
\[\underline{S}\longmapsto \tu{gr}_{\tu{Hodge}}^{i}{\mathbb{L}}\Omega_{\underline{S}/\underline{R}}\] is an hlf sheaf with values in $\mD(R)$, but this is Proposition \ref{lem-descent-hlf-hodge-graded-piece}.
\qed
 
\br See Corollary \ref{lemma:qSyn_descent_of_logdR} for a variant of Corollary \ref{cor:qsyn_descent_cotangent} for the $p$-completed log de Rham complex $\widehat \L\Omega_{-/\ul R}$.
\er

\newpage

\section{The log quasisyntomic site} \label{sec:qSyn}
In this section we introduce the quasisyntomic topology on the category of pre-log rings, which is a logarithmic analogue of the quasisyntomic site introduced in \cite{BMS2}. 

\subsection{Quasisyntomic pre-log rings and quasisyntomic maps} \label{subsec:log_qSyn} \noindent 

\bd 
We say that a map of pre-log rings $\ul A \ra \ul B$ is $p$-completely homologically log flat (resp. faithfully flat) if 
\be
\item $ B \otimes^\L_A A/p \cong B/p$ is concentrated in degree $0$, and 
\item the map $\ul A/p \lra \ul B/p$ is homologically log flat (resp. faithfully flat). 
\ee
\ed 

\bd 
A pre-log ring $\ul A = (A, M_A)$ is \emph{quasisyntomic} if 
\be 
\item $A$ is $p$-complete and has bounded $p^{\infty}$-torsion, and
\item the log cotangent complex $\L_{\ul A/\Z_p} \in \mD (A)$ has $p$-complete Tor amplitude in $[-1, 0]$, i.e., its mod $p$ reduction $\L_{\ul A/\Z_p} \otimes^\L_A A/p$  has Tor amplitude in $[-1, 0]$. 
\ee

Let $\QSyn^{\textup{prelog}}$ denote the category of quasisyntomic pre-log rings. 
\ed

\bd 
Let $\ul A$ and $\ul B$ be $p$-complete pre-log rings with bounded $p^{\infty}$-torsion. A map $\ul A \ra \ul B$ is \emph{quasisyntomic (resp. cover)} if 
\be 
\item $\ul A \ra \ul B $ is $p$-completely homologically log flat (resp. faithfully flat), and 
\item $\L_{\ul B /\ul A} \otimes^\L_{B} B/p$ has Tor amplitude in $[-1, 0]$. 
\ee
\ed 

 Let us give some examples of quasisyntomic pre-log rings and quasisyntomic maps. 
\begin{example} \label{example:log_qsyn}
\be
\item For rings equipped with the trivial pre-log structures, quasisyntomic objects and maps are the usual quasisyntomic rings and maps defined in \cite{BMS2}. In particular, $p$-adic completions of smooth algebras over $\Z_p$ or $\mO_C$ (equipped with trivial pre-log structures) are quasisyntomic; quasiregular semiperfectoid and perfectoid rings such as
\[ \mO_C/p, \quad \mO_C\gr{T^{1/p^\infty}}, \quad \mO_C\gr{T^{1/p^\infty}}/(T) \]
are quasisyntomic. 
\item If $R$ is a (classically) $p$-complete $\Z_p$-algebra, and $\Z_p \ra \ul R = (R, M)$ is a map of pre-log rings that induces a smooth map on the associated log $p$-adic formal schemes (so $R$ is necessarily $p$-torsion free), then $\ul R \in \QSyn^{\textup{prelog}}$. More generally, if $\ul{R}$ is the $p$-adic completion of a $p$-torsion free log complete intersection over $\Z_p$, then $\ul R \in \QSyn^{\textup{prelog}}$ by the discussion in Example \ref{example:2}. 
\item The pre-log ring 
\[\Big( \N[\frac{1}{p}]\oplus_\N \N[\frac{1}{p}] \lra  \mO_C \gr{X^{1/p^\infty}, Y^{1/p^\infty}}/(X-Y)\Big)\]
described in Example \ref{example:5}  is quasisyntomic by the computation thereof. 
\ee 
\end{example}

\bl \label{lemma:qsyn_cover_of_qsyn_is_qsyn}
Let $\ul A \ra \ul B$ be a quasisyntomic cover of $p$-complete pre-log rings with bounded $p^{\infty}$-torsion. Then $\ul A \in \QSyn^{\textup{prelog}}$ if and only if $\ul B \in \QSyn^{\textup{prelog}}$.  
\el 

\bproof 
By considering maps $\Z_p \ra \ul A \ra \ul B$ of pre-log rings, we have 
\[ \L_{\ul A/\Z_p} \otimes^\L_{A} B/p  \lra \L_{\ul B/\Z_p} \otimes^\L_{B} B/p \lra \L_{\ul B/\ul A} \otimes^\L_B B/p.\]
If $\ul A$ is quasisyntomic, then both $ \L_{\ul A/\Z_p} \otimes^\L_{A} B/p$ and $\L_{\ul B/\ul A} \otimes^\L_B B/p$ have Tor amplitudes in $[-1, 0]$, so $ \L_{\ul B/\Z_p} \otimes^\L_{B} B/p$ has Tor amplitude in $[-1, 0]$. 
Conversely, if $\ul B$ is quasisyntomic, then $ \L_{\ul A/\Z_p} \otimes^\L_{A} B/p$, thus $ \L_{\ul A/\Z_p} \otimes^\L_{A} A/p$, has Tor amplitude in $[-1, 1]$ since $A/p \ra B/p$ is faithfully flat by assumption. This further implies that $ \L_{\ul A/\Z_p} \otimes^\L_{A} A/p$ has Tor amplitude in $[-1, 0]$.
\eproof

\bl \label{lemma:composition_pushout_QSyn} Assume the underlying rings of all the relevant pre-log rings are $p$-complete with bounded $p^{\infty}$-torsion. 
\be
\item The composition $\ul A \ra \ul B \ra \ul C $ of two quasisyntomic maps is again quasisyntomic. 
\item Let $\ul A \ra \ul B$ be a quasisyntomic map and $\ul A \ra \ul C$ be any map of pre-log rings, then the $p$-completed pushout $\ul D = \ul B \widehat \otimes_{\ul A}^\L \ul C$ is a discrete pre-log ring, $D$ has bounded $p^{\infty}$-torsion, and the map 
\[\ul C \lra \ul D\]
is quasisyntomic. 
\ee
\el
\bproof 
We first note that 
\[ C \otimes^\L_A A/p \cong C \otimes^\L_B (B \otimes^\L_A A/p) \cong  C \otimes^\L_B B/p \cong C/p \]
is discrete (concentrated in degree $0$), and the composition 
\[ \ul{A}/p \lra \ul{B}/p \lra \ul{C}/p \]
is homologically log flat, so $\ul A\ra \ul C$ is $p$-completely homologically log flat. It is then clear from the transitivity triangle that $\L_{\ul C/\ul A} \otimes^\L_C C/p$ has Tor amplitude in $[-1, 0]$. This proves (1).  

For (2), we first check that 
\begin{align*} 
D \otimes^\L_C C/p  =  (B \otimes^\L_A C) \otimes^\L_C C/p 
 = & \: \: B \otimes^\L_A  C/p  \\
 = & \:  (B \otimes^\L_A A/p) \otimes^\L_{A/p} C/p 
=  B/p \otimes^\L_{A/p} C/p,
\end{align*}
where we use that $B \otimes^\L_A A/p = B/p$ is discrete for the last equality. As $A/p \ra B/p$ is flat, we conclude that $D \otimes^\L_{C} C/p = B/p \otimes_{A/p} C/p$ is concentrated in degree $0$, and that $\ul D/p$ is homologically log flat over $\ul C/p$. Since $D$ is $p$-completely flat over $C$, which has bounded $p^\infty$-torsion, we know that $D$ has bounded $p^\infty$-torsion by \cite[Corollary 4.8]{BMS2}. Finally, the claim on Tor amplitude follows from the base change functoriality of the log cotangent complex. 
\eproof

\bc 
$\QSyn^{\textup{prelog}, \opp}$ forms a site with the topology given by quasisyntomic covers. 
\ec 

\bproof 
This is an immediate consequence of Lemma \ref{lemma:composition_pushout_QSyn}. 
\eproof 

\br
Let $(R, M)$ be a perfectoid pre-log ring, which is quasisyntomic. We define $\QSyn_{(R, M)}$ to be the slice category of $\QSyn^{\textup{prelog}, \opp}$ over $(R, M)$ with the quasisyntomic topology. For any object $(S, N)$ of $\QSyn_{(R, M)}$, the derived $p$-completion of $\midwedge^i \L_{(S, N)/(R, M)}[-i]$ lives in $\mD^{\geq 0}(S)$. 
\er

\br
Let $(R, M)$ be a $p$-complete pre-log ring with bounded $p^\infty$-torsion. 
The small quasisyntomic site $\qSyn_{(R, M)}$ consists of quasisyntomic maps $(R, M)\to (S, N)$ with the quasisyntomic topology. For any object $(S, N)$ of $\qSyn_{(R, M)}$, the derived $p$-completion of $\midwedge^i \L_{(S, N)/(R, M)}[-i]$ lives in $\mD^{\geq 0}(S)$. If $(R, M)$ is quasisyntomic, then all objects of $\qSyn_{(R, M)}$ are quasisyntomic. 
\er

The following is a log version of some aspect of \cite[Variant E.17]{APC}. 

\bc \label{lemma:qSyn_descent_of_logdR} 
Let $\ul R = (R, M)$ be a $p$-complete pre-log ring with bounded $p^\infty$-torsion. The functor $\ul S \longmapsto  \widehat{\L}\Omega_{\underline{S}/\underline{R}}$ is a sheaf on $\qSyn_{\underline{R}}$ 
 with values in the $\infty$-category of $E_\infty$-$R$-algebras. 
\ec 

\bproof 
Let $\ul{S}^{-1} \ra \ul{S}^0$ be a quasisyntomic cover in $\qSyn_{\ul R}$ and let $\ul S^\bullet$ denote its (derived) $p$-completed \v{C}ech nerve. We need to show that the map $  \widehat{\L}\Omega_{\ul S^{-1}/\underline{R}} \lra \tu{Tot} (\widehat{\L}\Omega_{\ul S^{\bullet}/\underline{R}})$ is an isomorphism in $\mD(R)$. For which it suffices to check after taking derived reduction mod $p$. In other words, we want to show that, after tensoring with $\otimes^\L_{R} R/p$, the canonical map 
\[ \L\Omega_{(\ul S^{-1}/p)/(\underline{R}/p)} \lra \tu{Tot} (\L\Omega_{(\ul S^{\bullet}/p)/(\underline{R}/p)}) \]
is an isomorphism (here we use that $S^i \otimes^\L_{R} R/p \cong S^i/p$ from the quasisyntomic assumption).  
To this end, we first claim that for each $j \ge 0$, we have  
\[ \Fil_j^{\tu{conj}} \L \Omega_{(\ul S^{-1}/p)/(\underline{R}/p)} \isom  \tu{Tot} (\Fil_j^{\tu{conj}} \L \Omega_{(\ul S^{\bullet}/p)/(\underline{R}/p)}) \]
on the conjugate filtration (see Construction \ref{construction:derived_log_dR}). By considering the graded pieces (and using \ref{eq:derived_Cartier}), it suffices to show that 
\[  \L \Omega^j_{ (\ul S^{-1}/p)^{(1)}/(\underline{R}/p)} \isom  \tu{Tot} (  \L \Omega^j_{(\ul S^{\bullet}/p)^{(1)}/(\underline{R}/p)}),\]
but this follows from Lemma \ref{lem-descent-hlf-hodge-graded-piece} since Frobenius base change $(\ul S^{\bullet}/p)^{(1)}$ can be identified with the \v{C}ech nerve of $(\ul S^{-1}/p)^{(1)} \ra (\ul S^{0}/p)^{(1)}$ (note that, despite the Frobenius twist, $(\ul S^{-1}/p)^{(1)} \ra (\ul S^{0}/p)^{(1)}$ is a homologically log faithfully flat map of \emph{discrete} pre-log rings, since both $\ul S^{-1}/p$ and $\ul S^{0}/p$ are homologically log flat over $\ul R/p$ by assumption). 
Now by the quasisyntomic assumption (and Lemma \ref{lemma:composition_pushout_QSyn}), the object $\Fil_j^{\tu{conj}} \L \Omega_{(\ul S^{i}/p)/(\underline{R}/p)}$ lives in degree $\ge 0$ for each $i \ge 0$. The claim thus follows since totalization commutes with filtered colimit for complexes bounded from below. 
\eproof

\subsection{Quasiregular semiperfectoid (pre-)log rings} \label{subsec:log_qRSP} \noindent 

\noindent 
Now we introduce a version of quasiregular semiperfectoid pre-log rings -- roughly these are given by a pair $(S, M_S)$ where $S$ is a quotient of perfectoid rings by a quasiregular ideal and $M_S$ is a quotient of a uniquely $p$-divisible monoid by a ``quasiregular'' equivalence relation.

\bd \label{definition:qrspd_log}
A $p$-complete pre-log ring $\ul S = (S, M)$ is \emph{semiperfectoid} if 
\be
\item There exists a map $R \ra S$ from a perfectoid ring to $S$;
\item  $S/p$ is semiperfect (meaning that the map $\textup{Frob}\colon S/p \lra S/p$ is surjective); 
\item  the natural map  
\begin{align} \label{eq:semiperfect_monoid}
  M^\flat \lra M/M^\times 
\end{align}
is surjective. 
\ee
If, moreover, 
\begin{enumerate}
    \item[(4)] $\ul S$ is quasisyntomic
\end{enumerate}
then we say that $\ul S$ is \emph{quasiregular semiperfectoid}.
The category of quasiregular semiperfectoid pre-log rings is denoted by $\QRSP^{\textup{prelog}}$. 

It is sometimes useful to require the monoid $M$ in the pre-log ring $(S,M)$ to be integral (see Lemma \ref{perfectoid pre-log prism is initial} for example). 
\ed 

\br
Pre-log rings satisfy condition (2) and (3) in Definition \ref{definition:qrspd_log} are called log-semiperfect. They imply that
\[
\L_{(S, M)/ R}\otimes^\L_S {S/p}\in \mD^{\leq -1}(S/p)
\]
for any map $R\to S$. 
So, if $(S, M)$ is quasiregular semiperfectoid, then $\widehat \L_{(S, M)/ \Z_p}[-1]$ is $p$-completely flat.

There are other related conditions one may require on the monoids instead of asking the map $M^\flat \ra \cl M \coloneqq M/M^\times $ to be surjective. For example one could consider conditions 
\be
\item[(5)] The natural map $M^\flat \ra M$ is surjective, or equivalently, the multiplication by $p$ map $[p]\colon M \ra M $ is surjective. 
\item[(6)]  The natural map $(M/M^\times)^\flat \ra M/M^\times$ is surjective, or equivalently, the multiplication by $p$ map $[p]\colon M/M^\times \ra M/M^\times $ is surjective.
\ee
Note that condition (5) implies condition (3), which implies condition (6), and for sharp monoids they are equivalent. Also note that, if $\ul S$ is any pre-log ring where $S/p$ is semiperfect and $M$ satisfies condition $(6)$ (the weakest condition), then already we have $\Omega^1_{(\ul S/p)/\F_p} = 0$. Moreover, if we were to replace condition (3) by condition (5) or (6) in the definition of quasiregular semiperfectoid pre-log rings, then most assertions in the subsection would still hold. The condition (3) is chosen so that it is more convenient to discuss quasiregular semiperfectoid log rings rather than just pre-log rings. 
\er

\br 
If $(S, M)$ is semiperfectoid with a map $R \ra S$ from a perfectoid ring $R$, then
\[
(R', M')\coloneqq (R\widehat{\otimes}_{\Z_p}W(S^\flat)\widehat{\otimes}_{\Z_p} \Z_p \langle M^\flat \rangle, M^\flat) \to (S, M)
\]
is a map from a perfectoid pre-log ring that is surjective on rings and surjective modulo invertible elements on monoids. If $(S, N)$ is quasiregular semiperfectoid, then the $p$-completion of $\L_{(S, N)/ (R', M')}[-1]$ is $p$-completely flat. 

Conversely, any $(S, M)$ satisfying (4) in Definition \ref{definition:qrspd_log} is semiperfectoid if there exists a map $(R, N)\to (S, M)$ from a perfectoid pre-log ring $(R, N)$ that is surjective on rings and surjective modulo invertible elements on monoids. 
\er

\br \label{remark:quotient_semiperfect}
Let $M$ be a monoid satisfying (\ref{eq:semiperfect_monoid}) (i.e., condition (3) in Definition \ref{definition:qrspd_log}), then any quotient monoid of $M$ also satisfies (\ref{eq:semiperfect_monoid}). 
To see this, suppose $M \twoheadrightarrow N$ is surjective, then the map $M  \twoheadrightarrow N/N^\times$ and thus $M/M^\times  \twoheadrightarrow N/N^\times$ is surjective. The claim follows easily from this observation. In particular, if $M \ra P$ and $M \ra Q $ are two monoid maps and $P, Q$ both satisfy (\ref{eq:semiperfect_monoid}), then the pushout $P \oplus_M Q$ also satisfies (\ref{eq:semiperfect_monoid}). 
\er 

\begin{example}
\be
\item Let $\ul R = (R, M)$ with $R \in \QRSP$ and $M$ is uniquely $p$-divisible, then $\ul R \in \QRSP^{\textup{prelog}}$. 
\item  The pre-log ring from Example \ref{example:log_qsyn} (3)
\[
\Big( \N[\frac{1}{p}]\oplus_\N \N[\frac{1}{p}] \lra  \mO_C \gr{X^{1/p^\infty}, Y^{1/p^\infty}}/(X-Y)\Big)
\]
is log quasiregular semiperfectoid. 
\ee
\end{example}

The analogue of \cite[Lemma 4.25]{BMS1} still holds in the logarithmic setup.   
\bl \label{lemma:log_BMS_4.25}
Let $\ul S$ be a pre-log algebra and assume $S$ is $p$-complete with bounded $p^{\infty}$-torsion and $\ul S/p$ is log-semiperfect. Then $\ul S \in \QRSP^{\textup{prelog}}$ if and only if there exists a map $R \ra \ul S$ from a perfectoid $R$ (equipped with the trivial pre-log structure) such that $\L_{\ul{S}/R} \otimes^\L_{S} S/p \in \mD(S/p)$ has Tor amplitude concentrated in degree $[-1]$. In this case, the latter condition holds for any choice of $R \ra \ul S$ with $R$ being perfectoid. 
\el  

\bproof 
Suppose $\ul S \in \QRSP^{\textup{prelog}}$ and let $R \ra \ul{S}$ be a map from a perfectoid $R$ (which exists by definition). We have to show that $\L_{\ul S/R}$ has $p$-complete Tor amplitude concentrated in degree $[-1]$. The maps $\Z_p \ra R \ra \ul{S}$ give rise to the transitivity triangle 
\[ \L_{R/\Z_p} \otimes^\L_{R} S/p \lra \L_{\ul S/\Z_p} \otimes^\L_{S} S/p \lra \L_{\ul S/ R} \otimes^\L_S S/p .\]
Note that the first two terms have Tor amplitude in degree $[-1]$, as in the proof of \cite[Lemma 4.25]{BMS1}, we have to show that the first arrow is pure on $\pi_1$, namely, the map 
\[ \beta = \beta_{S/p}\colon \pi_1 (\L_{R/\Z_p} \otimes^\L_R S/p) \cong S/p \lra \pi_1 (\L_{\ul S/\Z_p} \otimes^\L_{S} S/p) \]
is injective after tensoring with any discrete $S/p$-module. By \cite[Lemma 4.26]{BMS1}, it suffices to show that $\beta_{S/p} \otimes k$ is injective for any perfect field $k$ over $S/p$. Now, by functoriality, $\beta_{S/p} \otimes k$ factors the map
\[ \beta_k\colon \pi_1 (\L_{R/\Z_p} \otimes^\L_{R} k ) \cong k \lra \pi_1 (\L_{\ul k/\Z_p}),\]
thus it suffices to show that $\beta_k$ is injective (equivalently, nonzero). To this end, we may replace $\Z_p$ by $W(k)$ (as the $p$-complete cotangent complex $\widehat \L_{W(k)/\Z_p} = 0$ vanishes) and consider the homotopy pushout square 
\[ 
\begin{tikzcd}
W(k) \arrow[r] \arrow[d] & W(\ul k) \arrow[d]  \\
k \arrow[r] & \ul k
\end{tikzcd}
\]
where $W(\ul k)$ is given by $W(\ul k) = (M_S \xrightarrow{m \mapsto [\alpha(m)]} W(k))$, with $\ul k = (k, M_S \ra S \ra k)$. Now $\beta_k$ is the map given by $\pi_1$ of the natural map 
\[ \L_{k/W(k)} \cong k[1] \lra \L_{\ul k / W(k)}\]
(coming from the transitivity triangle from the lower two arrows in the diagram).  If we compose this map with 
$\L_{\ul k/ W(k)}  \lra \L_{\ul k /W(\ul k)} $ coming from the right vertical arrow, we end up with the natural map 
\[ \L_{k /W( k)} \lra \L_{\ul k/W (\ul k)} \]
which is the morphism coming from base change. This is a quasi-isomorphism by Lemma \ref{lemma:no_additional_log}. In particular, this implies that $\beta_k$, thus $\beta_{S/p} \otimes k$, is injective, and we are done. 
\eproof

\bl
Let $\ul A \ra \ul B, \ul A \ra \ul C$ be maps in $\QRSP^{\textup{prelog}}$,  where $\ul A \ra \ul B$ is a quasisyntomic cover.  Then their pushout exists in $\QRSP^{\textup{prelog}}$ and is a quasisyntomic cover of $\ul C$. In particular, the category $\QRSP^{\textup{prelog}, \opp}$ forms a site with the topology given by quasisyntomic covers. 
\el

\bproof 
Let $\ul D \coloneqq  \ul B \widehat \otimes^\L_{\ul A} \ul C$ be the $p$-completed pushout in the category of pre-log algebras. Then by Lemma \ref{lemma:composition_pushout_QSyn}, $\ul D$ is a quasisyntomic cover in $\QSyn^{\textup{prelog}}$ (it particular it is discrete and has bounded $p^\infty$-torsion). As $\ul D/p = (B/p \otimes_{A/p} C/p, M_B \oplus_{M_A} M_C)$, we see that $\ul D/p$ satisfies condition (3) and (4) in Definition \ref{definition:qrspd_log} by Remark \ref{remark:quotient_semiperfect}, so $\ul D$ lies in $\QRSP^{\textup{prelog}}$.  
\eproof 

The following lemmas will imply that quasiregular semiperfectoid objects form a basis for the quasisyntomic site. 

\bl \label{lemma:cover_QSyn_by_QSRP}
Let $\ul R = (M \xrightarrow{\alpha} R) \in \QSyn^{\textup{prelog}}$ be a quasisyntomic pre-log algebra, then there exists a quasisyntomic cover $\ul R \ra \ul S$ with $\ul S \in \QRSP^{\textup{prelog}}$. One could choose $\ul S = (S, N)$ such that $N$ is $p$-divisible.  
\el 

\bproof 
Choose surjections $\Z_p [X_i]_{i \in I} \twoheadrightarrow R$ and $\N^{\oplus J} \twoheadrightarrow M$. Let $\Z_p\gr{X_i, Y_j}_{I, J}$ be the $p$-adic completion of $\Z_[X_i, Y_j]_{I, J}$ and equip it with a pre-log structure from $\N^{\oplus J}$ by sending $1_j \mapsto Y_j$, where $1_j$ is the generator of the $j^{th}$-copy of $\N$ in $\N^{\oplus J}$. Now consider 
\[ \Big(\Z_p\gr{X_i, Y_j}_{I, J}, \N^{\oplus J}\Big)  \lra (R, M) \]
given by the surjections chosen above. Now we take the $p$-completed base change along the quasisyntomic cover 
\[ \Big(\Z_p\gr{X_i, Y_j}_{I, J}, \N^{\oplus J}\Big) \lra \Big(\mO_C\gr{X_i^{1/p^\infty}, Y_j^{1/p^\infty}}_{I, J}, \N[\frac{1}{p}]^{\oplus J}\Big) \]
and denote this $p$-completed base change by $\ul S = (S, M_S)$. By Lemma \ref{lemma:qsyn_cover_of_qsyn_is_qsyn} and Lemma \ref{lemma:composition_pushout_QSyn} we know that $\ul S$ is quasisyntomic and that $\ul R \ra \ul S$ is a quasisyntomic cover. To finish the proof it suffices to note that $\ul S/p$ is log-semiperfect, as $S$ is a quotient of a perfectoid integral pre-log ring while $M_S$ is a quotient of $\N[\frac{1}{p}]^{\oplus J}$. 
\eproof 

\bl \label{lemma:preserve_Cech}
Let $\ul R \ra \ul S = (S, M_S)$ be a quasisyntomic cover in $\QSyn^{\textup{prelog}}$ with $\ul S \in \QRSP^{\textup{prelog}}$. Then every term in the  $\check{\textup{C}}$ech  nerve of the cover lies in $\QRSP^{\textup{prelog}}$.
\el 

\bproof 
Each term $\ul S^i = (S^i, M^i)$ in the $\check{\textup{C}}$ech nerve  lives in $\QSyn^{\textup{prelog}}$ by Lemma \ref{lemma:composition_pushout_QSyn}. As $M^i$ is a quotient of $ (M_S )^{\oplus (i+1)}$ and $S^i/p$ is a quotient of $(S/p)^{\otimes_{\F_p} (i+1)}$, the pre-log ring $\ul S^i/p $ is log-semiperfect. The lemma thus follows. 
\eproof

\bc 
Let $\mC$ be any presentable $\infty$-category, then the canonical restriction of sites $\QRSP^{\textup{prelog}, \opp} \ra \QSyn^{\textup{prelog}, \opp} $ induces an equivalence of sheaves valued in $\mC$
\[ \textup{Shv}_{\mC} (\QSyn^{\textup{prelog}, \opp}) \isom \textup{Shv}_{\mC} (\QRSP^{\textup{prelog}, \opp}).\]
\ec 

The corollary follows from a general result of this form in \cite[Appendix A.3, in particular Proposition A.3.11]{Mann}. For completeness we give an argument below similar to that of \cite[Proposition 4.31]{BMS2}. 

\bproof
It suffices to prove the universal case where $\mC$ is the $\infty$-category of spaces. We have a natural functor (by restriction)
\[ \iota\colon  \textup{Shv}  (\QSyn^{\textup{prelog}, \opp}) \lra  \textup{Shv} (\QRSP^{\textup{prelog}, \opp}).\]
 To define its inverse, we consider the composition of the above functor with the Yoneda embedding to obtain a functor $h\colon  \QSyn^{\textup{prelog}, \opp} \lra  \textup{Shv} (\QRSP^{\textup{prelog}, \opp})$, sending $A \mapsto h_A$. As in \cite{BMS2}, this functor sends covers to effective epimorphisms (by Lemma \ref{lemma:composition_pushout_QSyn} and Lemma \ref{lemma:cover_QSyn_by_QSRP}) and preserves their $\check{\text{C}}$ech nerves, thus we get a functor 
\[ \rho\colon  \textup{Shv} (\QRSP^{\textup{prelog}, \opp}) \lra  \textup{Shv}  (\QSyn^{\textup{prelog}, \opp})\]
which sends $\mF \mapsto (\rho(\mF)\colon A \mapsto \Hom_{\textup{Shv} (\QRSP^{\textup{prelog}, \opp})} (h_A, \mF))$. In other words, the presheaf $\rho(\mF)$ is a sheaf. To show that $\rho$ is indeed the inverse of $\iota$, one has to check that $\rho \circ \iota$ is the identity functor, which in turn follows from Lemma \ref{lemma:preserve_Cech}. 
\eproof 

\newpage

\section{Derived log prismatic cohomology} \label{sec:derived}

Building on earlier work \cite{Koshikawa}  of the first author, we introduce ``derived'' log prismatic cohomology for ($p$-complete) pre-log rings by deriving from the free case and sheafify in the \'etale topology.  The derived log prismatic cohomology and the cohomology of the structure sheaf on the log prismatic site \cite{Koshikawa} agree in reasonable situations, at least after \'etale sheafification.

\subsection{Derived log prismatic cohomology}
\noindent 

\noindent 
In this subsection we define the derived log prismatic cohomology (compare with \cite[Construction 7.6]{BS} for the nonlog case and \cite[6.8]{Bhatt_dR} for the derived log de Rham cohomology).  

\begin{construction}
Fix a bounded pre-log prism $(A, I, M_A)$. 
The category of pre-log rings over $(A/I, M_A)$ is cocomplete and generated under colimits by pre-log rings of the form of
\[
(A/I [ (X_s)_{ s\in S}, \N^T ], M_A\oplus\N^T)
\]
for finite sets $S,T$, and we will consider the $\infty$-category of simplicial pre-log rings over $(A/I, M_A)$ (or animated pre-log rings as in Remark \ref{animated pre-log rings}). 
We have a functor
\[
(A/I [ (X_s)_{ s\in S}, \N^T ], M_A\oplus\N^T)
\longmapsto
\Prism_{(A/I \langle (X_s)_{ s\in S}, \N^T \rangle, M_A\oplus \N^T)/(A, M_A)}
\]
to the $\infty$-category $\mathcal{D}(A)$ of $A$-modules, using the log prismatic site in \cite{Koshikawa}. In fact, each value is a $(p, I)$-complete commutative algebra in $\mathcal{D}(A)$ and equipped with a $\phi_A$-semilinear map $\phi$. We often regard the functor above as a functor to the $\infty$-category of such objects. 

\begin{remark}
One might think that the construction makes sense only assuming $(A, I)$ is a bounded prism and $(A, M_A)$ is a pre-log ring by using $\Prism_{(A/I \langle (X_s)_{ s\in S}, \N^T \rangle, \N^T)/A}$. However, it is not clear that it is functorial. 
\end{remark}

\begin{definition}
The \emph{derived log prismatic cohomology} is the functor deriving (or animating) the functor above to \emph{all} simplicial pre-log rings over $(A/I, M_A)$. 
For a simplicial pre-log ring $(R, P)$ over $(A/I, M_A)$, we simply write $\Prism_{(R, P)/(A, M_A)}$ with 
\[
\phi\colon \Prism_{(R, P)/(A, M_A)}\to \phi_{A,*}\Prism_{(R, P)/(A, M_A)}
\]
for the derived log prismatic cohomology. We sometimes write $\Prism^\L_{(R, P)/(A, M_A)}$ instead to distinguish it from the one in \cite{Koshikawa}.
We similarly define $\overline{\Prism}_{(R, P)/(A, M_A)}$, $ \overline{\Prism}^\L_{(R, P)/(A, M_A)}$. 
It follows easily that $\overline{\Prism}_{(R, P)/(A, M_A)}\cong \Prism_{(R, P)/(A, M_A)}\otimes^\L_A A/I$. 
\end{definition}
\end{construction}

\begin{remark}
In fact, it suffices to consider the subcategory of derived $p$-complete simplicial pre-log rings $(R, P)$ (i.e., the underlying simplicial ring $R$ is derived $p$-complete), as $\Prism_{(R, P)/(A, M_A)}$ only depends on the derived $p$-completion of $(R, P)$. This  follows from the Hodge--Tate comparison below and the fact that cotangent complex and its exterior powers commute with derived base change on $A$; the completed cotangent complex and its completed exterior powers are invariant under the derived $p$-completion of $(R, P)$. 
Moreover, the derived log prismatic cohomology on derived $p$-complete simplicial pre-log rings can also be obtained as the left Kan extension from $p$-complete pre-log rings of the form $\Sigma_{S, T} \coloneqq (A/I \langle (X_s)_{ s\in S}, \N^T \rangle, M_A \oplus\N^T)$. 
\end{remark}

We can further upgrade $\overline{\Prism}_{(R, P)/(A, M_A)}$ to a filtered object (We follow \cite{BMS2} for the discussion on filtered derived categories, but see also  \cite{GP}, \cite[Appendix D]{APC}).\footnote{Note that the $\infty$-category spanned by $p$-complete (resp. $(p,I)$-complete) objects of $\mD(A/I)$ (resp. $\mD(A)$) is a presentably closed symmetric monoidal stable $\infty$-category with $p$-completed (resp. $(p,I)$-completed) tensor products, hence the discussion there applies.}
For $(R, P)=(A/I [ (X_s)_{ s\in S}, \N^T ], M_A\oplus\N^T)$, we consider the canonical filtration on $\overline{\Prism}_{(R, P)/ (A, M_A)}$. This is a functorial increasing exhaustive multiplicative filtration by $p$-complete objects.
From the Hodge--Tate comparison proved in \cite{Koshikawa}, we have an functorial isomorphism 
\[
H^i (\overline{\Prism}_{(R, P)/(A, M_A)}) \cong 
(\Omega^i_{(R, P)/ (A, M_A)})^{\wedge}\{-i\} \cong 
(\midwedge^i \L_{(R, P)/ (A, M_A)})^{\wedge}\{-i\}. 
\]

Therefore, by deriving this filtration, we obtain

\begin{proposition}[(Derived) Hodge--Tate comparison] \label{prop:derived_HT}
Let $(R, P)$ be a simplicial pre-log ring over $(A/I, M_A)$. There exists  an increasing exhaustive multiplicative filtration 
\[
\Fil_\bullet \overline{\Prism}_{(R, P)/(A, M_A)} \lra \overline{\Prism}_{(R, P)/(A, M_A)}
\] on $\overline{\Prism}_{(R, P)/(A, M_A)}$  by (derived) $p$-complete objects, such that its graded pieces satisfy
\[
\grade_i \overline{\Prism}_{(R, P)/(A, M_A)} \cong (\midwedge^i \L_{(R, P)/ (A/I, M_A)} \{-i\}[-i])^{\wedge}. 
\]
This filtration is called the \emph{conjugate filtration}. 
\end{proposition}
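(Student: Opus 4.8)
The plan is to obtain the conjugate filtration on derived log prismatic cohomology by \emph{deriving} (animating) the canonical filtration on $\overline{\Prism}$ that already exists in the free case, and then to identify the graded pieces by deriving the Hodge--Tate comparison from \cite{Koshikawa}. Concretely, first I would recall that for a ``log free'' algebra $\Sigma_{S,T} = (A/I\langle(X_s)_{s\in S},\N^T\rangle, M_A\oplus\N^T)$, the object $\overline{\Prism}_{\Sigma_{S,T}/(A,M_A)}$ is a connective (cohomologically, placed in degrees $\le 0$ after the usual reindexing) $p$-complete $E_\infty$-$A/I$-algebra, so it carries its canonical (Postnikov/truncation) filtration $\tau_{\le \bullet}$; this is functorial in $\Sigma_{S,T}$, increasing, exhaustive, and multiplicative, and by the established Hodge--Tate comparison its $i$-th graded piece is $(\midwedge^i\L_{\Sigma_{S,T}/(A/I,M_A)}\{-i\}[-i])^\wedge$. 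Since the left-hand sides are all concentrated in a single degree here (the log free algebras are ``sufficiently non-singular''), the canonical filtration is the Postnikov tower and its formation is compatible with the structures in play.

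Next I would left Kan extend (animate) the assignment $\Sigma_{S,T}\mapsto \big(\Fil_\bullet\overline{\Prism}_{\Sigma_{S,T}/(A,M_A)}\to \overline{\Prism}_{\Sigma_{S,T}/(A,M_A)}\big)$ from the category of log free $(A/I,M_A)$-algebras to all simplicial (animated) pre-log rings, taking values in the filtered derived $\infty$-category $\mathcal{DF}(A/I)$ of $p$-complete objects (using that $p$-complete objects form a presentably symmetric monoidal stable $\infty$-category, as flagged in the footnote of the excerpt). Because left Kan extension is symmetric monoidal on the free objects and preserves colimits, the resulting filtration on $\Prism^\L$ is again multiplicative and exhaustive; the underlying object of the filtered object is $\overline{\Prism}_{(R,P)/(A,M_A)}$ by definition of the derived theory, since the forgetful functor $\mathcal{DF}\to\mathcal D$ preserves colimits. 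Exhaustiveness passes to the colimit because a filtered colimit of exhaustive filtrations is exhaustive. That the filtration is \emph{increasing} (rather than just a filtered diagram) is automatic from deriving an increasing filtration.

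For the graded pieces, I would use that $\mathrm{gr}_i\colon \mathcal{DF}(A/I)\to\mathcal D(A/I)$ is a colimit-preserving (exact) functor, so $\mathrm{gr}_i$ of the animated filtration is the animation of $\Sigma_{S,T}\mapsto \mathrm{gr}_i\overline{\Prism}_{\Sigma_{S,T}/(A,M_A)} \cong (\midwedge^i\L_{\Sigma_{S,T}/(A/I,M_A)}\{-i\}[-i])^\wedge$. By Construction--Remark on animated pre-log rings and the fact that $\midwedge^i\L_{-/(A/I,M_A)}$ (with the appropriate shift, completion and Breuil--Kisin twist, the latter being an invertible twist that commutes with everything) is \emph{defined} as the animation of its value on log free algebras --- this is exactly the content recalled in Remark \ref{animated pre-log rings} and the construction of the log cotangent complex and its exterior powers --- we get $\mathrm{gr}_i\overline{\Prism}_{(R,P)/(A,M_A)}\cong (\midwedge^i\L_{(R,P)/(A/I,M_A)}\{-i\}[-i])^\wedge$ for all simplicial pre-log rings $(R,P)$. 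Here one must be slightly careful that the $p$-completion is inserted at the right place: the derived $p$-completed exterior power of $\L$ is the animation of $\Sigma_{S,T}\mapsto (\midwedge^i\L_{\Sigma_{S,T}/(A/I,M_A)})^\wedge$, which matches because the log free algebras are $p$-complete and their log cotangent complexes are already $p$-complete projective modules in the relevant degree.

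The main obstacle I anticipate is purely bookkeeping rather than conceptual: checking that all the auxiliary structure (the multiplicative/$E_\infty$-algebra structure, the $p$-completeness of each $\Fil_i$, the exhaustiveness, and the compatibility of the Breuil--Kisin twist with animation) is preserved under left Kan extension, i.e.\ assembling the functor into $\mathrm{CAlg}(\mathcal{DF}(A/I)^{\wedge}_p)$ rather than just $\mathcal{DF}(A/I)$ and knowing that left Kan extension along the embedding of log free algebras is symmetric monoidal. This is the log analogue of the argument in \cite{BMS2}, \cite[Appendix E]{APC} for the (non-log) conjugate filtration, and it goes through verbatim once one has the Hodge--Tate comparison in the free case --- which is Theorem \ref{mainthm:comparisons}(1), already proved in \cite{Koshikawa} --- and the flat-descent and animation formalism for the log cotangent complex recalled in Section \ref{sec:cotangent_complex}.
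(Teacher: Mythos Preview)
Your proposal is correct and takes essentially the same approach as the paper: equip $\overline{\Prism}_{\Sigma_{S,T}/(A,M_A)}$ with its canonical filtration, identify the graded pieces via the Hodge--Tate comparison of \cite{Koshikawa}, and then derive (animate) this filtered object to all simplicial pre-log rings. The paper states this in a single sentence (``by deriving this filtration, we obtain\ldots''), whereas you have spelled out the bookkeeping on multiplicativity, exhaustiveness, $p$-completeness, and the commutation of $\grade_i$ with left Kan extension, all of which are routine and correct.
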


\bc [The smooth case]  \label{cor:derived_vs_non_derived_smooth}
Assume that $M_A\to P$ is a smooth chart of the associated log $p$-adic formal affine scheme $(X, M_X)$ over $(A/I, M_A)$ with $X = \spf R$. 
There is a natural isomorphism 
\[
\Prism^\L_{(R, P)/ (A, M_A)} \to R\Gamma_{\Prism} ((X, M_X)/(A, M_A))
\]
compatible with the Hodge--Tate comparison maps. 
\ec

\begin{proof}
The existence of the map follows from the functoriality of the Hodge--Tate comparison map (see \cite{Koshikawa}). 
As the Hodge--Tate comparison holds for both $\Prism^\L_{(R, P)/(A, M_A)}$ and the log prismaic cohomology of $(X, M_X)$,  it is clear by derived Nakayama that it is an isomorphism. 
\end{proof}

As a consequence of the Hodge--Tate comparison, let us list some functorial properties of the derived prismatic cohomology for pre-log rings.

\bp 
\be
\item For any (derived $p$-complete) simplicial ring $R$ over $A/I$, there is a natural isomorphism
\[\Prism_{R/ A} \cong \Prism_{(R, M_A) / (A, M_A)}. \]
\item The derived log prismatic cohomology on pre-log rings is invariant upon taking the associated log rings. More precisely, let $(R, P)$ be a pre-log ring over $(A/I, M_A)$ with the associated log ring $(R, P)^a$, then we have a natural isomorphism
\[\Prism_{(R, P)/ (A, M_A)} \cong \Prism_{(R, P)^a / (A, M_A)}. \]
\item (Base change) Let $(A, I, M_A)\to (A', IA', M_{A'})$ be a map of bounded pre-log prisms. 
For a simplicial pre-log ring $(R, P)$ over $(A/I, M_A)$, let $(R', P')$ denote the (homotopy) base change to $(A'/IA', M_{A'})$. 
There is a natural isomorphism
\[
\Prism_{(R, P)/(A, M_A)}\widehat{\otimes}^\L_A A' \cong \Prism_{(R', P')/(A', M_{A'})}. 
\]
\item (Multiplicativity)\label{symmetric monoidal}
Let $(R_1, P_1)$, $(R_2, P_2)$ be simplicial pre-log rings  over $(A/I, M_A)$, and let $(R_3, P_3)$ denote the homotopy cofiber product.  
The natural map
\[
\Prism_{(R_1, P_1)/(A, M_A)}\widehat{\otimes}^\L_A \Prism_{(R_2, P_2)/(A, M_A)} \cong
\Prism_{(R_3, P_3)/(A, M_A)}
\]
is an isomorphism and compatible with conjugate filtrations after $\otimes^\L_A A/I$, where the filtration on the left hand side is given by the Day convolution. In fact, $\Prism_{-/(A, M_A)}$ commutes with all colimits. 
\ee

\ep

\bproof 
For (1), note that for any finite set $S$, there is a functorial isomorphism
\[
\Prism_{A/I \langle (X_s)_{ s\in S} \rangle / A} \cong 
\Prism_{(A/I \langle (X_s)_{ s\in S}, M_A \rangle) / (A, M_A)} 
\]
compatible with the Hodge--Tate comparison maps. 
This induces the desired map, which is an isomorphism by the Hodge--Tate comparison and Lemma \ref{lemma:no_additional_log}. 
(2) follows from the Hodge--Tate comparison and Lemma \ref{lemma:pre_log_cotangent_same_as_log}. 
For (3), by the Hodge--Tate comparison and derived Nakayama, it suffices to prove the corresponding statement for the cotangent complexes, which is part (2) of Theorem \ref{thm:functorial_cotangent_complex}.  
 Finally, for part (4), it suffices to consider the case where $(R_i, P_i)$, $i=1, 2$ have the form of 
\[
(A/I [ (X_s)_{ s\in S_i}, \N^{T_i} ], M_A\oplus\N^{T_i})
\]
for finite sets $S_i$, $T_i$; one can then pass to the general case.  
Using the Hodge--Tate comparison once again (and its functoriality), the claim reduces to a standard computation of differential forms (compare with \cite[6.12]{Bhatt_dR}). 
\eproof

Finally, we observe that derived log prismatic cohomology satisfies quasisyntomic descent.  

\begin{proposition}[Quasisyntomic descent]
Let $(A, I, M_A)$ be a bounded pre-log prism. 
On the log quasisyntomic site $\qSyn_{(A/I, M_A)}$, the presheaf
\[
(R, P) \to \Prism_{(R, P)/ (A, M_A)}
\]
is a sheaf. 
If $(A/I, M_A)$ is a perfectoid pre-log ring, the same holds true on $\QSyn_{(A/I, M_A)}$. 
\end{proposition}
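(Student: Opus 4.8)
The plan is to transpose the proof of quasisyntomic descent for (nonlog) derived prismatic cohomology in \cite{BS} to the logarithmic setting, feeding in the derived Hodge--Tate comparison (Proposition \ref{prop:derived_HT}) and the homologically log flat descent for log cotangent complexes (Theorem \ref{mainthm:flat_descent}, or rather its graded refinement Proposition \ref{lem-descent-hlf-hodge-graded-piece}) in place of their nonlog analogues. Since the quasisyntomic topology on $\qSyn_{(A/I, M_A)}$ (and on $\QSyn_{(A/I, M_A)}$) is generated by single-morphism covers, which are stable under the $p$-completed pushouts producing discrete bounded-$p^{\infty}$-torsion objects of the site by Lemma \ref{lemma:composition_pushout_QSyn}, it suffices to fix such a cover $(R, P) \to (S^0, M^0)$, form its $p$-completed \v{C}ech nerve $(S^\bullet, M^\bullet)$ (naive $=$ homotopy by homological log flatness, with all terms in the site and quasisyntomic over $(A/I, M_A)$), and prove that $\Prism_{(R, P)/(A, M_A)} \to \tu{Tot}(\Prism_{(S^\bullet, M^\bullet)/(A, M_A)})$ is an isomorphism in $\mD(A)$.

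First I would reduce modulo $I$. Both sides are derived $I$-complete (a totalization of derived $I$-complete objects is derived $I$-complete), so it is enough to show the map becomes an isomorphism after $-\otimes^\L_A A/I$; working Zariski-locally on $\spf A$ so that $I = (d)$ with $d$ a nonzerodivisor, $-\otimes^\L_A A/I \simeq \mathrm{fib}(\cdot d)$ is a finite limit and hence commutes with $\tu{Tot}$, so (using $\overline{\Prism}_{-/(A, M_A)} \cong \Prism_{-/(A, M_A)}\otimes^\L_A A/I$) we are reduced to proving
\[
\overline{\Prism}_{(R, P)/(A, M_A)} \isom \tu{Tot}\big(\overline{\Prism}_{(S^\bullet, M^\bullet)/(A, M_A)}\big).
\]

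Next I would run the conjugate filtration of Proposition \ref{prop:derived_HT}: it is functorial, exhaustive, and has graded pieces $\grade_i \overline{\Prism}_{-/(A, M_A)} \cong (\midwedge^i \widehat{\L}_{-/(A/I, M_A)}\{-i\}[-i])$. On the $i$-th graded piece, reducing mod $p$ (legitimate since these are derived $p$-complete and $-\otimes^\L_A A/p \simeq \mathrm{fib}(\cdot p)$ commutes with $\tu{Tot}$) turns the required descent into
\[
\midwedge^i \L_{(R/p, P)/(A/(p,I), M_A)} \isom \tu{Tot}\big(\midwedge^i \L_{(S^\bullet/p, M^\bullet)/(A/(p,I), M_A)}\big),
\]
which is exactly Proposition \ref{lem-descent-hlf-hodge-graded-piece} for the base $(A/(p,I), M_A)$ and the homologically log faithfully flat cover $(R/p, P) \to (S^0/p, M^0)$ (which is such by the definition of a quasisyntomic cover). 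Induction on $i$ via the cofiber sequences $\Fil_{i-1} \to \Fil_i \to \grade_i$ (totalization commutes with these finite limits in the stable $\infty$-category $\mD(A)$), with base case $i = 0$ given by $p$-completely faithfully flat descent for the ring itself, then yields the descent isomorphism on every $\Fil_i \overline{\Prism}_{-/(A, M_A)}$.

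The hard part is the final passage to $\mathrm{colim}_i$, i.e.\ showing $\tu{Tot}$ commutes with the filtered colimit defining the conjugate filtration, as totalizations over $\Delta$ do not commute with filtered colimits in general. Here I would use that, because $(A/I, M_A) \to (S^n, M^n)$ is quasisyntomic (for the statement over $\QSyn_{(A/I, M_A)}$ with $(A/I, M_A)$ perfectoid one instead invokes $\widehat{\L}_{(A/I, M_A)/\Z_p} = 0$ from Lemma \ref{lemma:perfectoid_monoid} to identify $\widehat{\L}_{(S^n, M^n)/(A/I, M_A)} \cong \widehat{\L}_{(S^n, M^n)/\Z_p}$), the complex $\widehat{\L}_{(S^n, M^n)/(A/I, M_A)}$ has $p$-complete Tor amplitude in $[-1, 0]$; hence each $\grade_i \overline{\Prism}_{(S^n, M^n)/(A, M_A)}$ lies in $\mD^{[0, i]}$ and each $\Fil_i \overline{\Prism}_{(S^n, M^n)/(A, M_A)}$ lies in $\mD^{\ge 0}$, uniformly in $i$ and $n$. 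This uniform connectivity is precisely what makes $\tu{Tot}$ commute with $\mathrm{colim}_i$, and combining it with the descent isomorphisms on the filtered pieces gives the descent for $\overline{\Prism}_{-/(A, M_A)}$, hence for $\Prism_{-/(A, M_A)}$. Everything other than this interchange is routine bounded-complex bookkeeping with finite limits, and the perfectoid hypothesis enters only to guarantee the relevant Tor amplitude bounds on the larger site $\QSyn_{(A/I, M_A)}$.
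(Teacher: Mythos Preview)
Your proposal is correct and follows essentially the same approach as the paper's proof: reduce via derived Nakayama to the Hodge--Tate side, use the conjugate filtration to reduce to descent for the (exterior powers of the) log cotangent complex, and invoke the connectivity bound coming from the quasisyntomic hypothesis to justify commuting $\tu{Tot}$ with the exhaustive filtered colimit. The paper's proof compresses all of this into two sentences (``by derived Nakayama and the Hodge--Tate comparison, the problem reduces to the quasisyntomic descent for exterior powers of the cotangent complex''), but the ingredients and their order are the same, and in particular the paper explicitly records the connectivity input (``the $p$-completion of $\midwedge^i \L_{(R, P)/(A/I, M_A)}[-i]$ lies in $\mD^{\geq 0}$'') just as you do.
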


\begin{proof}
In both cases,  the $p$-completion of $\midwedge^i \L_{(R, P)/(A/I, M_A)}[-i]$ lies in $\mD^{\geq 0}(R)$ for every $i$. Therefore, by derived Nakayama and the Hodge--Tate comparison, the problem reduces to the quasisyntomic descent for exterior powers of the cotangent complex, which follows from 
Proposition \ref{lem-descent-hlf-hodge-graded-piece}. 
\end{proof}

\subsection{Globalization of derived log prismatic cohomology} \noindent 
\label{ss:global_derived_logprismatic}

\noindent 
Now we work with a log $p$-adic formal scheme rather than a pre-log ring. Let us note that there is a subtle difference between sections of a log structure and its (fixed) chart. 
Let $(X, M_X)$ be a log $p$-adic formal scheme over $(A/I, M_A)$. 
Then we have an assignment on affine objects of the \'etale site $X_{\ett}$:
\begin{equation} \label{eq:derived_prismatic_presheaf}
U=\tu{Spf} (R) \longmapsto \Prism_{(X, M_X)/(A, M_A)}^{\tu{pre}}(U)\coloneqq \Prism^\L_{(R, \Gamma (U, M_X))/ (A, M_A)}. 
\end{equation} 
This presheaf valued on $\mD(A)$ may not be an \'etale sheaf; this is related to why one uses the \'etale topoi to define the cotangent complex for log schemes \cite[8.29]{Olsson_log}.   

\begin{definition}[Derived prismatic cohomology]
Let $(X, M_X)$ be a log $p$-adic formal scheme over $(A/I, M_A)$. 
We define the derived log prismatic cohomology $\Prism^\L_{(X, M_X)/(A, M_A)}$ to be the $(p,I)$-complete \'etale sheafification of the presheaf $\Prism_{(X, M_X)/(A, M_A)}^{\tu{pre}}$. We often denote this \'etale sheaf by $\Prism^\L_{(X, M_X)}$ when the base bounded pre-log prism $(A, I, M_A)$ is understood.  
\end{definition}

\br  \label{remark:global_derived_conjugate_filtration}
Similarly, we may define $\cl \Prism^\L_{(X, M_X)/(A, M_A)}$ (resp. the derived conjugate filtration $\Fil_i \cl \Prism^\L_{(X, M_X)/(A, M_A)}$) as the $p$-complete \'etale sheafification of the presheaf sending 
\[
U = \spf (R) \longmapsto \cl \Prism^\L_{(R, \Gamma (U, M_X))/ (A, M_A)} \quad (\tu{resp. } 
U \longmapsto \Fil_i \cl \Prism^\L_{(R, \Gamma (U, M_X))/ (A, M_A)}).
\]
Note that $\cl \Prism^\L_{(X, M_X)/(A, M_A)}$ may also be obtained as the base change $\Prism^\L_{(X, M_X)/(A, M_A)} \otimes_A^\L A/I$. The (global) derived conjugate filtration has grade pieces 
\begin{align}
    \grade_i \cl \Prism^\L_{(R, \Gamma (U, M_X))/ (A, M_A)})  \isom & \widehat \L \Omega^i_{(X, M_X)/(A/I, M_A)}  \{-i\}[-i] \\
  \nonumber  & \quad \coloneqq   (\midwedge^i \L_{(X, M_X)/(A/I, M_A)})^\wedge \{-i\}[-i]. 
\end{align}
Here $\L_{(X, M_X)/(A/I, M_A)}$ denotes the cotangent complex for log formal schemes\footnote{This depends only on $(\spf A/I, M_A)^a$ and $(X, M_X)$ but the chart $M_A\to A/I$ (or its associated constant sheaf) can be used to compute the cotangent complex by \cite[Theorem 8.20]{Olsson_log}.} and $(-)^\wedge$ denotes the derived $p$-adic completion. This follows from Proposition \ref{prop:derived_HT} and the fact that the $p$-complete \'etale sheafification of the functor 
\[
U=\spf(R) \longmapsto \widehat \L_{(R, \Gamma (U, M_X))/(A/I, M_A)}
\]
is naturally isomorphic to the derived $p$-adic completion $\widehat \L_{(X, M_X)/(A/I, M_A)}$; this can be checked using the canonical free resolutions over $(A/I, M_A)$. 
\er 

\begin{remark}
We take the \'etale sheafification in above constructions mainly because the identity map $\Gamma (U, M_X)\to \Gamma (U, M_X)$ may not be a chart even if there is a chart on $U$, as pointed by Tsuji. If it is a chart, we can control the log cotangent complex without taking sheafification by Remark \ref{remark:log_cotangent_complex_nondiscrete}. Assuming there is a chart on $U$, we can achieve this condition Zariski locally on $U$ if the reduced scheme underlying $X_{A/(p,I)}$ is locally noetherian and $M_X$ is fine by \cite[Lemmas 1.3.2, 1.3.3]{Tsuji_cst} (see also \cite[Lemma A.9]{Koshikawa}), in which case we may consider only such $U$ as they form a basis of \'etale topology of $X$. 
\end{remark}

If $(X, M_X)$ is smooth over $(A/I, M_A)$, we can compare it with the cohomology of the log prismatic site.

\begin{proposition} \label{prop:sections_of_derived_prismatic}
Let $(A, I, M_A)$ be a bounded pre-log prism with $M_A$ being integral and $(X, M_X)$ a log $p$-adic formal scheme which is  smooth over $(A/I, M_A)$. 
\be
\item For any affine object $U=\spf (R)$ of $X_{\ett}$, the natural map
\begin{equation} \label{eq:compare_derived_with_nonderived}
\Prism^\L_{(X, M_X)/(A, M_A)}(U) \to R\Gamma (((U, M_U)/(A, M_A))_{\Prism}, \mO),  
\end{equation}
where $M_U$ is the restriction of $M_X$, is an isomorphism in $\mD(A)$. 
\item If $P\to \Gamma (U, M_X)$ is a smooth chart for the log structure, then the natural map
\begin{equation}
\Prism^\L_{(R, P)/(A, M_A)} \to
\Prism^\L_{(X, M_X)/(A, M_A)}(U)
\end{equation}
is an isomorphism. 
\ee
\end{proposition}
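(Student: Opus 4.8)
The plan is to deduce both parts from Corollary~\ref{cor:derived_vs_non_derived_smooth} together with the globalized Hodge--Tate comparison, the one genuine subtlety being the phenomenon noted in the remarks above: the sections $\Gamma(U,M_X)$ need not form a chart, so the presheaf $\Prism^{\tu{pre}}_{(X,M_X)/(A,M_A)}$ is really not an \'etale sheaf. First I would identify the target with the sections of an honest sheaf: localizing the morphism of topoi $\nu$ of \eqref{eq:map_of_topoi_from_prismatic} over $U=\spf R$ identifies $\nu^{-1}(U)$ with $((U,M_U)/(A,M_A))_\Prism$, so that $R\Gamma(((U,M_U)/(A,M_A))_\Prism,\mO) = \Gamma(U, R\nu_*\mO_\Prism)$; thus $\mathcal G \coloneqq R\nu_*\mO_\Prism$ is a $(p,I)$-complete \'etale sheaf on $X_\ett$ whose affine sections compute the right-hand side, and $\mathcal G\otimes^\L_A A/I \cong R\nu_*\cl\mO_\Prism = \cl\Prism_{X/(A,M_A)}$ since $R\nu_*$ is exact (cf.~\cite{Koshikawa}). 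Next I would produce the natural map $\Prism^\L_{(X,M_X)/(A,M_A)} \to \mathcal G$ as the $(p,I)$-complete \'etale sheafification of the presheaf morphism that, on an affine $V=\spf R'$, is the composite of the tautological comparison $\Prism^\L_{(R',\Gamma(V,M_X))/(A,M_A)} \to R\Gamma_\Prism(\spf(R',\Gamma(V,M_X))^a/(A,M_A))$ coming from the universal property of the left Kan extension defining derived log prismatic cohomology, with the map to $R\Gamma_\Prism((V,M_V)/(A,M_A)) = \mathcal G(V)$ induced by the canonical morphism from the log structure associated to $\Gamma(V,M_X)\to\mO_V$ to $M_V$.

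To prove Part (1), i.e.\ that this map is an isomorphism, since both sides are derived $(p,I)$-complete it suffices to check this after $\otimes^\L_A A/I$: the source becomes $\cl\Prism^\L_{(X,M_X)/(A,M_A)}$ and the target $\cl\Prism_{X/(A,M_A)}$. Both now carry conjugate filtrations --- on the source the derived one of Remark~\ref{remark:global_derived_conjugate_filtration}, on the target the one underlying the Hodge--Tate comparison of \cite{Koshikawa} --- with $i$-th graded piece $\widehat\L\Omega^i_{(X,M_X)/(A/I,M_A)}\{-i\}[-i]$; here smoothness enters through Lemma~\ref{lemma:compare_Gabber_with_Olsson}, which gives $\widehat\L_{(X,M_X)/(A/I,M_A)} \cong \widehat\Omega^1_{(X,M_X)/(A/I,M_A)}$ locally free of finite rank, so these graded pieces are honest vector bundles in a single degree. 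The natural map respects the filtrations and induces the identity on graded pieces --- this is where I invoke the compatibility with the Hodge--Tate comparison maps from Corollary~\ref{cor:derived_vs_non_derived_smooth}, pulled back along an \'etale cover of $X$ by affines carrying smooth charts, together with Theorem~\ref{mainthm:comparisons}(1). Since both filtrations are exhaustive, the map is an isomorphism mod $I$, hence an isomorphism; evaluating at $U$ gives Part (1).

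For Part (2), assume $P\to\Gamma(U,M_X)$ is a smooth chart. Then the associated log formal scheme $\spf(R,P)^a$ is exactly $(U,M_U)$ (this is the defining property of a chart, and involves no sheafification issue), so Corollary~\ref{cor:derived_vs_non_derived_smooth} gives an isomorphism $\Prism^\L_{(R,P)/(A,M_A)} \isom R\Gamma_\Prism((U,M_U)/(A,M_A))$, which Part (1) identifies with $\Prism^\L_{(X,M_X)/(A,M_A)}(U)$. A routine diagram chase --- both maps being induced by the chart $P\to\Gamma(U,M_X)$ and the functoriality of the Hodge--Tate comparison --- shows this composite is the natural map in the statement, which is therefore an isomorphism.

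The step I expect to be the main obstacle is the isomorphism in Part (1). One cannot prove it by evaluating $\Prism^{\tu{pre}}$ on a basis of charted affines, because $\Prism^{\tu{pre}}(V) = \Prism^\L_{(R',\Gamma(V,M_X))/(A,M_A)}$ differs from $R\Gamma_\Prism((V,M_V)/(A,M_A))$ even when $V$ carries a chart, since $\Gamma(V,M_X)$ itself need not be one. The argument is forced through the graded pieces, whose identification relies on the fact that the cotangent complex of the log \emph{formal scheme} $(X,M_X)$ is the $(p,I)$-complete \'etale sheafification of the pre-log-ring cotangent complexes $\widehat\L_{(R',\Gamma(-,M_X))/(A/I,M_A)}$ (Remark~\ref{remark:global_derived_conjugate_filtration}); reconciling this with the Hodge--Tate comparison of \cite{Koshikawa} is the crux.
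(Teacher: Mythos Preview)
Your proposal is correct and follows essentially the same route as the paper: both arguments construct the comparison map to the \'etale sheaf $R\nu_*\mO_\Prism$, reduce to the Hodge--Tate side modulo $I$, and then identify the graded pieces of the conjugate filtration with the log differentials using smoothness (the paper invokes \cite[Proposition 6.1]{Koshikawa} where you invoke Lemma~\ref{lemma:compare_Gabber_with_Olsson}, but these express the same fact). The only organizational difference is that for Part~(2) the paper argues directly from the Hodge--Tate comparison and the observation that $\widehat\Omega^1_{(R,P)/(A/I,M_A)}$ recovers the sections of the locally free sheaf $\underline\Omega^1_{(X,M_X)/(A/I,M_A)}$ on $U$, whereas you route through Corollary~\ref{cor:derived_vs_non_derived_smooth} and Part~(1); both are fine.
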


\begin{proof}
The map (\ref{eq:compare_derived_with_nonderived}) comes from the functoriality of the log prismatic site as the association
\[
U\mapsto R\Gamma (((U, M_U)/(A, M_A))_{\Prism}, \mO)
\]
is an \'etale sheaf (see \cite{Koshikawa}).  
Since this map is compatible with the Hodge--Tate comparison,  
by assumption we are reduced to prove the following statement:
the $p$-complete \'etale sheafification of the functor 
\[
U=\spf(R) \mapsto \widehat \L_{(R, \Gamma (U, M_X))/(A/I, M_A)}
\]
is naturally isomorphic to the sheaf $\underline{\Omega}^1_{(X, M_X)/(A/I, M_A)}$. 
As mentioned above, this \'etale sheafification is isomorphic to $\widehat\L_{(X, M_X)/(A/I, M_A)}$, so just apply \cite[Proposition 6.1]{Koshikawa}. 

For part (2), first note that, if $P\to \Gamma (U, M_X)$ is a smooth chart, the natural map
\[
\widehat \L_{(R, P)/(A/I, M_A)} \to \widehat \Omega^1_{(R, P)/(A/I, M_A)}
\]
is an isomorphism (Lemma \ref{lemma:compare_Gabber_with_Olsson}). As $\widehat \Omega^1_{(R, P)/(A/I, M_A)}$ is recovered from the locally free sheaf $\underline{\Omega}^1_{(X, M_X)/(A/I, M_A)}$ of finite rank by taking sections on $U$, the claim follows from the Hodge--Tate comparison.  
\end{proof}

\br \label{remark:comparing_derived_and_nonderived} 
Let $\Prism_{(X, M_X)}= \Prism_{(X, M_X)/(A, M_A)} $ denote the \'etale sheaf $R \nu_* \mO_{\Prism}$ where 
\[ 
\nu\colon \text{Shv} ((X/(A, M_A))_{\Prism}) \ra \text{Shv} (X_{\ett})
\] 
is the map of topoi (\ref{eq:map_of_topoi_from_prismatic}) from the introduction. Then part (1) of Proposition \ref{prop:sections_of_derived_prismatic} says that there is a natural isomorphism of \'etale sheaves 
\[\Prism^\L_{(X, M_X)} \isom \Prism_{(X, M_X)} \] 
so that
\[
R\Gamma_{\Prism}((X, M_X)/(A, M_A))\coloneqq R\Gamma (((X, M_X)/(A, M_A))_{\Prism}, \mO)\cong R\Gamma (X_{\ett}, \Prism^\L_{(X, M_X)}).
\]
In this case (under the assumption of Proposition \ref{prop:sections_of_derived_prismatic}), we also have 
\[
\cl \Prism^\L_{(X, M_X)} \isom \cl \Prism_{(X, M_X)},
\]
where $\cl \Prism_{(X, M_X)} =\cl \Prism_{(X, M_X)/(A, M)} \coloneqq R \nu_* \cl \mO_{\Prism}$. Moreover, under this isomorphism, the derived conjugate filtration $\Fil_i \cl \Prism^\L_{(X, M_X)} $ gets identified with the canonical filtration $\tau_{\le i} \cl \Prism_{(X, M_X)}$.
\er


\subsection{Universal pre-log prisms of semiperfectoid pre-log rings.} \label{ss:universal_object_bdd}
\noindent 

\noindent
Given a semiperfectoid pre-log ring, we discuss an ``initial'' pre-log prism and its relation to derived log prismatic cohomology. 

Let $\ul S = (S, N)$ be a semiperfectoid integral pre-log ring. Let $\ul R \to \ul S$ be a map from a perfectoid integral pre-log ring $\ul R = (R, M)$ that is surjective on rings and surjective modulo invertible elements on monoids. 
Take the exactification 
\[M^{\flat} \to \widetilde{M} \to N\] of the composition $M^{\flat}\to M \to N$ and set
\begin{gather*}
\widetilde{R}\coloneqq R\widehat{\otimes}_{\Z_p} \Z_p \langle \widetilde{M} \rangle, \quad
\Ainf(R, M^{\flat})\coloneqq \Ainf(R)\widehat{\otimes}_{\Z_p} \Z_p \langle M^\flat \rangle, \\
\Ainf(R, \widetilde{M}) \coloneqq \Ainf(R)\widehat{\otimes}_{\Z_p} \Z_p \langle \widetilde{M} \rangle.
\end{gather*}
The ring $\Ainf(R, M^{\flat})$ is perfectoid and $(\Ainf(R, M^{\flat}), (\xi), M^{\flat})$ is a perfect pre-log prism of rank 1. 
We also have a bounded pre-log prism $(\Ainf(R, \widetilde{M}), (\xi), \widetilde{M})$ of rank 1 with a map
\[
(\Ainf(R, M^{\flat}), (\xi), M^{\flat}) \to
(\Ainf(R, \widetilde{M}), (\xi), \widetilde{M}). 
\]
The map of pre-log rings
\[ (\Ainf(R, \widetilde{M}), \widetilde{M}) \to
(S, N) \]  induces an exact surjection on the associated log rings. 
Applying the existence of (nonlog) prismatic envelopes to the surjection $\Ainf(R, \widetilde{M})\to S$, we obtain a pre-log prism 
\[(\Prism^{\init}_{\ul S/ \ul R}, (\xi), M^{\init}_{\ul S/ \ul R}\coloneqq \widetilde{M})\] over $(\Ainf(R, \widetilde{M}), (\xi), \widetilde{M})$ equipped with a map $S\to \Prism^{\init}_{\ul S/ \ul R}/\xi$ and an exact surjection
\[
(\Prism^{\init}_{\ul S/ \ul R}, M^{\init}_{\ul S/ \ul R})^a \to (\Prism^{\init}_{\ul S/ \ul R}/\xi, N\to \Prism^{\init}_{\ul S/ \ul R}/\xi)^a, 
\]
where the superscript $a$ denotes the associated log ring. 
Similarly, we have a map
\[
S \to \Prism^{\init}_{\ul S/ \ul R, \tu{perf}}/\xi\coloneqq(\varinjlim_{\phi} \Prism^{\init}_{\ul S/ \ul R})^{\wedge}_{(p,\xi)}/\xi
\]
with an exact surjection
\[
(\Prism^{\init}_{\ul S/ \ul R, \tu{perf}}, M^{\init}_{(S,N)/(R, M)})^a \to (\Prism^{\init}_{\ul S/ \ul R, \tu{perf}}/\xi, N\to \Prism^{\init}_{\ul S/ \ul R, \tu{perf}}/\xi)^a. 
\]

\begin{proposition}\label{initial object}
For any integral ``log prism'' $(A, I, M_A)$, equipped with a map $S\to A/I$ and an exact surjection
\[
(A, M_A) \to (A/I, N\to S \to A/I)^a,
\]
there is a unique map of pre-log prisms
\[
(\Prism^{\init}_{\ul S/ \ul R}, (\xi), M^{\init}_{\ul S/ \ul R}) \to  (A, I, M_A)
\] 
compatible with exact surjections above.  The same universality with an additional condition that $A$ being perfect (resp. $(A, I, M_A)$ being perfect) holds for the perfection 
\[
(\Prism^{\init}_{\ul S/ \ul R, \tu{perf}}, (\xi), M^{\init}_{\ul S/ \ul R}) \quad
(\textnormal{resp.}~ (\Prism^{\init}_{\ul S/ \ul R, \tu{perf}}, (\xi), M^{\init,\perf}_{\ul S/ \ul R})).
\]
\end{proposition}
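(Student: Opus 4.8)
The plan is to reduce the statement to the universal property of the \emph{nonlog} prismatic envelope of \cite{BS} --- which is precisely how $(\Prism^{\init}_{\ul S/ \ul R}, (\xi))$ was defined on underlying prisms --- together with Lemma \ref{perfectoid pre-log prism is initial}. Since $M^{\init}_{\ul S/ \ul R} = \widetilde M$ is simply carried along everywhere, the only genuinely new input is the exactification $\widetilde M$. Concretely, proving the universal property amounts to two things: upgrading the nonlog envelope's universal property so as to include the monoid $\widetilde M$ (which is formal, as $\widetilde M$ is adjoined freely), and, given $(A,I,M_A)$ together with $S\to A/I$ and the exact surjection $(A,M_A)\to (A/I, N\to S\to A/I)^a$ --- data that does \emph{not} a priori include a map from the base $(\Ainf(R, \widetilde M), (\xi), \widetilde M)$ --- manufacturing a unique such base map $f$.

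The construction of $f$ I would carry out in three sub-steps. First, Lemma \ref{perfectoid pre-log prism is initial} (used through Remarks \ref{remark:variant_category_C} and \ref{remark:variant_category_C_2}), applied to the perfectoid integral pre-log ring $(R, M)$ and the integral ``log prism'' $(A,I,M_A)$, turns the composite $(R,M)\to (S,N)$, the map $S\to A/I$ and the given exact surjection into a unique map of pre-log prisms $(\Ainf(R), \ker\theta, M^{\flat}) \to (A,I,M_A)$. Second, combining the underlying $\delta$-ring map $\Ainf(R)\to A$ with the $\delta$-ring map $\Z_p\langle M^{\flat}\rangle \to A$ determined by its monoid component $M^{\flat}\to M_A$ --- legitimate because $\Z_p\langle M^{\flat}\rangle$ has rank $1$, i.e.\ $\delta_{\log}$ vanishes on $M^{\flat}$ --- extends this to a map $(\Ainf(R, M^{\flat}), (\xi), M^{\flat}) \to (A,I,M_A)$ with some monoid component $\beta\colon M^{\flat}\to M_A$. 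Third, I extend $\beta$ along the localization $M^{\flat}\hookrightarrow \widetilde M\subseteq (M^{\flat})^{\mathrm{gp}}$: groupifying $\beta$ gives a map $\widetilde M\to M_A^{\mathrm{gp}}$, and the point is that its image already lies in $M_A$. Indeed, writing $\overline M$ for the monoid of $(A/I, N\to S\to A/I)^a$ and $g\colon M_A\to\overline M$ for the given surjection, each $\widetilde m\in\widetilde M$ has an image in $\overline M$ (via $\widetilde M\to N\to \overline M$) which lifts, by surjectivity of $g$, to some $\mu\in M_A$; since $M_A=(g^{\mathrm{gp}})^{-1}(\overline M)$ in $M_A^{\mathrm{gp}}$ by exactness, $\mu$ and the image of $\widetilde m$ in $M_A^{\mathrm{gp}}$ differ by an element of $\ker(M_A^{\mathrm{gp}}\to\overline M^{\mathrm{gp}})\subseteq M_A^{\times}$, whence the image of $\widetilde m$ lies in $M_A$. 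This yields the forced, hence unique, extension $\widetilde\beta\colon\widetilde M\to M_A$; it is $\delta_{\log}$-trivial because $\widetilde M$ lies in the subgroup of $(M^{\flat})^{\mathrm{gp}}$ generated by $\delta_{\log}$-trivial elements, and it assembles with $\Ainf(R)\to A$ (using $\Ainf(R,\widetilde M)=\Ainf(R)\widehat{\otimes}_{\Z_p}\Z_p\langle\widetilde M\rangle$) into the desired map $f$, which reduces modulo $(\xi)$ to the given data and is unique with that property.

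Having $f$, I would invoke the envelope's universal property: by construction $(\Prism^{\init}_{\ul S/ \ul R}, (\xi))$ is the nonlog prismatic envelope of $\Ainf(R,\widetilde M)\to S$ over $(\Ainf(R,\widetilde M), (\xi))$, hence by \cite{BS} initial among prisms $(B,J)$ with a prism map from $(\Ainf(R,\widetilde M), (\xi))$ for which $\Ainf(R,\widetilde M)\to B/J$ factors through $S$. The underlying prism map of $f$ exhibits $(A,I)$ as such a target (the factorization being the given $S\to A/I$, which agrees modulo $(\xi)$ with the one coming from $f$), so there is a unique prism map $(\Prism^{\init}_{\ul S/ \ul R}, (\xi))\to (A,I)$ over $\Ainf(R,\widetilde M)$ compatible with the maps out of $S$; equipping both sides with $\widetilde M$, $M_A$ and $\widetilde\beta$ upgrades it to a map of pre-log prisms, compatible with the exact surjections since compatibility holds modulo $(\xi)$ on rings and the monoid parts both factor through $\widetilde M\to N$. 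Uniqueness follows from the uniqueness in the two universal properties used, as any such map of pre-log prisms restricts along $\Ainf(R,\widetilde M)\to \Prism^{\init}_{\ul S/ \ul R}$ to an $f$ of the above kind. For the perfect variants: when $A$ is perfect the map factors uniquely through the $(p,\xi)$-completed perfection $\Prism^{\init}_{\ul S/ \ul R,\mathrm{perf}}$, with the monoid carried along; and when $(A,I,M_A)$ is itself perfect one additionally uses that $M_A/A^{\times}$ is uniquely $p$-divisible (so $M_A$ is $p$-saturated modulo units) to extend $\widetilde\beta$ uniquely over the $p$-saturation $M^{\init,\perf}_{\ul S/ \ul R}$ of $\widetilde M$.

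The step I expect to be the main obstacle is the monoid bookkeeping in the middle paragraph: verifying that exactness of the hypothesised surjection $(A,M_A)\to (A/I, N\to S\to A/I)^a$ is exactly what is needed to extend the chart $\beta\colon M^{\flat}\to M_A$ over the exactified monoid $\widetilde M$, and that this extension stays compatible with all the $\delta$- and $\delta_{\log}$-structures in play. Once this is settled, the rest is a routine assembly on top of \cite{BS} and Lemma \ref{perfectoid pre-log prism is initial}.
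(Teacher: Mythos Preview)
Your proposal is correct and follows essentially the same route as the paper's proof: obtain a map from $(\Ainf(R,M^\flat),(\xi),M^\flat)$ to $(A,I,M_A)$ via Lemma~\ref{perfectoid pre-log prism is initial}, extend it over the exactification $\widetilde M$, and then invoke the universal property of the nonlog prismatic envelope. The only visible difference is packaging: the paper applies Lemma~\ref{perfectoid pre-log prism is initial} once to the perfectoid pre-log ring $(\Ainf(R,M^\flat)/(\xi),M^\flat)$ to land directly on $(\Ainf(R,M^\flat),(\xi),M^\flat)$, whereas you apply it to $(R,M)$ and then adjoin $\Z_p\langle M^\flat\rangle$ by hand---this is cosmetic. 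Where you genuinely add value is in spelling out the extension step ``which extends to $(\Ainf(R,\widetilde M),(\xi),\widetilde M)$'': your use of the exactness of $M_A\to\overline M$ to show that $\beta^{\textup{gp}}(\widetilde M)\subset M_A$, together with the check that $\delta_{\log}$-triviality propagates along quotients in $M_A$ (via the addition formula for $\delta_{\log}$), is exactly what the paper leaves to the reader. Your treatment of the perfect variants is likewise parallel to the paper's one-line ``follows immediately'', though note that $M^{\init,\perf}_{\ul S/\ul R}$ is really the monoid of the associated log ring of $\varinjlim_\phi\widetilde M\to\Prism^{\init}_{\ul S/\ul R,\perf}$ rather than literally the $p$-saturation of $\widetilde M$; in practice these agree up to units here, so your argument goes through.
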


\begin{proof}
By the assumption, we have
\[
(A / \xi, M_A )^a \cong (A/ \xi, N \to S \to A/I)^a. 
\]
In particular, we have a map $(S, N)^a \to (A/\xi, M_A)^a$. 
Therefore, we can consider the following composition
\[
(\Ainf(R, M^{\flat}), M^\flat)^a \to (S, N)^a \to (A/\xi, M_A)^a \to (A/(p, \xi), M_A)^a. 
\]
By Lemma \ref{perfectoid pre-log prism is initial}, the composite lifts uniquely to a map
\[
(\Ainf(R, M^{\flat}), (\xi), M^{\flat})^a \to (A, I, M_A)
\]
of pre-log prisms, which extends to $(\Ainf(R, \widetilde{M}), (\xi), \widetilde{M})$ and factors through the prismatic envelope
\[
(\Ainf(R, \widetilde{M}), (\xi), \widetilde{M}) \to
(\Prism^{\init}_{\ul S/ \ul R}, (\xi), M^{\init}_{\ul S/ \ul R}). 
\]
Therefore, we obtain a map
\[
(\Prism^{\init}_{\ul S/ \ul R}, (\xi), M^{\init}_{\ul S/ \ul R}) \to 
(A, I, M_A)
\]
as desired. The claim for the perfection follows immediately. 
\end{proof}

\begin{remark} \label{remark:``initial''_object}
While the notation suggests that $(\Prism^{\init}_{\ul S/ \ul R}, (\xi), M^{\init}_{\ul S/ \ul R})^a$ is ``initial'', this pre-log prism is in general not the initial object in the category 
$\mC_{/(S, N)}^{\tu{str}}$ considered in Remark \ref{remark:variant_category_C_2}. 
The issue is that $(\Prism^{\init}_{\ul S/ \ul R}, (\xi))$ is not necessarily bounded. On the other hand, in the important special case when $\ul S$ is   quasiregular semiperfectoid, the ``log prism'' $(\Prism^{\init}_{\ul S/ \ul R}, (\xi), M^{\init}_{\ul S/ \ul R})^a$ turns out to be literally initial and independent of the choice of $\ul R \ra \ul S$. We will see this from Proposition \ref{prop:compare_derived_prismatic_with_initial} below. 
\end{remark}

\br 
Moreover, the perfect counterpart $(\Prism^{\init}_{\ul S/ \ul R, \tu{perf}}, (\xi), M^{\init,\perf}_{\ul S/ \ul R})^a$ is also initial (in the relevant category of perfect ``log prisms'') as any perfect prism is bounded. In particular, $\Prism_{\ul S/ \ul R, \tu{perf}}$ is independent of the choice of $\ul R$ and the surjection $\ul R \ra \ul S$, and we shall write 
\[
\Prism_{\ul S, \tu{perf}} \coloneqq \Prism_{\ul S/ \ul R, \tu{perf}}
\]
in the rest of the article. 
\er 

If $\ul S = (S, N)$ is a semiperfectoid integral pre-log ring, then the derived log prismatic cohomology 
\[
\Prism_{\ul S/\Ainf(\ul R)} \cong
\Prism_{\ul S/(\Ainf(R, M^{\flat}), M^{\flat})}
\]
is discrete by the Hodge--Tate comparison and the vanishing of $\Omega^1_{\ul S / \ul R}$. Analogously to \cite[Proposition 7.10]{BS}, we can compare it to the ``initial object'' above if $\ul S$ is moreover quasiregular. 

\begin{proposition} \label{prop:compare_derived_prismatic_with_initial}
Assume that $\ul S = (S, N)$ is a quasiregular semiperfectoid integral pre-log ring. Then $\Prism_{\ul S/ \ul R}$ is discrete and naturally has the structure of a $\delta$-ring, and there is an isomorphism of $\delta$-rings
\[
\Prism_{\ul S/\Ainf(\ul R)}  \cong \Prism^{\init}_{\ul S/ \ul R}. 
\]
Moreover, $\Prism^{\init}_{\ul S/ \ul R}$ is bounded and independent of the choice of $\ul R \ra \ul S$. In particular, we obtain an initial ``log prism'' 
\[ 
(\Prism^{\init}_{\ul S}, (\xi), M^{\init}_{\ul S})\coloneqq (\Prism^{\init}_{\ul S/ \ul R}, (\xi), M^{\init}_{\ul S/ \ul R})^a
\]
in the category $\mC_{/(S, N)}^{\tu{str}}$ considered in Remark \ref{remark:variant_category_C_2}, as indicated in Remark \ref{remark:``initial''_object}.
\end{proposition}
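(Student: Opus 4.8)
\emph{Proof plan.} The strategy is to mimic the proof of \cite[Proposition 7.10]{BS} in the logarithmic setting, using Proposition \ref{initial object} for the universal property and the derived Hodge--Tate comparison of Proposition \ref{prop:derived_HT} for discreteness; the only genuinely new point is the boundedness of the envelope, which is forced by the exactification procedure introducing the non-perfect base prism $\sq A = \Ainf(R,\sq M)$, and this is handled by reducing to the flatness of (nonlog) prismatic envelopes together with a computation of the log cotangent complex.

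First I would recall, as already observed in the discussion preceding the statement, that $\Prism_{\ul S/\Ainf(\ul R)}$ is discrete. Since $\ul S$ is quasiregular semiperfectoid, $\widehat\L_{\ul S/\ul R}[-1]$ is $p$-completely flat over $S$: this follows from Lemma \ref{lemma:log_BMS_4.25} applied to the perfectoid ring $R$ equipped with its trivial pre-log structure, together with $\widehat\L_{\ul S/R}\cong\widehat\L_{\ul S/\ul R}$ (Lemma \ref{lemma:perfectoid_monoid}). Hence by Proposition \ref{prop:derived_HT} the conjugate-filtration graded pieces $(\midwedge^i\widehat\L_{\ul S/\ul R})\{-i\}[-i]$ all sit in degree $0$, so $\cl\Prism_{\ul S/\Ainf(\ul R)}$ is discrete, and then $\Prism_{\ul S/\Ainf(\ul R)}$ is discrete by derived $(p,\xi)$-completeness together with $\xi$-torsion-freeness. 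For the $\delta$-structure: the derived log prismatic cohomology functor is obtained by deriving a functor on free pre-log algebras whose values are computed by \v{C}ech--Alexander complexes of honest prisms, so it naturally takes values in animated $\delta$-$\Ainf(R)$-algebras; as the value at $\ul S$ is discrete, this refines $\phi$ to a genuine $\delta$-ring structure (alternatively, transport the $\delta$-structure of the prism $\Prism^{\init}_{\ul S/\ul R}$ along the isomorphism below).

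Next I would identify $\Prism_{\ul S/\Ainf(\ul R)}$ with $\Prism^{\init}_{\ul S/\ul R}$. By the construction recalled in this subsection the ring $\Prism^{\init}_{\ul S/\ul R}$ is the (nonlog) prismatic envelope of $\sq A\twoheadrightarrow S$, and, via the exactification built into that construction, $\Prism_{\ul S/\Ainf(\ul R)}\cong\Prism_{S/\sq A}$ (nonlog derived prismatic cohomology over the bounded prism $\sq A$); so the statement reduces to the nonlog assertion $\Prism_{S/\sq A}\cong\Prism^{\init}_{S/\sq A}$. The crucial input is that $\sq R\coloneqq\sq A/\xi\twoheadrightarrow S$ is quasiregular, i.e.\ $\widehat\L_{S/\sq R}$ is $p$-completely flat and concentrated in degree $[-1]$. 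To see this I would run the transitivity triangle for $(R,M^\flat)\to(\sq R,\sq M)\to(S,N)$, using: that the exact surjection $(\sq R,\sq M)^a\to(S,N)^a$ is strict, so $\widehat\L_{(S,N)/(\sq R,\sq M)}\cong\widehat\L_{S/\sq R}$ (Remark \ref{remark:compare_Gabber_with_Olsson}); that the exactification map $(R,M^\flat)\to(\sq R,\sq M)$ is integral, so $\widehat\L_{(\sq R,\sq M)/(R,M^\flat)}$ is $p$-completely flat and concentrated in degree $0$ (Lemma \ref{lemma:compare_Gabber_with_Olsson}); and that $\widehat\L_{(S,N)/(R,M^\flat)}=\widehat\L_{\ul S/\ul R}$ (Lemma \ref{lemma:pre_log_cotangent_same_as_log}) is $p$-completely flat and concentrated in degree $-1$; a short homological chase then gives the claim. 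With this, the nonlog theory of prismatic envelopes (Section 3 of \cite{BS}) shows $\Prism^{\init}_{S/\sq A}$ is $(p,\xi)$-completely flat over $\sq A$; since $\sq A$ is bounded (a rank-one prism over the perfectoid $\sq A/\xi$, hence $\xi$-torsion-free with bounded $p^\infty$-torsion), $\Prism^{\init}_{S/\sq A}$ is bounded, proving the boundedness assertion, and its Hodge--Tate fiber is discrete with the expected conjugate filtration. The argument of \cite[Proposition 7.10]{BS} then applies verbatim: the universal property of the prismatic envelope (the nonlog analogue of Proposition \ref{initial object}) gives a map $\Prism^{\init}_{S/\sq A}\to\Prism_{S/\sq A}$, a \v{C}ech--Alexander presentation of $\Prism_{S/\sq A}$ by honest prisms over $\sq A$ (each receiving a map from $\Prism^{\init}_{S/\sq A}$) gives the inverse, and the two are mutually inverse by the uniqueness in the respective universal properties; both maps are automatically $\phi$- (and hence $\delta$-) equivariant.

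Finally, now that $\Prism^{\init}_{\ul S/\ul R}$ is known to be bounded, $(\Prism^{\init}_{\ul S/\ul R},(\xi),M^{\init}_{\ul S/\ul R})^a$ is an object of $\mC_{/(S,N)}^{\tu{str}}$ (Remark \ref{remark:variant_category_C_2}), and Proposition \ref{initial object} exhibits a unique morphism from it to each object of that category, so it is the initial one; independence of the choice of $\ul R\to\ul S$ is then immediate. The hard part is the middle step: making precise the reduction $\Prism_{\ul S/\Ainf(\ul R)}\cong\Prism_{S/\sq A}$ through the exactification and — since $\sq A$ is genuinely non-perfect, so \cite[Proposition 7.10]{BS} cannot be quoted for it directly — verifying by hand that $\sq A/\xi\twoheadrightarrow S$ is quasiregular, which is exactly where strictness of the exactification and Lemma \ref{lemma:log_BMS_4.25} enter.
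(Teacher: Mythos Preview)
Your strategy is essentially the paper's: reduce to the nonlog derived prismatic cohomology $\Prism_{S/\sq A}$ via the exactification, check that $\widehat\L_{S/(\sq A/\xi)}[-1]$ is $p$-completely flat, and then invoke the analogue of \cite[Proposition~7.10]{BS} over the bounded (non-perfect) prism $\sq A$. The paper packages this last step as the separate Lemma~\ref{sufficient condition for being initial} and obtains the $\delta$-structure from \cite[Lemma~7.7]{BS}, but the content is the same.

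Two points to correct. First, your parenthetical ``a rank-one prism over the perfectoid $\sq A/\xi$'' is wrong: $\sq A/\xi = R\widehat\otimes_{\Z_p}\Z_p\langle\widetilde M\rangle$ is \emph{not} perfectoid (the exactified monoid $\widetilde M$ is not $p$-divisible), and indeed you yourself stress later that $\sq A$ is non-perfect. Boundedness of $\sq A$ instead comes from its being $(p,\xi)$-completely flat over the perfect prism $\Ainf(R)$. Second, ``Section~3 of \cite{BS}'' treats prismatic envelopes for \emph{regular} sequences, not quasiregular quotients; the flatness and initiality you need are exactly what Lemma~\ref{sufficient condition for being initial} (equivalently \cite[Proposition~7.10]{BS} generalized to bounded base prisms, or \cite[4.3.10]{APC}) provides. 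As a small sharpening: in your transitivity triangle the term $\widehat\L_{(\sq R,\widetilde M)/(R,M^\flat)}$ is not merely flat in degree~$0$ but actually vanishes (the map factors as a perfectoid map followed by a log \'etale exactification), so $\widehat\L_{S/\sq R}\cong\widehat\L_{\ul S/\ul R}$ directly and no homological chase is needed.
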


\begin{proof} 
By assumption we know that $S$ has bounded $p^{\infty}$-torsion and $\L_{\ul S/ \ul R}[-1]$ is a $p$-completely flat $S$-module. 
By the Hodge--Tate comparison, $\overline{\Prism}_{\ul S/ (\Ainf(\ul R)}$ is discrete and $p$-completely flat. 
We have isomorphisms
\begin{align*}
\Prism_{\ul S/ (\Ainf(\ul R))} 
&\cong \Prism_{\ul S/(\Ainf(R, M^{\flat}), M^{\flat})} \\
&\cong \Prism_{\ul S/(\Ainf(R, \widetilde{M}), \widetilde{M})} \\
&\cong \Prism_{S/\Ainf(R, \widetilde{M})}. 
\end{align*}
Applying \cite[Lemma 7.7]{BS} to $\Prism_{S/\Ainf(R, \widetilde{M})}$, we obtain the structure of a $\delta$-ring on it, and know that the bounded prism $(\Prism_{S/\Ainf(R, \widetilde{M})}, (\xi))$ is weakly initial in the unbounded prismatic site $(S/ \Ainf(R, \widetilde{M}))_{\Prism}^{\textnormal{unbdd}}$ whose objects are not assumed to bounded. 
Given Proposition \ref{initial object}, we need only show that $(\Prism_{S/\Ainf(R, \widetilde{M})}, (\xi))$ is actually initial, i.e., isomorphic to the prismatic envelope of the surjection $\Ainf(R, \widetilde{M}) \to S$, which follows from Lemma \ref{sufficient condition for being initial}. 
\end{proof}

We used the following version of \cite[Proposition 7.10]{BS}:

\begin{lemma}\label{sufficient condition for being initial}
Let $(A, I)$ be a bounded prism and $A/I \to S$ a surjection to a ring with bounded $p^{\infty}$-torsion. 
If $\L_{S/ (A/I)}[-1]$ is $p$-completely flat $S$-module, then $(\Prism_{S/A}, I\Prism_{S/A})$ defines a prism and it is initial in the unbounded prismatic site $(S/A)_{\Prism}^{\textnormal{unbdd}}$. 
\end{lemma}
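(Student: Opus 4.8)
The plan is to follow the strategy of \cite[Proposition 7.10]{BS}. Weak initiality of $\Prism_{S/A}$ in $(S/A)_{\Prism}^{\textnormal{unbdd}}$ is already supplied by \cite[Lemma 7.7]{BS}, which also endows $\Prism_{S/A}$ with a $\delta$-structure; so the two tasks are (i) to use the hypothesis on $\L_{S/(A/I)}$ to show that $\Prism_{S/A}$ is concentrated in degree $0$ and $I$-torsion-free, so that $(\Prism_{S/A}, I\Prism_{S/A})$ is an honest (bounded) prism lying in the site, and (ii) to upgrade weak initiality to genuine initiality.

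For (i), first I would analyze $\overline{\Prism}_{S/A} = \Prism_{S/A}\otimes^\L_A A/I$. By the (nonlog) Hodge--Tate comparison of \cite{BS} it carries an exhaustive increasing conjugate filtration with $\grade_i\overline{\Prism}_{S/A}\cong(\midwedge^i\L_{S/(A/I)})^{\wedge}\{-i\}[-i]$. Since $\L_{S/(A/I)}[-1]$ is a $p$-completely flat $S$-module, décalage identifies $\midwedge^i\L_{S/(A/I)}[-i]$ with the $i$-th divided power of this flat module (cf. \cite{BMS1, BMS2}), so every graded piece is $p$-completely flat and concentrated in degree $0$; running up the filtration, $\overline{\Prism}_{S/A}$ is itself concentrated in degree $0$ and $p$-completely flat over $S$, hence of bounded $p^{\infty}$-torsion. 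Working Zariski-locally on $\spf A$ we may take $I=(d)$ with $d$ a nonzerodivisor, whence $A/I$ has free resolution $[A\xrightarrow{d}A]$ and $H^{-1}(\overline{\Prism}_{S/A}) = \Prism_{S/A}[d]$; its vanishing gives that $\Prism_{S/A}$ is $d$-torsion-free, and derived Nakayama — $\Prism_{S/A}$ is derived $(p,I)$-complete with discrete reduction modulo $I$ — gives that $\Prism_{S/A}$ is concentrated in degree $0$. Together with the $\delta$-structure, the $d$-torsion-freeness (which makes $I\Prism_{S/A}$ invertible), and the congruence $p\in(I,\phi(I))$ inherited from $(A,I)$, this shows $(\Prism_{S/A},I\Prism_{S/A})$ is a bounded prism; the bottom map $S\to\overline{\Prism}_{S/A}$ of the conjugate filtration exhibits it as an object of $(S/A)_{\Prism}^{\textnormal{unbdd}}$.

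For (ii), set $\overline J\coloneqq\ker(A/I\to S)$ and lift it to an ideal $J\subseteq A$. The prismatic envelope construction of \cite[\S3]{BS} yields a $(p,I)$-complete $\delta$-$A$-algebra $\mathrm{Env}_A(\overline J)$ whose universal property reads: a $\delta$-$A$-algebra map $\mathrm{Env}_A(\overline J)\to B$ into a $(p,I)$-complete $\delta$-$A$-algebra exists and is unique exactly when $B$ is $I$-torsion-free and $JB\subseteq IB$. As this imposes no boundedness on $B$, $\mathrm{Env}_A(\overline J)$ is the initial object of $(S/A)_{\Prism}^{\textnormal{unbdd}}$. I would then show that the canonical map $\mathrm{Env}_A(\overline J)\to\Prism_{S/A}$ (which exists since the target is in the site) is an isomorphism: both sides are derived $(p,I)$-complete and $I$-torsion-free, so by Nakayama it suffices to check this modulo $I$, where both reductions compute the divided-power (equivalently, derived de Rham) envelope of $\overline J$; under the present flatness hypothesis this envelope is the discrete ring $\overline{\Prism}_{S/A}$ — for $\Prism_{S/A}$ by the Hodge--Tate comparison already used, and for $\mathrm{Env}_A(\overline J)$ by the explicit flat formula of \cite[\S3]{BS} after reducing, via faithfully flat descent along $A/I\to\overline{\Prism}_{S/A}$ together with the conjugate filtration, to the case in which $\overline J$ is generated by a Koszul-regular sequence. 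Hence $\Prism_{S/A}\cong\mathrm{Env}_A(\overline J)$ is initial in $(S/A)_{\Prism}^{\textnormal{unbdd}}$.

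The hard part will be step (ii), and specifically the identification $\mathrm{Env}_A(\overline J)\cong\Prism_{S/A}$ modulo $I$: one must match the two a priori different constructions of the graded/divided-power envelope of $\overline J$ and verify that the comparison map respects the relevant filtrations. The cleanest route seems to be reduction to the Koszul-regular case of \cite[\S3]{BS} combined with flat descent, but the bookkeeping there needs care; by contrast the inputs for step (i) — Hodge--Tate comparison, the décalage formula for exterior powers, and derived Nakayama — are routine.
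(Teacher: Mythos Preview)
Your step (i) is correct and is essentially the input to \cite[Lemma 7.7]{BS}, which then supplies the $\delta$-structure and weak initiality. The gap is in step (ii).

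First, the object $\mathrm{Env}_A(\overline J)$ with the universal property you state is \emph{not} constructed in \cite[\S3]{BS} for a general ideal $\overline J$: the prismatic envelope there is only built, and only shown to be $I$-torsion-free, when $\overline J$ is generated by a $p$-completely regular sequence. For arbitrary $\overline J$, forming $A\{J/I\}^{\delta}$, completing, and killing $I$-torsion need not stabilize, so asserting that $\mathrm{Env}_A(\overline J)$ exists as an honest $I$-torsion-free $(p,I)$-complete $\delta$-ring already presupposes the conclusion. Second, and more concretely, your proposed descent is broken: the map $A/I \to \overline{\Prism}_{S/A}$ factors through the \emph{surjection} $A/I \twoheadrightarrow S$, hence is never faithfully flat, and in any case descending along a map out of $A/I$ cannot change whether the ideal $\overline J \subset A/I$ is Koszul-regular.

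The paper (following \cite[Proposition 7.10]{BS}) takes a different route that sidesteps both issues. Rather than comparing with an abstract envelope, one changes the presentation: using \cite[Lemma 7.7]{BS} one replaces $S$ by a ring of the form
\[
A/I\langle X_1^{1/p^{\infty}}, \dots, X_r^{1/p^{\infty}}\rangle /(X_1 - f_1, \dots, X_r - f_r)
\]
and simultaneously enlarges the base prism to $A\langle X_1^{1/p^{\infty}}, \dots, X_r^{1/p^{\infty}}\rangle$ with $\delta(X_i^{1/p^k})=0$. Over this new base the defining sequence $(X_i - f_i)$ \emph{is} $p$-completely regular, so \cite[Example 7.9]{BS} applies directly and identifies the derived prismatic cohomology with the regular prismatic envelope, which is manifestly initial. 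The point of \cite[Lemma 7.7]{BS} is precisely that this change of $(A,S)$ is compatible with both derived prismatic cohomology and the prismatic site, so initiality transfers back. Your intuition that one should reduce to the Koszul-regular case is correct; the mechanism is this explicit change of base and presentation, not descent along $A/I\to\overline{\Prism}_{S/A}$.
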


This is also contained in \cite[Remark 4.3.9, Proposition 4.3.10]{APC} (with the same proof), and is strengthen in \cite[Corollary 4.3.14]{APC} by dropping the surjectivity assumption. 

\begin{proof}
The argument is the same as in \cite[Proposition 7.10]{BS}: using \cite[Lemma 7.7]{BS}, one replaces $S$ by a ring of the form of
\[
A/I \langle X_1^{1/p^{\infty}}, \dots, X_r^{1/p^{\infty}}\rangle / (X_1 - f_1, \dots, X_r - f_r). 
\]
Then replace $A$ by $A\langle X_1^{1/p^{\infty}}, \dots, X_r^{1/p^{\infty}}\rangle$ with $\delta (X_i^{1/p^k})=0$ for all $i, k$. The resulting case is settled in \cite[Example 7.9]{BS}. 
\end{proof}

For a general semiperfectoid integral pre-log ring $\ul S$, we can conclude the same after taking the perfection.  
\begin{proposition}\label{perfect log prismatic cohomology is initial}
Assume that $\ul S = (S, N)$ is a semiperfectoid integral pre-log ring. Then $\Prism_{\ul S/ \ul R, \perf}$ is discrete and naturally has the structure of a perfect prism, and there is an isomorphism of perfect $\delta$-rings
\[
\Prism_{\ul S/\Ainf(\ul R), \perf}  \cong \Prism^{\init}_{\ul S, \perf}. 
\]
\end{proposition}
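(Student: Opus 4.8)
The plan is to reduce Proposition \ref{perfect log prismatic cohomology is initial} to the already-established quasiregular case (Proposition \ref{prop:compare_derived_prismatic_with_initial}) by passing to the perfection, which has the effect of replacing a general semiperfectoid pre-log ring by a quasiregular one. More precisely, first I would recall that, just as in the non-log situation, taking the $p$-completed perfection of a prism over $\Ainf(\ul R)$ corresponds on mod-$\xi$ fibers to a faithfully flat (in fact $p$-completely faithfully flat) operation, so the derived log prismatic cohomology $\Prism_{\ul S/\Ainf(\ul R), \perf}$ can be computed by the same colimit construction $\left(\varinjlim_{\phi} \Prism_{\ul S/\Ainf(\ul R)}\right)^{\wedge}_{(p,\xi)}$ used to define $\Prism^{\init}_{\ul S/\ul R, \perf}$ in Subsection \ref{ss:universal_object_bdd}. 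Since $\Prism_{\ul S/\Ainf(\ul R)}$ need not be discrete for general (non-quasiregular) $\ul S$, the key point is that this colimit-perfection \emph{does} become discrete, $p$-completely flat over $S$, and carries the structure of a perfect $\delta$-ring.

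The main step, then, is the following reduction: choose the perfectoid cover $\ul R = (R, M) \to \ul S = (S, N)$ as in Subsection \ref{ss:universal_object_bdd}, form the exactification $M^{\flat} \to \widetilde M \to N$, and use the identifications
\[
\Prism_{\ul S/\Ainf(\ul R)} \cong \Prism_{\ul S/(\Ainf(R, \widetilde M), \widetilde M)} \cong \Prism_{S/\Ainf(R,\widetilde M)}
\]
coming from the vanishing of $\widehat\L_{\ul S/\ul R}$ (Corollary \ref{cor:cotangent_for_perfectoid}) together with Lemma \ref{lemma:pre_log_cotangent_same_as_log} and the smoothness of $\Ainf(R,\widetilde M)$ over $\Ainf(R, M^{\flat})$. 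This puts us in the non-log situation: I need a non-log statement to the effect that, for a bounded prism $(A,I)$ and \emph{any} surjection $A/I \to S$ with $S$ having bounded $p^\infty$-torsion, the $p$-completed perfection $\Prism_{S/A, \perf} := \left(\varinjlim_\phi \Prism_{S/A}\right)^\wedge_{(p,\xi)}$ is discrete, a perfect prism, and initial in the perfect unbounded prismatic site of $S/A$. For this I would either invoke the relevant statement from \cite{BS} / \cite{APC} (the perfection of $\Prism_{S/A}$ — which is weakly initial in the unbounded site by \cite[Lemma 7.7]{BS} and Lemma \ref{sufficient condition for being initial} once one has enough flatness, but more robustly one writes $S$ as a quotient of a $p$-completely smooth $\Z_p$-algebra adjoined $p$-power roots and reduces to \cite[Example 7.9]{BS}), or run the argument of Lemma \ref{sufficient condition for being initial} directly after perfection. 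The discreteness after perfection comes from the fact that the derived prismatic cohomology of a quotient by a Koszul-regular sequence (the case one reduces to) is already discrete, so its perfection is too.

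Concretely, the steps in order are: (i) record that $\Prism_{\ul S/\Ainf(\ul R), \perf} \cong \Prism_{S/\Ainf(R, \widetilde M), \perf}$ via the cotangent-complex comparisons above, reducing to the non-log claim; (ii) as in the proof of Lemma \ref{sufficient condition for being initial}, use \cite[Lemma 7.7]{BS} to replace $S$ by $A/I\gr{X_1^{1/p^\infty}, \dots, X_r^{1/p^\infty}}/(X_1 - f_1, \dots, X_r - f_r)$ and $A$ by $A\gr{X_1^{1/p^\infty}, \dots, X_r^{1/p^\infty}}$ with $\delta(X_i^{1/p^k}) = 0$; (iii) in that situation $\Prism_{S/A}$ is computed explicitly as in \cite[Example 7.9]{BS} and is discrete with bounded $p^\infty$-torsion and $p$-completely flat over $S$, hence its $(p,\xi)$-completed perfection is discrete and a perfect prism; (iv) apply Proposition \ref{initial object} to see that $\Prism^{\init}_{\ul S, \perf} = (\Prism^{\init}_{\ul S/\ul R, \perf}, (\xi), M^{\init,\perf}_{\ul S/\ul R})^a$ receives a map from this perfect ``log prism'', and conversely the universal property of the prismatic envelope together with the perfection of Proposition \ref{initial object} produces the inverse, yielding the isomorphism of perfect $\delta$-rings $\Prism_{\ul S/\Ainf(\ul R), \perf} \cong \Prism^{\init}_{\ul S, \perf}$; the $\delta$-ring (indeed perfect $\delta$-ring) structure is transported along this identification, or obtained directly from \cite[Lemma 7.7]{BS}. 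The main obstacle I anticipate is (iii)–(iv): making sure that ``perfection commutes with the relevant base changes and $p$-completions'' is carried out with the correct boundedness bookkeeping, i.e., that $\left(\varinjlim_\phi \Prism_{S/A}\right)^\wedge_{(p,\xi)}$ really is discrete and flat when $\Prism_{S/A}$ itself is only known to be so after the reduction in (ii), and that the exactification does not interfere — here the fact that $\widetilde M \to N$ is exact and $\Ainf(R,\widetilde M) \to \Ainf(R, M^\flat)$ is $p$-completely smooth is what keeps everything under control.
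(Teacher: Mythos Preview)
Your overall reduction to the non-log situation via the exactification
\[
\Prism_{\ul S/\Ainf(\ul R)} \cong \Prism_{S/\Ainf(R,\widetilde M)}
\]
is correct and is exactly what the paper does. The gap is in the non-log step. Your plan is to rerun the proof of Lemma~\ref{sufficient condition for being initial}: replace $S$ by $A/I\langle X_i^{1/p^\infty}\rangle/(X_i-f_i)$ and invoke \cite[Example 7.9]{BS}. But that argument uses the hypothesis that $\L_{S/(A/I)}[-1]$ is $p$-completely flat, which is precisely the quasiregular condition you do \emph{not} have here. In particular, your assertion in step~(iii) that $\Prism_{S/A}$ is ``$p$-completely flat over $S$'' is unjustified for a merely semiperfectoid $S$; only discreteness follows from the Hodge--Tate comparison and the vanishing of $\Omega^1$. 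Without that flatness you cannot conclude that $(\Prism_{S/A}, I\Prism_{S/A})$ is even a prism, nor identify it with a prismatic envelope. Your opening sentence, that taking perfection ``has the effect of replacing a general semiperfectoid pre-log ring by a quasiregular one,'' reflects this confusion: perfection acts on the prism, not on $S$.

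The paper avoids this by proving a different non-log lemma: if $\Prism_{S/A,\perf}\otimes^\L_A A/I$ is discrete (which follows from semiperfectness via the argument of \cite[Lemma 8.4]{BS}, not from any regularity), then $(\Prism_{S/A,\perf}, I\Prism_{S/A,\perf})$ is a perfect prism and is initial in the \emph{perfect} prismatic site. Initiality is obtained not by prismatic envelopes but through perfectoidization: over a perfect base $(A,I)$ the reduction $\Prism_{S/A,\perf}/I$ is by definition $S_{\mathrm{perfd}}$, which is initial among perfectoid $S$-algebras by \cite[Corollary 8.14]{BS}; the general bounded base is handled by passing to $A_{\perf}$ via \cite[Proposition 8.5]{BS}. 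This route never needs $\L_{S/(A/I)}[-1]$ to be flat, which is the point.
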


\begin{proof}
It follows from the assumption and the proof of \cite[Lemma 8.4]{BS} that $\Prism_{\ul S/ \ul R, \perf}\otimes^\L_A A/I$ is discrete. 
So, by arguing as before, it remains to check the following variant of Lemma \ref{sufficient condition for being initial}. 
\end{proof}

\begin{lemma}
Let $(A, I)$ be a bounded prism and $S$ a derived $p$-complete $A/I$-algebra. 
If $\Prism_{S/A, \perf}\otimes^\L_A A/I$ is discrete, then $(\Prism_{S/A, \perf}, I\Prism_{S/A, \perf})$ defines a perfect prism and it is initial in the perfect prismatic site $(S/A)_{\Prism}^{\perf}$. 
\end{lemma}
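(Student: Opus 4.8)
The plan is to follow the proof of Lemma~\ref{sufficient condition for being initial} (which itself follows \cite[Proposition~7.10]{BS}), replacing each input there by its ``perfect'' counterpart from \cite[\S 8]{BS}. Write $M\coloneqq \Prism_{S/A,\perf}$. By construction $M$ is a derived $(p,I)$-complete, connective object of $\mD(A)$ carrying a $\phi_A$-semilinear Frobenius $\phi$, and $M\otimes^\L_A A/I$ is discrete by hypothesis. First I would show $M$ is discrete and $I$-torsion free: working locally on $\spf A$ with $I=(d)$ for a distinguished (hence non-zero-divisor) element $d$, the cofiber sequence $M\xrightarrow{d}M\to M\otimes^\L_A A/I$ and the long exact sequence in cohomology force each $H^{j}(M)$ with $j<0$ to be simultaneously $d$-divisible and $d$-torsion free, and $H^{0}(M)$ to be $d$-torsion free; since the cohomology of a derived $d$-complete complex is again derived $d$-complete, derived Nakayama kills $H^{<0}(M)$, so $M$ is a classically $(p,I)$-complete, $I$-torsion free ring. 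Next, $M$ is the $(p,I)$-completion of the colimit perfection $\varinjlim_\phi \Prism_{S/A}$ of the (animated) $\delta$-$A$-algebra $\Prism_{S/A}$, so $M$ inherits a $\delta$-structure for which $\phi$ is bijective. Because $IA$ is an invertible ideal of $A$ with $p\in IA+\phi_A(IA)A$ and $M$ is $I$-torsion free over $A$, the ideal $IM\cong I\otimes_A M$ is invertible and satisfies $p\in IM+\phi_A(IM)M=IM+\phi(IM)M$; hence $(M,IM)$ is a prism, and it is perfect because $\phi$ is an isomorphism.

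It then remains to prove that $(M,IM)$ is initial in $(S/A)^{\perf}_{\Prism}$. The bottom of the conjugate (Hodge--Tate) filtration provides a canonical map $S\to\overline{\Prism}_{S/A}\to \overline{\Prism}_{S/A,\perf}=M/IM$, so $M$ is an object of $(S/A)^{\perf}_{\Prism}$. For an arbitrary perfect prism $(B,IB)$ over $(A,I)$ equipped with a map $S\to B/IB$, the data $(B,IB,\spf(B/IB)\to\spf S)$ defines an object of the relevant (unbounded) prismatic site, yielding a natural $A$-algebra map $\Prism_{S/A}\to B$ compatible with the maps from $S$; being $\phi$-equivariant with $\phi$ bijective on $B$, it factors uniquely through $M=\Prism_{S/A,\perf}\to B$. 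To make this map---and its uniqueness---precise, one reduces, exactly as in \cite[Proposition~7.10]{BS} and via \cite[Lemma~8.4]{BS} together with quasisyntomic/colimit descent, to the standard generators $S=A/I\langle X_1^{1/p^\infty},\dots,X_r^{1/p^\infty}\rangle/(X_1-f_1,\dots,X_r-f_r)$ over $A\langle X_1^{1/p^\infty},\dots,X_r^{1/p^\infty}\rangle$ with $\delta(X_i^{1/p^k})=0$, where $\Prism_{S/A}$ is the explicit prismatic envelope of \cite[Example~7.9]{BS} and $M$ is its colimit perfection; the universal property is then the computation of \textit{loc.\,cit}.

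The main obstacle is this last reduction: $\Prism_{S/A}$ is \emph{a priori} only an animated/derived object, so one must genuinely extract an honest mapping property into honest perfect prisms, and for general $S$ this is not done by a direct construction but by descending, through the colimit-and-quasisyntomic formalism developed above, to the free case treated in \cite[\S 8]{BS}. Once there, the remaining points---boundedness of the prismatic envelope and the universal property of its perfection---are routine, and, as noted after Lemma~\ref{sufficient condition for being initial}, the resulting statement (with this proof) is also recorded in \cite{APC}.
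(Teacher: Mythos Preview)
Your first paragraph is fine: the argument that $M=\Prism_{S/A,\perf}$ is discrete, $I$-torsion free, and a perfect $\delta$-ring is essentially what the paper means by ``the proof of \cite[Lemma~7.7, Lemma~8.4]{BS} still applies.''

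The gap is in your initiality argument. You propose to reduce, ``exactly as in \cite[Proposition~7.10]{BS},'' to the standard presentation $S=A/I\langle X_i^{1/p^\infty}\rangle/(X_i-f_i)$. But that reduction in \cite[Proposition~7.10]{BS} (and in Lemma~\ref{sufficient condition for being initial} above) uses two hypotheses you do not have here: a \emph{surjection} $A/I\twoheadrightarrow S$ and $p$-complete flatness of $\L_{S/(A/I)}[-1]$. For a general derived $p$-complete $A/I$-algebra $S$ there is simply no way to write $S$ in that form, and the vague appeal to ``quasisyntomic/colimit descent'' does not repair this: initiality is a mapping-space statement, not a cohomology computation, and it does not pass formally through sifted colimits of $S$. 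In particular, while the existence of a map $\Prism_{S/A}\to B$ can be obtained by left Kan extension, your proposed route to \emph{uniqueness} (which is where you invoke the reduction) does not go through.

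The paper takes a genuinely different and cleaner route. Rather than reducing on $S$, it reduces on the base: one passes to the perfection $(A_{\perf},IA_{\perf})$ and uses the chain
\[
\Prism_{S/A,\perf}\;\cong\;\Prism_{(A_{\perf}\widehat\otimes^\L_A S)/A_{\perf},\perf}\;\cong\;\Prism_{\pi_0(A_{\perf}\widehat\otimes^\L_A S)/A_{\perf},\perf},
\]
the last step by \cite[Proposition~8.5]{BS}. Over a perfect base, $\Prism_{-,\perf}\otimes^\L_A A/I$ is the perfectoidization, and \cite[Corollary~8.14]{BS} gives initiality directly. Finally, since any perfect prism over $(A,I)$ is automatically an $(A_{\perf},IA_{\perf})$-prism, the perfect prismatic sites $(S/A)_{\Prism}^{\perf}$ and $(\pi_0(A_{\perf}\widehat\otimes^\L_A S)/A_{\perf})_{\Prism}^{\perf}$ coincide, finishing the proof. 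This base-change-to-perfect-$A$ step is the key idea you are missing.
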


\begin{proof}
The proof of \cite[Lemma 7.7, Lemma 8.4]{BS} still applies to this setting and we see that the reduction $\Prism_{S/A, \perf}\otimes^\L_A A/I$ is discrete and $(\Prism_{S/A, \perf}, I\Prism_{S/A, \perf})$ is a perfect prism. 

If $(A, I)$ is a perfect prism, then the reduction is the perfectoidization $S_{\textnormal{perfd}}$ of $S$ by definition. 
It follows from \cite[Corollary 8.14]{BS} that the perfectoid ring $S_{\textnormal{perfd}}$ is initial in the perfect prismatic site $(S/A)_{\Prism}^{\perf}$. 

In general, let $(A_{\perf}, IA_{\perf})$ denote the perfection of $(A, I)$. 
There are identifications
\begin{align*}
\Prism_{S/A, \perf} 
&\cong
\bigl(\varinjlim A_{\perf}\widehat{\otimes}^\L_{A} \Prism_{S/A}\bigr)^{\wedge} \\
&\cong 
\Prism_{(A_{\perf}\widehat{\otimes}^\L_A S) / A_{\perf}, \perf} \\
&\cong
\Prism_{\pi_0(A_{\perf}\widehat{\otimes}^\L_A S) / A_{\perf}, \perf}, 
\end{align*}
where the last one comes from \cite[Proposition 8.5]{BS}. 
So, it is initial in the perfect prismatic site $(\pi_0(A_{\perf}\widehat{\otimes}^\L_A S)/A_{\perf})_{\Prism}^{\perf}$. 
This clearly implies that it is also initial in the perfect prismatic site $(S/A)_{\Prism}^{\perf}$. 
\end{proof}

\bc \label{cor:p_saturation_induces_iso_on_perfect_prism}
Assume that $\ul S = (S, N)$ is a semiperfectoid integral pre-log ring and $N$ is semiperfect in the sense that $N^\flat \to N$ is surjective. 
Let $\ul S^{p\text{-sat}}$ denote the (classically) $p$-completed $p$-saturation of $\ul S=(S, N)$. 
Then, $\ul S^{p\text{-sat}}$ is semiperfectoid and the natural map
\[
\Prism_{\ul S/ \ul R, \perf} \to \Prism_{\ul S^{p\text{-sat}}/\ul R, \perf}
\]
is an isomorphism. 
\ec

\bproof
It is easy to see that $p$-saturation $N^{p\text{-sat}}$ of $N$ is again semiperfect. Therefore,
\[
S/p \otimes_{\F_p [N]}\F_p [N^{p\text{-sat}}]
\]
is semiperfect and $\ul S^{p\text{-sat}}$ is semiperfectoid. 
By Proposition \ref{perfect log prismatic cohomology is initial}, $(\Prism_{\ul S/ \ul R, \perf}/(\xi), M_{\ul S/ \ul R}^{\init,\perf})^a$ is the universal perfectoid log ring over $\ul S$. Recall that the monoid of any perfectoid log ring is $p$-saturated. In particular, there is a unique map
\[
\ul S^{p\text{-sat}} \ra (\Prism_{\ul S/ \ul R, \perf}/(\xi), M_{\ul S/ \ul R}^{\init,\perf})^a, \]
and this map exhibits the target as the universal perfectoid log ring over $\ul S^{p\text{-sat}}$. Therefore, again by Proposition \ref{perfect log prismatic cohomology is initial}, we see that
\[
\Prism_{\ul S/ \ul R, \perf} \cong \Prism_{\ul S^{p\text{-sat}}/\ul R, \perf}.
\]
\eproof


\newpage

\section{Nygaard filtrations and de Rham comparison} \label{sec:Nygaard}

In this section, we construct a Nygaard filtration for free pre-log rings and then get a \emph{derived} Nygaard filtration for arbitrary pre-log rings over a fixed base. The main theorem (in the local setting) in this section is the following. 
\bt[Derived Nygaard filtration] \label{derived_nygaard_HT}
Let $(A, I, M_A)$ be an integral bounded pre-log prism and $(R, P)$ a simplicial pre-log ring over $(A/I, M_A)$. 
There is a decreasing multiplicative filtration $\Fil_N$ by $(p, I)$-complete objects on $\Prism_{(R, P)/(A, M_A)}^{(1)}$ equipped with an isomorphism
\[
\grade^i_N \Prism_{(R, P)/(A, M_A)}^{(1)}\cong \Fil_i \overline{\Prism}_{(R, P)/(A, M_A)}\{i\}.
\]
We call it the \emph{derived Nygaard filtration}. 
\et
As a corollary, we prove the de Rham comparison for derived log prismatic cohomology (see Corollary \ref{cor:dR_comparison}).

\subsection{A toy example}  \label{ss:toy_example}
\noindent 

\noindent Our strategy is similar to that of \cite{BS}, but with a key difference, which arises because of the exactification procedure in the process of forming log prismatic envelops. This introduces non-perfect base prisms into the picture. 
As a result,  the derived Nygaard filtration may not be discrete even for ``large'' pre-log rings (in contrast with \cite[Proposition 12.11]{BS}), which  makes our definition less canonical compared to \cite[Theorem 15.3]{BS}. 
Let us illustrate the difference with an example. 
 
\beg \label{example:log_Nygaard}
Fix $(\Ainf, I) = (\Ainf, (\xi))$ as our base prism with the trivial pre-log structure. Let $\ul R = (\mO_C \gr{x}, \N)$ and let $ \ul S \coloneqq (\mO_C \gr{x^{1/p^\infty}}, \N[\frac{1}{p}])$. Following \cite[Section 12]{BS}, a natural strategy to endow the Nygaard filtration on $\Prism_{\ul{R}/\Ainf}$ is to consider the \v{C}ech nerve $\ul S^\bullet$ of the quasisyntomic cover $\ul R \ra \ul S$: one first constructs a suitable Nygaard filtration on each $\Prism_{\ul S^i/\Ainf}$ and then take the totalization. For $i = 1$, we have the following pre-log algebra 
\[ \ul S^1 = \big(\mO_C \gr{x^{1/p^\infty}, y^{1/p^\infty}}/(x - y), \textstyle \N[\frac{1}{p}] \oplus_{\N} \N[\frac{1}{p}]   \big). \]
which is log quasiregular semiperfectoid. To ease notation, let us write $\ul B = (B, N)$ for $\ul S^1$.  To compute the derived log prismatic cohomology let us start with the following surjection from a perfectoid pre-log ring 
\begin{equation} \label{eq:surjection_iota} \iota\colon  \ul{B}_\infty \coloneqq (\mO_C\gr{x^{1/p^\infty}, y^{1/p^\infty}}, \textstyle \N[\frac{1}{p}]^{\oplus 2}) \lra (B, N).
\end{equation}
Let $\ul{\sq B} = (\sq B, \sq N)$ be the exactification of the surjection $\iota$ in (\ref{eq:surjection_iota}), which takes the form 
\bi
\item $\sq B = \mO_C \gr{x^{1/p^\infty}, y^{1/p^\infty}, x/y, y/x}$ 
\item $\sq N = \{(a, b) \in \Z[\frac{1}{p}]^{\oplus 2} \: | \:   a + b \in  \N [\frac{1}{p}]\}$ 
\ei
and $\iota$ factors as 
\[ \ul B_\infty \lra \ul{\sq B} \lra \ul B \]
where the first map is log \'etale and the second map is an exact surjection of pre-log rings. 
Let   $(\Ainf(B_\infty), I, N_\infty = \N[\frac{1}{p}]^{\oplus 2})$ be the (perfect) pre-log prism of rank 1 coming from $\Ainf (\ul B_\infty)$, and let 
\[(\sq A, I, \sq N)\] be the pre-log prism of rank 1 where 
\bi 
\item $\sq A = \Ainf (B_\infty) \widehat \otimes_{\Z_p \gr{N_\infty}} \Z_p \gr{\sq N} \cong \Ainf \gr{X^{1/p^\infty}, Y^{1/p^\infty}, X/Y, Y/X}.$
\item $\delta (X/Y) = \delta (Y/X) = 0$. 
\ei 
From Hodge--Tate comparison (see also Subsection \ref{ss:universal_object_bdd}) 
we know that 
\begin{align*}
 \Prism_{\ul B/\Ainf} & \isom \Prism_{\ul B/ (\Ainf(B_\infty), N_\infty) } 
 \isom  \Prism_{\ul B/ (\sq A, \sq N) } \isom  \Prism_{B/ \sq A } 
\end{align*}
where the last isomorphism comes from the exactness of the map $\ul{\sq B} \ra \ul B$. Again by the Hodge--Tate comparison, we see that $\Prism_{\ul B/\Ainf} \cong \Prism_{B/\sq A}$ is discrete. It is therefore tempting to consider a naive Nygaard-like filtration 
\[ \sq \Fil^i \coloneqq \{x \in \Prism_{\ul B/\Ainf} | \phi(x) \in I^i  \Prism_{\ul B/\Ainf} \}.
\] However, this filtration is \textit{not} the one that we are after --- it will not satisfy the analogue of \cite[Theorem 12.2]{BS}. To see this, let us observe that $\Prism_{B/\sq A}$ can be computed by the prismatic envelope of the map 
\[\Ainf \gr{X^{1/p^\infty}, Y^{1/p^\infty}, X/Y, Y/X} \ra \mO_C \gr{x^{1/p^\infty}, y^{1/p^\infty}}/(x - y)\]
which sends $X^{1/p^m} \mapsto x^{1/p^m}, Y^{1/p^m} \mapsto y^{1/p^m},$ and $X/Y \mapsto 1$. This is isomorphic to the prismatic envelope of 
\[\Ainf \gr{u, v, X^{1/p^\infty}, (uX)^{1/p^\infty}}/(uv - 1) \lra \mO_C\gr{u, v, x^{1/p^\infty}, (ux)^{1/p^\infty}}/(uv - 1, u - 1).\]
Therefore, we have (up to completion)
\[ \Prism_{B/\sq A} \cong \frac{\Ainf \gr{u, v, X^{1/p^\infty}, (uX)^{1/p^\infty}}}{(uv - 1)} \Big\{\frac{(\xi, u-1)}{\xi}\Big\}^{\delta}.\]
Now the element $\frac{u-1}{\xi}$ lies in $\Fil_1 \cl \Prism_{B/\sq A}$ but not in the image of the map 
$\textstyle{``\frac{\phi}{\xi}"} \colon \sq \Fil^1 \ra \cl \Prism_{B/\sq A}$ as $u$ does not admit $p$-power roots in $\sq A$.  

The issue here is that the prism $(\sq A, I)$ is not perfect. In fact, one should always consider Nygaard filtrations on Frobenius twisted prismatic cohomology $\Prism_{-/A}^{(1)}$ (i.e., base changed along the Frobenius of the base prism). When the base prism $(A, I)$ is perfect, we have $\Prism_{-/A}^{(1)} \cong \Prism_{-/A}$ and we can view $\Fil^\bullet_{N}$ as a filtration on $\Prism_{-/A}$ (for example, in \cite[Definition 12.1]{BS}). In the example above, if we consider the filtration 
\[\Fil^i \coloneqq \{x \in    \Prism_{B/\sq A}^{(1)}  \: | \: \phi (x) \in I^i \Prism_{B/\sq A} \} \]
on $ \Prism_{B/\sq A}^{(1)} = \Prism_{B/\sq A} \widehat \otimes_{\sq A, \phi} \sq A$,\footnote{
Note that while $\Prism_{B/\sq A} \cong \Prism_{\ul B/\Ainf}$, the Frobenius twisted prismatic cohomology $\Prism_{B/\sq A}^{(1)}$ \textit{does not only} depend on $\ul B$ -- it also depends on $\sq A$, which comes from a choice of the surjection $\ul B_\infty \ra \ul B$.
} 
then the analogue of \cite[Theorem 12.2]{BS} continues to hold, in other words, we have 
\[ \textstyle  \frac{\phi}{\xi^i}\colon \grade^i \Prism_{B/\sq A}^{(1)} \isom \Fil_i \cl \Prism_{B/\sq A}.\]
Therefore, to equip the Nygaard filtration on the log prismatic cohomology $\Prism_{\ul R/\Ainf}$ (or rather, its isomorphic copy $\Prism_{\ul R/\Ainf}^{(1)}$) from the beginning of this example, we need to rewrite it as the totalization of the log prismatic cohomology of not just the \v{C}ech nerve $\ul S^\bullet$ above, but certain Frobenius twisted variants of $\ul S^\bullet$. 
This is possible essentially by the result in \cite[Appendix B]{Koshikawa} (see also \cite[Lemma 5.4]{BS}), which we explain in detail later in this section.  
\eeg

\subsection{A non-log case: lci quotients over non-perfect bases} \label{ss:lci_quotients_nonperfect}
\noindent 

\noindent 
As illustrated in Example \ref{example:log_Nygaard}, we need to slightly extend the discussion of \cite[Section 12]{BS} to the case of certain lci quotients over non-perfect bases. 

Fix a bounded prism $(A, I)$ and consider 
\[A_0\coloneqq A \langle X_1, \dots X_d \rangle\] as a $\delta$-ring over $A$ with $\delta (X_i)=0$. The pair $(A_0, IA_0)$ gives rise to a bounded prism over $(A, I)$, which will be the base prism in this discussion. In other words, we will consider the (derived) prismatic cohomology $\Prism_{-/ A_0}$ relative to $A_0$ (not $A$). 

Let $R$ be an $A_0$-algebra of the form 
\[ R= A \langle X_1, \dots X_d \rangle /(I, f_1, \dots, f_r)\] with $f_1, \dots, f_r$ giving a $p$-completely regular sequence on $A /I \langle X_1, \dots, X_d \rangle$ relative to $A/I$. 
By \cite[Example 7.9]{BS},\footnote{Our setting is not covered actually as the sequence $f_1, \dots, f_r$ may not be Koszul-regular, but their argument still applies.} $\Prism_{R/ A_0}$ is identified with the prismatic envelope 
\[A_0\{\tfrac{(I, f_1, \dots , f_r)}{I}\}^{\delta}, \]
which is discrete, concentrated in degree $0$, and $(p,I)$-completely flat over $A$.  Recall that 
\[
\Prism_{R/A_0}^{(1)}\coloneqq \Prism_{R/A_0}\widehat{\otimes}^\L_{A_0, \phi } A_0
\cong \Prism_{R^{(1)} / A_0}
\]
denotes the base change of $\Prism_{R/A_0}$ along the Frobenius on $A_0$. Here we write $R^{(1)} \coloneqq R\widehat{\otimes}^\L_{A_{0},\phi}A_0$.  

\begin{assumption} \label{assumption:p_I_flat}
In this section, we further assume that 
$\Prism_{R/A_0}^{(1)}$ is $(p, I)$-completely flat. 
\end{assumption}

\br 
For instance, Assumption \ref{assumption:p_I_flat} is satisfied if $(A, \phi (I)A)$ is also a bounded prism (e.g., the universal oriented prism) and 
\[\phi_{A_0} (f_1), \dots \phi_A (f_r)\] forms a $p$-completely regular sequence in $A_0/\phi(I)A_0$ relative to $A/\phi(I)A$. An important example is $f_i=X_i-1$ for all $i$, which will be used later. 
\er 

\bd \label{definition:Nygaard_filtration}
We define the \emph{Nygaard filtration} $\Fil_N^\bullet$ on $\Prism_{R/A_0}^{(1)}$ by the formula
\begin{equation} \label{eq:Nygaard_definition}
\Fil_N^i \Prism_{R/A_0}^{(1)}=\{x\in \Prism_{R/A_0}^{(1)} \mid \phi_{R/A_0} (x) \in I^i \Prism_{R/A_0} \}, 
\end{equation}
where  $\phi_{R/A_0}\colon \Prism_{R/A_0}^{(1)} \to \Prism_{R/A_0}$ is the relative Frobenius (relative to $A_0$). 
The formation of Nygaard filtration commutes with $(p, I)$-completely flat base changes on $A$. 
\ed

\begin{notation}
Let $\Fil_\bullet$ denote the conjugate filtration of $\overline{\Prism}_{R/A_0}$. 
It commutes with $(p, I)$-completely flat base change on $A_0$. 
Moreover, $\Fil_i \overline{\Prism}_{R/A_0}$ is a $p$-completely flat $R$-module for every $i$. 
\end{notation}

Now we state a variant of \cite[Theorem 12.2, Theorem 15.2]{BS} (which treats quasiregular semiperfectoid rings and \emph{large} quasisyntomic $A'_0/IA'_0$-algebras).

\bp\label{lci case} 
Keep notations from above. Let $A_0 \to A'_0$ be a $(p, I)$-completely \emph{flat} map of bounded prisms. We regard $(A'_0, IA'_0)$ as a bounded prism over $(A, I)$ and let $R'$ denote $R\widehat{\otimes}_A A'_0$. Then the Nygaard filtration\footnote{Note that, the derived prismatic cohomology 
$\Prism_{R'/A'_0}$ and its base change $\Prism_{R'/A'_0}^{(1)}\coloneqq \Prism_{R'/A'_0}\widehat{\otimes}^\L_{A'_0}A'_0$ are both discrete. In particular, the definition of Nygaard filtration $\Fil^\bullet_N$ makes sense for $\Prism_{R'/A'_0}^{(1)}$.  }
 $\Fil_N^\bullet$ on $\Prism_{R'/A_0'}^{(1)}$ agrees with the derived Nygaard filtration from \cite[Section 12]{BS}, and  the relative Frobenius $\phi_{R'/A'_0}$ induces an isomorphism
\[
\grade_N^i \Prism_{R'/A'_0}^{(1)} \cong \Fil_i \overline{\Prism}_{R'/A'_0}\{i\}. 
\] 
where $\{i\}$ denotes the Breuil--Kisin twist.
\ep

\begin{proof} It suffices to prove the second claim, which we argue by flat descent. The argument is in spirit close to that of \cite[Theorem 15.2]{BS}. 

\noindent \textit{(1)}. First we consider the case where 
\begin{equation*}
A'_0=A_{\infty}\coloneqq A_0\langle X_1^{1/p^{\infty}}, \dots, X_d^{1/p^{\infty}}\rangle
\end{equation*}
with $\delta (X_i^{1/p^m})=0$ for all $m\geq 0$ and $i$, and write $R_{\infty}$ for $R'$. 
In this case, there are compatible identifications
\begin{equation*}
\Prism_{R_{\infty}/A_{\infty}}\cong \Prism_{R_{\infty}/A}, \quad
\Prism_{R_{\infty}/A_{\infty}}^{(1)}\cong \Prism_{R_{\infty}/A}\widehat{\otimes}^\L_{A,\phi} A
\end{equation*}
as \emph{$A$-modules}. 
Note that $R_{\infty}$ is a large quasisyntomic $A/I$-algebra in the sense of \cite[Definition 15.1]{BS}. Our Nygaard filtration on $\Prism_{R_{\infty}/A_{\infty}}^{(1)}$ and the one in \cite[Theorem 15.2]{BS} agree under this identification, thus, by \emph{loc.cit.}, we have an isomorphism
\begin{equation*}
\grade_N^i \Prism_{R_{\infty}/A_{\infty}}^{(1)} \cong \Fil_i \overline{\Prism}_{R_{\infty}/A_{\infty}}\{i\}. 
\end{equation*}

\vspace*{0.1in}
\noindent \textit{(2)}. Now suppose that $A'_0$ lives over some $A_{\infty}$ from the preceding paragraph. By the discussion above,  the derived base change of the Nygaard filtration of $\Prism_{R_{\infty}/A_{\infty}}^{(1)}$ along $A_{\infty}\to A'_0$ is discrete and gives rise to a filtration whose graded pieces are $\Fil_i \overline{\Prism}_{R'/A'_0}\{i\}$ (argue by induction on $i$.) Thus this filtration has to agree with the Nygaard filtration $\Fil_N^\bullet$ of $\Prism_{R'/A'_0}^{(1)}$ (again by induction on $i$). This also implies that
\begin{equation*}
\grade_N^i \Prism_{R'/ A'_0}^{(1)} \cong \Fil_i \overline{\Prism}_{R'/ A'_0}\{i\} 
\end{equation*}
by base change. (Compare with the beginning of the proof of \cite[Theorem 15.2]{BS}.)

\vspace*{0.2in}
\noindent \textit{(3)}.
Next, we consider the case $A'_0=A_0$. We will show the following claims simultaneously
\bi
\item The formation of $\Fil^{i}_N$  commutes with (completed) derived base change along the map $A_0\to A_{\infty}$. 
\item the image of the relative Frobenius
\begin{equation} \label{eq:relative_frob_on_Nygaard}
\phi_{R/A_0}\colon \Fil^i_N \Prism_{R/A_0}^{(1)} \to \overline{\Prism}_{R/ A_0}\{i\}
\end{equation}
is $\Fil_i \overline{\Prism}_{R/A_0} \{i\}$. 
\ei 
We proceed by induction on $i$. For $i=0$, $\Fil^{0}_N \Prism_{R/A_0}^{(1)}$ is just $\Prism_{R/A_0}^{(1)}$, which commutes with derived base change. We will show that, if $\Fil^i_N$ commutes with derived base change, then 
\bi
\item The relative Frobenius map in (\ref{eq:relative_frob_on_Nygaard}) factors through 
\begin{equation} \label{eq:relative_frob_on_Nygaard_2}
\phi_{R/A_0}\colon \Fil^i_N \Prism_{R/A_0}^{(1)} \to \Fil_i \overline{\Prism}_{R/ A_0}\{i\}
\end{equation} and is surjective, 
\item $\Fil^{i+1}_N$ commutes with derived base change along $A_0 \ra A_\infty$. 
\ei
Consider $A_0 \ra A_\infty$ where $A_\infty = A_0 \gr{X_1^{1/p^\infty}, ..., X_d^{1/p^\infty}}$.  As both $\Fil^{i}_N$ (by inductive hypothesis) and $\Fil_\bullet $ commute with base change, we get a map
$ \phi_{R/A_0}\colon \Fil^{i}_N \Prism_{R/A_0}^{(1)} \to \Fil_{i} \overline{\Prism}_{R/ A_0}\{ i\} $
by flat descent for the conjugate filtration.  This map is surjective as the corresponding map is surjective for $A_{\infty}$ by the discussion in step (1). (The surjectivity of a map between $(p, I)$-derived complete modules can be checked modulo $(p, I)$ and hence after any $(p, I)$-completed faithfully flat base change.) Now, since the map $\phi_{R/A_0}$ in (\ref{eq:relative_frob_on_Nygaard_2}) is surjective,  we conclude that $\Fil_N^{i+1} = \ker \phi_{R/A_0}$ commutes with derived base change since the conjugate filtration commutes with derived base change.

\vspace*{0.2in}
\noindent \textit{(4)}.
Finally, for a general $A'_0$, we argue as in the preceding paragraph and use assertions from part (2).  
This finishes the proof of the proposition. 
\end{proof}

\br
Let $(A, I) \to (B, IB)$ a map of bounded prisms. 
Let $B_0$ (resp. $S$) denote the base change of $A_0$ (resp. $R$) to $B$. Then, $f_1, \dots, f_r$ is a $p$-completely regular sequence of $B_0$ relative to $B / IB$ and 
\[
\Prism_{S/ B_0}^{(1)} \cong \Prism_{R/A_0}^{(1)} \widehat{\otimes}^\L_{A_0} B_0 \cong 
\Prism_{R/A_0}^{(1)} \widehat{\otimes}^\L_{A} B
\]
is $(p, I)$-completely flat. Using Proposition \ref{lci case}, one can see that the above identification
is compatible with Nygaard filtrations, i.e., the Nygaard filtration commutes with base change $A\to B$. 
(Note that the conjugate filtration commutes with base change $A\to B$.)
\er

\subsection{Extracting $p$-power roots} \label{ss:extracting_p-power_roots}
\noindent
We apply the results of the previous subsection to derived log prismatic cohomology of a certain map arising from the perfection of free monoids. 

\begin{construction}[Extracting $p$-power roots]
Fix a bounded pre-log prism $(A, I, M_A)$ and assume $M_A$ is integral. 
Let $M_A\oplus N \to M_A\oplus M$ be a surjective map of finitely generated free monoids over $M_A$ and let $M_A\oplus N\to \widetilde{N}\to M_A\oplus M$ be its exactification. 
The kernel 
\[G = \ker (M_A^{\tu{gp}}\oplus N^{\tu{gp}}=\widetilde{N}^{\tu{gp}}\to M_A^{\tu{gp}}\oplus M^{\tu{gp}})\] is a finitely generated free abelian group. 
As $M$ is free, the surjection $\widetilde{N}\to M_A \oplus M$ admits a section and it induces a decomposition $\widetilde{N}\cong M_A\oplus M \oplus G$. Set $N_{\infty}\coloneqq N[1/p]$ and define integral monoids $\widetilde{N}_{\infty}$ and $M_{\infty}$ by the pushout diagram 
\[
\begin{tikzcd}
M_A\oplus N \arrow[r] \arrow[d] & \sq N \arrow[d] \arrow[r] & M_A\oplus M \arrow[d] \\
M_A\oplus N_\infty \arrow[r] & \sq N_\infty  \arrow[r] & M_\infty, 
\end{tikzcd} \quad \quad
\]
where pushouts are discrete. (Beware potential confusing notation.)
Note that the factorization $M_A\oplus N_{\infty}\to \widetilde{N}_{\infty}\to M_{\infty}$ is the exactification, and there is a natural isomorphism $\widetilde{N}_{\infty}/ G \cong M_{\infty}$. 
Now we consider a pre-log ring
\[
(A/I)_{\infty}\coloneqq(A/I\widehat{\otimes}_{\Z_p \langle M_A\rangle}\Z_p \langle M_{\infty}\rangle, M_{\infty})
\]
that is quasisyntomic over $(A/I, M_A)$; the $p$-completed tensor product is the same as the $p$-completed derived tensor product as $M_A \to M_{\infty}$ is injective and integral. Let us also define a $\delta_{\log}$-ring
\begin{equation*}
\widetilde{A}_{\infty}\coloneqq A\widehat{\otimes}^{\L}_{\Z_p\langle M_A\rangle} \Z_p\langle \widetilde{N}_{\infty}\rangle \cong A\langle M\oplus G\rangle\widehat{\otimes}^{\L}_{\Z_p\langle N\rangle}\Z_p\langle N_{\infty}\rangle, \quad
\widetilde{N}_{\infty}\ra \widetilde{A}_{\infty},
\end{equation*}
where the $(p,I)$-completed tensor products are again discrete as $M_A \to \widetilde{N}$ is flat and the $\delta_{\log}$-structure is defined by $\delta_{\log} (N_{\infty})=0$; this extends to $\widetilde{N}_{\infty}\to \widetilde{A}_{\infty}$ by \cite[Proposition 2.16]{Koshikawa}. 
Using the Hodge--Tate comparison, we then have isomorphisms
\begin{align} \label{eq:Nygaard_log_and_nonlog}
\Prism_{((A/I)_{\infty}, M_{\infty})/ (A, M_A)} 
&\cong \Prism_{((A/I)_{\infty}, M_{\infty})/ (A\langle N_{\infty}\rangle, M_A\oplus N_{\infty})} \\ 
\nonumber &\cong \Prism_{((A/I)_{\infty}, M_{\infty})/ (\widetilde{A}_{\infty}, \widetilde{N}_{\infty})} \\
\nonumber
&\cong \Prism_{(A/I)_{\infty}/ \widetilde{A}_{\infty}}
\cong \Prism_{A/I\langle M \rangle/ A\langle M\oplus G\rangle }\widehat{\otimes}^{\L}_{A\langle M\oplus G\rangle} \widetilde{A}_{\infty}. 
\end{align} 
In particular, these cohomology are discrete, concentrated in degree $0$, and $(p,I)$-completely flat over $A$ since $G\subset A/I\langle M\oplus G\rangle$ gives rise to a $p$-completely regular sequence relative to $A/I$. 
\end{construction}

\bl \label{semiperfectoid case}
Let $\Fil_N^\bullet $ be the Nygaard filtration defined on 
\[ \Prism_{ (A/I)_{\infty} / \sq A_\infty}^{(1)} \cong \Prism_{(A/I)_{\infty}, M_{\infty})/(A, M_A)}\widehat{\otimes}^\L_{\widetilde{A}_{\infty}, \phi}\widetilde{A}_{\infty},\]
which is discrete, by Formula (\ref{eq:Nygaard_definition}). Then the relative Frobenius induces an isomorphism
\[
\grade_N^i (\Prism_{((A/I)_{\infty}, M_{\infty})/(A, M_A)}\widehat{\otimes}^\L_{\widetilde{A}_{\infty}, \phi}\widetilde{A}_{\infty})
\cong \Fil_i \overline{\Prism}_{((A/I)_{\infty}, M_{\infty})/(A, M_A)}\{i\}.
\]
\el

\bproof 
After choosing a basis we have $G \cong \Z^{c}$ and a map $\N^{c} \hookrightarrow  G$. This allows us to write $A/I \gr{M}$ as a quotient of the (completed) polynomial ring $A \gr{M, x_1, ..., x_c}$ via the surjection 
\begin{equation} \label{eq:presenting_AM_as_lci} 
A \gr{\N^{c} \oplus M} \hookrightarrow A \gr{G \oplus M}  \lra A/I \gr{M}. 
\end{equation} 
Note that the (completed) pushout of the composite 
\[
A \gr{\N^{c} \oplus M} \lra A \gr{G \oplus M} \lra \sq A_\infty,
\]
which is $(p,I)$-completely flat, along the  composition map in (\ref{eq:presenting_AM_as_lci}) is $(A/I)_{\infty}$. Therefore, we can apply Proposition \ref{lci case} and conclude the lemma once we verify Assumption \ref{assumption:p_I_flat}. To verify this, we may work pro-Zariski locally on $A$ and assume $(A, I)$ is orientable, and then it further reduces to the universal oriented case with $M_A$, i.e., the $(p,d)$-completion of $\Z_p\{d\}^{\delta} \{M_A\}^{\delta}_{\log}[\delta(d)^{-1}]$ (in particular $(A, \phi(I))$ is again a prism). In this case, $\phi(x_1)-1, \dots \phi (x_c)-1$ is a $p$-completely regular sequence in $A\langle \N^c\oplus M\rangle$ relative to $A/\phi(I)A$ and Assumption \ref{assumption:p_I_flat} holds. 
\eproof 

\subsection{The log free case}\label{ss:log_free_case}
\noindent 
In this subsection we endow a Nygaard filtration for free pre-log rings in a functorial way. We follow the strategy outlined in Example \ref{example:log_Nygaard}. 

Fix an integral bounded pre-log prism $(A, I, M_A)$. Let 
\[
(A/I\langle M \rangle, M_A\oplus M)=(A/I \langle \N^S \rangle, M_A\oplus \N^S)
\]
denote the free pre-log ring on some finite set $S$. Given a surjection $M_A\oplus N \to M_A\oplus M$ from a finitely generated free monoid over $M_A$, let us consider the quasisyntomic cover
\begin{equation*}
(A/I \langle M \rangle, M_A\oplus M) \lra \textstyle ((A/I)_{\infty}, M_{\infty})
\end{equation*}
from the previous subsection and let $((A/I)_{\infty}^{\bullet}, M_{\infty}^{\bullet})$ denote its (completed) \v{C}ech nerve.  

\begin{construction}[Exactification for free pre-log rings] \label{construction:free_case_extract_roots} \noindent 

\noindent
In order to apply the discussion from Subsection \ref{ss:extracting_p-power_roots}, we further introduce the following cosimplicial objects.  Let $(A/I \langle N^{\bullet} \rangle, N^{\bullet})$ denote the (completed) \v{C}ech nerve of the map
\[A/I \lra (A/I \gr{N} , N).\]
Note that the monoids $N^{\bullet}$ are just coproducts of $N$. Write $N_{\infty}^{\bullet}$ for $N^{\bullet}[1/p]$. 
The following natural maps of monoids (horizontal maps are induced by summation) form a pushout diagram
\begin{equation} \label{diagram:N_infty}
\begin{tikzcd}
M_A\oplus N^\bullet \arrow[r] \arrow[d] & M_A\oplus M \arrow[d]\\
M_A\oplus N_\infty^\bullet \arrow[r] & M_\infty^\bullet.
\end{tikzcd} 
\end{equation}

\noindent 
By taking exactifications of horizontal arrows in Diagram (\ref{diagram:N_infty}), we get
\[
\begin{tikzcd}
M_A\oplus N^\bullet \arrow[r] \arrow[d] & \arrow[r] \arrow[d] \sq N^\bullet & M_A\oplus M \arrow[d]\\
M_A\oplus N_\infty^\bullet \arrow[r] & \sq N_\infty^\bullet \arrow[r] & M_\infty^\bullet,
\end{tikzcd} 
\]
where the left square is also a pushout square. 
Let us set
\begin{equation*}
(A/I)^{\bullet}_{\infty}\coloneqq A/I \widehat{\otimes}_{A/I\langle M_A\rangle} A/I\langle M_{\infty}^{\bullet}\rangle, \quad
\widetilde{A}_{\infty}^{\bullet}\coloneqq A \langle \widetilde{N}_{\infty}^{\bullet} \rangle. 
\end{equation*}

\noindent By Corollary \ref{semiperfectoid case}, we have the Nygaard filtration $\Fil^i_N$ on
\begin{equation}\label{eq:derives_prismatic_log_compares_to_nonlog_Cech_nerve}
\Prism_{((A/I)^{\bullet}_{\infty},M_{\infty}^{\bullet}) / (A, M_A) } \widehat{\otimes}^\L_{\widetilde{A}_{\infty}^{\bullet}, \phi} \widetilde{A}_{\infty}^{\bullet} 
\cong 
\Prism_{(A/I)^{\bullet}_{\infty}  / \sq A_\infty^\bullet } \widehat{\otimes}^\L_{\widetilde{A}_{\infty}^{\bullet}, \phi} \widetilde{A}_{\infty}^{\bullet}, 
\end{equation}
such that the relative Frobenius (using (\ref{eq:Nygaard_log_and_nonlog}) to identify the target) 
\begin{equation} \label{eq:Nygaard_relative_Frob_Cech_nerve}
\phi_{(A/I)^{\bullet}_{\infty}/\widetilde{A}_{\infty}^{\bullet}}\colon 
\Fil^i_N \Prism_{((A/I)^{\bullet}_{\infty},M_{\infty}^{\bullet}) / (A, M_A) } \widehat{\otimes}^\L_{\widetilde{A}_{\infty}^{\bullet}, \phi} \widetilde{A}_{\infty}^{\bullet} \lra 
I^i \Prism_{((A/I)^{\bullet}_{\infty},M_{\infty}^{\bullet}) / (A, M_A) }
\end{equation}
induces an isomorphism
\[
\grade_N^i  \Prism_{((A/I)^{\bullet}_{\infty},M_{\infty}^{\bullet}) / (A, M_A) } \widehat{\otimes}^\L_{\widetilde{A}_{\infty}^{\bullet}, \phi} \widetilde{A}_{\infty}^{\bullet} 
\cong 
\Fil_i \cl \Prism_{((A/I)^{\bullet}_{\infty},M_{\infty}^{\bullet}) / (A, M_A) } \{i\}.
\]
Finally, note that the cosimplicial object $\Prism_{((A/I)^{\bullet}_{\infty},M_{\infty}^{\bullet}) / (A, M_A) }$ is obtained from the $(p,I)$-completed derived base change along the map of cosimplicial $A$-algebras
\[
A\langle N^{\bullet}\rangle\widehat{\otimes}_{\Z_p\langle M_A\oplus N^{\bullet}\rangle}\Z_p \langle \widetilde{N}^{\bullet}\rangle
\ra \widetilde{A}^{\bullet}_{\infty}. 
\]
\end{construction} 

Following \cite[Definition 12.9]{BS}, we will use the totalization of $\Fil^i_N$ above to define a Nygaard filtration on $\Prism_{(A \langle M \rangle , M_A\oplus M ) /(A, M_A)}^{(1)}\coloneqq\Prism_{(A \langle M \rangle , M_A\oplus M ) /(A, M_A)}\widehat{\otimes}^\L_{A, \phi} A$. For this we need the following lemma. 

\begin{lemma} \label{lemma:Niziol}
Let $F^{\bullet}$ be a cosimplicial $A/(p,I)[ N^{\bullet}]\otimes_{\F_p[ M_A\oplus N^{\bullet}]}\F_p [ \widetilde{N}^{\bullet}]$-module. 
\begin{enumerate}
    \item The natural map
\[
F^{\bullet} \ra E^{\bullet}\coloneqq F^{\bullet}\otimes_{\F_p[N^{\bullet}]} \F_p [N_{\infty}^{\bullet}], 
\]
where the target naturally identifies with the base change of $F^{\bullet}$ to $\widetilde{A}_{\infty}^{\bullet}$, induces a quasi-isomorphism on the associated complexes of $A$-modules. 
    \item The natural map
    \[
    F^{\bullet}\otimes_{A/(p,I),\phi}A/(p,I)\ra F^{\bullet} \otimes_{A/(p,I)[ N^{\bullet}]\otimes_{\F_p[ M_A\oplus N^{\bullet}]}\F_p [ \widetilde{N}^{\bullet}],\phi}A/(p,I)[ N^{\bullet}]\otimes_{\F_p[ M_A\oplus N^{\bullet}]}\F_p [ \widetilde{N}^{\bullet}]    
    \]
    induces a quasi-isomorphism on the associated complexes of $A$-modules. 
    \item The natural map
    \[
    E^{\bullet}\otimes_{A/(p,I),\phi}A/(p,I) \ra E^{\bullet}\otimes_{\widetilde{A}_{\infty}^{\bullet},\phi}\widetilde{A}_{\infty}^{\bullet}
    \]
    induces a quasi-isomorphism on the associated complexes of $A$-modules. 
\end{enumerate}
\end{lemma}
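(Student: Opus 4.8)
\textbf{Proof proposal for Lemma \ref{lemma:Niziol}.}

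The plan is to reduce all three statements to the single observation that the cosimplicial monoid $[n]\mapsto N^{n}$ (the \v{C}ech nerve of $\F_p \to \F_p[N]$) is, after applying the free algebra functor or tensoring a module over it, homotopy-equivalent to its ``$[-1]$-term'' in a way that kills the higher simplicial direction. Concretely, recall that $N^{\bullet}$ is the \v{C}ech nerve of the coproduct inclusion $0\to N$, so it is a \emph{split} (augmented, after adding $N^{-1}=0$) cosimplicial object: the augmentation $0=N^{-1}\to N^0=N$ admits the standard extra degeneracies coming from the fact that $N^{\bullet}$ is the bar construction on the coaugmented object $N$. Therefore, for \emph{any} functor $T$ out of the category of $\F_p$-algebras under $\F_p[N^{\bullet}]$, the cosimplicial object $T(N^{\bullet})$ is split augmented over $T(0)$, and hence its totalization (equivalently, the associated complex of $A$-modules) is computed by the $[-1]$-term; in particular any map of such cosimplicial diagrams that is an isomorphism in degree $[-1]$ induces a quasi-isomorphism on totalizations. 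This is the same mechanism used in the proof of Proposition \ref{lem-descent-hlf-hodge-graded-piece} (``any functor preserves split augmented cosimplicial diagrams''), and I expect it to do all the work here.

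For part (1), I would note that $F^{\bullet}\mapsto E^{\bullet}=F^{\bullet}\otimes_{\F_p[N^{\bullet}]}\F_p[N_\infty^{\bullet}]$ and $F^\bullet$ itself, viewed as cosimplicial $A$-modules built functorially from the split augmented cosimplicial ring $\F_p[N^{\bullet}]$ (with fixed ``coefficient data'' $F^{-1}$ over $\F_p[N^{-1}]=\F_p$, on which $N^\bullet\mapsto N_\infty^\bullet$ does nothing since $N^{-1}=0$), become split augmented with the \emph{same} $[-1]$-term $F^{-1}$; hence the map $F^\bullet\to E^\bullet$ is a quasi-isomorphism on associated complexes. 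Parts (2) and (3) are of the same nature: in (2) the source and target are obtained by applying to $F^{\bullet}$ the Frobenius base-change functor along $A/(p,I)\to A/(p,I)[N^\bullet]\otimes_{\F_p[M_A\oplus N^\bullet]}\F_p[\widetilde N^\bullet]$ versus along $A/(p,I)\to A/(p,I)$, and both constructions are functorial in the split augmented cosimplicial ring $\F_p[N^\bullet]$ (respectively $\F_p[\widetilde N^\bullet]\cong\F_p[\widetilde N\oplus\text{(coproduct of }N)]$), with the same $[-1]$-term once we observe $\widetilde N^{-1}$ maps to $N^{-1}=0$ appropriately; so again one compares $[-1]$-terms. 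Part (3) combines (1) and (2): $E^\bullet$ is already the base change to $\widetilde A_\infty^\bullet$, so the two Frobenius twists $E^\bullet\otimes_{A/(p,I),\phi}A/(p,I)$ and $E^\bullet\otimes_{\widetilde A_\infty^\bullet,\phi}\widetilde A_\infty^\bullet$ differ by base change along the split augmented cosimplicial ring $\widetilde A_\infty^\bullet/(p,I)$ over $A/(p,I)$, which again has constant associated complex.

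The main obstacle, I expect, is bookkeeping rather than conceptual: one must check carefully that the relevant diagrams really are split augmented (the extra degeneracies for $N^\bullet$ and for $\widetilde N^\bullet$ must be compatible with the tensor structure and with the Frobenius twist, which is a nonlinear operation on rings but linear on the module $F^\bullet$), and that the ``$[-1]$-terms'' one lands on are genuinely identified under the maps in question. In particular, for (2) one should make sure the Frobenius on $\F_p[\widetilde N^\bullet]$ is compatible with the splitting, using that Frobenius is a ring map and that the monoid $\widetilde N^\bullet$, being a pushout of free monoids, has its splitting preserved by multiplication-by-$p$ on the monoid. Once these compatibilities are in place, each of the three assertions follows formally from the statement that the underlying augmented cosimplicial object of any split augmented cosimplicial object is a limit diagram (cf. \cite[Lemma 6.1.3.16]{Lurie}), exactly as in the proof of Proposition \ref{lem-descent-hlf-hodge-graded-piece}.
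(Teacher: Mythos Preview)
Your approach has a genuine gap. The split augmented structure on the \v{C}ech nerve $\F_p[N^\bullet]$ over $\F_p$ (coming from the retraction $\F_p[N]\to\F_p$) does \emph{not} transfer to an arbitrary cosimplicial $\F_p[N^\bullet]$-module $F^\bullet$. The extra degeneracies on $\F_p[N^\bullet]$ use the augmentation $\epsilon\colon\F_p[N]\to\F_p$ and are only $\F_p$-linear, not $\F_p[N^\bullet]$-linear, so they are destroyed by $F^\bullet\otimes_{\F_p[N^\bullet]}(-)$. In particular there is no ``$F^{-1}$'': the lemma is stated for an \emph{arbitrary} cosimplicial module, not one built functorially from the nerve. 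The analogy with the proof of Proposition~\ref{lem-descent-hlf-hodge-graded-piece} fails exactly here --- in that proof the cosimplicial object is literally $T(\ul S^{\bullet+m+1})$ for a fixed functor $T$, which is why the splitting survives; here $F^\bullet$ has no such functorial origin.

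The paper's argument is different and does not rely on a splitting of $F^\bullet$. For (1), one writes $N\to N_\infty=N[1/p]$ as the filtered colimit of the multiplication-by-$p^k$ endomorphisms of $N$; each of these makes $\F_p[N]$ a finite free module over itself with $1$ as a basis element, and \cite[Lemma~5.4]{BS} then gives the quasi-isomorphism for \emph{any} cosimplicial module $F^\bullet$. For (2), the paper invokes \cite[Proposition~B.3]{Koshikawa}, a Frobenius-descent statement for monoid algebras whose hypotheses (that $M_A\to\widetilde N$ is injective integral with $\widetilde N^{\textup{gp}}/M_A^{\textup{gp}}$ finitely generated free, and that $\widetilde N\to\widetilde N^{(1)}$ is exact) are verified using the exactification; this is where the specific monoid-theoretic structure enters essentially. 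Part (3) is then formally (1) combined with (2). None of these inputs follows from the splitting you propose.
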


\begin{proof}
(1) The map $\F_p [N^{\bullet}]\ra \F_p [N^{\bullet}_{\infty}]$ is the \v{C}ech nerve of the natural map $\F_p [N]\ra \F_p [N_{\infty}]$. The map $N\ra N_{\infty}$ is the filtered colimit of maps of the multiplication by powers of $p$. Each map gives a quasi-isomorphism by \cite[Lemma 5.4]{BS}, and so is the colimit. 

(2) This follows from \cite[Proposition.B.3]{Koshikawa}. More concretely, we apply it to $M_A\to \widetilde{N}$ here (in the notation there this is $M\to Q$), which is injective and integral. Let us check the assumption of \cite[Proposition.B.3]{Koshikawa}. First, the group
\[
G\coloneqq Q^{\textup{gp}}/M^{\textup{gp}}=\widetilde{N}^{\textup{gp}}/M_A^{\textup{gp}}\cong N^{\textup{gp}}
\]
is clearly a finitely generated free abelian group. Secondly, the map
$Q=\widetilde{N}\ra Q^{(1)}$, where $(-)^{(1)}$ denotes the pushout along $M_A\ra M_A; m\mapsto m^p$, is exact since the map $\widetilde{N}\ra M_A\oplus M$ is exact surjective and $M_A\oplus M \ra (M_A\oplus M)^{(1)}$ is clearly exact. 

(3) Combine (1) and (2). 
\end{proof}
Finally, we are ready to construct Nygaard filtrations in the free case. Let 
\[
(R, P) = (A/I \gr{M}, M_A \oplus M)=(A/I\langle \N^S\rangle, M_A\oplus \N^S)
\]
be the pre-log ring as above. By the quasisyntomic descent, we know that the Frobenius twisted prismatic cohomology $\Prism_{(R, P)/(A, M_A)}^{(1)}$ is computed by
\[
\Prism_{((A/I)_{\infty}^{\bullet}, M_{\infty}^{\bullet})/ A} 
\widehat{\otimes}^\L_{A, \phi} A
\cong 
\Prism_{(A/I)^{\bullet}_{\infty}/ \sq A_\infty^\bullet} 
\widehat{\otimes}^\L_{A, \phi} A, 
\]
where all terms are discrete and $(p,I)$-completely flat over $A$. 
Therefore, Lemma \ref{lemma:Niziol} implies that it is also computed by
\[
\Prism_{(A/I)^{\bullet}_{\infty}/ \sq A_\infty^\bullet}
\widehat{\otimes}^\L_{\widetilde{A}_{\infty}^{\bullet}, \phi} \widetilde{A}_{\infty}^{\bullet}. \]

\bd \label{definition:derived_Nygaard} 
For $(R, P)=(A/I \langle \N^S \rangle, M_A\oplus \N^S)$, we define the Nygaard filtration
\[
\Fil^i_N \Prism_{(R, P)/(A, M_A)}^{(1)} 
\lra  \Prism_{(R, P)/(A, M_A)}^{(1)}=\Prism_{(R, P)/(A, M_A)}\widehat{\otimes}^\L_{A, \phi} A
\]
to be the totalization of 
\[
\Fil^i_N 
\Prism_{(A/I)^{\bullet}_{\infty}/ \sq A_\infty^\bullet}
\widehat{\otimes}^\L_{\widetilde{A}_{\infty}^{\bullet}, \phi} \widetilde{A}_{\infty}^{\bullet}, 
\]
which is independent of the surjection $M_A\oplus N\ra M_A\oplus \N^S$.\footnote{Any map between two surjections induces a map between cosimplicial objects that is a quasi-isomorphism on the filtrations as seen by induction and the description of graded pieces. We can take the direct sum of any two surjections, so we can take the sifted colimit. To have a strictly functorial presentation, we can consider the case $N=\N^{S'}$ for a finite generating subset $S'\subset M_A\oplus \N^S$ with the natural map $N\to M_A\oplus \N^S$ and pass to its filtered colimit.}
These Nygaard filtrations are equipped with a ``Frobenius map'' 
\[
\phi\colon \Fil^i_N \Prism_{(R, P)/(A, M_A)}^{(1)} 
\lra 
I^i  \Prism_{(R, P)/(A, M_A)}
\]
coming from (\ref{eq:Nygaard_relative_Frob_Cech_nerve}), and are functorial.  
More generally, if 
\begin{equation} \label{eq:Sigma_S_T}
\ul \Sigma_{S,T}=(A/I \langle (X_s)_{s\in S}, \N^T\rangle, M_A \oplus \N^T)
\end{equation}
for finite sets $S$, $T$, we define the Nygaard filtration using the isomorphism
\[
\Prism_{\ul \Sigma_{S,T}/(A, M_A)}^{(1)}\cong
\Prism_{A/I \langle (X_s)_{s\in S}\rangle /A}^{(1)} \widehat{\otimes}^\L_A
\Prism_{(A/I \langle \N^T\rangle, \N^T)/A}^{(1)}
\]
and the Day convolution of the above filtration with the one in \cite[Definition 12.9, Theorem 15.3]{BS}. Or, equivalently, one can slightly generalize the above construction itself in this setting. 
\ed

We immediately get

\bp \label{prop:derived_nygaard_HT_free}
Let $\ul \Sigma_{S,T}$ be as above. We have a commutative diagram
\[
\begin{tikzcd}
\Fil^i_N \Prism_{\ul \Sigma_{S,T}/(A, M_A)}^{(1)} \arrow[r, "\phi"] \arrow[d]&   I^i \Prism_{\ul \Sigma_{S,T}/(A, M_A)}  \arrow[d]
 \\
\grade^i_N \Prism_{\ul \Sigma_{S,T}/(A, M_A)}^{(1)} \arrow[r] 
& \overline{\Prism}_{\ul \Sigma_{S,T}/ (A, M_A)}\{i\}, 
\end{tikzcd}
\]
where the bottom horizontal map gives rise to a  functorial isomorphism
\begin{equation} \label{eq:isom_of_graded_Nygaard_on_poly}
\grade^i_N \Prism_{\ul \Sigma_{S,T}/(A, M_A)}^{(1)} \cong 
\tau_{\leq i} \overline{\Prism}_{\ul \Sigma_{S,T}/ (A, M_A)}\{i\}. 
\end{equation} 
\ep

\begin{remark}
The existence of the the map 
\[
\grade^i_N \Prism_{\ul \Sigma_{S,T}/(A, M_A)}^{(1)} \to
\tau_{\leq i} \overline{\Prism}_{\ul \Sigma_{S,T}/ (A, M_A)}\{i\}
\]
in (\ref{eq:isom_of_graded_Nygaard_on_poly}) above is a property of the map 
\[
\grade^i_N \Prism_{\ul \Sigma_{S,T}/(A, M_A)}^{(1)} \to \overline{\Prism}_{\ul \Sigma_{S,T}/ (A, M_A)}\{i\}, 
\]
and it is induced by taking the truncation. 
\end{remark}

\subsection{Derived Nygaard filtration and de Rham comparison}  \noindent  \label{ss:derived_Nygaard_and_dR}

\noindent 
Deriving the above filtration, we obtain the derived Nygaard filtration $\Fil_N^\bullet \Prism_{(R, P)/(A, M_A)}^{\L, (1)}$ on any (simplicial) pre-log ring $(R, P)$ over $(A/I, M_A)$, and a map of filtered complexes
\[
\phi\colon \Fil_N^\bullet \Prism_{(R, P)/(A, M_A)}^{\L, (1)} \to 
I^\bullet \Prism^{\L}_{(R, P)/(A, M_A)}. 
\]
The map above induces a canonical isomorphism 
\begin{equation}\label{eq:derived_graded_of_derived_Nygaard}
    \grade_N^{i} \Prism_{(R, P)/ (A, M_A )}^{\L, (1)} \isom  \Fil_i \overline{\Prism}^\L_{(R, P)/ (A, M_A)}\{i\}
\end{equation}
where the right hand side is the conjugate filtration from Proposition \ref{prop:derived_HT}. Theorem \ref{derived_nygaard_HT} then follows directly from the log free case (Proposition \ref{prop:derived_nygaard_HT_free}).

\br 
We warn the reader that, even when $(R, P)$ is ``large'', the derived Nygaard filtration may not be discrete contrary to \cite[Theorem 15.2]{BS}. 
This makes our definition somewhat non-canonical even in the smooth case, and we do not have a non-derived definition like \cite[Theorem 15.3]{BS}. However, under an additional assumption, e.g., the Cartier type condition, we have another description using $L{\eta}_I$ and the Beilinson $t$-structure \cite[5.3, 5.4, 5.8]{BMS2} (or, a general version of $L\eta_{I}$ considered by Berthelot--Ogus).
\er 

More precisely, 

\bc \label{decalage_affine} \label{cor:Frobenius_factors_through_L_eta_affine}
Let $(A, I, M_A)$ be a bounded pre-log prism with $M_A$ being integral
and suppose that $(R, P)$ is a $p$-complete pre-log ring over $(A/I, M_A)$ with bounded $p^\infty$-torsion and $M_A\to P$ is a smooth chart. Then there are isomorphisms
\[
\grade_N^{i} \Prism_{(R, P)/ (A, M_A )}^{\L, (1)} \cong \tau_{\leq i} \overline{\Prism^\L}_{(R, P)/ (A, M_A)}\{i\}
\]
for all $i \geq 0$. 
The Frobenius map $\phi$ on $\Prism_{(R, P)/ (A, M_A)}$ factors as
\begin{align*}
\Prism^\L_{(R, P)/ (A, M_A)}\widehat{\otimes}^{\L}_{A, \phi_A} A= \Prism_{(R, P)/ (A, M_A)}^{\L, (1)}
& \lra 
L\eta_I \Prism^\L_{(R, P)/ (A, M_A)} \\ 
&\lra
\Prism^\L_{(R, P)/ (A, M_A)}. 
\end{align*}
If moreover $M_A\to P$ is of Cartier type, then the map $\Prism_{(R, P)/ (A, M_A)}^{\L, (1)}\to L\eta_I \Prism^\L_{(R, P)/ (A, M_A)}$ is an isomorphism identifying the derived Nygaard filtration with the truncation of the $I$-adic filtration of $\Prism^\L_{(R, P)/ (A, M_A)}$ with respect to the Beilinson $t$-structure. 
\ec

\begin{proof}
The first part follows since the conjugate filtration is just the canonical filtration in the case of smooth charts. 
The factorization is obtained by using \cite[5.8]{BMS2} as in the proof of \cite[Theorem 15.3]{BS}. 
For the last claim,  to see that the map
\[
\Prism_{(R, P)/ (A, M_A)}^{\L, (1)}\to L\eta_I \Prism^\L_{(R, P)/ (A, M_A)}
\]
is an isomorphism, it suffices to see that it is so modulo $I$. The right hand side modulo $I$ is identified with the log de Rham complex by \cite[Proposition 6.12]{BMS1} and the Hodge--Tate comparison. 
Moreover, the Cartier type assumption implies that the log de Rham complex is identified with the derived log de Rham cohomology \cite[7.6]{Bhatt_dR}. 
Then we can reduce to the case $(R, P)=(A/I \langle (X_{i})_{i\in S}, \N^T\rangle, M_A\oplus\N^T)$ for finite sets $S$, $T$.  We may also assume that $M_A$ is trivial. Then we can replace $(A, I)$ by the universal oriented prism, and in turn by $(\Z_p, (p))$ using \cite[Construction 6.1]{BS} (cf. the proof of \cite[Theorem 15.3]{BS}). 
This case now follows from the crystalline comparison in \cite{Koshikawa} and its compatibility with the Cartier isomorphism. The claim for the Nygaard filtration follows from the description of graded pieces and \cite[5.4]{BMS2} by induction. 
\end{proof}

In the above proof, we also showed the following:

\bc[The de Rham comparison] \label{cor:dR_comparison}
Assume $P\to R$ is a smooth chart of Cartier type over $(A/I, M_A)$. 
There is a canonical isomorphism
\[
\Prism^\L_{(R, P)/ (A, M_A)}\widehat{\otimes}^\L_{A, \phi}A/I \cong \widehat \Omega^{\bullet}_{(R, P)/ (A/I, M_A)}
\]
of $E_{\infty}$-algebras in $\mD(A/I)$. 
\ec

\br[Derived de Rham comparison] \label{remark:derived_dR_comparison}
Deriving the isomorphism in Corollary \ref{cor:dR_comparison}, we obtain a canonical isomorphism \[
\Prism^\L_{(R, P)/ (A, M_A)} \widehat{\otimes}^{\L}_{A, \phi} A/I \cong \widehat \L \Omega_{(R, P)/ (A/I, M_A)}
\] of $E_\infty$-algebras in $\mD(A/I)$
for all (simplicial) pre-log rings $(R, P)$ over $(A/I, M_A)$
\er

\br[Multiplicative structure]  \label{remark:multiplicative_structure}
For any (simplicial) pre-log ring $(R, P)$ over $(A/I, M_A)$, the multiplicative structure on $\Prism_{(R, P)/(A, M_A)}^{\L, (1)}$ admits a refinement to a multiplicative structure 
\[
\Fil_N^i \Prism_{(R, P)/(A, M_A)}^{\L, (1)} \otimes_A^\L \Fil_N^j \Prism_{(R, P)/(A, M_A)}^{\L, (1)} \lra \Fil_N^{i+j} \Prism_{(R, P)/(A, M_A)}^{\L, (1)}
\]
which is functorial in $(R, P)$. Via left Kan extension, it suffices to construct such a multiplicative structure for pre-log rings of the form $\ul{\Sigma}_{S, T} = (A/I \langle (X_s)_{s\in S}, \N^T\rangle, M_A \oplus \N^T)$  that is functorial in $S$ and $T$. From Definition \ref{definition:derived_Nygaard}, it reduces to the case where $S = \varnothing$ (using the multiplicative structure on $\Prism^{\L, (1)}_{A/I \gr{X_s}_{s \in S}/A}$ from \cite[Remark 5.1.8]{APC}). For this, it suffices to construct a functorial multiplicative structure 
\begin{equation} \label{eq:multiplicativity_on_Cech_nerve}
    \Fil^i_N \Big(
\Prism^{(1)}_{A/I \langle M_{\infty}^{k}\rangle/ \sq A_\infty^k}  \Big) \otimes^\L_{A}  \Fil^j_N \Big(
\Prism^{(1)}_{A/I \langle M_{\infty}^{k}\rangle/ \sq A_\infty^k} \Big)  \lra  \Fil^{i+j}_N \Big(
\Prism^{(1)}_{A/I \langle M_{\infty}^{k}\rangle/ \sq A_\infty^k} \Big)
\end{equation}
for each $k$, where we use the notation
\[
\Prism^{(1)}_{A/I \langle M_{\infty}^{k}\rangle/ \sq A_\infty^k} \coloneqq 
\Prism_{A/I \langle M_{\infty}^{k}\rangle/ \sq A_\infty^k}
\widehat{\otimes}^\L_{\widetilde{A}_{\infty}^{k}, \phi} \widetilde{A}_{\infty}^{k} 
\]
from Subsection \ref{ss:log_free_case}. The desired multiplicative structure then comes from 
\begin{align*}
\Fil_N^i \Prism_{\ul{\Sigma}_{T}/(A, M_A)}^{\L, (1)} \otimes_A^\L \Fil_N^j \Prism_{\ul{\Sigma}_{T}/(A, M_A)}^{\L, (1)}  
\lra
& \varprojlim_{k} \Big( 
 \Fil^i_N \Big(
\Prism^{(1)}_{A/I \langle M_{\infty}^{k}\rangle/ \sq A_\infty^k}  \Big) \otimes^\L_{A}  \Fil^j_N \Big(
\Prism^{(1)}_{A/I \langle M_{\infty}^{k}\rangle/ \sq A_\infty^k} \Big)  
\Big) \\
\lra 
& \varprojlim_{k}  
 \Fil^{i+j}_N \Big(
\Prism^{(1)}_{A/I \langle M_{\infty}^{k}\rangle/ \sq A_\infty^k}  \Big)  \:\: = \Fil_N^{i+j} \Prism_{\ul{\Sigma}_{T}/(A, M_A)}^{\L, (1)}
\end{align*}
Finally, the map (\ref{eq:multiplicativity_on_Cech_nerve}) comes from formula (\ref{eq:Nygaard_definition}). 
\er 

\subsection{Nygaard filtration, $I$-adic  filtration and Hodge filtration.}

\begin{construction} \label{construction:I_adic_filtration_maps_to_Nygaard}
Let $(R, P)$ be a (simplicial) pre-log ring  
over $(A/I, M_A)$. From  Remark \ref{remark:multiplicative_structure} we obtain a map of filtered complexes 
\begin{multline} \label{eq:I_adic_filtration_maps_to_Nygaard}
\quad I^{i} \otimes^\L_A  \Fil_N^\bullet \Prism_{(R, P)/(
A, M_A)}^{\L, (1)} \cong \Fil_{N}^{i} \Prism^{\L, (1)}_{(A/I, M_A)/(A, M_A)} \otimes^\L_{A} \Fil_N^\bullet \Prism_{(R, P)/(
A, M_A)}^{\L, (1)} \\ 
\lra \Fil_{N}^{\bullet + i} \Prism^{\L, (1)}_{(R, P)/(A, M_A)} \quad 
\end{multline}
for each $i \ge 0$, as 
\[\Fil_N^i \Prism_{(A/I, M)/(A, M_A)}^{\L, (1)} \cong \Fil_N^i \Prism_{(A/I)/A}^{\L, (1)} = I^i\] (see \cite[Example 5.1.4]{APC}). From the construction given in Remark \ref{remark:multiplicative_structure}, the composition of  (\ref{eq:I_adic_filtration_maps_to_Nygaard}) with the canonical map 
\[\Fil_N^{\bullet + i} \Prism^{\L, (1)}_{(R, P)/(A, M_A)}  \ra  \Fil_N^{\bullet} \Prism^{\L, (1)}_{(R, P)/(A, M_A)}\] agrees with the natural map induced by $I^i \subset A$. 
\end{construction}

We have the following analogue of \cite[Proposition 5.2.3]{APC}, which says that the natural map 
\[
\Prism^{\L, (1)}_{(R, P)/(A, M_A)} \lra \widehat \L \Omega_{(R, P)/(A/I, M_A)}
\]
from the derived de Rham comparison can be naturally upgraded to a filtered map, where $\Prism^{\L, (1)}_{(R, P)/(A, M_A)} $ is equipped with the Nygaard filtration and the ($p$-completed) derived log de Rham complex $\widehat \L \Omega_{(R,P)/(A/I, M_A)}$ is equipped with the Hodge filtration. 

\bl \label{lemma:Nygaard_map_to_Hodge} 
Let $(R, P)$ be a (simplicial) pre-log ring  over $(A/I, M_A)$.  There is a canonical map of filtered complexes 
\begin{equation} \label{eq:image_of_Nygaard_is_Hodge}
\gamma^\bullet_{(R, P)}\colon \Fil_N^\bullet \Prism^{\L, (1)}_{(R, P)/(A, M_A)} \lra \Fil_H^{\bullet} \widehat \L \Omega_{(R, P)/(A/I, M_A)}
\end{equation}
functorial in $(R, P)$. 
\el

\bproof The proof is similar to that of \cite[Proposition 5.2.3]{APC}. 
Via left Kan extension, it suffices to construct  canonical maps 
\begin{equation} \label{eq:image_of_Nygaard_is_Hodge_for_polynomials}
\gamma^\bullet_{S, T}\colon \Fil_N^\bullet \Prism^{\L, (1)}_{\ul{\Sigma}_{S, T}/(A, M_A)} \lra \Fil_H^{\bullet} \widehat \L \Omega_{\ul{\Sigma}_{S, T}/(A/I, M_A)} = \widehat \Omega^{\ge \bullet}_{\ul{\Sigma}_{S, T}/(A/I, M_A)} \end{equation}
functorial in $S$ and $T$, for $\ul{\Sigma}_{S, T}$ as in (\ref{eq:Sigma_S_T}). Since the filtered complex $\widehat \Omega^{\ge \bullet}_{\ul{\Sigma}_{S, T}/(A/I, M_A)}$ (with its Hodge filtration) lies in the heart of the Beilinson $t$-structure and $\Fil_N^\bullet \Prism^{\L, (1)}_{\ul{\Sigma}_{S, T}/(A, M_A)}$ is connective by Corollary \ref{cor:Frobenius_factors_through_L_eta_affine} and its proof, the map $\gamma^\bullet_{
S, T}$ in (\ref{eq:image_of_Nygaard_is_Hodge_for_polynomials}) corresponds precisely to a map 
\begin{equation} \label{eq:Nygaard_is_Hodge_for_polynomials_Beilinson_truncation}
(H^\bullet (\grade_N^\bullet \big( \Prism^{\L, (1)}_{\ul{\Sigma}_{S, T}/(A, M_A)} )), \beta) \cong 
\tau^{\ge 0}_B \Fil_N^\bullet \big( \Prism^{\L, (1)}_{\ul{\Sigma}_{S, T}/(A, M_A)} \big) \lra  \widehat \Omega^{\ge \bullet}_{\ul{\Sigma}_{S, T}/(A/I, M_A)}, 
\end{equation}
where $\tau_{B}^{\ge 0}$ denotes the truncation with respect to the Beilinson $t$-structure, and $\beta$ denotes the Bockstein differential. Now the canonical map (\ref{eq:Nygaard_is_Hodge_for_polynomials_Beilinson_truncation}) is supplied by the isomorphism 
\[
H^\bullet (\grade_N^\bullet \big( \Prism^{\L, (1)}_{\ul{\Sigma}_{S, T}/(A, M_A)} ))
\isom H^\bullet (\tau_{\le \bullet} \cl \Prism^{\L}_{\ul{\Sigma}_{S, T}/(A, M_A)} \{\bullet\}) 
\isom \widehat \Omega^{ \bullet}_{\ul{\Sigma}_{S, T}/(A/I, M_A)} 
\]
of cochain complexes using Corollary \ref{decalage_affine} and the Hodge--Tate comparison, in particular, the Bockstein differential on 
\[H^\bullet (\tau_{\le \bullet} \cl \Prism^{\L}_{\ul{\Sigma}_{S, T}/(A, M_A)} \{\bullet\})  \cong  H^\bullet (\cl \Prism^{\L}_{\ul{\Sigma}_{S, T}/(A, M_A)} \{\bullet\})
\] gets identified with the de Rham differential on $\widehat \Omega^{\bullet}_{\ul{\Sigma}_{S, T}/(A/I, M_A)}$. 
\eproof

\br
If $M_A = P = \{e\}$, then the map we constructed above agrees with that in \cite[Proposition 5.2.3]{APC}. In particular, for a general (simplicial) pre-log ring $(R, P)$, the map (\ref{eq:image_of_Nygaard_is_Hodge}) is given by 
\[
\grade_N^0 \Prism^{\L, (1)}_{(R, P)/(A, M_A)} \isom \Fil_0 \cl \Prism^{\L}_{(R, P)/(A, M_A)} \cong \grade_H^0  \widehat \L \Omega_{(R, P)/(A/I, M_A)} 
\] on the $0^{\tu{th}}$ graded piece (up to a functorial homotopy). 
\er 

Next we determine the fiber of the map (\ref{eq:image_of_Nygaard_is_Hodge}). 

\bp \label{prop:fiber_of_Nygaard_to_Hodge}
Let $(R, P)$ be a (simplicial) pre-log ring  
over $(A/I, M_A)$. There is a fiber sequence  
\begin{equation} \label{eq:I_adic_and_Nygaard}
I \otimes^\L_A \Fil_N^{\bullet - 1} \Prism^{\L, (1)}_{(R, P)/(A, M_A)}  \lra 
\Fil_N^\bullet \Prism^{\L, (1)}_{(R, P)/(A, M_A)} \xrightarrow{\gamma_{(R, P)}^{\bullet}} \Fil_H^{\bullet} \widehat \L \Omega_{(R, P)/(A/I, M_A)}
\end{equation}
that is functorial in $(R, P)$. Here the first arrow is given by (\ref{eq:I_adic_filtration_maps_to_Nygaard}) in Construction \ref{construction:I_adic_filtration_maps_to_Nygaard} and the second arrow is (\ref{eq:image_of_Nygaard_is_Hodge}). 
\ep

\bproof  The proof is similar to that of \cite[Proposition 5.2.8]{APC}. 
It suffices to show that the functorial sequence (\ref{eq:I_adic_and_Nygaard}) 
is a fiber sequence when $(R, P) = \ul{\Sigma}_{S, T}$ as in the proof of Lemma \ref{lemma:Nygaard_map_to_Hodge}. Now the sequence becomes 
\[
I \otimes^\L_A \Fil_N^{\bullet - 1} \Prism^{\L, (1)}_{\ul{\Sigma}_{S, T}/(A, M_A)}  \lra 
\Fil_N^\bullet \Prism^{\L, (1)}_{\ul{\Sigma}_{S, T}/(A, M_A)} \xrightarrow{\gamma^\bullet_{S, T}} \widehat \Omega^{\ge \bullet}_{\ul{\Sigma}_{S, T}/(A/I, M_A)}.
\]
The composition is nullhomotopic,  since $I \otimes_A \Fil_N^{\bullet - 1} \Prism^{\L, (1)}_{\ul{\Sigma}_{S, T}/(A, M_A)}  \in \mathcal{DF}(A)^{\le -1}$ by Corollary \ref{cor:Frobenius_factors_through_L_eta_affine}, while $ \widehat \Omega^{\ge \bullet}_{\ul{\Sigma}_{S, T}/(A/I, M_A)} \in \mathcal{DF}(A)^{\heartsuit}$ with respect to the Beilinson $t$-structure. Therefore the map $\gamma^\bullet_{S, T}$ descends to a map from the cofiber 
\begin{equation} \label{eq:Nygaard_Hodge_on_cofiber}
\Fil_N^\bullet \Prism^{\L, (1)}_{\ul{\Sigma}_{S, T}/(A, M_A)} / I \otimes^\L_A \Fil_N^{\bullet - 1} \Prism^{\L, (1)}_{\ul{\Sigma}_{S, T}/(A, M_A)} \lra \widehat \Omega^{\ge \bullet}_{\ul{\Sigma}_{S, T}/(A/I, M_A)}.
\end{equation}
It remains to show that filtered map (\ref{eq:Nygaard_Hodge_on_cofiber}) is an isomorphism. For $\bullet=i \le 0$, this follows from the de Rham comparison. For $\bullet=i \ge 0$, by induction, it remains to show that the map (\ref{eq:Nygaard_Hodge_on_cofiber}) induces an isomorphism 
\[
\Fil_i \cl \Prism^{\L}_{\ul{\Sigma}_{S, T}/(A, M_A)} \{i\}/ \Fil_{i-1} \cl \Prism^{\L}_{\ul{\Sigma}_{S, T}/(A, M_A)}\{i\} \isom \widehat \Omega^i_{\ul{\Sigma}_{S, T} /(A/I, M_A)}[-i].
\]
on graded pieces. Now, from the construction of the map $\gamma^\bullet$ in Lemma \ref{lemma:Nygaard_map_to_Hodge}, this map agrees with the isomorphism supplied by the  Hodge--Tate comparison. 
\eproof

\subsection{Global consequences}
\noindent 

\noindent In this subsection, we extend our analysis  on the Nygaard filtration to the global setup. 

\begin{construction}[The ``Frobenius twisted'' sheaf $\Prism^{(1)}_{(X, M_X)/(A, M_A)}$] \label{construction:Frob_twisted_prism}
To start, let $(X, M_X)$ be a log $p$-adic formal scheme over $(A/I, M_A)$ and denote by \[\Prism^{(1)}_{(X, M_X)/(A, M_A)}\] the $(p,I)$-complete \'etale sheafification of the presheaf on $X_{\ett}$ 
\begin{equation} \label{eq:presheaf_twisted_by_Frob}
\Prism^{(1), \tu{pre}}\colon U  \longmapsto 
\big(\Prism_{(X, M_X)/(A, M_A)} (U) \big) \widehat \otimes^\L_{A, \phi_A} A.    
\end{equation}
We similarly define $\Prism^{\L, (1)}_{(X, M_X)/(A, M_A)}$ for the derived version. 
\end{construction}

\begin{construction}[The global Nygaard filtration] \label{construction:global_Nygaard}
Consider the functor on affine objects  of the \'etale site $X_{\ett}$ which sends 
\[
U=\spf (R) \longmapsto \Fil^i_N \Prism^{\L, (1)}_{(R, \Gamma (U, M_X))/ (A, M_A)}.
\]
Taking the $(p,I)$-complete \'etale sheafification, we obtain a Nygaard filtration 
\[
\Fil_N^i \Prism^{\L, (1)}_{(X,M_X)/(A, M_A)} \lra \Prism^{\L, (1)}_{(X,M_X)/(A, M_A)}, 
\]
 equipped with a map of filtered complexes 
\begin{equation} \label{eq:Frob_on_global_Nygaard}
\phi\colon \Fil_N^\bullet \Prism^{\L, (1)}_{(X,M_X)/(A, M_A)} \to I^\bullet \Prism^{\L}_{(X,M_X)/(A, M_A)}
\end{equation}
induced by (refined) Frobenius in Definition \ref{definition:derived_Nygaard}. Moreover, for each $i \ge 0$ there is a natural map  
\begin{equation}  \label{eq:I_adic_filtration_maps_to_Nygaard_global}
I^i  \Fil_N^\bullet \Prism^{\L, (1)}_{(X,M_X)/(A, M_A)} \lra \Fil_N^{\bullet+i}\Prism^{\L, (1)}_{(X,M_X)/(A, M_A)} 
\end{equation}
 of filtered complexes using Construction \ref{construction:I_adic_filtration_maps_to_Nygaard}. 
\end{construction}

\bc \label{cor:global_Nygaard}
The Frobenius map factors through a canonical isomorphism 
\[
\grade_N^i \Prism^{\L, (1)}_{(X,M_X)/(A, M_A)} \isom \Fil_i \cl \Prism^{\L}_{(X,M_X)/(A, M_A)} \{i\}
\]
of \'etale sheaves on $X$, where $\Fil_i$ denotes the derived conjugate filtration from Remark \ref{remark:global_derived_conjugate_filtration}.
If in addition $M_A$ is integral and $(X, M_X)$ is smooth over $(A/I, M_A)$, then we have a Nygaard filtration $\Fil_N^\bullet \Prism^{(1)}_{(X,M_X)/(A, M_A)}$ on $ \Prism^{(1)}_{(X,M_X)/(A, M_A)}$, which satisfies 
\[
\grade_N^\bullet \Prism^{(1)}_{(X,M_X)/(A, M_A)} \isom \tau_{\le \bullet} \cl \Prism_{(X,M_X)/(A, M_A)} \{\bullet\}.
\]
\ec 

Assume $M_A$ is integral. Next, we prove the global counterpart of Corollary \ref{decalage_affine}. Let us start with the following lemma. Let $X_{\ett, \tu{aff}}$ denote the affine (small) \'etale site of $X$, consisting of affine objects in $X_{\ett}$ equipped with topology given by \'etale covers. 

\bl \label{lemma:L_eta_Prism_complete}
Let $(X, M_X)$ be a smooth log $p$-adic formal scheme over $(A/I, M_A)$. 
Then the presheaf 
\[
(L \eta_I \Prism)^{\tu{pre}}\colon U = \spf R \longmapsto L \eta_I (\Prism_{(X, M_X)/(A, M_A)} (U))
\]
is already a sheaf on $X_{\ett, \tu{aff}}$, which we denote by $L \eta_{I} \Prism_{(X, M_X)/(A, M_A), \tu{aff}}$ (or simply $(L \eta_{I} \Prism)_{\tu{aff}}$), which is a derived $(p, I)$-complete sheaf over $X_{\ett, \tu{aff}}$. It determines a unique derived $(p, I)$-complete sheaf on $X_{\ett}$, which we denote by 
\[L \eta_{I} \Prism_{(X, M_X)/(A, M_A)}\]
(or simply $L \eta_{I} \Prism$ if the context is clear).
\el 

\bproof 
Note that the presheaf $(L \eta_I \Prism)^{\tu{pre}}$ takes values in derived $(p, I)$-complete objects (by \cite[Lemma 6.19]{BMS1}). 
Thus it suffices to check the claim after taking derived reduction mod $I$. In other words, it suffices to show that the presheaf sending \[
U = \spf R \longmapsto \Big((L \eta_I \Prism)^{\tu{pre}} (U) \Big) \widehat \otimes_{A}^\L A/I
\]
is a sheaf on $X_{\ett, \tu{aff}}$. 
But this becomes the \'etale sheaf sending 
\[ U = \spf R \longmapsto H^\bullet (\cl \Prism_{(X, M_X)/(A, M_A)} (U))\{i\}\] by \cite[Proposition 6.12]{BMS1}, which is isomorphic to the log de Rham complex $\Omega^{\bullet}_{(X, M_X)/(A/I, M_A)}$. 
\eproof 

Now we are ready to prove 
\bp \label{prop:decalage_global}
Let $(X, M_X)$ be a smooth log $p$-adic formal scheme over $(A/I, M_A)$ with the mod $p$ fiber of Cartier type, then the Frobenius induces a canonical isomorphism of \'etale sheaves 
\begin{equation} \label{eq:L_eta_global}
\Prism_{(X, M_X)/(A, M_A)}^{(1)} \isom L \eta_{I} \Prism_{(X, M_X)/(A, M_A)},
\end{equation}
where $\Prism_{(X, M_X)/(A, M_A)}^{(1)}$ is defined in Construction \ref{construction:Frob_twisted_prism} and $L \eta_{I} \Prism_{(X, M_X)/(A, M_A)}$ is the sheaf introduced in Lemma \ref{lemma:L_eta_Prism_complete}. Moreover, $\Prism^{(1), \tu{pre}}$ is a sheaf on qcqs objects of $X_{\ett}$ and
\[
R\Gamma_{\Prism}((U, M_U)/(A, M_A))^{(1)} \isom L \eta_{I} R\Gamma_{\Prism}((U, M_U)/(A, M_A))
\]
for $U=\spf R$. 
\ep 

\bproof 
Let us first prove (\ref{prop:decalage_global}). 
On each $U =\spf R \in X_{\ett}$, we have a map of filtered complexes 
\begin{equation} \label{eq:map_phi_on_U}
\phi_U\colon \Prism^{(1)}_{(X, M_X)/(A, M_A)} (U) \lra \Prism_{(X, M_X)/(A, M_A)} (U)
\end{equation}
where the left hand side is equipped with the Nygaard filtration 
\[\Fil_N^\bullet \Prism^{(1)}_{(X, M_X)/(A, M_A)} (U)\] while the right hand side is equipped with the $I$-adic filtration. 
By Corollary \ref{cor:global_Nygaard}, the Nygaard filtration on  $\Prism^{(1)}_{(X, M_X)/(A, M_A)} (U)$ is connective with respect to the Beilinson $t$-structure, therefore the map (\ref{eq:map_phi_on_U}) factors canonically through 
\begin{equation} \label{eq:Frobenius_phi_U_map_to_L_eta}
\phi_U\colon \Prism^{(1)}_{(X, M_X)/(A, M_A)} (U) \lra L \eta_I (\Prism_{(X, M_X)/(A, M_A)} (U)),
\end{equation}
which is functorial in $U$. This way we have constructed a natural map 
\[\phi\colon \Prism^{(1), \tu{pre}} \ra (L \eta_I \Prism)_{\tu{aff}}\] as presheaves on $X_{\ett, \tu{aff}}$.  
On the associated derived $(p, I)$-complete sheaves (over $X_{\ett}$) this induces the map as required in (\ref{eq:L_eta_global}).  

To proceed, let $X_{\ett, \textup{sm}} \subset X_{\ett, \tu{aff}}$ denote the full subcategory which consists of affine $U$ \'etale over $X$ that admits a smooth chart   $P \ra \Gamma(U, M_X)$ of Cartier type. Such objects form a basis for the \'etale topology of $X$. Therefore, to show that the map (\ref{eq:L_eta_global}) is an isomorphism, it suffices to show that its restriction to $X_{\ett, \tu{sm}}$ is an isomorphism. To this end, let $\Prism^{(1), \tu{pre}}_{\tu{sm}}$ (resp. $(L \eta_I \Prism)_{\tu{sm}}$) denote the restriction of the presheaf $\Prism^{(1), \tu{pre}}$ (resp. $(L \eta_I \Prism)_{\tu{aff}}$) to $X_{\ett, \tu{sm}}$. By Proposition \ref{prop:sections_of_derived_prismatic} and Corollary \ref{decalage_affine}, the natural map
\begin{equation} \label{eq:Leta_isom_on_small}
\phi\colon \Prism^{(1), \tu{pre}}_{\tu{sm}} \isom (L \eta_I \Prism)_{\tu{sm}}
\end{equation}
is an isomorphism of presheaves on $X_{\ett, \tu{sm}}$. Now taking derived $(p, I)$-complete sheafification we get the desired claim (note that the right hand side of (\ref{eq:Leta_isom_on_small}) is already a derived $(p, I)$-complete sheaf over $X_{\ett, \tu{sm}}$ by Lemma \ref{lemma:L_eta_Prism_complete}). 

For the remaining assertions, it suffices to show that the natural map 
\begin{equation} \label{eq:pulling_out_Frobenius}
 R \Gamma (X_{\ett}, \Prism_{(X, M_X)/(A, M_A)})  \widehat \otimes^\L_{A, \phi_A} A \lra 
  R \Gamma (X_{\ett}, \Prism^{(1)}_{(X, M_X)/(A, M_A)}) 
\end{equation}
is an isomorphism if $X$ is qcqs. We may replace $X_{\ett}$ by $X_{\ett, \tu{sm}}$ and it suffices to show that 
\begin{equation*} 
 R \Gamma (X_{\ett, \tu{sm}}, \Prism)  \widehat \otimes^\L_{A, \phi_A} A \isom 
  R \Gamma (X_{\ett, \tu{sm}}, \Prism \widehat \otimes^\L_{A, \phi_A} A), 
\end{equation*}
where $\Prism$ denotes the sheaf $\Prism_{(X, M_X)/(A, M_A)}$, is an isomorphism. This can be checked after taking $\otimes_A^{\L} A/(p,I)$, and it follows from the projection formula since $X$ is qcqs by assumption. 
\eproof

From Proposition \ref{prop:decalage_global}, we immediately obtain (a sheaf version of) the de Rham comparison stated in the introduction. 

\bc 
Let $(X, M_X)$ be a smooth log $p$-adic formal scheme over $(A/I, M_A)$ with the mod $p$ fiber of Cartier type. Then there is a canonical isomorphism 
\[
\Prism_{(X, M_X)/(A, M_A)} \widehat \otimes_{A, \phi_A}^\L A/I \cong \Omega^{\bullet}_{(X, M_X)/(A/I, M_A)}.
\]
\ec 

\bproof 
This follows from taking the (derived) mod $I$ reduction of the isomorphism (\ref{eq:L_eta_global}) and the Hodge--Tate comparison. 
\eproof

From Proposition \ref{prop:decalage_global}, we further deduce the following more precise statement on the image of Frobenius. 

\bc \label{cor:image_Frob_global}
Let $(X, M_X)$ be a smooth log $p$-adic formal scheme over $(A/I, M_A)$ with the mod $p$ fiber of Cartier type. Then for each $i \ge 0$, we have natural maps 
\begin{equation} \label{eq:V_i_on_complex}
V_i\colon \tau_{\le i} \Prism_{(X, M_X)/(A, M_A)} \otimes^\L_A I^i \lra \tau_{\le i} \Prism_{(X, M_X)/(A, M_A)}^{(1)} 
\end{equation}
such that the composition $\phi \circ V_i$ with  \[\phi\colon \tau_{\le i}\Prism_{(X, M_X)/(A, M_A)}^{(1)}  \ra  \tau_{\le i}\Prism_{(X, M_X)/(A, M_A)}\] 
(resp. the precomposition $V_i \circ (\phi \otimes 1)$ with 
\[\phi \otimes 1\colon  \tau_{\le i}\Prism_{(X, M_X)/(A, M_A)}^{(1)} \otimes^\L_A I^i  \ra  \tau_{\le i}\Prism_{(X, M_X)/(A, M_A)} \otimes^\L_A I^i, \] 
becomes the natural map induced by the inclusion $I^i \hookrightarrow A$. In particular, if the underlying formal scheme $X$ is qcqs, there is a natural map, for each $i \ge 0$, 
\begin{equation} \label{eq:V_i_on_cohomology}
V_i\colon  H^i_{\Prism} ( (X, M_X)/(A, M_A))  \otimes_A I^i \lra  H^i (R \Gamma_{\Prism} ((X, M_X)/(A, M_A))\widehat\otimes^\L_{A, \phi_A} A)  
\end{equation}
which is an inverse of the Frobenius map up to $I^i$. 
\ec 

\bproof 
The existence of the required maps $V_i$ in (\ref{eq:V_i_on_complex}) follows from the first part of Proposition \ref{prop:decalage_global} using \cite[Lemma 6.9]{BMS1} (see \cite[Corollary 15.5]{BS} and its proof). The last assertion follows from the second part of Proposition \ref{prop:decalage_global}: by taking cohomology, we obtain the map $V_i$ in (\ref{eq:V_i_on_cohomology}) as the composition
\[
\begin{tikzcd}[row sep = 1.5em]
  H^i_{\Prism} ( (X, M_X)/(A, M_A))  \otimes_A I^i \arrow[d, equal] \\
   H^i (X_{\ett}, \Prism_{(X, M_X)/(A, M_A)}) \otimes_A I^i \arrow[r, "(\ref{eq:V_i_on_complex})"] &  H^i (X_{\ett}, \Prism_{(X, M_X)/(A, M_A)}^{(1)}) \arrow[d, "\rotatebox{90}{$\sim$}"]\\ 
   & H^i (R \Gamma_{\Prism} ((X, M_X)/(A, M_A)) \widehat \otimes^\L_{A, \phi_A} A) . 
\end{tikzcd}
\]
\eproof 

\br \label{remark:uniform_bound_of_I_power}
In fact, the bound on the power of $I$ needed for the Frobenius to admit an inverse can be chosen uniformly if $\Omega^1_{(X, M_X)/(A/I, M_A)}$ has the finite rank. More precisely, if we let $D$ denote the rank of $\Omega^1_{(X, M_X)/(A/I, M_A)}$, then the map (\ref{eq:V_i_on_complex}) gives us a map 
\[
V\colon  \Prism_{(X, M_X)/(A, M_A)} \otimes^\L_A I^D \lra \Prism_{(X, M_X)/(A, M_A)}^{(1)} 
\]
which provides an inverse of the Frobenius map up to $I^D$. In particular, the Frobenius on $R\Gamma_{\Prism} ((X,M_X)/(A, M_A))$ admits an inverse up to $I^D$. 
\er

Let us also note the following. 

\bc \label{cor:fiber_of_Nygaard_to_Hodge_global}  Let $(A, I, M_A)$ be a bounded pre-log ring and $(X, M_X)$ a log $p$-adic formal scheme over $(A/I, M_A)$. 
For each $i$, let $\Fil_H^i \widehat \L \Omega_{(X, M_X)/(A/I, M_A)}$ denote the derived $p$-complete \'etale sheafification of the presheaf
\[
U = \spf R \longmapsto  \Fil_H^{i} \widehat \L \Omega_{(R, \Gamma(U, M_X))/(A/I, M_A)}
\]
valued in derived $p$-complete objects in $\mD(A)$. Then there is a canonical fiber sequence 
\[ 
I \otimes^\L_A \Fil_N^{\bullet - 1} \Prism^{\L, (1)}_{(X,M_X)/(A, M_A)} \lra \Fil_N^\bullet \Prism^{\L, (1)}_{(X,M_X)/(A, M_A)} \lra \Fil_H^\bullet \widehat \L \Omega_{(X, M_X)/(A/I, M_A)}
\]
of filtered complexes on $X_{\ett}$. 
\ec 

\bproof 
This follows from Proposition \ref{prop:fiber_of_Nygaard_to_Hodge}. 
\eproof 

\br \label{remark:comparing_derived_dR_and_dR_complex}
Suppose that $(X, M_X)$ is smooth over $(A/I, M_A)$ with its mod $p$ fiber of Cartier type, then the fiber sequence in Corollary \ref{cor:fiber_of_Nygaard_to_Hodge_global} becomes \begin{equation}
 I \otimes^\L_A \Fil_N^{\bullet - 1} \Prism^{(1)}_{(X,M_X)/(A, M_A)} \lra \Fil_N^\bullet \Prism^{(1)}_{(X,M_X)/(A, M_A)} \lra \Omega_{(X, M_X)/(A/I, M_A)}^{\ge \bullet}.
 \end{equation}
For this it suffices to show that the natural map \begin{equation} \label{eq:compare_derived_dR_to_dR}
 \widehat \L \Omega_{(X, M_X)/(A/I, M_A)}
 \lra \Omega_{(X, M_X)/(A/I, M_A)}^{\bullet}
\end{equation}
is an isomorphism of \'etale sheaves identifying the Hodge filtration on both sides. 
Let us first show that (\ref{eq:compare_derived_dR_to_dR}) is an isomorphism. If  $A/I$ is an $\F_p$-algebra, then using the conjugate filtration on both sides one reduces to show that the map $\widehat \L_{(X,M_X)/(A/I, M_A)} \ra \Omega_{(X, M_X)/(A/I, M_A)}^1$ is an isomorphism, which follows from the proof of Proposition \ref{prop:sections_of_derived_prismatic}. Slightly more generally, if $A/I$ is an $\Z/p^n$-algebra, we may reduce to the case of $\F_p$-algebras by d\'evissage. In general, since $A/I$ is assumed to have bounded $p^{\infty}$-torsion, we know that the pro-systems $\{A/(I, p^n)\}$ and $\{A/I \otimes^\L_{\Z} \Z/p^n\}$ are pro-isomorphic (see \cite[Lemma 4.6]{BMS2} and the discussion thereafter). It follows that, for any derived $p$-complete object $N \in \mD (A/I)$, we have an isomorphism 
\[
N \isom \varprojlim_{n} N \otimes^\L_{\Z} \Z/p^n \cong  \varprojlim_{n} N \otimes^\L_{A/I} (A/I \otimes^\L_{\Z}\Z/p^n) 
\isom \varprojlim_{n} N \otimes^\L_{A/I} A/(I, p^n). 
\]
Therefore, it suffices to show that for each $n \ge 1$, (\ref{eq:compare_derived_dR_to_dR}) becomes an isomorphism after taking derived base change $-\otimes^\L_{A/I} A/(I, p^n)$. By assumption we know that $(X, M_X)$ is smooth over $(A/I, M_A)$, in particular, this implies that $X$ is flat over $A/I$, thus after derived base change the map becomes 
\[
 \widehat \L \Omega_{(X, M_X)_{n}/(A/(I,p^n), M_A)}
 \lra \Omega_{(X, M_X)_n/(A/(I, p^n), M_A)}^{\bullet},
\]
which we know is an isomorphism. 
Here  $(X, M_X)_{n}$ denotes the base change
\[
 (X, M_X) \times_{(\spf A/I, M_A)^a} (\spec A/(I, p^n), M_A)^a, 
\]
which is a log scheme smooth over $(A/(I, p^n), M_A)$. 
\er 

This implies the following claim, which we need later. 

\bc   \label{cor:Nygaard_complete_for_smooth}
Let $(X, M_X)$ be a smooth log $p$-adic formal scheme over $(A/I, M_A)$. Suppose that $X$ is qcqs and that the mod $p$ fiber of $(X, M_X)$ is of Cartier type. 
\be 
\item Assume $\Omega^1_{(X, M_X)/(A/I, M_A)}$ has the finite rank $D$. If $j \ge D$, then the map 
\[
\Fil_N^j \Prism^{(1)}_{(X,M_X)/(A, M_A)} \otimes_A^\L I^i  \lra \Fil_N^{i+j}\Prism^{(1)}_{(X,M_X)/(A, M_A)} \]
from (\ref{eq:I_adic_filtration_maps_to_Nygaard_global}) induces an isomorphism 
\[
R \Gamma(X_{\ett}, \Fil_N^j \Prism^{(1)}_{(X,M_X)/(A, M_A)}) \otimes^\L_A I^i  \isom R \Gamma(X_{\ett}, \Fil_N^{i+j} \Prism^{(1)}_{(X,M_X)/(A, M_A)}) 
\]
\item  $R \Gamma(X_{\ett}, \Prism^{(1)}_{(X,M_X)/(A, M_A)})$ is complete with respect to the Nygaard filtration given by 
\[
R \Gamma(X_{\ett}, \Fil_N^\bullet \Prism^{(1)}_{(X,M_X)/(A, M_A)}) \lra R \Gamma(X_{\ett}, \Prism^{(1)}_{(X,M_X)/(A, M_A)}).
\]
\ee 
\ec

\bproof 
 Part (1) follows from Corollary \ref{cor:fiber_of_Nygaard_to_Hodge_global} and Remark \ref{remark:comparing_derived_dR_and_dR_complex}. Part (2) follows from Part (1) since the Nygaard filtration on 
 \[\Fil_N^{D} R \Gamma(X_{\ett}, \Prism^{(1)}_{(X,M_X)/(A, M_A)}) = R \Gamma(X_{\ett}, \Fil_N^D \Prism^{(1)}_{(X,M_X)/(A, M_A)})\] becomes the $I$-adic filtration (compare with \cite[Lemma 7.8]{LiuLi1} and its proof). 
\eproof

 
\newpage 
\section{Comparison with Kummer \'etale cohomology}  \label{sec:etale}

In this section, we prove the following \'etale comparison theorem for derived log prismatic cohomology. 
 
\bt \label{thm:etale_comp}
Let $(A, I = (d))$ be a perfect prism where $I \ne (p)$.\footnote{For the case where $I = (p)$, the statement is vacuously true as both sides vanish.} Let $(R, P)$ be a $p$-adically complete pre-log $A/I$-algebra where $P$ is a saturated monoid and $R$ has bounded $p^\infty$-torsion. 
For each $n \ge 1$, there is a canonical isomorphism 
\[ 
R \Gamma_{\textup{k\'et}} (\spec (R[\frac{1}{p}], P)^a, \Z/p^n) \cong 
(\Prism_{(R, P)/A} [\frac{1}{d}] /p^n )^{\phi = 1}
\]
which is functorial in $(R, P)$. 
\et 

For a fine saturated monoid $P$, $R \Gamma_{\textup{k\'et}} (\spec (R[\frac{1}{p}], P)^a, \Z/p^n)$ is the Kummer \'etale cohomology of the fs log scheme $\spec (R[\frac{1}{p}], P)^a$.  
For a general saturated monoid $P$, one may simply define
\[
R \Gamma_{\textup{k\'et}} (\spec (R[\frac{1}{p}], P)^a, \Z/p^n)
\coloneqq
\varinjlim_{P_i \subset P}
R \Gamma_{\textup{k\'et}} (\spec (R[\frac{1}{p}], P_i)^a, \Z/p^n), 
\]
where $P_i$ runs through fine saturated submonoids of $P$ and the colimit is filtered.

Our proof of Theorem \ref{thm:etale_comp} is analogous to the one outlined in \cite{Bhatt_notes}, namely one reduces the statement to a log semiperfectoid situation. The key observation is that a certain log prismatic cohomology can be identified with a (nonlog) prismatic cohomology after taking perfections. 

\br[Pre-log structure on the base prism]
In the theorem above, we may replace the base prism $(A, I)$ by any perfect ``log prism'', or any pre-log prism $(A, I, M_A)$ such that $(A/I, M_A)$ is a perfectoid or even pseudo-perfectoid pre-log ring, in the sense of Definition \ref{def:perfectoid_monoid} and Remark \ref{remark:pseudo_perfectoid}. 
To see this, note that under the (pseudo-)perfectoid assumption, we have a natural isomorphism 
\[\Prism_{(R, P)/A} \isom \Prism_{(R, P)/(A, M_A)}\] by the Hodge--Tate comparison and Lemma \ref{lemma:perfectoid_monoid}. Let us also note that the left hand side does not depend on the base pre-log prism.  
\er 

\br[Log \'etale cohomology] The left hand side of the comparison isomorphism in Theorem \ref{thm:etale_comp} can be replaced by the (full) log \'etale cohomology of $\spec (R[\frac{1}{p}], P)^a$, which can similarly be defined as 
\[
R \Gamma_{\textup{log\'et}} (\spec (R[\frac{1}{p}], P)^a, \Z/p^n)
\coloneqq
\varinjlim_{P_i \subset P}
R \Gamma_{\textup{log\'et}} (\spec (R[\frac{1}{p}], P_i)^a, \Z/p^n).  
\]
This is because for constant torsion coefficients $\Lambda$ (for example $\Lambda = \Z/p^n$), we have a natural isomorphisms
\[
R \Gamma_{\textup{k\'et}} (\spec (R[\frac{1}{p}], P_i)^a, \Lambda) \isom R \Gamma_{\textup{log\'et}} (\spec (R[\frac{1}{p}], P_i)^a, \Lambda)
\]
using \cite[Proposition 5.4]{Nakayama2}, which identifies the\ Kummer \'etale cohomology and the log \'etale cohomology of $\spec (R[\frac{1}{p}], P)^a$ in the colimit. A more geometric approach that recovers the above definition is given in \cite{Dori_Yao}. 
\er 

\br[Relation to cohomology of log adic spaces]  
Suppose that $\spa (R[\frac{1}{p}], R)$ is an adic space (namely it is sheafy as a pre-adic space), then the Kummer \'etale cohomology of $\spec (R[\frac{1}{p}], P)^a$ can be viewed as the Kummer \'etale cohomology of the log adic space $(\spa (R[\frac{1}{p}], R), P)^a$ (note that the definition of Kummer \'etale cohomology of log adic spaces in \cite{DLLZ, DY} typically require some finiteness assumptions). In particular, we have the following 

\bl \label{lemma:comparing_adic_generic_to_spec} Let $\Lambda = \Z/n\Z$. Let $(R, P)$ be a classically $p$-complete 
fs pre-log ring and assume that $R$ is topologically finitely generated over a noetherian ring $A_0$. Let $X =  \spec (R[\frac{1}{p}], P)^a$ and let \[X^{\text{ad}} = (\spa (R[\frac{1}{p}], R), P)^a\] denote the associated log adic space in the sense of \cite{DLLZ}, where $P$ denotes the pre-log structure on $\spa (R[\frac{1}{p}], R)$ induced from $P \ra R$. Then there is a natural isomorphism 
\[
R \Gamma_{\ket} (X, \Lambda) \isom R \Gamma_{\ket} (X^{\text{ad}}, \Lambda). 
\]
\el

\bproof 
Let $\mathring X = \spec (R[\frac{1}{p}])$ (resp. ${\mathring X }^{\text{ad}} = \spa (R[\frac{1}{p}], R)$) be the underlying scheme (resp. adic space) of $X$ (resp. $X^{\text{ad}}$). Write $M_X$ (resp. $\mM_{X^{\text{ad}}}$) for the sheaf of monoids induced from pre-log structure $P$. Then we have maps 
\[
\epsilon\colon X\ra \mathring X  \qquad  (\text{resp. }
\epsilon^{\text{ad}}\colon X^{\text{ad}} \ra {\mathring X }^{\text{ad}}).
\] 
By \cite[Corollary 3.2.3]{Huber}  (and the Grothendieck--Serre spectral sequence), it suffices to show that $R^i \epsilon^{\text{ad}}_* \Lambda \cong f^* R^i \epsilon_* \Lambda$, where $f$ denotes the map $f\colon {\mathring X }^{\text{ad}} \ra {\mathring X }$. For this let us note that we have isomorphisms
\[
\midwedge^i (\cl \mM_{X^{\text{ad}}}^{\text{gp}}/n \cl \mM_{X^{\text{ad}}}^{\text{gp}}) (-i) \isom R^i \epsilon^{\text{ad}}_* \Lambda
\] over the adic space ${\mathring X }^{\text{ad}}_{\ett}$
by \cite[Equation (4.4.28), Lemma 4.4.29]{DLLZ}. On the other hand, we have  
\[
\midwedge^i (\cl M_{X}^{\text{gp}}/n \cl  M_{X}^{\text{gp}} ) (-i) \isom R^i \epsilon_* \Lambda
\] 
in ${\mathring X }_{\ett}$ by \cite[Theorem 2.4]{Kato_Nakayama}. The claim thus follows. 
\eproof 

\er

\subsection{Descendability of log prismatic cohomology}

\bl \label{lemma:descendability_when_fg} Let $I, J$ be two finite sets. Let $T = \cl A \gr{X_i, Y_j}_{i \in I, j \in J}$ and equip $T$ with the pre-log structure $M =\N^{\oplus J} \ra T$, sending each $m \in \N_j$ to $Y_j^m$. Let 
\[(T_\infty = \cl A \gr{X_i^{1/p^\infty}, Y_j^{1/p^\infty}}, \: M_\infty = (\N_\infty)^{\oplus J} )
\]
be the perfectoid pre-log ring over $(T, M)$. Here $\N_\infty$ denotes the monoid $\N[\frac{1}{p}]$. Then the map 
\[ 
\Prism_{(T, M)/A} \lra \Prism_{(T_\infty, M_\infty)/A} \quad \text{and }  
\]
is a descendable map of $E_\infty$-algebras. Moreover, the maps 
\[ 
\Prism_{(T, M)/A, \text{perf}} \lra \Prism_{(T_\infty, M_\infty)/A, \text{perf}} \quad \text{and } \: \: 
\Prism_{(T, M)/A, \text{perf}} [\frac{1}{d}] \lra \Prism_{(T_\infty, M_\infty)/A, \text{perf}} [\frac{1}{d}]  
\]
are  descendable maps of $E_\infty$-algebras. 
\el 

The proof of the lemma is similar to that of \cite[Lemma 8.6]{BS}

\bproof 

Filtered colimits of descendable maps of bounded index is descendable by \cite[Lemma 12.22]{BS_Witt}, so the statement about perfection and localization (inverting $d$) follows from the descendability of the first arrow, which further reduces to the case where $(T, M) = (\cl A \gr{Y}, \N)$ by considering the coproduct (the case where $T = \cl A \gr{X} $ equipped with the trivial pre-log structure is already taken care of by \cite[Lemma 8.6]{BS}).  Now we show that the map 
\begin{equation} \label{eq:descendable_degree_2}
\Prism_{(\cl A \gr{Y}, \N)/A} \lra \Prism_{(\cl A \gr{Y^{1/p^\infty}}, \N[1/p])/A} 
\end{equation}  is descendable of index $\le 2$.  Let $F \in \mD (\Prism_{\cl A \gr{\ul \N}} ) $ be the fiber of the map in (\ref{eq:descendable_degree_2}) where we write $\Prism_{\cl A \gr{\ul \N}}  \coloneqq\Prism_{(\cl A \gr{Y}, \N)/A} $,
 then it suffices to show that 
$ \Hom_{\Prism_{\cl A \gr{\ul \N}}} (F^{\otimes 2}, \Prism_{\cl A \gr{\ul \N}}) \in \mD^{<0}$. By derived completeness, it suffices to prove the statement after reducing mod $(p, \phi^{-1} (d))$. By de Rham comparison and base change, we are reduced to consider the map 
\begin{equation} 
\label{eq:special_case_of_descendable}
\L \Omega_{ (\F_p [Y], \N) /\F_p} \cong \Omega^{\bullet}_{(\F_p [Y], \N)/\F_p} \lra \L \Omega_{(\F_p [Y^{1/p^\infty}], \N[1/p])/\F_p} \cong \F_p [Y^{1/p^\infty}]
\end{equation}
and show that its fiber $F_0$ satisfies $\Hom_{\L \Omega_{\F_p [\ul \N]}} (F_0^{\otimes 2}, \L \Omega_{\F_p [\ul \N]}) \in \mD^{< 0 }$. The same argument of \cite[Lemma 8.6]{BS} applies. By Cartier isomorphism we have $\Omega^{\bullet}_{(\F_p [Y], \N)/\F_p}  \cong \F_p [Y] \oplus \Omega^1 [-1]$ where $\Omega^1 \coloneqq \Omega^1_{(\F_p [Y], \N)/\F_p} \cong \F_p [Y] \text{ dlog } Y$. Following \textit{loc.cit.} let us write $\Omega \coloneqq \F_p [Y] \oplus \Omega^1 [-1]$.  The map (\ref{eq:special_case_of_descendable}) is given by sending $\F_p [Y] \ra \F_p [Y^{1/p^\infty}]$ via Frobenius and sending $\Omega^1[-1]$ to $0$. Since $\text{Frob}: \F_p [T] \ra \F_p [T]$ makes $\F_p [T]$ a free module over itself, we know that the fiber $F_0$ of the map (\ref{eq:special_case_of_descendable}) identifies with $\F_p [Y]^{\oplus S}[-1]$ for some set $S$ as an $\F_p [Y]$ module, so it suffices to show that $\Hom_{\Omega} ((\F_p [Y][-1] )^{\otimes 2}, \Omega) \in \mD^{< 0 }$, which follows from the resolution of $\F_p [Y][-1]$ over the dg-algebra $\Omega$ by 
\[
\Big( \cdots \lra \Omega [-3] \xrightarrow{\text{dlog } Y}    \Omega [-2] \xrightarrow{\text{dlog } Y} \Omega[-1]     \Big ) \isom \F_p [Y][-1]
\]
\eproof

\bc \label{cor:descent_for_perfected_prismatic}
Let $(R,P)$ be a 
classically $p$-complete pre-log $\cl A$-algebra such that $R$ is topologically finitely generated over $\cl A$ with bounded $p^{\infty}$-torsion. 
Choose a map 
\[ 
 (T, M) \coloneqq (\cl A \gr{X_i, Y_j}_{i \in I, j \in J}, \N^{\oplus J}) \ra (R,P)
 \]
with finite sets $I, J$ that is surjective on both rings and monoids. Let 
\[
(T_\infty, M_\infty)\coloneqq (\cl A \gr{X_i^{{1}/{p^\infty}}, Y_j^{{1}/{p^\infty}}}, \N_\infty^{\oplus J}), \quad
(R_\infty, P_\infty)\coloneqq (T_\infty, M_\infty) \oplus_{(T, M)} (R, P);
\]
the latter is the base change of pre-log rings.   
Let $(R_\infty^\bullet, P_\infty^\bullet)$ be the derived $p$-complete \v{C}ech nerve of the map $(R, P) \ra (R_\infty, P_\infty)$, then we have isomorphisms
\[
(\Prism_{(R, P)/A } [\frac{1}{d}] /p^n)^{\phi = 1} \cong  (\Prism_{(R, P)/A, \text{perf}} [\frac{1}{d}] /p^n)^{\phi = 1} \isom \varprojlim ( \Prism_{(R_\infty^\bullet , P_\infty^\bullet)/A, \text{perf}} [\frac{1}{d}] /p^n)^{\phi = 1}.
\]
\ec 

\br 
Each term in the non-completed \v{C}ech nerve of the map $R \ra R_\infty$ has bounded $p^\infty$-torsion (since it is $p$-completely faithfully flat over $R$, see \cite[Lemma 4.7]{BMS2}), thus its derived $p$-completion agrees with the classical $p$-completion and is discrete.
\er 

\bproof 
The first arrow follows from \cite[Lemma 9.2]{BS}. For the descent statement, note that by base change and Lemma \ref{lemma:descendability_when_fg},  the map  
\[  \Prism_{(R, P)/A, \text{perf}}[\frac{1}{d}]/p^n \longrightarrow \Prism_{(R_\infty, P_\infty)/A, \text{perf}}[\frac{1}{d}]/p^n \] is descendable. 
\eproof

\subsection{The Kummer \'etale cohomology} 

\noindent 

\noindent In this subsection we establish a version of Corollary \ref{cor:descent_for_perfected_prismatic} for the Kummer \'etale side. First let us record some observations which will be used later, which roughly say that the \'etale side also behaves well under filtered colimits. 

\bl \label{lemma:ket_equal_et_for_divisible_monoid}
Let $(R, P)$ be a pre-log ring such that $p$ is invertible on $R$ and $P$ is 
a $p$-divisible saturated monoid. Then we have a natural isomorphism  
\[
R \Gamma_{\textup{k\'et}} (\spec (R, P)^a, \Z/p^n) \cong  R \Gamma_{\textup{\'et}} (\spec R, \Z/p^n).
\]
\el 

\bproof 
Write $\Lambda = \Z/p^n$. For each fs submonoid $P_i \subset P$, let $X_i$ denote the fs log scheme $\spec (R, P_i)^a$, and write 
\[\epsilon_i\colon X_{i, \textup{k\'et}} \ra \mathring X_{i, \ett}
\] for the projection from the Kummer \'etale site of $X_i$ to the \'etale site of the underlying scheme. 
Then we have 
\begin{align*}
R \Gamma_{\textup{k\'et}} (\spec (R, P)^a, \Lambda)   = &  
\varinjlim_{P_i \subset P}
R \Gamma_{\textup{k\'et}} (\spec (R , P_i)^a, \Lambda) \\ 
 \cong & \varinjlim_{P_i \subset P}
R \Gamma_{\textup{et}} (\spec R, R {\epsilon_i}_* \Lambda) \\
 \cong & \: \:
R \Gamma_{\textup{et}} (\spec R, \varinjlim_{i} R {\epsilon_i}_* \Lambda),
\end{align*}
where for the last isomorphism we use that the colimit is filtered. Now it suffices to show that the natural map 
\[ 
\Lambda \rightarrow \varinjlim_{i} R {\epsilon_i}_* \Lambda
\]
is a quasi-isomorphism. For this it suffices 
to show that $\varinjlim_{i} R^j{\epsilon_i}_* \Lambda = 0$ for each $j \ge 1$. To this end, by \cite[Theorem 2.4]{Kato_Nakayama} we have natural isomorphisms 
\[
\midwedge^j (\cl M_{X_i}^{\text{gp}}/p^n  \cl  M_{X_i}^{\text{gp}} ) (-j) \isom R^j {\epsilon_i}* \Lambda
\] 
where $M_{X_i}$ denote the log structure on $X_i$ associated to the constant pre-log structure $P_i$. Since $P = \varinjlim_{i} P_i$ is $p$-divisible, it follows that  the colimit $\varinjlim_{i} \cl M_{X_i}$ is $p$-divisible, thus  $\varinjlim_{i} R^j{\epsilon_i}_* \Lambda = 0$ for each $j \ge 1$ and the claim follows. 
\eproof

\bl \label{lemma:log_Fujuwara_Gabber}
Let $\Lambda$ be a torsion abelian group. 
Let $(R, I)$ be a Henselian pair and let $\alpha\colon P \ra R$ be an fs pre-log structure on $R$. Let $f\colon R \ra \widehat R_I$ be the (classical) $I$-adic completion. 
Then $f$ induces an isomorphism   
\[
R\Gamma_{\ket} ((\spec (R)\minus V(I), P)^a, \Lambda) \isom R \Gamma_{\ket} ((\spec \widehat R_I \minus V(I \widehat R_I), P)^a, \Lambda).
\]
Here the log structures in question come from the pre-log structure $P$ on $\spec R$ (resp. $\spec \widehat R_I$). 
\el

\bproof 
Let $X$ (resp. $Y$) be the log scheme with underlying scheme $\spec (R)\minus V(I)$ (resp. $\spec \widehat R_I \minus V(I \widehat R_I)$) and with log structure induced from $\spec (R, P)^a$ (resp. $\spec (\widehat R_I, P)^a$). Let $\mathring{X}$ (resp. $\mathring{Y}$) denote the underlying scheme of $X$ (resp. $Y$), viewed as a log scheme with the trivial log structure. Then we have the following pullback square of fs log schemes  
\[ 
\begin{tikzcd}
Y \arrow[d, swap, "f"] \arrow[r, "h"] & \mathring{Y} \arrow[d, "\mathring{f}"] \\
X \arrow[r, "g"] & \mathring{X}. 
\end{tikzcd}
\]
Thus we have a natural isomorphism of \'etale sheaves 
\[
(\mathring{f})^* R g_* \Lambda \isom R h_* f^* \Lambda \cong R h_*  \Lambda, 
\]
where $\Lambda$ is the constant sheaf on the Kummer \'etale site of $X$, 
by \cite[Theorem 5.1]{Nakayama1} (proper base change for strict morphisms). 
Therefore, we are reduced to show that the natural map 
\[
R\Gamma_{\ett} (\spec (R)\minus V(I), \mG) \isom R \Gamma_{\ett} (\spec \widehat R_I \minus V(I \widehat R_I), (\mathring{f})^* \mG)
\]
is an isomorphism where $\mG = R g_* \Lambda \in \mD(\mathring{X}_{\ett}, \Lambda)$. 
This follows from a theorem of Fujiwara--Gabber (see \cite[Theorem 6.11]{BM_arc}). 
\eproof

\bc \label{cor:colimit_of_cohomology_after_p_completion} 
Let $(R, P)$ be a saturated pre-log ring. Suppose that 
$(R, P) = \varinjlim_{i} (R_i, P_i)$ is a filtered colimit of fs pre-log rings. Then we have 
\[ 
R \Gamma_{\textup{k\'et}} (\spec (\widehat R_p [\frac{1}{p}], P)^a, \Lambda) \cong \varinjlim_{i} R \Gamma_{\textup{k\'et}} (\spec (\widehat{(R_i)}_p [\frac{1}{p}],  P_i)^a, \Lambda).   
\]
Here $\widehat R_p$ denotes the (classical) $p$-adic completion of $R$. 
\ec 
 
\bproof 
By the lemma above, we may replace $\widehat R_p$ by the $p$-henselization $R^h_p$.  
It suffices to show that for each $P_i$, we have 
\[ 
R \Gamma_{\textup{k\'et}} (\spec (R_p^h [\frac{1}{p}], P_i)^a, \Lambda) \cong \varinjlim_{j, j \ge i} R \Gamma_{\textup{k\'et}} (\spec ((R_j)^h_p [\frac{1}{p}],  P_i)^a, \Lambda).   
\]  Fix $i$ and write $X = \spec (R^h_p [\frac{1}{p}], P_i)^a$ (resp. $X_j = \spec ( (R_j)^h_p[\frac{1}{p}], P_i)^a$). 
Let $\epsilon\colon X_{\ket} \ra \mathring X_{\ett}$ (resp. $\epsilon_j\colon X_{j, \ket} \ra \mathring X_{j, \ett}$ ) be the projection from the Kummer \'etale site to the \'etale site of the underlying scheme. Now the right hand side can be rewritten as 
\begin{equation} \label{eq:ket_for_completed_log_rings_colimit} 
\varinjlim_{j, j \ge i} R \Gamma_{\textup{k\'et}} (X_j, \Lambda) 
\cong  
\varinjlim_{j \ge i} R \Gamma_{\textup{\'et}} (\mathring X_j, R {\epsilon_j}_* \Lambda)  
\cong  R \Gamma_{\textup{\'et}} (\mathring X, R {\epsilon}_* \Lambda). 
\end{equation} 
which is isomorphic to $ R \Gamma_{\textup{k\'et}} (X, \Lambda)$ as desired. Note that for the second isomorphism in (\ref{eq:ket_for_completed_log_rings_colimit}) we use that the formation of $p$-hensalization commutes with filtered colimits and proper base change for strict maps of fs log schemes as in the proof of the previous lemma. 
\eproof 

\begin{construction} \label{construction:descent_for_log_etale}

Now let $(R,P)$ be a $p$-complete pre-log algebra over $\cl A$ where $R$ is topologically finitely generated over $\cl A$ and $P$ is an fs monoid. Choose a map 
\begin{equation} \label{eq:surjection_to_R_P}
 (T, M) \coloneqq (\cl A \gr{X_i, Y_j}_{i \in I, j \in J}, \N^{\oplus J}) \ra (R,P)
\end{equation}
that is surjective on both rings and monoids (here $I$ and $J$ are assumed to be finite). 
As in Corollary \ref{cor:descent_for_perfected_prismatic}, let us consider the following ``cover''  
\begin{equation} \label{eq:surjection_to_R_P_2}
(T, M) \ra (T_\infty, M_\infty)\coloneqq (\cl A \gr{X_i^{\frac{1}{p^\infty}}, Y_j^{\frac{1}{p^\infty}}}, \N_\infty^{\oplus J}).
\end{equation}
 Now we form the $p$-completed pushout
\[
\begin{tikzcd}
(T, M) \arrow[r] \arrow[d] & (T_\infty, M_\infty) \arrow[d] \\
(R, P) \arrow[r] & (R_\infty^{\text{sat}}, P_\infty^{\text{sat}})
\end{tikzcd}
\]
in the category of (classically) $p$-complete saturated pre-log rings. Let $(R_\infty^{\bullet, \text{sat}}, P_\infty^{\bullet, \text{sat}})$ denote the \v{C}ech nerve of the map   $(R, P) \ra (R_\infty^{\text{sat}}, P_\infty^{\text{sat}})$, taken in the category $\widehat{\text{Alg}}^{\textup{prelog}}_{\text{sat}}$ of (classically) $p$-complete saturated pre-log rings. 
\end{construction} 

\bp \label{prop:flat_descent_ket}
The map 
\begin{equation} \label{eq:log_v_cover}
(R, P) \ra  (R_\infty^{\text{sat}}, P_\infty^{\text{sat}})
\end{equation} 
constructed above satisfies descent for the functor
$\mF\colon \widehat{\text{Alg}}^{\textup{prelog}}_{\text{sat}} \ra \mD(\Lambda)$  
that sends a classically $p$-complete saturated pre-log ring $(R, P)$ to $R\Gamma_{\ket} (\spec (R[\frac{1}{p}], P)^a, \Lambda)$. 
In other words, we have a natural isomorphism 
\[
R \Gamma_{\text{k\'et}} (\spec (R [\frac{1}{p}], P)^a, \Lambda) \isom \textup{Tot } \Big( R \Gamma_{\text{k\'et}} (\spec (R_\infty^{\bullet, \text{sat}}[\frac{1}{p}], P_\infty^{\bullet, \text{sat}})^a, \Lambda) \Big).
\]
\ep

\br 
There are several ways for arguing this. One convenient argument is to observe that the map in (\ref{eq:log_v_cover}) is a   log $v_p$-cover (in fact even a log $\text{arc}_p$-cover) in the sense of \cite{Dori_Yao}. In that (forthcoming) paper, the authors prove that Kummer \'etale cohomology on the generic fibers satisfies descent with respect to such maps. In what follows let us provide a proof which does not use the log version of $v$-descent. \er

\bproof 
Let $T_{0, \infty} = \cl A \gr{X_i^{1/p^\infty}, Y_i}$, the map in (\ref{eq:surjection_to_R_P_2}) factors as 
\[
(T, M) \lra (T_{0, \infty}, M) \lra (T_{\infty}, M_\infty).
\]
Its (classically $p$-complete) saturated base change along $(T, M) \ra (R, P)$ is given by 
\[
(R,P) \xrightarrow{ f } (R_{0, \infty}, P) \xrightarrow{ g } (R_\infty, P_\infty). 
\]
By \cite[Lemma 3.1.2]{LiuZheng} (and its proof) it suffices to show that the map $f$ satisfies $\mF$-descent and the map $g$ satisfies $\mF$-descent along any (classically $p$-complete)  saturated base change. Let us first treat the map $f$. The argument is similar to the proof of Lemma \ref{lemma:log_Fujuwara_Gabber}. Let $X = \spec (R[\frac{1}{p}], P)^a$ and $X_\infty = \spec (R_{0, \infty}[\frac{1}{p}], P)^a$, then we have a Cartesian square 
\[ 
\begin{tikzcd}
X_\infty \arrow[d, swap, "f"] \arrow[r, "\epsilon_\infty"] & \mathring{X}_{\infty} \arrow[d, "\mathring{f}"] \\
X \arrow[r, "\epsilon"] & \mathring{X}. 
\end{tikzcd}
\]
Then by proper base change (\cite[Theorem 5.1]{Nakayama1}) we have 
\[
(\mathring{f})^* R \epsilon_* \Lambda \isom R (\epsilon_{\infty})_* \Lambda.
\]
Thus the map $f$ satisfies $\mF$-descent, since the functor sending an $R$-algebra $(R \xrightarrow{g} S)$ to $R \Gamma_{\ett} (\spec \widehat S_{p} [\frac{1}{p}], g^* \mG)$ where $\mG = R \epsilon_* \Lambda$ satisfies $\text{arc}_p$-descent by \cite[Corollary 6.17]{BM_arc}. For the second claim, it suffices to show that the map $(\cl A \gr{\N^{\oplus J}}, \N^{\oplus J}) \ra (\cl A \gr{\N_\infty^{\oplus J}}, \N_\infty^{\oplus J})$ satisfies universal $\mF$-descent. This essentially follows from a ``Kummer pro-\'etale'' descent, which we may phrase as follows. Let us write this map as a $p$-completed filtered colimit of maps 
\[g_n\colon (\cl A \gr{Y_j}_{j \in J}, \N^{\oplus J}) \ra (\cl A \gr{Y_j}_{j \in J}, \N^{\oplus J}) 
\]
given by $Y_j \mapsto Y_j^{p^n}$ on the ring and by multiplication by $p^n\colon \N^{\oplus J} \ra \N^{\oplus J}$ on the monoid. Since totalization in $\mD^{\ge 0}$ commutes with filtered colimits, by Corollary \ref{cor:colimit_of_cohomology_after_p_completion}, it suffices to show that each $g_n$ satisfies universal $\mF$-descent. By possibly writing the monoid $N$ as a filtered colimit of fs monoids, it suffices to check the following: for any map 
\[
\psi\colon (\cl A \gr{Y_j}, \N^{\oplus J}) \ra (S, N) 
\]
where $S$ is (classically) $p$-complete and $N$ is an fs monoid, the (classically) $p$-completed saturated base change $(S,N) \ra (S_n, N_n)$ of the map $g_n$ along $\psi$ satisfies $\mF$-descent.

To prove this, we write 
\[(S_n', N_n') \coloneqq 
(S, N) \otimes_{(\cl A \gr{Y_j}, \N^{\oplus J}), g_n} (\cl A \gr{Y_j}, \N^{\oplus J}) 
\]
for the base change in the category of integral pre-log rings, and write 
\[
({S_n'}^{\textup{sat}}, {N_n'}^{\textup{sat}}) = (S_n' \otimes_{\Z_p[N_n']} \Z_p[{N_n'}^{\textup{sat}}],{N_n'}^{\textup{sat}})
\]
for its saturation. Note that $(S_n, N_n) = (\widehat{{S_n'}^{\textup{sat}}}, {N_n'}^{\textup{sat}})$ is given by the (classical) $p$-adic completion. The map $\spec ({S_n'}^{\textup{sat}} [\frac{1}{p}], N_n)^a \ra \spec (S[\frac{1}{p}], N)^a$ is Kummer \'etale (since it is base changed from a standard Kummer \'etale map), thus we are done once we show that 
\[
R \Gamma_{\ket} (\spec ({S_n'}^{\textup{sat}} [\frac{1}{p}], N_n)^a, \Lambda) \cong 
R \Gamma_{\ket} (\spec (S_n [\frac{1}{p}], N_n)^a, \Lambda) 
\]
is an isomorphism (the problem being that we do not know whether after saturation the ring ${S_n'}^{\textup{sat}}$ is classically $p$-complete). To this end, let us observe that since $N_n'= N \otimes_{\N^{\oplus J}, p^n} \N^{\oplus J}$ is a fine monoid, $\Z_p[{N_n'}^{\textup{sat}}]$ is contained in the normalization of $\Z_p [N_n']$ in $\Z_p [N_n'^{\textup{gp}}]$.\footnote{In fact, $\Z_p[{N_n'}^{\textup{sat}}]$ is the normalization of $\Z_p [N_n']$ in $\Z_p [N_n'^{\textup{gp}}]$ by \cite[Theorem 4.11.2]{Gillam_notes}. Indeed, we may assume that $N_n'^{\textup{gp}}$ is torsionfree as $N_n'^{\textup{sat}}$ contains the torsion part of $N_n'^{\textup{gp}}$, and this toric case is well-known.} Therefore, the induced map of rings  $\Z_p[N_n'] \ra \Z_p [{N_n'}^{\textup{sat}}]$
is finite, since $\Z_p [N_n']$ is a Nagata ring. Note that ${S_n'}^{\textup{sat}}$ is the base change of $\Z_p [N] \ra S$ along the composition 
\[
h_n\colon \Z_p [N] \ra \Z_p [N_n'] \ra \Z_p[N_n],
\]
which is a finite morphism of Noetherian rings. By \cite[Tag 0A05]{SP}, the derived base change $S \otimes_{\Z_p [N]}^{\L} \Z_p [N_n]$ is derived $p$-complete, thus ${S_n'}^{\textup{sat}}$ is derived $p$-complete. Finally by \cite[Tag 0G3G]{SP} we know that the surjection 
${S_n'}^{\textup{sat}} \ra S_n = \widehat{{S_n'}^{\textup{sat}}}$ is a nil-thickening. This proves the claim and thus the proposition. 
\eproof

\br \label{remark:getting_rid_of_divisible_monoid_in_descent}
Using Lemma \ref{lemma:control_of_saturated_monoid} below and Lemma \ref{lemma:ket_equal_et_for_divisible_monoid}, Proposition \ref{prop:flat_descent_ket} implies that in fact we have a natural isomorphism 
\[
R \Gamma_{\text{k\'et}} (\spec (R [\frac{1}{p}], P)^a, \Lambda) \isom \textup{Tot } \Big( R \Gamma_{\text{\'et}} (\spec  R_\infty^{\bullet, \text{sat}}[\frac{1}{p}] , \Lambda) \Big),
\]
which is how we will apply this proposition in the proof of Theorem \ref{thm:etale_comp}. One may in fact prove this assertion slightly more directly by rewriting
$R \Gamma_{\ket} (\spec (S_n[\frac{1}{p}], N_n)^a, \Lambda) \cong R \Gamma_{\ett} (\spec S_n[\frac{1}{p}], R {\epsilon_n}_* \Lambda)$ using the map  $\epsilon_n\colon \spec (S_n[\frac{1}{p}], N_n)^a \ra \spec S_n [\frac{1}{p}]$ and take the colimit of \'etale cohomology with coefficients. The claim will then follow from a similar argument as the proof of Lemma \ref{lemma:ket_equal_et_for_divisible_monoid}. 
\er

\subsection{The log free case} \label{ss:example_etale_free} \indent 

\noindent Before we prove Theorem \ref{thm:etale_comp}, let us start with a special case, which illustrates some idea of the proof. 

\bp \label{lemma:etale_comp_special_case}
Theorem \ref{thm:etale_comp} holds for $(R, P) = (\cl A \gr{\N^{J}}, \N^{J})$ with a finite set  $J$.   
\ep

\bproof For notational simplicity let us write the proof assuming $J = \{*\}$ is a singleton. Write $\N_\infty \coloneqq \N[\frac{1}{p}] $. Let $P_\infty^\bullet$ denote the \v{C}ech nerve of $\N \ra \N_\infty$ in the category of monoids, and let $S_\infty^\bullet$ denote the \v{C}ech nerve of $\N \ra \N_\infty$ in the category of saturated monoids, which is given by the saturation $S_\infty^i = P_\infty^{i, \text{sat}}$. By 
Proposition \ref{prop:flat_descent_ket}, 
we know that 
\[
R \Gamma_{\text{k\'et}} (\spec (R [\frac{1}{p}], P)^a, \Z/p^n) \isom \lim R \Gamma_{\text{k\'et}} (\spec (\cl A \gr{S_\infty^\bullet} [\frac{1}{p}], S_\infty^\bullet)^a, \Z/p^n).
\]
By Lemma \ref{cor:descent_for_perfected_prismatic}, we have 
\[
(\Prism_{(R, P)/A}[\frac{1}{d}]/p^n)^{\phi = 1} \cong \lim \Big(\Prism_{(R_\infty^\bullet, P_\infty^\bullet)/A, \text{perf}} [\frac{1}{d}]/p^n \Big)^{\phi = 1}
\]
where $R_\infty^i = \cl A \gr{P_\infty^i}$. 
Thus it suffices to show that there are natural isomorphisms 
\begin{equation} \label{eq:saturated_log_free_nonlog_prismatic}
\Gamma_{\text{k\'et}} (\spec (\cl A \gr{S_\infty^i} [\frac{1}{p}], S_\infty^i)^a, \Z/p^n) \cong 
\Big(\Prism_{(R_\infty^i, P_\infty^i)/A, \text{perf}} [\frac{1}{d}]/p^n \Big)^{\phi = 1}
\end{equation}
for each $i$ (that are compatible with the edge maps $S_\infty^i \ra S_\infty^j$). 
Now observe that the saturated monoids $S_\infty^i$ admits the following description: the map 
\begin{equation}\label{eq:saturated_free_case_monoids}
S_\infty^i = (\N_\infty \oplus_{\N}   \cdots \oplus_{\N} \N_\infty)^{\text{sat}} \longrightarrow \N_\infty \oplus (\N_\infty/\N)^{\oplus i}
\end{equation}
induced from $(x_0, x_1, ..., x_i) \mapsto (x_0 + \cdots x_i, \cl x_1, ..., \cl x_i)$ is an isomorphism. Thus we have $\spec (\cl A\gr{S_\infty^i}[\frac{1}{p}], S_\infty^i)^a \cong \spec (\cl A\gr{S_\infty^i}[\frac{1}{p}], \N_\infty)^a$. Since $\N_\infty$ is uniquely $p$-divisible, the natural map between \'etale and Kummer \'etale cohomology
\[
R \Gamma_{\text{\'et}} (\spec \cl A \gr{S_\infty^i} [\frac{1}{p}], \Z/p^n) \isom R \Gamma_{\text{k\'et}} (\spec (\cl A \gr{S_\infty^i} [\frac{1}{p}], \N_\infty)^a, \Z/p^n)
\]
is an isomorphism by Lemma \ref{lemma:ket_equal_et_for_divisible_monoid}.  
Therefore, by the (nonlog version of the) \'etale comparison of prismatic cohomology (\cite[Theorem 1.8(4)]{BS}), we have 
\begin{equation} \label{eq:reducing_log_free_nonlog_prismatic}
R \Gamma_{\text{\'et}} (\spec \cl A \gr{S_\infty^i} [\frac{1}{p}], \Z/p^n) \cong \big(\Prism_{\cl A \gr{S_\infty^i}/A, \text{perf}} [\frac{1}{d}]/p^n\big)^{\phi = 1} \cong \big(\Prism_{\cl A \gr{S_\infty^i}/A}[\frac{1}{d}] /p^n \big)^{\phi = 1}. 
\end{equation}
It remains to relate the right of (\ref{eq:reducing_log_free_nonlog_prismatic}) with the right hand side of (\ref{eq:saturated_log_free_nonlog_prismatic}), which is the content of Lemma \ref{lemma:relating_log_with_saturated_freecase}.  
\eproof 

\bl \label{lemma:relating_log_with_saturated_freecase}
The natural map $P_\infty^i \ra P_\infty^{i, \text{sat}} = S_\infty^i$ induces an isomorphism 
\[
\Prism_{(R_\infty^i, P_\infty^i)/A, \text{perf}} \isom   \Prism_{(\cl A \gr{S_\infty^i}, S_\infty^i)/A, \text{perf}} \cong \Prism_{\cl A \gr{S_\infty^i}/A, \text{perf}} .
\]
\el 

\bproof 
The second isomorphism is a consequence of the description of $S_\infty^i$  in (\ref{eq:saturated_free_case_monoids}). Let us now prove the first isomorphism. For simplicity, let us treat the case when $i = 1$ (for higher indices, the complication is purely notational).  Let $\sq P_\infty^1$ denote the exactification of the surjection 
\[
\N_\infty^{\oplus 2} \twoheadrightarrow   P_\infty^1 = \N_\infty \oplus_\N \N_\infty,
\]
which is the submonoid of $\Z[1/p]^{\oplus 2}$ generated by $\N_\infty^{\oplus 2}$ and $(1, -1)\cdot \Z$. From Section \ref{sec:derived}, $\Prism_{(R_\infty^1, P_\infty^1)/A}$ can be computed as the $(p, I)$-completed prismatic envelope of 
\[
A \gr{\sq P_\infty^1} \cong A \gr{x^{\frac{1}{p^\infty}}, y^{\frac{1}{p^\infty}}, x/y, y/x} \twoheadrightarrow \cl A \gr{P_\infty^1} \cong \cl  A  \gr{x^{\frac{1}{p^\infty}}, y^{\frac{1}{p^\infty}}}/(x-y).
\]
Now write $u = y/x$, then we have 
\begin{align*}
\Prism_{(R_\infty^1, P_\infty^1)/A} & \cong A \left\langle  x^{\frac{1}{p^\infty}}, (xu)^{\frac{1}{p^\infty}}, u^{\pm 1} \right\rangle \left \{ \frac{u-1}{I} \right\}^{\delta, \wedge}_{(p,d)} \\
& \cong A \left\langle  x^{\frac{1}{p^\infty}}, (xu)^{\frac{1}{p^\infty}}, u \right\rangle \left \{ \frac{u-1}{I} \right\}^{\delta, \wedge}_{(p,d)}. 
\end{align*}
On the other hand, since $\cl A \gr{S_\infty^1} \cong  \cl  A  \gr{x^{\frac{1}{p^\infty}}, u^{\frac{1}{p^\infty}}}/(u-1) $, its derived prismatic cohomology is given by the $(p, I)$-completed prismatic envelope 
\[
\Prism_{\cl A \gr{S_\infty^1}/A}  \cong A \left\langle  x^{\frac{1}{p^\infty}}, (u)^{\frac{1}{p^\infty}} \right\rangle \left \{ \frac{u-1}{I} \right\}^{\delta, \wedge}_{(p,d)}.
\]
By the description of the map $P_\infty^1 \ra S_\infty^1$, the natural map $\Prism_{(R_\infty^1, P_\infty^1)/A} \ra \Prism_{\cl A \gr{S_\infty^1}/A}$ in the statement of the lemma is given by $x^{1/p^m} \mapsto x^{1/p^m}, (xu)^{1/p^m} \mapsto x^{1/p^m} \cdot u^{1/p^m}$. It is clear that this map becomes an isomorphism after taking perfections.  
\eproof

\subsection{The \'etale comparison (the setup)} \label{ss:log_etale_setup}

\noindent 

\noindent Now we begin to prove the \'etale comparison in general. For this subsection let $(R,P)$ be an fs pre-log algebra such that $R$ is (classically) $p$-complete and topologically finitely generated over $\cl A$. Choose a map $
 (T, M) \coloneqq (\cl A \gr{X_i, Y_j}_{i \in I, j \in J}, \N^{\oplus J}) \ra (R,P) $ 
that is surjective on both rings and monoids, where $I$ and $J$ are assumed to be finite. Consider the map 
\[
(T, M) \ra (T_\infty, M_\infty)\coloneqq (\cl A \gr{X_i^{\frac{1}{p^\infty}}, Y_j^{\frac{1}{p^\infty}}}, \N_\infty^{\oplus J}).
\]
As in Construction \ref{construction:descent_for_log_etale}, we first form the pushout diagram
\[
\begin{tikzcd}
(T, M) \arrow[r] \arrow[d] & (T_\infty, M_\infty) \arrow[d] \\
(R, P) \arrow[r] & (R_\infty, P_\infty)
\end{tikzcd}
\]
in the category of (classically) $p$-complete pre-log rings. 
Let $(T_\infty^\bullet, M_\infty^\bullet)$ (resp. $(R_\infty^\bullet, P_\infty^\bullet)$) denote the \v{C}ech nerve of the map $(T, M) \ra (T_\infty, M_\infty)$ (resp. $(R, P) \ra (R_\infty, P_\infty)$), taken in the category of (classically) $p$-complete pre-log rings. Note that all the monoids involved are automatically integral. Also note that $P_\infty^i$ can be obtained as the pushout of $M \ra M_\infty^i$ along the surjective map $M \ra P$ for each $i \ge 0$. The monoids $M_\infty^i$ receives a natural surjection   
\[
(\N_\infty^{\oplus (i+1)})^{\oplus J} \longrightarrow M_\infty^i  \cong (\N_\infty \oplus_{\N} \cdots \oplus_{\N} \N_\infty)^{\oplus J},
\]
which in turn gives rise to a surjection 
\begin{equation} \label{eq:map_on_P_infty}
\pi\colon (\N_\infty^{\oplus (i+1)})^{\oplus J} \longrightarrow P_\infty^i.  
\end{equation} 
Let $\sq M_\infty^i$ be the exactification of the surjection $(\N_\infty^{\oplus (i+1)})^{\oplus J} \ra P_\infty^i$, and let $\sq M_\infty^{i, \text{sat}}$ (resp. $P_\infty^{i, \text{sat}}$) be the saturation of $\sq M_\infty^i$ (resp. of $P_\infty^i$). These monoids fit into the following diagram 
\[
\begin{tikzcd}[column sep = 1em]
& (\N_\infty^{\oplus (i+1)})^{\oplus J} \arrow[d, two heads] \arrow[r, hook]  & \sq M_\infty^i \arrow[ldd, two heads] \arrow[r, hook]
& \sq M_\infty^{i, \text{sat}} \arrow[r, hook] \arrow[ldd]
& (\sq M_\infty^i)^{\text{gp}} = (\Z[\frac{1}{p}]^{\oplus (i+1)})^{\oplus J}
\\
M \arrow[d, two heads] \arrow[r]  & M_\infty^i \arrow[d, two heads] 
\\ 
P \arrow[r] & P_\infty^i \arrow[r, hook] & P_\infty^{i, \text{sat}} \arrow[r, hook] & (P_\infty^i)^{\text{gp}}
\end{tikzcd}
\]
\bl \label{lemma:control_of_saturated_monoid}
\be
\item The monoid $P_\infty^{i, \text{sat}}$ and its group completion $(P_\infty^i)^{\text{gp}}$ are both $p$-divisible. The monoid $\sq M_\infty^{i, \text{sat}}$ is uniquely $p$-divisible. 
\item The map $\sq M_\infty^{i, \text{sat}} \ra P_\infty^{i, \text{sat}}$ is an exact surjection. Likewise, let $\sq M_\infty^{i, p\tu{-sat}}$ (resp. $P_\infty^{i, p\tu{-sat}}$) denote the $p$-saturation of $\sq M_\infty^i$ (resp. $P_\infty^i$), then the map 
$\sq M_\infty^{i, p\text{-sat}} \ra P_\infty^{i, p\text{-sat}}$ is an exact surjection. 
\item The $p$-saturation $\sq M_\infty^{i, p\text{-sat}}$ (resp. $P_\infty^{i, p\text{-sat}}$) of the monoid $\sq M_\infty^i$ (resp. $P_\infty^i$) agrees with its saturation $\sq M_\infty^{i, \text{sat}}$ (resp. $P_\infty^{i, \text{sat}}$).  
\ee
\el

\bproof 
 Note that $P_\infty^i$ is integral, thus we may view $P_\infty^{i, \text{sat}}$ as a submonoid of $(P_\infty^i)^{\text{gp}}$. The group $(P_\infty^i)^{\text{gp}}$ is $p$-divisible since it receives a surjection from $(\sq M_\infty^i)^{\text{gp}} = (\Z[\frac{1}{p}]^{\oplus (i+1)})^{\oplus J}$, from this it  also follows that $P_\infty^{i, \text{sat}}$ is $p$-divisible. As a saturated submonoid of a uniquely $p$-divisible monoid, $\sq M_\infty^{i, \text{sat}}$ is uniquely $p$-divisible. This proves part (1). 
 
 Part (2) follows from the description 
 \begin{align*}
     \sq M_\infty^{i, \text{sat}} 
     & = \{\alpha \in (\sq M_\infty^i)^{\text{gp}} \: | \: m \cdot \alpha \in \sq M_\infty^i \text{ for some } m \}\\
     & = \{\alpha \in (\sq M_\infty^i)^{\text{gp}} \: | \: m \cdot \pi (\alpha) \in P_\infty^i \text{ for some } m \} = \pi^{-1} (P_\infty^i)
 \end{align*}
 where $\pi\colon (\sq M_\infty^i)^{\text{gp}} \twoheadrightarrow (P_\infty^i)^{\text{gp}} $ denotes the map induced from (\ref{eq:map_on_P_infty}), and likewise for the $p$-saturated version. 
 
 For Part (3), we know that $\sq M_\infty^{i, p\tu{-sat}} = \sq M_\infty^{i,\tu{sat}}$ by the description of $(\sq M_\infty^{i})^{\tu{gp}}$. 
 The claim on $P_\infty^{i, p\tu{-sat}}$ follows from the claim on $\sq M_\infty^{i, p\tu{-sat}}$ by Part (2). 
\eproof 

\br \label{lemma:P_sat_infty_is_qrsp}
The commutative square 
\[
\begin{tikzcd}[row sep = 1em]  
\sq M_\infty^i \arrow[r] \arrow[d] & \sq M_\infty^{i, \text{sat}} \arrow[d] \\
P_\infty^i \arrow[r] & P_\infty^{i, \text{sat}}
\end{tikzcd}
\]
is a pushout square of (integral) monoids, and agrees with the derived pushout.  
Moreover, the monoid algebra $\cl A \gr{P_\infty^{i, \text{sat}}}$  (resp. the pre-log algebra $(\cl A \gr{P_\infty^{i, \text{sat}}}, P_\infty^{i, \text{sat}})$)  is quasiregular semiperfectoid. 
 \er

\subsection{End of the proof}  \noindent

 \noindent 

 Now we finish the proof of the \'etale comparison. 

\bproof[Proof of Theorem \ref{thm:etale_comp}]  \noindent 

\noindent 
First let us assume that $(R, P)$ is an fs pre-log algebra where $R$ is $p$-complete, topologically finitely generated  over $\cl A$ with bounded $p^{\infty}$-torsion.  Choose maps as in Subsection \ref{ss:log_etale_setup} (see (\ref{eq:surjection_to_R_P}) and (\ref{eq:surjection_to_R_P_2})). Let 
\[R_\infty^{i, \text{sat}} \coloneqq R^i_\infty \widehat \otimes_{\cl A \gr{P_\infty^i}} \cl A \gr{P_\infty^{i, \text{sat}}},
\]
in other words, $(R_\infty^{\bullet, \text{sat}}, P_\infty^{\bullet, \text{sat}})$ is the (classically) $p$-complete saturated \v{C}ech nerve of the map  $(R, P) \ra (R_\infty^{0, \text{sat}}, P_\infty^{0, \text{sat}})$. 
By Proposition \ref{prop:flat_descent_ket}, we have 
\begin{equation} \label{eq:etale_descent_for_fs}
R \Gamma_{\text{k\'et}} (\spec (R [\frac{1}{p}], P)^a, \Z/p^n) \isom \lim R \Gamma_{\text{k\'et}} (\spec (R_\infty^{\bullet, \text{sat}}[\frac{1}{p}], P_\infty^{\bullet, \text{sat}})^a, \Z/p^n).
\end{equation} 
Since $P_\infty^{i, \text{sat}}$ is $p$-divisible by Lemma \ref{lemma:control_of_saturated_monoid}, we know that the Kummer \'etale cohomology in right hand side of (\ref{eq:etale_descent_for_fs}) can be computed by the usual  \'etale cohomology by Lemma \ref{lemma:ket_equal_et_for_divisible_monoid} (also see Remark \ref{remark:getting_rid_of_divisible_monoid_in_descent}). In other words, we have an isomorphism 
\[
R \Gamma_{\text{\'et}} (\spec R_\infty^{i, \text{sat}} [\frac{1}{p}], \Z/p^n) \isom R \Gamma_{\text{k\'et}} (\spec (R_\infty^{i, \text{sat}} [\frac{1}{p}], P_\infty^{i, \text{sat}})^a, \Z/p^n)
\] 
for each $i$. Therefore, by the nonlog version of the \'etale comparison (\cite[Theorem 1.8(4)]{BS}), we have isomorphisms  
\begin{equation} \label{eq:saturated_prism_compares_to_logeetale}
R \Gamma_{\text{k\'et}} (\spec (R_\infty^{i, \text{sat}} [\frac{1}{p}], P_\infty^{i, \text{sat}})^a, \Z/p^n) \cong 
\Big( \Prism_{R_\infty^{i, \text{sat}}/A} [\frac{1}{d}] /p^n\Big)^{\phi = 1}. 
\end{equation}
By Corollary \ref{cor:descent_for_perfected_prismatic}, the right hand side in the statement of the theorem can be computed by 
\begin{align}  \label{eq:descent_prismatic_P_infty}
    \Big( \Prism_{(R, P)/A} [\frac{1}{d}] /p^n\Big)^{\phi = 1} 
    & \cong \Big( \Prism_{(R, P)/A, \text{perf}} [\frac{1}{d}] /p^n\Big)^{\phi = 1} \nonumber \\
    & \cong \varprojlim  \Big( \Prism_{(R_\infty^i, P_\infty^i)/A, \text{perf}} [\frac{1}{d}] /p^n \Big)^{\phi = 1}.
\end{align}
Combining (\ref{eq:etale_descent_for_fs}), (\ref{eq:saturated_prism_compares_to_logeetale}) and (\ref{eq:descent_prismatic_P_infty}), it suffices to construct (natural) isomorphisms 
\begin{equation} \label{eq:the_map_gamma_comparing_prisms}
\gamma\colon \Prism_{R_\infty^{i, \text{sat}}/A, \text{perf}} \isom \Prism_{(R_\infty^i, P_\infty^i)/A, \text{perf}}
\end{equation} 
between derived prismatic cohomology of $R_\infty^{i, \text{sat}}$ and derived log prismatic cohomology of $(R_\infty^i, P_\infty^i)$ after taking perfections, 
such that these isomorphisms are compatible with the maps $R_\infty^{i} \ra R_\infty^{k}$ induced by all simplicial maps $[i] \ra [k]$. For this, let us consider the following maps 
\[ 
\begin{tikzcd} 
& \Prism_{(R_\infty^i, P_\infty^i)/A} \arrow[d, "\beta"]
\\
\Prism_{R_\infty^{i, \text{sat}}/A} \arrow[r, "\alpha"]
& \Prism_{(R_\infty^{i, \text{sat}}, P_\infty^{i, \text{sat}})/A}
\end{tikzcd}
\]
induced from $R_\infty^{i, \text{sat}} \ra (R_\infty^{i, \text{sat}}, P_\infty^{i, \text{sat}})$ and $ (R_\infty^i, P_\infty^i)\ra (R_\infty^{i, \text{sat}}, P_\infty^{i, \text{sat}})$. The desired isomorphism $\gamma$ in (\ref{eq:the_map_gamma_comparing_prisms}) come from the following claims on the maps $\alpha$ and $\beta$ in the diagram above:
\bi
\item The map $\alpha$ is an isomorphism. 
\item The map $\beta$ induces an isomorphism 
\[
\beta_{\text{perf}}\colon  \Prism_{(R_\infty^i, P_\infty^i)/A, \text{perf}} \isom \Prism_{(R_\infty^{i, \text{sat}}, P_\infty^{i, \text{sat}})/A, \text{perf}}
\]
on taking perfections. 
\ei
For the first claim, it suffices to show that the natural map 
\[
\widehat \L_{R_\infty^{i, \text{sat}}/\cl A} \lra \widehat \L_{(R_\infty^{i, \text{sat}}, P_\infty^{i, \text{sat}})/\cl A}  
\]
is an isomorphism by the Hodge--Tate comparison. This in turn follows from the isomorphism 
\[
 \widehat \L_{\cl A \gr{P_\infty^{i, \text{sat}}} / \cl A } \isom \widehat \L_{(\cl A \gr{P_\infty^{i, \text{sat}}}, P_\infty^{i, \text{sat}})/\cl A}
\]
by base change. 
For the second claim, let us note that $(R_\infty^i, P_\infty^i)$ is a semiperfectoid integral pre-log ring and $P_\infty^i$ is semiperfect, therefore $\beta_{\text{perf}}$ is an isomorphism by Corollary \ref{cor:p_saturation_induces_iso_on_perfect_prism} (using Part (3) of Lemma \ref{lemma:control_of_saturated_monoid}). 

Now we have constructed an isomorphism 
\[
\eta_{(T, M)}\colon R \Gamma_{\ket} (\spec(R[\frac{1}{p}], P)^a, \Z/p^n) \isom \big(\Prism_{(R, P)/A} [\frac{1}{d}]/p^n\big)^{\phi = 1}
\]
which \textit{a priori} depends on the choice of the surjection $(T, M) \ra (R, P)$. In order to get a canonical isomorphism, we follow \cite[Remark 4.3]{Bhatt_notes}. Namely we consider the index category $I$ of a pair consisting of a sufficiently large finite subset of $R$  (resp. of $P$)  whose elements topologically generate $R$ (resp. $P$). Since each $s \in I$ provides a surjection as in (\ref{eq:surjection_to_R_P}), we get an isomorphism $\eta_s$ as above. In the (filtered) colimit we obtain a canonical isomorphism  
\begin{equation} \label{eq:canonical_eta}
\eta = \text{colim } \eta_s\colon R \Gamma_{\ket} (\spec(R[\frac{1}{p}], P)^a, \Z/p^n) \isom \big(\Prism_{(R, P)/A} [\frac{1}{d}]/p^n\big)^{\phi = 1}.
\end{equation} 

For a general saturated pre-log ring $(R, P)$ where $R$ is $p$-complete with bounded $p^\infty$-torsion, we may write it as a filtered colimit of pre-log rings $(R_i, P_i)$ where each $R_i$ is topologically finitely generated over $\cl A$ with bounded $p^{\infty}$-torsion and $P_i$ is an fs monoid. One way to achieve this for the ring $R$ is to write $R = \varinjlim_{i} R_i^{\circ}$ as a filtered colimit of (all of its) finitely generated subrings, and take $R_i \coloneqq \im (\widehat{R_i^{\circ}} \ra R)$ where $ \widehat{R_i^{\circ}}$ is the (classical) $p$-adic completion of $R_i^{\circ}$. Then $R_i$ is a (classically) $p$-complete subring of $R$ that contains $R_i^{\circ}$ and is topologically finitely 
generated over $\cl A$ with bounded $p^\infty$-torsion. Then by taking the filtered colimit of the isomorphisms $\eta_i$ in (\ref{eq:canonical_eta}), we get the desired isomorphism in the statement of the theorem. 
\eproof 
 
 \br 
The above proof of the \'etale comparison implies the following result: if $(R, P)$ is an integral pre-log ring over the perfectoid ring $A/I$ and $R$ is $p$-complete with bounded $p^{\infty}$-torsion, then
\[
\big(\Prism_{(R, P)/A} [\frac{1}{d}]/p^n\big)^{\phi = 1} \cong
\big(\Prism_{(R^{p\text{-sat}}, P^{p\text{-sat}})/A} [\frac{1}{d}]/p^n\big)^{\phi = 1}
\]
for every $n\geq 1$, where $(R^{p\text{-sat}}, P^{p\text{-sat}})$ is the $p$-saturation of $(R, P)$, which is (derived) $p$-complete. 
\er


\newpage
\section{Log diamonds and Kummer \'etale local systems} \label{sec:log_diamonds} \noindent

In this section we develop the theory of log diamonds. The reason for introducing this framework is twofold:  first we use this language to give a reformulation and a globalization of the \'etale comparison theorem (Theorem \ref{thm:etale_comp}). Secondly, we use this framework to relate $F$-crystals on the absolute (saturated) log prismatic site to Kummer \'etale local systems on the log diamond,   generalizing a result of Bhatt--Scholze which we now recall.  

Let $\Perf$ denote the category of perfectoid spaces in characteristic $p$, on which Scholze introduced the following well-behaved (in particular, subcanonical) topologies: the pro-\'etale topology and the v-topology. Also recall that a diamond is a pro-\'etale sheaf on $\Perf$ that is the quotient of a perfectoid space by a pro-\'etale equivalence relation. Any analytic (pre-)adic space $X$ over $\spa \Z_p$ has the associated (locally spatial) diamond $X^{\diamondsuit}$.  
For any diamond $X$, Scholze defined the quasi-pro-\'etale site $X_{\qproet}$ (and the v-site), which is a natural site to consider $\Z_p$-local systems. Bhatt--Scholze found a quite general relation between $\Z_p$-local systems of the generic fiber of any bounded $p$-adic formal scheme $X$ and the absolute prismatic site of $X$ in \cite[Section 3]{BS_crystal}. Here is a rough version of one of their results: 

\begin{theorem*}[Bhatt--Scholze]
Let $X$ be a bounded $p$-adic formal scheme. 
There is a natural equivalence
\[
\textnormal{Vect}(X_{\Prism}, \mO_{\Prism}[1/\mI]_p^{\wedge})^{\phi=1} \simeq 
\textnormal{Loc}_{\Z_p}(X_{\eta, \qproet}^{\diamondsuit}),  
\]
where $\textnormal{Vect}(X_{\Prism}, \mO_{\Prism}[1/\mI]_p^{\wedge})^{\phi=1}$ denotes the category of Laurent $F$-crystals and $\textnormal{Loc}_{\Z_p}(X_{\eta, \qproet}^{\diamondsuit})$ denotes the category of (quasi-pro-\'etale) $\Z_p$-local systems. 
\end{theorem*}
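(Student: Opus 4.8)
The assertion is \cite[Section~3, esp.\ Theorem~3.2 and Corollary~3.8]{BS_crystal}; I recall the strategy, since we shall need to re-run it in the logarithmic setting. The plan is to construct the étale realization functor by evaluation at perfectoid points, establish the equivalence in the affine perfectoid case via a relative form of Fontaine's correspondence, and then glue by descent.

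First I would construct
\[
T\colon \textnormal{Vect}(X_{\Prism}, \mO_{\Prism}[1/\mI]_p^{\wedge})^{\phi=1} \lra \textnormal{Loc}_{\Z_p}(X_{\eta, \qproet}^{\diamondsuit}).
\]
Recall that the affinoid perfectoid spaces $Y = \spa(S, S^{+})$ equipped with a map $Y \to X_{\eta}^{\diamondsuit}$ extending to an adic map $\spf S^{+} \to X$ of formal schemes form a basis of $X_{\eta, \qproet}^{\diamondsuit}$, by Scholze's theory of diamonds together with the boundedness of $X$. For such $Y$ the pair $(\Ainf(S^{+}), \ker\theta_{S^{+}})$, where $\theta_{S^{+}}\colon \Ainf(S^{+}) \to S^{+}$ is the usual surjection, is a perfect prism by \cite[Theorem~3.10]{BS}, and it is naturally an object of $X_{\Prism}$ via the composite $\spf(S^{+}) = \spf(\Ainf(S^{+})/\ker\theta) \to X$. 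Given a Laurent $F$-crystal $\mathcal F$, evaluating at this prism yields a finite projective $\Ainf(S^{+})[1/I]_p^{\wedge}$-module $M_{Y}$ with an isomorphism $\phi^{*}M_{Y} \simeq M_{Y}$; I set $T(\mathcal F)(Y) \coloneqq M_{Y}^{\phi=1}$ and sheafify over $X_{\eta,\qproet}^{\diamondsuit}$. The crystal property makes this compatible with transition maps, and the fact that $T(\mathcal F)(Y)$ is a finite projective $\Z_{p}$-module of the expected rank — so that $T(\mathcal F)$ is a local system — is part of the affine computation below.

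The main input is the perfectoid case: if $R$ is a perfectoid ring with tilt $R^{\flat}$ and $\Ainf \coloneqq \Ainf(R) = W(R^{\flat})$, then modulo $p$ the ring $\Ainf[1/I]_p^{\wedge}$ becomes (the global sections of the structure sheaf of) the tilt of the analytic adic space $\spa(R[1/p], R^{\circ})$, and the category of finite projective $\phi$-modules over $\Ainf[1/I]_p^{\wedge}$ is equivalent to $\textnormal{Loc}_{\Z_{p}}$ of $\spa(R[1/p], R^{\circ})^{\diamondsuit}$. This is the relative form of Fontaine's equivalence between étale $\phi$-modules over $W$ of a perfectoid Tate ring and $\Z_{p}$-sheaves (Kedlaya--Liu; see also \cite[Section~3]{BS_crystal}), combined with Scholze's tilting equivalence for diamonds. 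In particular it identifies $(\mO_{\Prism}[1/\mI]_p^{\wedge})^{\phi=1}$ on the initial perfect prism of $\spf R$ with $\underline{\Z_{p}}$ on $(\spf R)_{\eta}^{\diamondsuit}$, giving the rank statement above together with full faithfulness of $T$ in this case.

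To pass from the perfectoid affine case to a general bounded $X$, I would argue by descent on both sides. On the diamond side one has pro-étale/v-descent for $\Z_{p}$-local systems; on the $\phi$-module side, the assignment $Y \mapsto \{\text{finite projective }\phi\text{-modules over }\mO(Y^{\flat})\}$ satisfies v-descent on affinoid perfectoid $Y$ (faithfully flat descent for vector bundles plus $\phi$-equivariance). On the prismatic side one uses that the perfect prisms $(\Ainf(S^{+}), \ker\theta)$ attached to perfectoid $X_{\eta}^{\diamondsuit}$-algebras form a basis of $X_{\Prism}$ stable under the relevant covers, so that Laurent $F$-crystals on $X_{\Prism}$ and their realizations are determined by their restrictions to this basis. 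Gluing the perfectoid-case equivalences over a v-hypercover of $X_{\eta}^{\diamondsuit}$ by such $Y$ then produces a quasi-inverse to $T$ and shows it is an equivalence. The hard part is precisely the bookkeeping here: identifying the quasi-pro-étale site of $X_{\eta}^{\diamondsuit}$ with the site of perfect prisms over $X$ through the untilt correspondence, and verifying that $\mO_{\Prism}[1/\mI]_p^{\wedge}$ — hence the category of Laurent $F$-crystals — genuinely satisfies descent along these covers. This is the technical heart of \cite[Section~3]{BS_crystal}, resting on Scholze's diamond formalism and the prismatic étale comparison.
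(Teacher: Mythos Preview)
The paper does not prove this statement: it is quoted verbatim from \cite[Corollary~3.8]{BS_crystal} as motivation for the logarithmic generalization (Theorem~\ref{etale_realization_is_equivalence}), and no proof is given in the paper itself. Your sketch is a reasonable summary of the Bhatt--Scholze argument and there is nothing in the paper to compare it against directly.

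That said, since you explicitly say you intend to ``re-run it in the logarithmic setting,'' it is worth noting that the paper's proof of the log analogue is organized somewhat differently from your outline. Rather than constructing the \'etale realization functor first and then invoking v-descent on the diamond side, the paper writes \emph{both} sides as limits over an index category and then compares the indexing. Concretely: by a result of Drinfeld--Mathew \cite[Theorem~5.8]{Mathew}, Laurent $F$-crystals on $(X,M_X)_{\Prism}$ are a limit of $\textnormal{Vect}(A[1/I]^{\wedge}_p)^{\phi_A=1}$ over all log prisms; one then passes to \emph{perfect} log prisms via \cite[Proposition~3.5, Corollary~3.7]{BS_crystal}, rewrites this as a limit over perfectoid log rings, applies the mod-$p^n$ equivalence $\textnormal{Vect}(W(S^\flat)/p^n)^{\phi=1}\simeq\textnormal{Loc}_{\Z/p^n}(\spec S)$ from \cite[Proposition~3.6, Example~3.4]{BS_crystal} at each perfectoid point, and finally checks that the resulting limit over strictly totally disconnected log perfectoid spaces computes $\textnormal{Loc}_{\Z_p}$ on the quasi-pro-Kummer-\'etale site by a basis argument. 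The difference from your outline is mainly packaging --- Drinfeld--Mathew plus reduction to perfect prisms replaces the explicit v-descent step --- but if you plan to adapt your strategy to the log case you will want to follow the paper's route, since the comparison of index categories (perfect log prisms versus quasi-pro-Kummer-\'etale perfectoid points) is where the log-specific content enters.
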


After discussing the \'etale comparison, we will explain a log version of the above theorem (Theorem \ref{etale_realization_is_equivalence}) using pro-Kummer-\'etale local systems. 

\subsection{The quasi-pro-\'etale topology and the structure sheaf} \noindent 

\noindent 
Let us recall the definition of quasi-pro-\'etale maps following \cite{diamond}. 
First recall that a perfectoid space $X$ is strictly totally disconnected if $X$ is qcqs and every \'etale cover of $X$ splits. Any affinoid perfectoid space $X$ admits an affinoid pro-\'etale surjection $\widetilde{X}\to X$ from a strictly totally disconnected perfectoid space $\widetilde{X}$ (see \cite[Lemma 7.18]{diamond}). 

\bd[\cite{diamond}]
\noindent
\be 
\item A locally separated map of diamonds $Y'\to Y$ is quasi-pro-\'etale if for any strictly totally disconnected perfectoid space $X$ over $Y$, the pullback $Y'\times_Y X$ is representable by a perfectoid space and the map $Y'\times_Y X\to X$ is pro-\'etale. 
\item The quasi-pro-\'etale site $Y_{\qproet}$ of $Y$ consists of quasi-pro-\'etale maps $Y'\to Y$, and coverings are jointly surjective maps.
\ee 
\ed 

\br 
Every diamond $Y$ admits a surjective quasi-pro-\'etale map $Y'\to Y$ from a perfectoid space with pro-\'etale equivalence relation, and any pro-\'etale sheaf that admits a surjective quasi-pro-\'etale map from a perfectoid space (or a diamond) is a diamond. 
\er

Next, we consider the structure sheaf on $Y_{\qproet}$ following \cite{MannWerner}. It depends on the following additional datum: let $\spd \Q_p$ denote the diamond attached to $\spa(\Q_p, \Z_p)$, and we fix a map $Y\to \spd \Q_p$.\footnote{If $Y$ comes from an analytic (pre-)adic space over $\spa(\Q_p, \Z_p)$, then it comes naturally with $Y\to \spd \Q_p$.}  The completed integral structure sheaf $\widehat{\mO}^+_Y$ and the completed structure sheaf $\widehat{\mO}_Y$ are defined as follows: for an affinoid perfectoid space $Y'=\spa (R, R^+)$ quasi-pro-\'etale over $Y$, the map to $\spd \Q_p$ determines an untilt $\spa (R^{\sharp}, R^{\sharp+})$, then we have 
\[
\widehat{\mO}^+_Y (Y')=R^{\sharp+}, \quad \text{and }\:   \widehat{\mO}_Y (Y')=R^{\sharp}.
\]

\subsection{Log diamonds} \noindent

\noindent
As the (completed) structure sheaf lives in the quasi-pro-\'etale site, it is also natural to consider a log structure on the quasi-pro-\'etale site. 

\bd
A \emph{log (locally spatial) diamond} over $\Q_p$ is a (locally spatial) diamond $Y$ with a map $Y\to \spd \Q_p$ and a log structure $M_Y \to \widehat{\mO}_Y$ on the quasi-pro-\'etale site. 
\ed

The following treatment of charts is slightly nonstandard from a general theory, and based on the one for log adic spaces in \cite{DLLZ}. 

\bd
Let $(Y, M_Y)$ be a log diamond. 
\be 
\item A chart of $M_X$ is a map of monoids $P\to \Gamma (Y_{\qproet}, M_Y)$ from a monoid $P$ that induces an isomorphism of log structures $P_Y^a \cong M_Y$ and whose composite with $\Gamma (Y_{\qproet}, M_Y)\to \Gamma (Y_{\qproet}, \widehat{\mO}_Y)$ factors over $\Gamma (Y_{\qproet}, \widehat{\mO}_Y^+)$. The chart is said to be integral (resp. saturated, resp. fine, resp. fs) if $P$ is  integral (resp. saturated, resp. fine, resp. fs).  
\item We say that $(Y, M_Y)$ is quasi-coherent (resp. integral quasi-coherent, resp. saturated quasi-coherent, resp. fine, resp. fs) if $M_Y$ admits a (resp. integral, resp. saturated, resp. fine, resp. fs) chart quasi-pro-\'etale locally on $Y$. 
\ee 
\ed

Note that, if $(Y, M_Y)$ is integral quasi-coherent (resp. saturated quasi-coherent), then $\Gamma (U, M_Y)$ is integral (resp. saturated) for every $U \in Y_{\qproet}$. 

\begin{convention}
We will make the following assumption in the rest of the section: for a map $(X, M_{X})\to (Y, M_Y)$ of quasi-coherent (resp. integral quasi-coherent, resp. saturated quasi-coherent, resp. fine, resp. fs) log diamonds, we always assume that quasi-pro-\'etale locally on $X$ and $Y$ there is a map of (resp. integral, resp. saturated, resp. fine, resp. fs) charts
\[
\begin{CD}
Q @>>> \Gamma (Y, M_Y) \\
@VVV @VVV \\
P @>>> \Gamma (X, M_X). 
\end{CD}
\]
Existence of such charts is usually proved under a finiteness condition, but we do not consider this problem and only mention the following example. 
\end{convention}

\begin{example}
Let $X$ be an adic space over $\spa (\Q_p, \Z_p)$ that is either locally noetherian or perfectoid. In this case, we also have the noncompleted structure sheaves $\mO_X^+, \mO_X$ on the \'etale site of $X_{\eta}^{\diamondsuit}$ and the $\widehat{\mO}^+_X$ is the $p$-adic completion of $\nu^{-1}\mO_X^+$ under $\nu\colon X^{\diamondsuit}_{\qproet}\to X^{\diamondsuit}_{\ett}$ (see \cite[Lemma 2.7]{MannWerner}). So, any fs log structure on $X_{\ett}$ in the sense of \cite[Definition 2.3.1]{DLLZ} induces an fs log structure on $X^{\diamondsuit}_{\qproet}$ and gives rise to an fs log diamond $(X, M_X)^{\diamondsuit}$. Any map $(Y, M_Y)\to (X, M_X)$ between such fs log adic spaces induces a map $(Y, M_Y)^{\diamondsuit}\to (X, M_X)^{\diamondsuit}$ of fs log diamonds \cite[Proposition 2.3.22]{DLLZ}. 

Similarly, if $(X, M_X)$ is an fs log $p$-adic formal scheme over $\Z_p$, then the diamond generic fiber $X^{\diamondsuit}_{\eta}$ over $\spd \Q_p$ has a natural fs log structure induced by $M_X$ and gives rise to a log diamond $(X, M_X)_{\eta}^{\diamondsuit}$. This construction is functorial in the above sense. 
\end{example}

We want to define a Kummer analogue of the quasi-pro-\'etale site. 
We first check that the saturation and fiber products exist:

\bl
Let $(X, M_X)$ be an affinoid perfectoid space $X$ over $\Q_p$ with a log structure $M_X$ and a chart $P\to \Gamma (X, \widehat{\mO}^+_X)$. Then, the saturation $(X^{\textup{sat}}, M_X^{\textup{sat}})$ exists as a log locally spatial diamond over $\Q_p$, and its underlying diamond is a perfectoid space. 
\el

\bproof
Write $X=\spa (R, R^+)^{\diamondsuit}$ with $R$ over $\Q_p$. Let 
\[S = R^+ \otimes_{\Z_p [P]} \Z_p [P^{\textup{sat}}]\] denote the tensor product.
As $S$ is integral over the perfectoid ring $R^+$, it has the perfectoidization $S\to S_{\textnormal{perfd}}$ by \cite[Theorem 1.17 (1)]{BS}. 
The perfectoid ring $S_{\textnormal{perfd}}$ and a pre-log structure $P^{\textup{sat}}\to S_{\textnormal{perfd}}$ determines a log affinoid perfectoid $(X^{\textup{sat}}, M_X^{\textup{sat}})$ space over $\Q_p$ satisfying the universality of the saturation. 
\eproof

\bc
Let $(Y, M_Y)$ be a quasi-coherent log (locally spatial) diamond over $\Q_p$. 
Then its saturation $(Y^{\textup{sat}}, M_Y^{\textup{sat}})$ exists as a saturated quasi-coherent log (locally spatial) diamond. 
\ec

\bproof
There is a surjective quasi-pro-\'etale map $X\to Y$ from the disjoint union of affinoid perfectoid spaces $X$ with pro-\'etale equivalence relation. As $M_Y$ is quasi-coherent, we may assume that $M_Y$ admits a chart $P\to M_Y$ on $X$. We have the saturation $(X^{\textup{sat}},M_Y^\textup{sat})$ of $(X, M_Y)$, with underlying perfectoid space. The base changes of both of the projections in 
\[ X\times_Y X\rightrightarrows X\] give the saturation $((X\times_Y X)^{\textup{sat}}, M_Y^{\textup{sat}})$ of $(X\times_Y X, M_Y)$, 
with two strict pro-\'etale maps
\[
((X\times_Y X)^{\textup{sat}}, M_Y^{\textup{sat}})\rightrightarrows
(X^{\textup{sat}},M_Y^\textup{sat}). 
\]
The quotient of $X^{\textup{sat}}$ by the equivalence relation $(X\times_Y X)^{\textup{sat}}$ is a diamond by \cite[Proposition 11.8]{diamond}. Let $Y^{\textup{sat}}$ denote this quotient, with a natural log structure $M_Y^{\textup{sat}}$. The log diamond $(Y^{\textup{sat}}, M_Y^{\textup{sat}})$ is the saturation of $(Y, M_Y)$. 

If $Y$ is spatial, we can take $X\to Y$ to be universally open and then $X^{\textup{sat}}\to Y^{\textup{sat}}$ is also universally open. So, $Y^{\textup{sat}}$ is spatial by \cite[Proposition 11.24]{diamond}. The case of locally spatial $Y$ then follows. 
\eproof

As fiber products exist in the category of (locally spatial) diamonds over $\Q_p$, we deduce the following.

\bc
Fiber products exist in the category of saturated quasi-coherent (resp. fs) log (locally spatial) diamonds over $\Q_p$. 
\ec

\begin{convention}
All log diamonds are saturated quasi-coherent from now on, and in particular we will take the fiber products as saturated quasi-coherent log diamonds and we write $Y'\times_Y^{\textup{sat}}X$ for the underlying diamond.  
\end{convention}

\subsection{The quasi-pro-Kummer-\'etale topology} \noindent

\noindent
We first introduce a logarithmic analogue of the notion of strictly totally disconnected perfectoid spaces. 

\bl
For any strictly totally disconnected perfectoid space $X$, the vanishing of cohomology  $H^1 (X, \widehat{\mO}_X^\times)=0$ holds for the pro-\'etale topology. In particular, for any integral quasi-coherent log structure $M_X$, the map 
\[\Gamma (X, M_X)\to \Gamma (X, M_X/M_X^\times)\] is surjective. 
\el

\bproof
It is known \cite[Theorem II, 3.5.8]{Kedlaya-Liu_II} (or \cite[Lemma 17.1.8]{Berkeley}) that any vector bundle on a perfectoid space for the pro-\'etale topology (or even v-topology) descends to a vector bundle for the \'etale topology (and, in fact, analytic topology). As $X$ is strictly totally disconnected, any line bundle on $X$ for the \'etale topology is trivial, so $H^1 (X, \widehat{\mO}_X^\times)=0$. To deduce the second statement, observe that for any $m\in \Gamma (X, M_X/M_X^{\times})$, the quasi-pro-\'etale sheaf of lifts $\tilde{m}\in \Gamma (-, M_X)$ of $m$ is an $\widehat{\mO}_X^\times$-torsor. We showed that such a torsor is trivial. 
\eproof

\bd
A strictly totally disconnected log perfectoid space is a strictly totally disconnected perfectoid space $X$ with a saturated quasi-coherent log structure $M_X$ such that  $M_X/M_X^\times$ is uniquely divisible. 
In particular, $X$ is affinoid perfectoid and $\Gamma (X, M_X)$ is saturated and divisible (since $\Gamma (X, \widehat{\mO}_X^\times)$ is also divisible). 
\ed

\br
In the definition of a strictly totally disconnected log perfectoid space, it suffices to assume $M_X/M_X^\times$, equivalently $M_X$, is divisible: $M_X/ M_X^\times$ is a sheaf of sharp saturated monoids and hence it must be uniquely divisible.
In particular, if $X$ is a strictly totally disconnected perfectoid space and $P$ is a divisible saturated monoid with a map $P\to \Gamma (X, \widehat{\mO}_X^+)$, then the associated log diamond $(X, P)^a$ is a strictly totally disconnected log perfectoid space.
\er

\bd Let $f\colon (Y', M_{Y'})\to (Y, M_Y)$ be a locally separated map of saturated quasi-coherent log diamonds. We say that $f$ is  quasi-pro-Kummer-\'etale (resp. Kummer-\'etale, resp. finite Kummer-\'etale) if for any map $(X, M_X)\to (Y, M_Y)$ from a strictly totally disconnected log perfectoid space $(X, M_X)$, the saturated fiber product $Y'\times_Y^{\textup{sat}} X$ is a perfectoid space and the map 
\[(Y'\times_Y X, M_{Y'\times_Y X})\to (X, M_X)
\] is strict and pro-\'etale (resp. \'etale, resp. finite \'etale). 
\ed

\br As in \cite{diamond}, we always assume that quasi-pro-Kummer-\'etale maps are locally separated. This is automatic if $Y'$ is a perfectoid space (and $Y$ is a diamond). 
\er

\bp
\begin{enumerate}
    \item A strict map of saturated quasi-coherent log diamonds is quasi-pro-Kummer-\'etale (resp. Kummer-\'etale. resp finite Kummer-\'etale) if and only if its underlying map of diamonds is pro-\'etale (resp. \'etale, resp. finite \'etale). 
    \item Any pullback of a quasi-pro-Kummer-\'etale (resp. Kummer-\'etale, resp. finite Kummer-\'etale) map is quasi-pro-Kummer-\'etale (resp. Kummer-\'etale, resp. finite Kummer-\'etale).  
    \item If $f\colon (Y_1, M_{Y_1}) \to (Y_2, M_{Y_2}), g\colon (Y_2, M_{Y_2})\to (Y_3, M_{Y_3})$ are quasi-pro-Kummer-\'etale (resp. Kummer-\'etale, resp. finite Kummer-\'etale), then so is $g\circ f$. 
    \item Let $f\colon (Y_1, M_{Y_1}) \to (Y_2, M_{Y_2}), g\colon (Y_2, M_{Y_2})\to (Y_3, M_{Y_3})$ be maps of log diamonds. If $g$ and $g\circ f$ are quasi-pro-Kummer-\'etale (resp. Kummer-\'etale, resp. finite Kummer-\'etale), then so is $f$. 
\end{enumerate}
\ep

\bproof
(3) can be proved in the same way as in the proof of \cite[Proposition 10.4 (i)]{diamond}, and the others are more straightforward. 
\eproof

It is convenient to define the surjectivity of a quasi-pro-Kummer-\'etale map as follows:

\bd
Let $(Y', M_{Y'})\to (Y, M_Y)$ be a quasi-pro-Kummer-\'etale map of log diamonds. We say that it is surjective if $Y'\times_Y^{\textup{sat}} X \to X$ is surjective for any strictly totally disconnected log perfectoid space $(X, M_X)$. The class of surjective quasi-pro-Kummer-\'etale maps is stable under base change and composition. 
\ed

\bd[The quasi-pro-Kummer-\'etale site]
Let $(Y, M_Y)$ be a saturated quasi-coherent log diamond. The quasi-pro-Kummer-\'etale site $(Y, M_Y)_{\qpket}$ consists of quasi-pro-Kummer-\'etale maps $(Y', M_{Y'})\to (Y, M_Y)$, and coverings are jointly surjective maps (note that a map between objects are automatically quasi-pro-Kummer-\'etale and the surjectivity for such a map is defined as above). 
\ed

\br
Let $(Z, M_Z) \to (Y, M_Y)$ be a a map of saturated quasi-coherent log diamonds. The pullback functor induces a morphism of sites $(Z, M_Z)_{\qpket}\to (Y, M_Y)_{\qpket}$. 
\er

\br
It is possible to define the Kummer-\'etale site for saturated quasi-coherent log locally spatial diamonds in a similar way, but we do not need it. 
\er

Let us compare this site with more classical ones. We start with a simple covering. 

\bl
\begin{enumerate}
\item For any integer $n\geq 1$, the following natural map
\[
f\colon (\spd (\Q_p \langle T^{1/n} \rangle, \Z_p \langle T^{1/n}\rangle), T^{\N/n})\to (\spd (\Q_p \langle T \rangle, \Z_p \langle T\rangle), T^{\N})
\]
induces a surjective finite Kummer-\'etale map of log diamonds. 
\item The map
\[
(\spd (\Q_p \langle T^{\Q_{\geq 0}} \rangle, \Z_p \langle T^{\Q_{\geq 0}}\rangle), T^{\Q_{\geq 0}})\to (\spd (\Q_p \langle T \rangle, \Z_p \langle T\rangle), T^{\N})
\]
induces a surjective quasi-pro-Kummer-\'etale map of log diamonds. 
\end{enumerate}
\el

\bproof
It suffices to prove (1). Let $(Y, M_Y)\coloneqq (\spd (\Q_p \langle T \rangle, \Z_p \langle T\rangle), T^{\N})^a$ denote the log diamond associated with $(\spd (\Q_p \langle T \rangle, \Z_p \langle T\rangle), T^{\N})$. 
Take a map $(X, M_X)\to (Y, M_Y)$ from a strictly totally disconnected log perfectoid space $(X, M_X)$. As $\Gamma (X, M_X)$ is divisible, there exists $m\in \Gamma (X, M_X)$ such that $m^n$ equals the image of $T$. Then the map from $(X, M_X)$ factors over
\[
g\colon (Z, M_Z)\coloneqq (\spd (\Q_p \langle m \rangle, \Z_p \langle m\rangle), m^{\N})^a \to
(Y, M_Y)=(\spd (\Q_p \langle T \rangle, \Z_p \langle T\rangle), T^{\N})^a, 
\]
which is induced from $T \mapsto m^n$. The saturated fiber product of $f$ and $g$ can be computed using the following standard fact: the saturation of the coproduct of $T^{\N}\hookrightarrow T^{\N/n}$ and $T^{\N}\to m^{\N}$ is isomorphic to $\Z/n\Z \oplus m^{\N}$.  
We see that the saturated fiber product of $f$ and $g$ is isomorphic to the disjoint union of copies of $(Z, M_Z)$ labelled by $\Z/n\Z$. 
Therefore, the saturated fiber product of $f$ and the map $(X, M_X)\to (Y, M_Y)$ is the disjoint union of $(X, M_X)$ labelled by $\Z/n\Z$, and hence the map to $(X, M_X)$ is surjective and finite \'etale. This shows that $f$ is surjective and finite Kummer-\'etale as desired. 
\eproof

More generally, the following holds:

\bp
Let $P$ be an fs monoid. For an integer $n\geq 1$, let $P^{1/n}$ denote $P$ with a map $P\to P; a\mapsto a^n$. Let $P_{\Q_{\geq 0}}$ denote the colimit of all $P^{1/n}$. 
\begin{enumerate}
\item For any integer $n\geq 1$ and a saturated submonoid $Q\subset P^{1/n}$ containing $P$, the following natural map
\[
(\spd (\Q_p \langle Q \rangle, \Z_p \langle Q\rangle), Q)\to (\spd (\Q_p \langle P \rangle, \Z_p \langle P\rangle), P)
\]
induces a surjective finite Kummer-\'etale map of log diamonds. 
\item The map
\[
(\spd (\Q_p \langle P_{\Q_{\geq 0}} \rangle, \Z_p \langle P_{\Q_{\geq 0}}\rangle), P_{\Q_{\geq 0}})\to (\spd (\Q_p \langle P \rangle, \Z_p \langle P\rangle), P)
\]
induces a surjective quasi-pro-Kummer-\'etale map of log diamonds. 
\end{enumerate}
\ep
 
 \bproof
It suffices to prove (1). 
Assume $P$ is a sharp fs monoid. 
We first prove the following: let $f\colon P\to M$ be a map to a divisible saturated monoid $M$ with $M/M^\times$ being uniquely divisible, then there is a map $Q \to M$ for any $n\geq 1$ extending $f$. Observe that there is a unique map $\overline{f}_n \colon Q\to M/M^\times$ extending $\overline{f}\colon P\to M/M^\times$, and we want to lift it to $Q\to M$. Choose any extension 
\[
f^{\textup{gp}}_n \colon Q^{\textup{gp}}\to M^{\textup{gp}}
\]
of $f^{\textup{gp}} \colon P^{\textup{gp}}\to M^{\textup{gp}}$; this is possible since $P^{\textup{gp}}$ is free and $M$ is divisible. The composite of $f^{\textup{gp}}_n$ with $M^{\textup{gp}}\to M^{\textup{gp}}/M^\times$ coincides with $\overline{f}_n^{\textup{gp}}$ as $M/M^\times$ is uniquely divisible. In particular, the composite maps $Q$ to $M/M^\times$, and hence $f^{\textup{gp}}_n$ maps $Q$ to $M$. So, $f^{\textup{gp}}_n$ restricts to an extension of $f$. 

Now, the proof of the previous lemma works for $P$: apply the result in the first paragraph to $M=\Gamma (X, M_X)$ for a strictly totally disconnected log perfectoid $(X, M_X)$ and recall that the saturated self-coproduct of $P\to Q$ is isomorphic to $Q^{\textup{gp}}/ P^{\textup{gp}}\oplus Q$. 

Now consider a general fs monoid $P$. Choose a section $\overline{P}\coloneqq P/P^\times \to P$, and observe that $P\to P^{1/n}$ factors through $P\to P\times_{\overline{P}}\overline{P}^{1/n}$ (note that $P\times_{\overline{P}}\overline{P}^{1/n}$ is also fs ). From what we have shown, the map $P\to P\times_{\overline{P}}\overline{P}^{1/n}$ induces a finite Kummer-\'etale map of log diamonds. The map $P\times_{\overline{P}}\overline{P}^{1/n}\to P^{1/n}$ is induced by $P^\times \to (P^\times)^{1/n}$, and hence it induces a (strict) finite \'etale map of log diamonds. 
\eproof

\bc
Let $P$ be an fs monoid. For any Huber pair $(R, R^+)$ over $(\Q_p, \Z_p)$ with a map $P\to R^+$, 
taking the associated log diamond induces a morphism of sites
\[
(\spd (R, R^+), P)^a_{\qpket} \to (\spec R, P)^a_{\ket}. 
\]
Similarly, if $P$ is a divisible saturated monoid, we have a morphism of sites
\[
(\spd (R, R^+), P)^a_{\qpket} \to (\spec R)_{\ett}
\]
and a natural equivalence
\[
(\spd (R, R^+), P)^a_{\qpket} \cong \spd (R, R^+)_{\qproet}. 
\]
\ec

\bproof
We freely use results from \cite{Kato2}. 
Any morphism in $(\spec R, P)^a_{\ket}$ is \'etale locally a finite Kummer-\'etale map modelled on some map $P\to Q\subset P^{1/n}$ for some $n$. So, it induces a Kummer-\'etale map of associated log diamonds. Recall that coverings in $(\spec R, P)^a_{\ket}$ is defined as jointly surjective maps of underlying schemes, and this class is shown to be stable under (saturated) base change. Let $(Y_i)_{i\in I}\to Y$ be a covering in $(\spec R, P)^a_{\ket}$, and we need to show that it induces the induced Kummer-\'etale map of log diamonds is surjective. It suffices to show the case of affine $Y$, and then we may replace $I$ by a finite subset since the images of $Y_i$ is a Zariski open covering. Now, we can find a map $P\to Q$ of Kummer type such that the (saturated) base change to $(\Z[Q], Q)$ gives a strict \'etale covering. Then, its base change to any strictly totally disconnected space is surjective. So, we obtain a continuous functor of sites. 

Finally, note that saturated fiber products commutes with taking log diamonds; recall that we only use perfectoidization in our proof of existence of saturated fiber products of log diamonds. This implies that the continuous functor induces a morphism of sites. 

Next, assume $P$ is a divisible saturated monoid. The only nontrivial part is the claim on equivalence. But observe that any quasi-pro-Kummer-\'etale map of the form of $(Y', M_{Y'})\to (Y, P)^a$ has the quasi-pro-\'etale underlying map of diamonds: any map $X\to Y$ from a strictly totally disconnected perfectoid space $X$ gives rise to a strict totally disconnected log perfectoid space $(X, P)^a$. Moreover, $Y'\times_X Y$ with pullback of $M_{Y'}$ is already saturated, and hence must be strict over $(X, P)^a$. By taking a suitable union of maps of the form $X\to Y$ to obtain a quasi-pro-\'etale cover, we see that $P\to \Gamma (Y', M_{Y'})$ is a chart of $M_{Y'}$ and $(Y', M_{Y'})\to (Y, P)^a$ is strict. This proves the desired equivalence. 
\eproof

\br
Let $P$ be an fs monoid and $P_{\infty}$ is a divisible saturated monoid over $P$. For any pre-log ring $(R, P)$ and the saturated base change $(S, P_{\infty})$, the base change functor induces a morphism of sites $(\spec S)_{\ett}\to (\spec R, P)_{\ket}$. 
\er

The following result is an evidence that our site is a right one:

\bt[Comparison with Kummer \'etale cohomology of log schemes] \label{comparing_diamond_to_spec}
Let $\Lambda$ be a torsion abelian group. 
Let $R$ be a $p$-complete ring with bounded $p^{\infty}$-torsion, and let $(R[1/p], R^+)$ denote the associated Huber pair. 
For any fs monoid $P$ with a map $P\to R$, the comparison map induces an isomorphism
\[
R\Gamma_{\ket} ((\spec R[1/p], P)^a, \Lambda) \cong
R\Gamma_{\qpket}((\spd (R[1/p], R^+), P)^a, \Lambda). 
\]
\et

\br
A similar comparison holds for the Kummer \'etale cohomology of log adic spaces defined in \cite{DLLZ}, cf. Lemma \ref{lemma:comparing_adic_generic_to_spec}. 
\er

\bproof
Take some surjection $\N^{\oplus J}\to P$ and the associated $P\to P_{\infty}$ as in Proposition \ref{prop:flat_descent_ket}, and let $(S, P_{\infty})$ denote the (classically) completed saturation of the base change of $(R, P)$ along $\Z[P]\to \Z[P_{\infty}]$. 
By Proposition \ref{prop:flat_descent_ket}, the descent of Kummer \'etale cohomology holds for the map
\[
 (\spec S[1/p], P_\infty)^a\to (\spec R[1/p], P)^a. 
\]
On the other hand, the descent of quasi-pro-Kummer-\'etale cohomology holds for the map
\[
(\spd (S[1/p], S^+), P_{\infty})^a \to (\spd (R[1/p], R^+), P)^a
\]
as it is quasi-pro-Kummer-\'etale, where $(S[1/p], S^+)$ denote the Huber pair associated with $S$. 
So, we may replace $(R, P)$ by $(S, P_{\infty})$ and (quasi-pro-)Kummer-\'etale sites by (quasi-pro-)\'etale sites. 
Then, the claim follows from the arc descent \cite[Corollary 6.17]{BM_arc}. 
\eproof

\bc[Reformulation of \'etale comparison] \label{etale_comp_diamond}
Let $(A, I = (d))$ be a perfect prism.  Let $R$ be a $p$-complete $A/I$-algebra with  bounded $p^{\infty}$-torsion and let $P$ be an fs monoid with a map $P\to R$. 
Then, for each $n \ge 1$, there is a canonical isomorphism 
\[ 
R \Gamma ((\spd (R[\frac{1}{p}], R^+), P)^a_{\qpket}, \Z/p^n) \cong 
(\Prism_{(R, P)/A} [\frac{1}{d}] /p^n )^{\phi = 1}
\]
which is functorial in $(R, P)$. 
\ec

\br
If $P$ is only saturated, then the statement continues to hold at least if we define the quasi-pro-Kummer-\'etale cohomology as the filtered colimit of the fs case. We do not consider if the cohomology of the quasi-pro-Kummer-\'etale site works in such generality. 
\er

\subsection{Global \'etale comparison} \label{ss:etale_comp_diaomnd_global}
\noindent 

\noindent 
Now we are ready to globalize the \'etale comparison theorem proved in Section \ref{sec:etale}. For the setup, let us consider the following data: 
\bi 
\item let $(A, I=(d), M_{0})$ be a bounded pre-log prism where $(A, I)$ is perfect and $M_{0}$ is an fs monoid; 
\item let $(X_0, M_{X_0})$ be a smooth fs log $p$-adic formal scheme over $(A/I, M_0)$ where $X_0$ is  qcqs and the mod $p$ fiber of the map 
\begin{equation} \label{eq:saturated_morphism}
(X_0, M_{X_0}) \lra (\spf A/I, M_0)^a
\end{equation}
is of Cartier type; 
\item let $(A, I, M_A)$ be a saturated pre-log prism such that its associated log prism \[(A, I, M_{\spf A})=(A, I, M_A)^{a}\] is a perfect\footnote{As in the definition of perfect ``log prisms'', this means the Frobenius lift $\phi$ is an isomorphism of log formal schemes.} log prism, and is equipped with a map 
\[(A, M_{0})\to (A, \Gamma (\spf A, M_{\spf A})).\]  
\item Let $(X, M_X)$ be the base change of $(X_0, M_{X_0})$ to $(A/I, M_A)$. More generally, for any  submonoid $M_i \subset M_A$ such that $\Gamma(\spf A, M^a_{i, \spf A})$ contains the image of $M_0$, let $(X_i, M_{X_i})$ denote the base change of $(X_0, M_{X_0})$ to $(A/I, M_i)$. They fit in the following commutative diagram of cartesian squares
\[
\begin{tikzcd}
(X_0, M_{X_0})  \arrow[d] 
& (X_i, M_{X_i})   \arrow[d]  \arrow[l]
& (X, M_{X}) \arrow[d] \arrow[l]
\\ 
(\spf A/I, M_0)^a 
& (\spf A/I, M_i)^a  \arrow[l]
& (\spf A/I, M_A)^a \arrow[l] 
\end{tikzcd}
\]
in the category of log $p$-adic formal schemes. Note that the underlying scheme 
\[ X_0 = X_i = X\] does not change. Also note that, by Remark \ref{remark:Cartier_type_saturated} below, the map (\ref{eq:saturated_morphism}) is saturated, thus the base changes above agree with base changes taken in the category of saturated log $p$-adic formal schemes \cite[Proposition II.2.13]{Tsuji}. 
\ei 
\br  \label{remark:Cartier_type_saturated}
In our setting, the assumption that the mod $p$ fiber of the map (\ref{eq:saturated_morphism}) is of Cartier type is equivalent to the condition that (\ref{eq:saturated_morphism}) is a saturated morphism of fs log $p$-adic formal schemes in the sense of \cite[Definition II.2.10]{Tsuji} (naturally generalized to log formal schemes). See \cite[Proposition II.2.14, Theorem II.3.1]{Tsuji}.  
\er 
Given the setup above, we define\footnote{Note that any fs sub log structure of $M_{\spf A}$ is contained in $M_{i, \spf A}^a$ for some $i$ as $\spf A$ is quasicompact, so we can make the definition more canonical and it only depends on $M_{\spf A}$.} the quasi-pro-Kummer-\'etale cohomology of the log diamond $(X, M_X)_{\eta}^{\diamondsuit}$ of the generic fiber of $(X, M_X)$ as follows:
\[
R\Gamma_{\qpket}((X, M_X)_{\eta}^{\diamondsuit}, \Z/p^m\Z) \\
\coloneqq
\varinjlim_i R\Gamma((X_i, M_{X_i})_{\eta, \qpket}^{\diamondsuit}, \Z/p^m\Z), 
\]
here $M_i\subset M_A$ run through all fs submonoids of $M_A$ such that $\Gamma (\spf A, M_{i, \spf A}^a)$ contain the image of $M_0$.

\bt[Globalization of \'etale comparison] \label{etale_comp_diaomnd_global}  In the setup above, there are  functorial isomorphisms
\begin{align*}
R\Gamma_{\qpket} ((X, M_X)^{\diamondsuit}_{\eta}, \Z/p^m \Z)
&\cong
(R\Gamma_{\Prism}((X, M_X) /(A, M_A)))[\frac{1}{d}]/p^m)^{\phi=1} \\
&\cong 
(R\Gamma_{\Prism}((X_0, M_{X_0})/(A, M_0)))[\frac{1}{d}]/p^m)^{\phi=1}. 
\end{align*}
\et

\bproof
Let us first prove that the map of presheaves
\[
R\Gamma_{\qpket} ((U_{\eta}^{\diamondsuit}, \Gamma (U, M_{X}))^a, \Z/p^m) \lra 
R\Gamma_{\qpket} ((U, M_U)_{\eta}^{\diamondsuit}, \Z/p^m), 
\]
where $M_U$ is the restriction of $M_X$, induces an isomorphism after \'etale sheafification on $U$. Assuming that some fs smooth chart $P\to \Gamma (U, M_{X_0})$ (over $M_0$) exists,  we need only show that the natural map
\[
R\Gamma_{\qpket} (((U, P_{M_A})^a)_{\eta}^{\diamondsuit}, \Z/p^m) \to
R\Gamma_{\qpket} ((U_{\eta}^{\diamondsuit}, \Gamma (U, M_{X}))^a, \Z/p^m)
\]
induces an isomorphism after \'etale sheafification on $U$, here $P_{M_A}$ denotes the pushout of monoids $P_{M_A} \coloneqq P \oplus_{M_0} M_A$. By Corollary \ref{etale_comp_diamond}, it is enough to prove that the natural maps
\[
(\Prism^{\L}_{(R, P_{M_A})/A}[1/d]/p^m)^{\phi=1} \to
(\Prism^{\L}_{(R, \Gamma (U, M_{X}))/A}[1/d]/p^m)^{\phi=1}
\]
induce isomorphisms after \'etale sheafification on $U=\spf R$. It suffices to treat $m=1$. The Hodge--Tate comparison implies that, as $P_{M_A} = P\oplus_{M_0} M_A \to R$ is a chart of $M_{X}$, the cone of the map
\[
\Prism^{\L}_{(R, P_{M_A})/A}/p \to
\Prism^{\L}_{(R, \Gamma (U, M_{X}))/A}/p
\]
is uniquely $d$-divisible after \'etale sheafification. Thus taking $(-[1/d])^{\phi=1}$ of the cone is the same as taking $(-)^{\phi=1}$ of the cone after \'etale sheafification. Therefore, it suffices to show that 
\[
(\Prism^{\L}_{(R, P_{M_A})/A}/p)^{\phi=1} \to
(\Prism^{\L}_{(R, \Gamma (U, M_{X}))/A}/p)^{\phi=1}
\]
becomes an isomorphism after \'etale sheafification. By the proof of \cite[Lemma 9.2]{BS}, for any $d$-complete $M\in \mD (A/p)$ with Frobenius $\phi$, $M^{\phi=1}\cong (M/d)^{\phi=1}$. Hence, it suffices to show that 
\[
(\Prism^{\L}_{(R, P_{M_A})/A}/(p,d))^{\phi=1} \to
(\Prism^{\L}_{(R, \Gamma (U, M_{X}))/A}/(p,d))^{\phi=1}
\]
becomes an isomorphism after \'etale sheafification. This follows from the Hodge--Tate comparison; in fact, the map
\[
\Prism^{\L}_{(R, P_{M_A})/A}/(p,d) \to
\Prism^{\L}_{(R, \Gamma (U, M_{X}))/A}/(p,d)
\]
becomes an isomorphism after \'etale sheafification. 

There are natural maps of \'etale sheaves
\[
\Prism^{\L}_{(X, M_X)/A} \ra \Prism^{\L}_{(X, M_X)/(A, M_A)} \ra  \Prism_{(X, M_X)/(A, M_A)}, 
\]
which are both isomorphisms (the first one follows from the Hodge--Tate comparison and the perfectness of $(A, I, M_A)^a$ and the second isomorphism is Remark \ref{remark:comparing_derived_and_nonderived}). 
Moreover, we have the $(p,d)$-complete sheafification map
\[
\Prism^{\L, \textup{pre}}_{(X, M_X)/A} \ra \Prism^{\L}_{(X, M_X)/A}
\]
and, by Corollary \ref{etale_comp_diamond}, the \'etale presheaf 
\[ (\Prism^{\tu{pre}}_{(X, M_X)/A}[\frac{1}{d}]/p^m)^{\phi=1} \colon U   \longmapsto 
(\Prism^{\L}_{(R, \Gamma(U, M_X))/A} [\frac{1}{d}]/p^m)^{\phi=1}
\]
is isomorphic to the following presheaf
\[
U  \longmapsto R\Gamma_{\qpket} ((U_{\eta}^{\diamondsuit}, \Gamma (U, M_X))^a, \Z/p^m). 
\]
By what we prove in the first paragraph and the qcqs assumption on $X_0$, it suffices to show, for any affine $U =\spf R$ \'etale over $X_0 = X$, the composite
\begin{multline*}
R\Gamma_{\qpket} ((U_{\eta}^{\diamondsuit}, \Gamma (U, M_X))^a, \Z/p^m) \cong
(\Prism^{\L}_{(R, \Gamma(U, M_X))/A} [\frac{1}{d}]/p^m)^{\phi=1} \\ 
\ra (\Prism_{(X, M_X)/(A, M_A)} [\frac{1}{d}]/p^m)^{\phi=1}(U)
\end{multline*}
induces an isomorphism after \'etale sheafification. We may assume some fs smooth chart $P\to \Gamma (U, M_{X_0})$ (over $M_0$) exists. Again by what we prove in the first paragraph, we need only observe that there are isomorphisms 
\[
R\Gamma_{\qpket} ((U_{\eta}^{\diamondsuit}, P)^a, \Z/p^m) \cong 
(\Prism^{\L}_{(R, P_{M_A})/(A, M_A)} [\frac{1}{d}]/p^m)^{\phi=1} \cong
(\Prism^{\L}_{(X, M_X)/(A, M_A)} [\frac{1}{d}]/p^m)^{\phi=1}(U). 
\]
\eproof

\bc \label{cor:etale_comparison_over_W(C_flat)}
Further assume that $A/I=\mO_C$ for an algebraically field $C$ so that $A=\Ainf=W(\mO_{C}^\flat)$ and that $X$ is proper over $A/I$. 
Then, 
\[
R\Gamma_{\qpket}((X, M_X)_{\eta}^{\diamondsuit}, \Z_p)\coloneqq
\varprojlim_m R\Gamma_{\qpket}((X, M_X)_{\eta}^{\diamondsuit}, \Z/p^m\Z)
\]
is a perfect complex of $\Z_p$-modules and there is an isomorphism
\[
R\Gamma_{\qpket}((X, M_X)_{\eta}^{\diamondsuit}, \Z_p)\otimes^\L_{\Z_p} W(C^\flat) \cong
R\Gamma_{\Prism} ( (X, M_X)/ (A, M_A))\otimes^\L_{\Ainf} W(C^\flat). 
\] 
A similar assertion holds for mod $p^m$ coefficients. More precisely, there is an isomorphism 
\[
R\Gamma_{\qpket}((X, M_X)_{\eta}^{\diamondsuit}, \Z/p^m)\otimes^\L_{\Z_p} W(C^\flat) \cong
(R\Gamma_{\Prism} ( (X, M_X)/ (A, M_A))/p^m )\otimes^\L_{\Ainf} W(C^\flat). 
\]
\ec

See Proposition \ref{etale_realization_commutes_with_pushforward} below for a generalization. 

\bproof
This follows from \cite[Lemma 8.5]{Bhatt} (or \cite[Example 3.4]{BS_crystal}) as the log prismatic cohomology is perfect if $X$ is proper. 
\eproof

\subsection{Kummer \'etale local systems and Laurent $F$-crystals} \noindent

\noindent
Finally, we discuss (quasi-)pro-Kummer-\'etale local systems. There are several possible approaches, and we follow that of \cite[Section 3]{MannWerner}. 
Let $(X, M_X)$ be an fs log diamond. There are morphisms of sites
\[
\text{pr}\colon (X, M_X)_{\qpket} \to X_{\qproet} \to *_{\proet}. 
\]
A sheaf of sets on $*_{\proet}$ is now known as a condensed set. 

\bd
Let $\Lambda$ be a condensed ring. A sheaf $\mF$ of $\text{pr}^{-1}\Lambda$-modules in $(X, M_X)_{\qpket}$ is constant if $\mF$ is isomorphic to $\text{pr}^{-1}\Lambda^r$, and $\mF$ is locally constant or a $\Lambda$-local system if it is constant quasi-pro-Kummer-\'etale locally on $(X, M_X)_{\qpket}$. 
We write $\text{Loc}_{\Lambda}(X, M_X)$ for the category of $\Lambda$-local systems. 
\ed

\br
We will only consider discrete rings $\Z/p^n\Z$ and $\Z_p$ with profinite topology as condensed rings. 
\er

\bd 
Let $(X, M_X)$ be a bounded fs log $p$-adic formal scheme. The absolute saturated log prismatic site of $(X, M_X)$ is defined as follows: an object is a saturated log prism $(A, I, M_{\spf A})=(A, I, M_A)^a$ with a map $(\spf A/I, M_A)^a \to (X, M_X)$ that admits a saturated chart \'etale locally, and the topology is a strict flat topology.  We denote the absolute saturated log prismatic site of $(X, M_X)$ by $(X, M_X)_{\Prism}$. 
\ed 

\bd \label{def:Laurent_F_crystal} Let $(X, M_X)$ be a bounded fs log $p$-adic formal scheme. A Laurent $F$-crystal is a crystal of vector bundles $E$ on $((X, M_X)_{\Prism}, \mO_{\Prism}[1/\mI]_p^{\wedge})$ equipped with isomorphisms $\phi_E\colon \phi^* E\cong E$. 
We use the notation 
\[\text{Vect}((X, M_X)_{\Prism}, \mO_{\Prism}[1/\mI]_p^{\wedge})^{\phi=1}\] to  denote the category of Laurent $F$-crystals. 
\ed 

We prove the following analogue of \cite[Corollary 3.8]{BS_crystal}.

\bt[Laurent $F$-crystals and pro-Kummer-\'etale local systems] \label{etale_realization_is_equivalence}
Let $(X, M_X)$ be a bounded fs log $p$-adic formal scheme. 
There is a natural equivalence
\[
\textnormal{Vect}((X, M_X)_{\Prism}, \mO_{\Prism}[1/\mI]_p^{\wedge})^{\phi=1} \simeq 
\textnormal{Loc}_{\Z_p}((X, M_X)_{\eta}^{\diamondsuit}). 
\]
\et

\bproof
By a result of Drinfeld--Mathew \cite[Theorem 5.8]{Mathew}, there is an equivalence of categories
\[
\textnormal{Vect}((X, M_X)_{\Prism}, \mO_{\Prism}[1/\mI]_p^{\wedge})^{\phi=1} \simeq
\lim_{(A, I, M_{\spf A})\in (X, M_X)_{\Prism}}\textnormal{Vect}(A[1/I]^{\wedge}_p)^{\phi_A=1},
\]
as in \cite[Proposition 2.7]{BS_crystal}. 
For each $(A, I, M_{\spf A})=(A, I, M_A)^a\in (X, M_X)_{\Prism}$, write 
\[(A_{\perf}, IA_{\perf}, M_{A,\perf})^a
\] 
for its perfection. By \cite[Proposition 3.5]{BS_crystal} and the proof of \cite[Corollary 3.7]{BS_crystal}, 
we have an equivalence
\[
\textnormal{Vect}(A[1/I]^{\wedge}_p)^{\phi_A=1} \simeq
\textnormal{Vect}(A_{\perf}[1/I]^{\wedge}_p)^{\phi_{A_{\perf}}=1}. 
\]
So, if $(X, M_X)_{\Prism}^{\perf}$ denotes the full subcategory of $(X, M_X)_{\Prism}$ of the saturated perfect log prisms, then we have an equivalence
\[
\lim_{(A, I, M_{\spf A})\in (X, M_X)_{\Prism}}\textnormal{Vect}(A[1/I]^{\wedge}_p)^{\phi_A=1} \simeq
\lim_{(A, I, M_{\spf A})\in (X, M_X)_{\Prism}^{\perf}}\textnormal{Vect}(A[1/I]^{\wedge}_p)^{\phi_A=1}. 
\]
Now recall that the category of (saturated) perfect log prisms $(A, I, M_{\spf A})=(A, I, M_A)^a$ is equivalent to the category of (saturated) log $p$-adic formal schemes 
\[(\spf A/I, M_{\spf A/I})=\spf (A/I, M_A)^a
\] attached to (saturated) perfectoid log rings. Therefore, from the equivalence above, we have an equivalence  
\[
\lim_{(A, I, M_{\spf A})\in (X, M_X)_{\Prism}}\textnormal{Vect}(A[1/I]^{\wedge}_p)^{\phi_A=1} \simeq
\lim_{(\spf A/I, M_{\spf A/I}) \to (X, M_X)}\textnormal{Vect}(A[1/I]^{\wedge}_p)^{\phi_A=1},  
\]
where $(\spf A/I, M_{\spf A/I})$ is attached to a saturated perfectoid log ring and 
\[(\spf A/I, M_{\spf A/I})\to (X, M_X)
\] 
is a map of saturated quasi-coherent log $p$-adic formal schemes that admits saturated charts \'etale locally. Write $S=A/I[1/p]$, and let $S^+$ denote the integral closure of $A/I$ in $S$. As $\textnormal{Vect}(A[1/I]^{\wedge}_p)^{\phi_A=1}$ only depends on $S$,  we may only consider integrally closed $A/I$ by replacing $(S, A/I)$ by $(S, S^+)$. 

To proceed, let us study the category $\textnormal{Vect}(A[1/I]^{\wedge}_p)^{\phi_A=1}$. 
Note that $A[1/I]^{\wedge}_p=W(S^\flat)$. 
We have, by \cite[Proposition 3.6, Example 3.4]{BS_crystal} and an induction argument, 
\[
\textnormal{Vect}(W(S^\flat)/p^n )^{\phi_A=1} \simeq
\textnormal{Loc}_{\Z/p^n \Z}(\spec S)
\]
for every integer $n\geq 1$. 
Any $\Z/p^n\Z$-local system on $\spec S$ is represented by a finite \'etale covering of $\spec S$. The same holds for $\spd (S, S^+)$ with quasi-pro-\'etale topology,  as finite \'etale maps satisfy quasi-pro-\'etale descent in the category of diamonds \cite[Proposition 10.11 (iii)]{diamond}. Therefore, we can rewrite the above equivalence as
\[
\textnormal{Vect}(W(S^\flat)/p^n )^{\phi_A=1} \simeq
\textnormal{Loc}_{\Z/p^n \Z}(\spd (S, S^+)). 
\]
By passing to the limits and using \cite[Proposition 3.5]{MannWerner}, we get a natural equivalence
\[
\textnormal{Vect}(W(S^\flat))^{\phi_A=1} \simeq
\textnormal{Loc}_{\Z_p}(\spd (S, S^+)). 
\]
Combining results so far, we now have a functor 
\begin{multline*}
\qquad \quad \lim_{(A, I, M_{\spf A})\in (X, M_X)_{\Prism}}\textnormal{Vect}(A[1/I]^{\wedge}_p)^{\phi_A=1} \\ \lra
\lim_{(\spd (S, S^+), M_S)^a \to (X, M_X)^{\diamondsuit}_{\eta}}\textnormal{Loc}_{\Z_p}(\spd (S, S^+)),  \qquad \qquad 
\end{multline*}
where $\spd (S, S^+)$ is perfectoid, $M_S$ is saturated, and $M_S/M_S^\times$ is divisible; we will prove that this functor is an equivalence at the end of this proof. 

Let us now recall that quasi-pro-\'etale maps to $\spd (S, S^+)$ correspond to quasi-pro-Kummer-\'etale maps to $(\spd (S, S^+), M_S)^a$ as $M_S/M_S^\times$ is divisible. Therefore, there is a natural pullback functor
\[
\textnormal{Loc}_{\Z_p}(X, M_X) \lra 
\lim_{(\spd (S, S^+), M_S)^a \to (X, M_X)^{\diamondsuit}_{\eta}}\textnormal{Loc}_{\Z_p}(\spd (S, S^+)), 
\]
and we shall show that this is an equivalence. 
For this, we observe that log diamonds of the form $(\spd (T, T^+), M_{T})^a$ that are quasi-pro-Kummer-\'etale over $(X, M_X)$ form a basis\footnote{This basis is not necessarily closed under (saturated) fiber products over $(X, M_X)_{\eta}^{\diamondsuit}$.} on $(X, M_X)_{\eta, \qpket}^{\diamondsuit}$. Moreover, their base changes along a map 
\[(\spd (S, S^+), M_S)^a \to (X, M_X)_{\eta}^{\diamondsuit}\]
form a basis of $(\spd (S, S^+), M_S)^a_{\qpket}$, which can be refined by a basis consisting of quasi-pro-Kummer-\'etale maps \[
(\spd (S', S^{'+}), M_{S'})^a\to (\spd (S, S^+), M_S)^a
\] that factor over $(\spd (S, S^+), M_S)^a\times_{(X, M_X)_{\eta}^{\diamondsuit}}(\spd (T, T^+), M_{T})^a$ for some $(\spd (T, T^+), M_{T})^a$. 
Therefore, we have equivalences 
\begin{align*}
\textnormal{Loc}_{\Z_p}(X, M_X) 
&\simeq
\lim_{(\spd (T, T^+), M_T)^a \in (X, M_X)^{\diamondsuit}_{\eta, \qpket}}\textnormal{Loc}_{\Z_p}(\spd (T, T^+)) \\
&\simeq
\lim_{(\spd (S, S^+), M_S)^a \in (X, M_X)^{\diamondsuit}_{\eta}}  \: \: \textnormal{ Loc}_{\Z_p}(\spd (S, S^+)). 
\end{align*}
In fact, the objects of the following form also form a basis on $(X, M_X)_{\eta, \qpket}^{\diamondsuit}$: 
\bi 
\item $(\spd (S, S^+), M_S)^a$ that is  quasi-pro-Kummer-\'etale over $(\spd (R[1/p], R^+), P)^a$ with $M_S=P_{\Q_{\geq 0}}$ for some \'etale map $\spf R\to X$ with an fs chart $P\to R$.
\ei 
This again gives a basis after the base change along any map $(\spd (S, S^+), M_S)^a\to (X, M_X)$ and it is refined by a another basis similar to the one described above. Thus, we finally get the desired equivalence
\[
\lim_{(A, I, M_{\spf A})\in (X, M_X)_{\Prism}}\textnormal{Vect}(A[1/I]^{\wedge}_p)^{\phi_A=1} \simeq
\textnormal{Loc}_{\Z_p}(X, M_X). 
\]
\eproof

There is a derived version of Theorem \ref{etale_realization_is_equivalence} as in \cite{BS_crystal}. 
The $\infty$-category
\[
\mD^{\textnormal{perf}}((X, M_X)_{\Prism}, \mO_{\Prism}[1/\mI]_p^{\wedge})^{\phi=1}
\]
is defined by replacing vector bundles by perfect complexes in the definition of 
\[ \textnormal{Vect}((X, M_X)_{\Prism}, \mO_{\Prism}[1/\mI]_p^{\wedge})^{\phi=1}.
\] 
Let $\mD^{(b)}((X, M_X)_{\eta}^{\diamondsuit}, \Z_p)$ denote the $\infty$-category of complexes of $\Z_p$-modules locally constant with perfect fibers; note that this is a hypercomplete sheaf of $\infty$-categories with respect to the quasi-pro-Kummer-\'etale topology if $X_{\eta}^{\diamondsuit}$ is quasicompact, as one can work with complexes globally concentrated on a fixed bounded range, cf. the proof of \cite[Proposition 5.11]{BM_arc}. 

\bt
Let $(X, M_X)$ be a  bounded fs log $p$-adic formal scheme. 
There is a natural equivalence
\[
\mD^{\textnormal{perf}}((X, M_X)_{\Prism}, \mO_{\Prism}[1/\mI]_p^{\wedge})^{\phi=1} \simeq 
\mD^{(b)}((X, M_X)_{\eta}^{\diamondsuit}, \Z_p). 
\]
\et

\bproof
If $X$ is quasicompact, the previous proof works, and then the case of quasiseparated $X$ follows by taking an affine Zariski open cover of $X$ and gluing. Finally, the general case follows by another gluing argument since any open of an affine formal scheme is (quasi)separated. 
\eproof

For a Laurent $F$-crystal $\mF$, the corresponding quasi-pro-Kummer-\'etale sheaf is called the \'etale realization of $\mF$ and denoted by $\mF_{\ett}$. For instance, $(\mO_{\Prism})_{\ett}$ is the constant sheaf $\Z_p$ on the quasi-pro-Kummer-\'etale site. Let us record the compatibility with pushforward in the simplest case:

\bp \label{etale_realization_commutes_with_pushforward}
Let $f\colon (X, M_X)\to (Y, M_Y)$ be a smooth map of  bounded fs log   $p$-adic formal schemes. 
If $f$ is proper, $Rf_{*}\mO_{\Prism}$ is an $F$-crystal of perfect complexes, and there is a natural isomorphism, 
\[
(Rf_{*}\mO_{\Prism})_{\ett} \cong
Rf_{\eta, *}\Z_p, 
\]
where $f_{\eta}\colon (X, M_X)_{\eta}^{\diamondsuit}\to (Y, M_Y)_{\eta}^{\diamondsuit}$. In particular, $Rf_{\eta, *}\Z_p$ is locally constant with perfect fibers. 
\ep

\br
It follows from the base change of log prismatic cohomology that $Rf_{\eta *}\Z_p$ commutes with base change $(Z, M_Z)\to (Y, M_Y)$. 
\er

\bproof
The Hodge--Tate comparison and the base change implies that $Rf_{*}\mO_{\Prism}$ is an $F$-crystal of perfect complexes. 

To show the second claim, we may assume, working \'etale locally on $Y$, that there is a sharp fs chart $P \to \Gamma (Y, M_Y)$. Take an object $(A, I ,M_A)^a$ of the saturated perfect log prismatic site of $(Y, M_Y)_{\Prism}$. Let $\widetilde{P}\to \Gamma (\spf A, M_{\spf A}^a)$ denote the pullback along 
\[\Gamma (\spf A, M_{\spf A})\to \Gamma (\spf A/I, M_{A, \spf A/I}^a).
\] Since this map is exact surjective and $P$ is sharp fs, there is a section $P\to \widetilde{P}$. Regard $(A, I, \widetilde{P})$ as a pre-log prism via the $\delta_{\log}$-structure induced by $\widetilde{P}\to \Gamma (\spf A, M_{\spf A})$. We may now apply Theorem  \ref{etale_comp_diaomnd_global}, and there is a functorial isomorphism
\[
(R\Gamma_{\Prism}((X, M_X)_{(A/I, M_A)}/(A, M_A))[1/d]/p^n)^{\phi=1} \cong
R\Gamma_{\qpket} ((X, M_X)^{\diamondsuit}_{(A/I, M_A), \eta}, \Z/p^n) 
\]
for every $n \geq 1$. 
As in the proof of Theorem \ref{etale_realization_is_equivalence}, we may assume, to determine the \'etale realization, that the generic fiber of $\spf A/I$ with the log structure is a strictly totally disconnected log perfectoid space $(\spa (S, S^+), M_S)$ that is quasi-pro-Kummer-\'etale over $(Y, M_Y)$. Then, the \'etale $\Z/p^n$-local system corresponds to the left hand side is locally constant with respect to finitely many closed and open subsets (as any \'etale cover of $S$ splits), and the right hand side recovered as the global section of the locally constant sheaf. This argument, combined with base change of log prismatic cohomology, also implies that $R\Gamma_{\qpket} ((X, M_X)^{\diamondsuit}_{\eta, (\spa (T, T^+),M_S)}, \Z/p^n)$ for all such quasi-pro-\'etale $\spa (T, T^+) \to \spa (S, S^+)$ is the value of the locally constant quasi-pro-(Kummer-)\'etale sheaf. Therefore, the restriction of $Rf_{\eta, *}\Z/p^n$ to $(\spa (S, S^+), M_S)$ is locally constant as a quasi-pro-Kummer-\'etale sheaf of complexes. We conclude that $(Rf_* \mO_{\Prism})_{\ett}$ identifies with $Rf_{\eta *}\Z_p$. 
\eproof



\newpage 
\section{Breuil--Kisin--Fargues modules} 
\label{section:BKF_module}

In this section, we prove that under the \'etale comparison, the log prismatic cohomology of a smooth proper fs log $p$-adic formal scheme has the structure of a Breuil--Kisin--Fargues module (cf. Definition \ref{definiton:BKF_module}). Throughout the section, we let $A = \Ainf =  W(\mO_C^\flat)$, where $C$ is an algebraically closed extension of $\Q_p$. Fix a compatible choice of $p$-power roots of unity and set 
\[ \epsilon = (1, \zeta_p, \zeta_{p^2}, ...) \in \mO_C^\flat;  \quad \mu = [\epsilon] -1, \: \: d=\xi = 1 + [\epsilon]^{\frac{1}{p}} + \cdots [\epsilon]^{\frac{p-1}{p}} \in A.\] 
Write $\fm \subset \mO_C$ for the maximal ideal.  
Let 
\[
(A, I, M_A) = (\Ainf, (\xi), M_A)
\]
be a saturated perfect ``log prism''; the associated log prism $(A, I, M_{\spf A}) = (A, I, M_A)^a$ is perfect and any perfect log prism with underlying prism $(A, I)$ arises in this way as any \'etale sheaf on $\spf A$ is constant.

Now let us first recall the setting of Theorem \ref{etale_comp_diaomnd_global} (and Corollary \ref{cor:etale_comparison_over_W(C_flat)}).
Suppose that $ (X, M_X)$ fits into the following cartesian diagram 
\[
\begin{tikzcd}
 (X_0, M_{X_0})  \arrow[d, "\pi_0"]  
& (X, M_{X}) \arrow[d] \arrow[l]
\\ 
(\spf \mO_C, M_0)^a  
& (\spf \mO_C, M_A)^a \arrow[l] 
\end{tikzcd}
\]
of log $p$-adic formal schemes, where 
\bi
\item The bottom map is induced by a map $M_0 \ra \Gamma(\spf A, M_{\spf A})$  from an fs monoid $M_0$. 
\item $(X_0, M_{X_0})$ is an  fs log $p$-adic formal scheme, smooth and proper over $\spf (\mO_C, M_0)^a$, such that the mod $p$ fiber of $\pi_0$ is of Cartier type and $X_0$ has the same underlying formal scheme as $X$. 
\ei 
Given the setup above, set 
\[M\coloneqq R\Gamma_{\Prism} ((X_0, M_{X_0})/(A, M_0)), \qquad \textup{and}  \quad  H^i_{\Prism}\coloneqq H^i(M) \:\:\: 
\] 
for each $i \ge 0$. 
Note that, by base change along the map $(A, I, M_0) \ra (A, I, M_A)$ of bounded pre-log prisms, we have  
\[
M = R\Gamma_{\Prism} ((X_0, M_{X_0})/(A, M_0)) \cong R\Gamma_{\Prism} ((X, M_{X})/(A, M_A)). 
\]

Let us also recall the definition of Breuil--Kisin--Fargues modules from \cite[Definition 4.22]{BMS1}.  
\bd \label{definiton:BKF_module}
A Breuil--Kisin--Fargues module is a finitely presented $\Ainf$-module $N$ equipped with a $\phi$-linear map $\phi =  \phi_N\colon N \ra N$, such that $N[\frac{1}{p}]$ is a free $\Ainf[\frac{1}{p}]$-module and that $\phi$ induces an isomorphism 
\[\phi\colon N [\frac{1}{\xi}] \isom N[\frac{1}{\phi(\xi)}].\]
\ed 

The main result we prove in this section is the following
\begin{theorem} \label{theorem:BKF_module}
Let $(X, M_X)$ be as above, then  $\phi^* H^i_{\Prism} = H^i_{\Prism} \otimes_{\Ainf, \phi} \Ainf$ with its Frobenius $\phi$ is a Breuil--Kisin--Fargues-module. 
\end{theorem}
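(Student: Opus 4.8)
The plan is to verify the three defining conditions of a Breuil--Kisin--Fargues module for $N \coloneqq \phi^* H^i_{\Prism}$: (i) $N$ is finitely presented over $\Ainf$; (ii) $N[\tfrac1p]$ is free over $\Ainf[\tfrac1p]$; and (iii) $\phi$ induces an isomorphism $N[\tfrac1\xi] \isom N[\tfrac1{\phi(\xi)}]$. Since $X$ is smooth and proper over $\mO_C$ and the mod $p$ fiber of $\pi_0$ is of Cartier type, the log prismatic cohomology complex $M = R\Gamma_{\Prism}((X,M_X)/(A,M_A))$ is a perfect complex of $\Ainf$-modules (by the properness and the Hodge--Tate comparison, as in \cite[Example 3.4]{BS_crystal}; this is already recorded in the proof of Corollary \ref{cor:etale_comparison_over_W(C_flat)}). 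Hence each $H^i_{\Prism}$ is a finitely presented $\Ainf$-module, and so is its Frobenius twist $N = H^i_{\Prism}\otimes_{\Ainf,\phi}\Ainf$. This settles (i).

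For (iii), the key input is Theorem \ref{mainthm:comparisons} Part (6) (equivalently Corollary \ref{cor:image_Frob_global} and Remark \ref{remark:uniform_bound_of_I_power}): in the present smooth proper Cartier-type setting, the linearized Frobenius
\[
\phi\colon R\Gamma_{\Prism}((X,M_X)/(A,M_A)) \widehat\otimes^\L_{A,\phi_A} A \lra R\Gamma_{\Prism}((X,M_X)/(A,M_A))
\]
becomes an isomorphism after inverting $I = (\xi)$; more precisely there are maps $V_i$ with $\phi\circ V_i = V_i\circ\phi = \xi^i$ on $H^i$, and a uniform bound $\xi^D$ (with $D$ the rank of $\Omega^1$) works on the whole complex. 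Therefore $\phi$ induces an isomorphism $N[\tfrac1\xi] = H^i(M^{(1)})[\tfrac1\xi] \isom H^i(M)[\tfrac1\xi]$. To get the BKF isomorphism $N[\tfrac1\xi]\isom N[\tfrac1{\phi(\xi)}]$, I would combine this with the perfection trick sketched in the introduction: $M\otimes^\L_A A[\tfrac1{\phi^{-1}(\mu)}] \isom (M_{\perf})^\wedge_p \otimes^\L_A A[\tfrac1{\phi^{-1}(\mu)}]$ (the pole of $\phi$ along $\phi^{-1}(\xi)$ being bounded by the Nygaard analysis), and then the mod $p$ Riemann--Hilbert input from \cite{Bhatt_Lurie} gives $(M_{\perf})^\wedge_p\otimes^\L_A A[\tfrac1{\phi^{-1}(\mu)}] \isom T\otimes^\L_{\Z_p} A[\tfrac1{\phi^{-1}(\mu)}]$ with $T = R\Gamma_{\ket}(X_\eta,\Z_p)$; since $\phi$ acts trivially on the $T$-factor, this forces $\phi$ to be an isomorphism already after inverting $\phi^{-1}(\mu)$, hence after inverting $\phi^{-1}(\xi)$, and twisting back gives an isomorphism after inverting $\phi(\xi)$. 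Combined with the isomorphism after inverting $\xi$, and the fact that $\xi$ and $\phi(\xi)$ generate the unit ideal after inverting $p$ together with Beauville--Laszlo type gluing, one deduces condition (iii).

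For (ii), I would argue that $N[\tfrac1p]$ is free over $\Ainf[\tfrac1p]$ by the standard Breuil--Kisin--Fargues argument: $N[\tfrac1p]$ is a finitely presented module over the principal ideal domain $\Ainf[\tfrac1p]$, so it is a direct sum of a free module and torsion; the torsion part is killed by a power of $\xi$ (since $\Ainf[\tfrac1p]$ is a PID with primes the Frobenius orbit of $\xi$, and only the prime $\xi$ is relevant here), but the torsion of $H^i_{\Prism}$ is $p$-power torsion by the étale comparison Theorem \ref{etale_comp_diaomnd_global} (which identifies $H^i_{\Prism}[\tfrac1\xi]/p^n$ with $H^i_{\ket}(X_\eta,\Z/p^n)\otimes A[\tfrac1{\phi^{-1}(\mu)}]$-type data, a finite module), so it vanishes after inverting $p$. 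Alternatively, one invokes \cite[Lemma 8.5]{Bhatt} exactly as in the proof of Corollary \ref{cor:etale_comparison_over_W(C_flat)}, which already gives the perfectness and, together with the étale comparison, the freeness of $N[\tfrac1p]$.

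\textbf{Main obstacle.} I expect the main difficulty to be step (iii): controlling the pole order of the Frobenius along $\phi^{-1}(\xi)$ well enough to conclude that passing to the $p$-completed perfection only introduces bounded poles along $\phi^{-1}(\mu)$, and hence that $M$ and $(M_{\perf})^\wedge_p$ agree after inverting $\phi^{-1}(\mu)$. This is where the Nygaard filtration (Theorem \ref{mainthm:Nygaard}, and especially the completeness statement Corollary \ref{cor:Nygaard_complete_for_smooth}) must be used to bound the Frobenius, and where the non-discreteness of the log Nygaard filtration makes the bookkeeping more delicate than in the non-log case of \cite{BS}. The remaining parts (i), (ii) are essentially formal once perfectness of $M$ and the étale comparison are in hand.
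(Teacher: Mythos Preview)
There is a genuine gap in your argument for condition (ii). The ring $\Ainf[\tfrac1p]$ is \emph{not} a principal ideal domain (nor even known to be Noetherian in any useful way), so the structure theorem argument you sketch does not apply; likewise, the alternative you suggest via \cite[Lemma 8.5]{Bhatt} only yields a comparison over $W(C^\flat)$ (this is what is used in Corollary \ref{cor:etale_comparison_over_W(C_flat)}), which says nothing about freeness over $\Ainf[\tfrac1p]$. Establishing freeness after inverting $p$ is in fact the crux of the matter, and it does not follow formally from perfectness of $M$ together with the \'etale comparison.

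The paper does not verify (i)--(iii) directly. Instead it invokes the criterion \cite[Corollary 4.20]{BMS1}, which reduces the BKF property to two inputs: Proposition \ref{inverting_mu} (the comparison $H^i_{\Prism}\otimes_{\Ainf}\Ainf[1/\phi^{-1}(\mu)]\cong H^i_{\qpket}\otimes_{\Z_p}\Ainf[1/\phi^{-1}(\mu)]$) and Proposition \ref{crys_free} (freeness of $H^i(M\otimes^\L_{\Ainf,\phi}\Acrys[1/p])$ over $\Acrys[1/p]$). Your perfection-plus-Riemann--Hilbert idea is exactly how the paper proves Proposition \ref{inverting_mu} (via Lemmas \ref{lemma:RH} and \ref{lemma:inverting_mu_on_perfection}), so that part of your plan is correct but misplaced: it feeds into the BMS1 criterion rather than into (iii), which already follows immediately from the Frobenius isogeny of Corollary \ref{cor:image_Frob_global} by applying $\phi^*$. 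What you are missing entirely is the second input, Proposition \ref{crys_free}: the paper proves this via the crystalline comparison and a Hyodo--Kato type argument reducing to freeness of log crystalline cohomology of the special fiber over $W(k)$. Without this $\Acrys[1/p]$-freeness (or some equivalent replacement), there is no route to the freeness of $N[\tfrac1p]$ over $\Ainf[\tfrac1p]$.
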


We will apply \cite[Corollary 4.20]{BMS1}, and it suffices to show the following two propositions. 

\begin{proposition} \label{inverting_mu}
For every $i$,  
\[
H^i_{\Prism}\otimes_{\Ainf} \Ainf [1/\phi^{-1}(\mu)] \cong
H^i_{\qpket}((X, M_X)_{\eta}^{\diamondsuit}, \Z_p)\otimes_{\Z_p} \Ainf [1/\phi^{-1}(\mu)]. 
\]
\end{proposition}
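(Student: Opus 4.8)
\textbf{Proposal for the proof of Proposition \ref{inverting_mu}.} The plan is to pass through the $p$-completed perfection of $M$ and reduce to the \'etale comparison theorem (Corollary \ref{cor:etale_comparison_over_W(C_flat)}, or Theorem \ref{etale_comp_diaomnd_global}), whose natural home is the perfected prismatic cohomology. First I would introduce $T \coloneqq R\Gamma_{\qpket}((X, M_X)_{\eta}^{\diamondsuit}, \Z_p)$, which by Corollary \ref{cor:etale_comparison_over_W(C_flat)} is a perfect complex of $\Z_p$-modules. The key input is that $\phi^{-1}(\mu)$ divides $\phi^{-1}(\xi) = \phi^{-1}(d)$ up to a unit in $\Ainf$ (indeed $\mu = \phi^{-1}(\mu)\cdot \phi^{-1}(\xi)\cdot(\text{unit})$ after suitable normalization, and more to the point $\phi^{-1}(\mu)$ and $\phi^{-1}(\xi)$ generate the same radical), so that inverting $\phi^{-1}(\mu)$ inverts $\phi^{-1}(d)$; combined with the fact (proved via the Nygaard filtration, cf. Corollary \ref{cor:image_Frob_global} and Remark \ref{remark:uniform_bound_of_I_power}) that the Frobenius on $M$ is an isogeny with cokernel killed by a bounded power of $d$, I would deduce that the natural map
\[
M \otimes_{\Ainf}^\L \Ainf[1/\phi^{-1}(\mu)] \isom (M_{\perf})_p^\wedge \otimes_{\Ainf}^\L \Ainf[1/\phi^{-1}(\mu)]
\]
is an isomorphism, where $(M_{\perf})_p^\wedge$ denotes the $p$-completed colimit perfection of $M$ along $\phi$. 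The point is that taking the colimit perfection only introduces poles along $\phi^{-1}(d)$ (equivalently $\phi^{-1}(\mu)$), and by the uniform bound on the isogeny these poles are bounded, so they disappear after localizing at $\phi^{-1}(\mu)$.

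Next I would identify $(M_{\perf})_p^\wedge$ with (the pushforward of) a perfected absolute log prismatic cohomology and apply the \'etale comparison. Concretely, using Proposition \ref{perfect log prismatic cohomology is initial} and the globalization machinery of Section \ref{sec:derived} and Section \ref{sec:log_diamonds}, $(M_{\perf})_p^\wedge$ computes the $\mO_{\Prism}[1/\mI]_p^\wedge$-valued cohomology after inverting $d$; more precisely I would invoke Corollary \ref{cor:etale_comparison_over_W(C_flat)} in the mod $p^m$ form,
\[
R\Gamma_{\qpket}((X, M_X)_{\eta}^{\diamondsuit}, \Z/p^m)\otimes^\L_{\Z_p} W(C^\flat) \cong
(M/p^m)\otimes^\L_{\Ainf} W(C^\flat),
\]
and pass to the limit over $m$ to get $T \otimes_{\Z_p}^\L W(C^\flat) \cong M \otimes_{\Ainf}^\L W(C^\flat)$, where $W(C^\flat) = \Ainf[1/d]_p^\wedge = \Ainf[1/\xi]_p^\wedge$. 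Since $\Ainf[1/\phi^{-1}(\mu)]$ factors through $\Ainf[1/d]$, base changing this isomorphism further along $W(C^\flat) \to \Ainf[1/\phi^{-1}(\mu)]^\wedge$ (and checking the completions cause no harm, as all the complexes in sight are perfect) would yield
\[
M \otimes_{\Ainf}^\L \Ainf[1/\phi^{-1}(\mu)] \isom T \otimes_{\Z_p}^\L \Ainf[1/\phi^{-1}(\mu)].
\]
Taking cohomology in degree $i$ and using that $\Ainf[1/\phi^{-1}(\mu)]$ is flat over $\Ainf$ (so cohomology commutes with the base change) gives the stated formula.

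\textbf{Main obstacle.} The delicate point is the comparison between $M$ and its perfection $(M_{\perf})_p^\wedge$ after inverting $\phi^{-1}(\mu)$, i.e.\ controlling the poles of the Frobenius along $\phi^{-1}(\xi)$. This is exactly where the work on the Nygaard filtration in Section \ref{sec:Nygaard} is needed: one must know that $\phi$ is an isogeny on $M$ with a \emph{uniform} bound $D$ (the rank of $\Omega^1$) on the power of $I$ needed for an inverse, as in Remark \ref{remark:uniform_bound_of_I_power}; this ensures that $\varinjlim_\phi M$ introduces only a bounded pole and hence that $M[1/\phi^{-1}(\mu)] \to (M_{\perf})_p^\wedge[1/\phi^{-1}(\mu)]$ is an isomorphism rather than merely an isogeny. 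A secondary technical point is tracking that all the localizations and $p$-completions in play (at $d$, at $\phi^{-1}(\mu)$, and the $p$-adic completion) interact well; this is handled by working throughout with perfect complexes over $\Ainf$, for which derived completions are well-behaved and commute with the relevant flat base changes.
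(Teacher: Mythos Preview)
Your first step --- comparing $M$ with its $p$-completed perfection $(M_{\perf})_p^\wedge$ after inverting $\phi^{-1}(\mu)$ --- is correct and matches the paper's Lemma \ref{lemma:inverting_mu_on_perfection}. The Nygaard/$L\eta$ input you invoke (Remark \ref{remark:uniform_bound_of_I_power}) is exactly what controls the poles of the iterated Frobenius along the $\phi^{-j}(\xi)$, all of which divide $\phi^{-1}(\mu)$.

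The second step has a genuine gap. You assert that ``$\Ainf[1/\phi^{-1}(\mu)]$ factors through $\Ainf[1/d]$'' and then want to base change the $W(C^\flat)$-comparison of Corollary \ref{cor:etale_comparison_over_W(C_flat)} along a map $W(C^\flat)\to \Ainf[1/\phi^{-1}(\mu)]$. But no such map exists: inverting $\phi^{-1}(\mu)$ does \emph{not} invert $d=\xi$ (indeed $\xi\equiv p\bmod \phi^{-1}(\mu)$, and $p$ is not a unit in $\Ainf[1/\phi^{-1}(\mu)]$), and in any case $W(C^\flat)$ is $p$-complete while $\Ainf[1/\phi^{-1}(\mu)]$ is not. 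The map goes the other way, $\Ainf[1/\phi^{-1}(\mu)]\to W(C^\flat)$, so the $W(C^\flat)$-statement is strictly \emph{weaker} than what the proposition asserts. Earlier you correctly note that inverting $\phi^{-1}(\mu)$ inverts $\phi^{-1}(d)$, but then silently slide to $d$ itself; that slide is where the argument breaks.

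What the paper does instead for this step (Lemma \ref{lemma:RH}) is avoid $W(C^\flat)$ altogether: using the mod $p$ Riemann--Hilbert correspondence of Bhatt--Lurie over $\spec \mO_C^\flat$, one constructs genuine $\phi$-equivariant maps
\[
W(\fm^\flat)\otimes^\L_{\Z_p} T \lra (M_{\perf})_p^\wedge \lra T\otimes^\L_{\Z_p}\Ainf
\]
living over $\Ainf$ (not $W(C^\flat)$), and shows their cones are annihilated by $W(\fm^\flat)$, hence vanish after inverting $\phi^{-1}(\mu)$. The point is that the \'etale comparison only controls the situation on the open $\{d\neq 0\}$, whereas the proposition requires control on the larger open $\{\phi^{-1}(\mu)\neq 0\}$; the Riemann--Hilbert argument supplies exactly the missing extension across $\{d=0\}$ (away from the closed point). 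This ingredient is absent from your proposal and cannot be replaced by Corollary \ref{cor:etale_comparison_over_W(C_flat)}.
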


\begin{proposition} \label{crys_free}
For every $i$,
\[
H^i (M \otimes^\L_{\Ainf, \phi} \Acrys[1/p])
\]
is a finite free $\Acrys [1/p]$-module. 
\end{proposition}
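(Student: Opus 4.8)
\textbf{Proof plan for Proposition \ref{crys_free}.}
The plan is to reduce this to the known nonlog case treated in \cite[Section 12, Section 13]{BMS1}, after extracting the structural consequences of having the Nygaard filtration on log prismatic cohomology. First I would recall that $M \otimes^\L_{\Ainf, \phi} \Acrys$ computes the $\Acrys$-valued cohomology $R\Gamma_{\Acrys}((X_0, M_{X_0})/(\mO_C, M_0))$; indeed, one has the de Rham comparison (Corollary \ref{cor:dR_comparison} and Remark \ref{remark:derived_dR_comparison}), so $M \otimes^\L_{\Ainf, \phi} \mO_C \cong R\Gamma_{\textup{logdR}}((X, M_X)/(\mO_C, M_A))$, and $\Acrys$ is obtained from $\Ainf$ via $\phi$ together with divided-power completion along $\xi$; the crystalline comparison of \cite{Koshikawa} (Theorem \ref{mainthm:comparisons}(3), applied over $\mO_C/p$ and then lifted using the pro-system $\{\Acrys/p^n\}$) then identifies $M \otimes^\L_{\Ainf,\phi}\Acrys$ with log crystalline cohomology of the mod $p$ fiber relative to $\Acrys$. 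Since $(X_0, M_{X_0})$ is proper over $(\mO_C, M_0)$ with mod $p$ fiber of Cartier type, log crystalline cohomology is a perfect complex of $\Acrys$-modules by the comparison with log de Rham cohomology and finiteness of the latter; hence each $H^i(M \otimes^\L_{\Ainf,\phi}\Acrys)$ is a finitely presented $\Acrys$-module.

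Next I would invert $p$. The point is that $\Acrys[1/p] = B^+_{\textup{crys}}$ is a much better behaved ring: it is a (non-complete) PID-like ring in the relevant sense — more precisely, after inverting $p$ the $\xi$-adic filtration behaves like the $t$-adic filtration on $B^+_{\textup{dR}}$ — and by \cite[Proposition 13.9]{BMS1} (or the argument of \cite[Theorem 14.3]{BMS1}, which is what gives freeness of $H^i(\Ainf\text{-cohomology})\otimes \Acrys[1/p]$) a finitely presented $\Acrys[1/p]$-module arising as the cohomology of a complex computing crystalline cohomology of a proper smooth (log) object is automatically free. Concretely: the Frobenius on $M$ becomes an isogeny after inverting $\xi$ by Theorem \ref{mainthm:comparisons}(6) / Corollary \ref{cor:image_Frob_global}, so $M[1/\xi]$ with its Frobenius is ``unit-root up to a bounded twist''; combined with the Nygaard-filtration control of the pole order (Corollary \ref{cor:Nygaard_complete_for_smooth}, Remark \ref{remark:uniform_bound_of_I_power}), the module $M\otimes^\L_{\Ainf,\phi}\Acrys[1/p]$ is, after inverting $p$, a finite projective $\Acrys[1/p]$-module in each degree. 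Since $\Acrys[1/p]$ has trivial Picard group (it is a filtered colimit, along multiplication by $p$, of the local ring $\Acrys$ which has connected spectrum; more precisely $\spec \Acrys[1/p]$ is connected and $\Acrys[1/p]$ is a domain after tensoring with $B^+_{\textup{dR}}$), finite projective modules are free.

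The cleanest route, which I would actually follow to avoid re-running the \cite{BMS1} machinery in the log setting, is the following. First establish Proposition \ref{inverting_mu} (the previous proposition in the paper), and combine it with the fact that $\phi^* H^i_{\Prism}$ is finitely presented over $\Ainf$ (which follows from perfectness of $M$, itself a consequence of properness via \cite[Lemma 8.5]{Bhatt}) and that $H^i_{\qpket}((X,M_X)_\eta^\diamondsuit, \Z_p)$ is a finitely generated $\Z_p$-module (Corollary \ref{cor:etale_comparison_over_W(C_flat)}). Then $\phi^* H^i_{\Prism}[1/p]$ is a finitely presented $\Ainf[1/p]$-module which becomes free after the faithfully flat (after inverting $p$) base change $\Ainf[1/p] \to \Ainf[1/\phi^{-1}(\mu)][1/p]$; by \cite[Lemma 4.18]{BMS1} (or the elementary descent argument that a finitely presented module over $\Ainf[1/p]$ that is free after inverting $\mu$ — and also, by the de Rham comparison, free after inverting $\xi$, i.e.\ of the expected rank at the two "boundary" primes — is free), we conclude $\phi^* H^i_{\Prism}[1/p]$ is free over $\Ainf[1/p]$. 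Finally, base changing along $\Ainf[1/p] \to \Acrys[1/p]$ gives that $H^i(M\otimes^\L_{\Ainf,\phi}\Acrys[1/p]) = (\phi^* H^i_{\Prism})\otimes_{\Ainf}\Acrys[1/p]$ (the base change being exact on cohomology since $\Ainf[1/p]\to\Acrys[1/p]$ is flat and the higher Tor's vanish) is finite free over $\Acrys[1/p]$.

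\textbf{Main obstacle.} The genuinely delicate point is controlling the pole order of the Frobenius — i.e., ensuring that $\phi^* H^i_{\Prism}[1/p]$ is a \emph{finitely presented} $\Ainf[1/p]$-module of the correct rank and free at \emph{all} height-one primes, not just at $(\mu)$ and $(\xi)$ — which is exactly where the Nygaard-filtration bounds of Section \ref{sec:Nygaard} (Corollary \ref{cor:Nygaard_complete_for_smooth} and Remark \ref{remark:uniform_bound_of_I_power}) are needed and where the logarithmic exactification could in principle introduce unbounded denominators. I expect this to be handled exactly as in the passage from \cite[Section 4.3]{BMS1} to the proof of \cite[Theorem 14.3]{BMS1}, using that $M$ is perfect and that the linearized Frobenius is an isomorphism up to a fixed power $\xi^D$ where $D = \dim X$; the log case introduces no new difficulty here beyond bookkeeping, because all the relevant finiteness and the isogeny statement have already been proven in the required generality earlier in the paper.
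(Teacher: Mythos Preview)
Your ``cleanest route'' is circular. You propose to first prove that $\phi^* H^i_{\Prism}[1/p]$ is free over $\Ainf[1/p]$ and then base change to $\Acrys[1/p]$, but freeness of $\phi^* H^i_{\Prism}[1/p]$ over $\Ainf[1/p]$ is exactly the Breuil--Kisin--Fargues condition, i.e.\ the conclusion of Theorem~\ref{theorem:BKF_module}, which the paper obtains by feeding \emph{both} Proposition~\ref{inverting_mu} and Proposition~\ref{crys_free} into \cite[Corollary~4.20]{BMS1}. The descent step you invoke does not work: the localization $\Ainf[1/p] \to \Ainf[1/p, 1/\phi^{-1}(\mu)]$ is \emph{not} faithfully flat (for instance the prime $\ker(\theta\circ\phi)$ of $\Ainf[1/p]$ lies in $V(\phi^{-1}(\mu))$, since $(\theta\circ\phi)(\phi^{-1}(\mu))=\theta(\mu)=0$), and the de Rham comparison only computes the \emph{fiber} $M \otimes^\L_{\Ainf,\phi} \mO_C$ at $\xi = 0$, not the localization $M[1/\xi]$, so it does not yield the ``free after inverting $\xi$'' your parenthetical asserts. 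The whole content of \cite[Corollary~4.20]{BMS1} is precisely that freeness over $\Ainf[1/\phi^{-1}(\mu)]$ together with freeness over $\Acrys[1/p]$ controls $\Ainf[1/p]$; one cannot drop the $\Acrys$ input.

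The paper's proof is independent of Proposition~\ref{inverting_mu} and proceeds by a Hyodo--Kato-type argument. The crystalline comparison identifies $\phi^* M \otimes^\L_{\Ainf}\Acrys$ with $R\Gamma_{\crys}((X_0,M_{X_0})_{\mO_C/p}/(\Acrys, M_{\crys}))$, on which Frobenius becomes an isomorphism after inverting $p$ by Corollary~\ref{cor:image_Frob_global}. The key observation is that, since $(X_0,M_{X_0})$ is qcqs of finite presentation with fs log structure, for $n\gg 0$ its mod $p^{1/p^n}$ reduction is isomorphic as a log scheme to the pullback of the special fiber $(Y,M_Y)$ from $(k,N)$ along a fixed section $k\to\mO_C/p$. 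Iterating Frobenius $n$ times (an isomorphism after $[1/p]$) therefore yields
\[
R\Gamma_{\crys}((X_0,M_{X_0})_{\mO_C/p}/(\Acrys,M_{\crys}))[\tfrac{1}{p}] \;\cong\; R\Gamma_{\crys}((Y,M_Y)/(W(k),N))\otimes^\L_{W(k)}\Acrys[\tfrac{1}{p}],
\]
whose cohomology groups are visibly finite free over $\Acrys[1/p]$ since $W(k)[1/p]$ is a field. Your ``main obstacle'' about Nygaard bounds on Frobenius poles is a red herring for this proposition; that bound enters the proof of Proposition~\ref{inverting_mu} (Lemma~\ref{lemma:inverting_mu_on_perfection}), not here.
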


\subsection{Comparison over $\Ainf[\phi^{-1}(\mu)]$}
\noindent 

\noindent 
To prove Proposition \ref{inverting_mu}, we will construct suitable comparison maps using the Riemann--Hilbert correspondence in characteristic $p$ of the form of Bhatt--Lurie \cite{Bhatt_Lurie}. Set 
\[T\coloneqq R\Gamma_{\qpket}((X, M_X)_{\eta}^{\diamondsuit}, \Z_p).\] 
Recall that $T$ is a perfect complex of $\Z_p$-modules by Corollary \ref{cor:etale_comparison_over_W(C_flat)}. 

\begin{lemma} \label{lemma:RH}
There is a $\phi$-equivariant map $M\to T\otimes^\L_{\Z_p}\Ainf$ compatible with the \'etale comparison.  It induces a map
\[
(M_{\perf})^{\wedge}_p \to T\otimes^\L_{\Z_p}\Ainf, 
\] 
which becomes an isomorphism after inverting $\phi^{-1}(\mu)$. Here the source is the derived $p$-completed colimit perfection. Similarly, there is a $\phi$-equivariant map
\[
W(\fm^{\flat})\otimes^\L_{\Z_p} T \to (M_{\perf})^{\wedge}_p
\]
that becomes an isomorphism after inverting $\phi^{-1}(\mu)$.
Moreover, the resulting composite $W(\fm^{\flat})\otimes^\L_{\Z_p} T\to \Ainf\otimes_{\Z_p}T$ is induced by the inclusion $W(\fm^{\flat})\hookrightarrow \Ainf$.  
\end{lemma}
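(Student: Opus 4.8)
\textbf{Proof plan for Lemma \ref{lemma:RH}.}

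The plan is to produce the comparison maps by working locally in the quasisyntomic (or log quasisyntomic) topology and then invoke the mod-$p$ Riemann--Hilbert machinery of Bhatt--Lurie \cite{Bhatt_Lurie}. First I would reduce to an affine statement: by Corollary \ref{cor:descent_for_perfected_prismatic} and the fact that taking Frobenius-fixed points commutes with the relevant totalizations (cf. \cite[Lemma 9.2]{BS}), it suffices to construct, functorially in a large enough basis of $(X, M_X)_\Prism^{\perf}$, a $\phi$-equivariant map from $\Prism_{(R,P)/(A,M_A)}$ to $R\Gamma_{\qpket}((\spd(R[1/p],R^+),P)^a, \Z_p)\otimes_{\Z_p}^{\L}\Ainf$ which is compatible with the isomorphism $\eta$ of Theorem \ref{thm:etale_comp} (equivalently Corollary \ref{etale_comp_diamond}). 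The target can be rewritten using the $F$-crystal/local-system dictionary of Theorem \ref{etale_realization_is_equivalence}: $R\Gamma_{\ket}(-,\Z_p)\otimes_{\Z_p}\Ainf$ is the \'etale realization of $R f_* \mO_\Prism$ base-changed along $A\to \Ainf$, so the desired map is essentially the canonical comparison between a Laurent $F$-crystal's prismatic avatar and its \'etale realization tensored up to $\Ainf = A[1/I]^\wedge_p$. Concretely, the map $\mO_\Prism \to (\mO_\Prism[1/\mI]^\wedge_p)^{\phi=1}\otimes_{\Z_p}\Ainf$ gives, after applying $Rf_*$ and using Proposition \ref{etale_realization_commutes_with_pushforward}, the sought-after $\phi$-equivariant arrow $M\to T\otimes_{\Z_p}^{\L}\Ainf$; its compatibility with the \'etale comparison of Theorem \ref{etale_comp_diaomnd_global} is then a formality from the construction.

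Next I would pass to the perfection. Since perfection is a ($p$-completed) filtered colimit and Frobenius is an isomorphism on it, the map above factors through $(M_{\perf})^\wedge_p \to T\otimes_{\Z_p}^{\L}\Ainf$. To check this becomes an isomorphism after inverting $\phi^{-1}(\mu)$, I would work modulo $p$ and reduce to the characteristic-$p$ Riemann--Hilbert statement: over $\Ainf/p = \mO_C^\flat$, and after inverting $\phi^{-1}(\mu)$ (which, since $\xi \equiv \mu^{p-1}$-type relations hold and $\phi(\xi)$ differs from $\mu$ by a unit times $\phi^{-1}(\mu)$, amounts to working over a ring where $d = \xi$ is invertible up to the relevant Frobenius twist), the perfected prismatic cohomology of $(X,M_X)$ mod $p$ is the solution complex of a holonomic (or, here, locally free) module with Frobenius, whose ``Riemann--Hilbert'' partner is exactly $T\otimes \Ainf$. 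I would invoke \cite[Theorem ...]{Bhatt_Lurie} applied to $R\Gamma$ of the mod-$p$ fiber (which is of Cartier type, so the de Rham / crystalline comparison of Corollary \ref{cor:dR_comparison} and the properness of $X_0$ keep everything perfect), together with the fact from \cite{BS_crystal} that $(\Prism_{-}[1/\mI]^\wedge_p/p^n)^{\phi=1}$ recovers \'etale cohomology; the colimit-perfection is invariant under passing to $(-[1/\mI]^\wedge_p)^{\phi=1}$ by \cite[Theorem 1.8(4)]{BS}, which is what lets me identify $(M_{\perf})^\wedge_p$ away from $\phi^{-1}(\mu)$ with $T\otimes\Ainf$.

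For the second map, $W(\fm^\flat)\otimes_{\Z_p}^{\L} T \to (M_{\perf})^\wedge_p$, I would dualize: the same Riemann--Hilbert / arc-descent package produces, for a module-with-Frobenius that is $\xi$-torsionfree after the appropriate twist, a ``co-solution'' functor landing in $W(\fm^\flat)$-modules, and the counit of the relevant adjunction gives the arrow; again it becomes an isomorphism after inverting $\phi^{-1}(\mu)$ because there $W(\fm^\flat)\hookrightarrow \Ainf$ becomes an isomorphism of the relevant localizations (as $\fm^\flat[1/\mu^\flat] = \mO_C^\flat[1/\mu^\flat]$ and similarly $p$-adically), matching the first isomorphism. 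The asserted compatibility of the composite $W(\fm^\flat)\otimes T \to \Ainf\otimes T$ with the inclusion $W(\fm^\flat)\hookrightarrow\Ainf$ is then read off by tracking the two maps through the Riemann--Hilbert correspondence and using functoriality in the coefficient ring, together with the normalization $(\mO_\Prism)_{\ett} = \Z_p$ from just before Proposition \ref{etale_realization_commutes_with_pushforward}.

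The main obstacle I anticipate is the precise packaging of the mod-$p$ Riemann--Hilbert correspondence of \cite{Bhatt_Lurie} in a way that (i) applies to the \emph{logarithmic} perfected prismatic cohomology and (ii) is manifestly compatible with the \'etale comparison isomorphism $\eta$ already constructed in Section \ref{sec:etale}; circumventing this will rely on reducing everything, via Corollary \ref{cor:descent_for_perfected_prismatic} and the identification of log prismatic cohomology of semiperfectoid pre-log rings with nonlog prismatic cohomology (Subsection \ref{ss:universal_object_bdd}), to the nonlog affine situation where \cite{Bhatt_Lurie, BS_crystal} apply directly, and then checking the bound on the pole of Frobenius along $\phi^{-1}(\xi)$ (from the Nygaard filtration, Corollary \ref{cor:Nygaard_complete_for_smooth} and Remark \ref{remark:uniform_bound_of_I_power}) so that passing to the perfection only introduces a controlled amount of $\phi^{-1}(\mu)$ in the denominator.
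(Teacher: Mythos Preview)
Your plan misplaces the Riemann--Hilbert input and, as a result, overcomplicates the construction of both maps. The paper does \emph{not} reduce to affines of $X$ or to log quasiregular semiperfectoid rings; instead it works directly with the perfect complex $M$ over $\Ainf$ (perfectness coming from properness of $X_0$) and applies the Bhatt--Lurie equivalence
\[
\textnormal{Sol}\colon \mD^b_{\textup{alg}}(W_n(\mO_C^\flat)[F]) \simeq \mD^b_{\ett}(\spec \mO_C^\flat, \Z/p^n)
\]
on the \emph{base} $\spec \mO_C^\flat$. Since $W_n(\mO_C^\flat)$ is coherent, the cohomology of $M_n\coloneqq M\otimes^\L_{\Ainf}W_n(\mO_C^\flat)$ is finitely presented, so $(M_n)_{\perf}$ is algebraic; under $\textnormal{Sol}$, the complex $T\otimes^\L_{\Z_p}W_n(\mO_C^\flat)$ corresponds to the constant sheaf $\underline{T_n}$ on the two-point scheme $\spec \mO_C^\flat$, and the map is simply $\textnormal{Sol}((M_n)_{\perf})\to \textnormal{Sol}((M_n[1/d])_{\perf})\cong \underline{T_n}$, the last isomorphism being the \'etale comparison. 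Your proposed route through the Laurent $F$-crystal dictionary does not obviously produce a map landing in $T\otimes^\L_{\Z_p}\Ainf$: the natural target of $\mO_\Prism\to \mO_\Prism[1/\mI]^\wedge_p$ evaluated over $(\Ainf,(\xi))$ is $W(C^\flat)$, not $\Ainf$, and there is no evident sheaf-level map $\mO_\Prism\to (\mO_\Prism[1/\mI]^\wedge_p)^{\phi=1}\otimes_{\Z_p}\Ainf$ with the required variance.

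The second map is also not obtained by any ``dualization'' or ``co-solution'' functor. It comes from the open--closed decomposition on $\spec \mO_C^\flat$: with $j\colon \spec C^\flat\hookrightarrow \spec \mO_C^\flat$ one has $Rj_!j^*\textnormal{Sol}((M_n)_{\perf})\cong Rj_!j^*\underline{T_n}$, and both sides correspond to $W_n(\fm^\flat)\otimes^\L_{\Z_p}T$ under the equivalence. The isomorphisms after inverting $\phi^{-1}(\mu)$ then follow because the cones of both maps are supported on the closed point of $\spec \mO_C^\flat$, hence killed by $[\fm^\flat]$ (and so by $\phi^{-1}(\mu)$ modulo $p$). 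Finally, the Nygaard-filtration bound on the pole of Frobenius that you invoke at the end is not used in this lemma at all; it is exactly the content of the \emph{next} lemma (Lemma~\ref{lemma:inverting_mu_on_perfection}), which compares $M$ with $(M_{\perf})^\wedge_p$, whereas the present lemma only compares $(M_{\perf})^\wedge_p$ with $T\otimes^\L_{\Z_p}\Ainf$.
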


\begin{proof}
As $M$ and $T$ are derived $p$-complete, it suffices to construct a map 
\[
M\otimes^\L_{\Ainf}W_n (\mO_C^{\flat}) \to T \otimes^\L_{\Z_p} W_n (\mO_C^{\flat})
\]
for all $n\geq 1$. To construct these maps, we will use the Riemann--Hilbert correspondence in characteristic $p$ developed in \cite{Bhatt_Lurie} applied to $\spec \mO_C^{\flat}$. 

Let us summarize what we need. The derived functor of $\phi$-fixed points, working \'etale locally, gives a functor
\[
\textnormal{Sol}\colon
\mD^+_{\perf} (W_n (\mO_C^{\flat})[F]) \simeq \mD^+ (\text{Mod}^{\perf}_{W_n(\mO_C^{\flat})}) \to
\mD^+_{\ett} (\spec \mO_C^{\flat}, \Z/p^n), 
\]
where 
\bi
\item $\mD^+_{\perf} (W_n (\mO_C^{\flat})[F])$ denotes the full subcategory of the derived category of Frobenius $W_n (\mO_C^{\flat})$-modules whose Frobenii are quasi-isomorphisms, and 
\item $\mD^+ (\text{Mod}^{\perf}_{W_n(\mO_C^{\flat})})$ denotes the derived category of Frobenius $W_n (\mO_C^{\flat})$-modules whose Frobenii are isomorphisms. 
\ei 
Also recall that a Frobenius $\mO_C^{\flat}$-module $N$ is holonomic if $N$ is the perfection of a finitely presented $\mO_C^{\flat}$-module with a Frobenius \cite[Definition 4.1.1]{Bhatt_Lurie}, and such Frobenius modules are algebraic by \cite[Proposition 4.2.1]{Bhatt_Lurie} in the sense of \cite[Definition 2.4.1]{Bhatt_Lurie}. A Frobenius $W_n(\mO_C^{\flat})$-module $N$ is algebraic if $N/pN$ is algebraic \cite[Definition 9.5.2]{Bhatt_Lurie}. Now let 
\[
\mD^b_{\text{alg}} (W_n (\mO_C^{\flat})[F])\subset \mD^b_{\perf} (W_n (\mO_C^{\flat})[F])
\]
denote the full subcategory of bounded objects with cohomology being algebraic Frobenius $W_n(\mO_C^{\flat})$-modules. 
The functor
\[
\text{Sol}\colon \mD^b_{\text{alg}} (W_n (\mO_C^{\flat})[F]) \to \mD^b_{\ett}(\spec \mO_C^{\flat}, \Z/p^n)
\]
is an equivalence:\footnote{See also \cite[Theorem 5.4, Remark 5.3]{Mathew_perfect}.} the case $n=1$ is proven in \cite[Theorem 12.1.5]{Bhatt_Lurie}. For general $n$, the functor is fully faithful by \cite[Proposition 9.5.6, Proposition 9.6.2]{Bhatt_Lurie}, then the essential surjectivity follows by induction using \cite[Theorem 9.6.1]{Bhatt_Lurie} and \cite[Proposition 9.6.2]{Bhatt_Lurie}. 

To apply the above result, observe that all the cohomology of $M\otimes^\L_{\Ainf}W_n (\mO_C^{\flat})$ are finitely presented since $W_n (\mO_C^{\flat})$ is coherent \cite[Proposition 3.24]{BMS1} and $M$ is a perfect complex. Therefore, we see that the (non-completed) colimit perfection 
\[(M_n)_{\perf}\coloneqq \big(M\otimes^\L_{\Ainf}W_n (\mO_C^{\flat}) \big)_{\perf}\] 
is algebraic.\footnote{Alternatively, the case $n=1$ follows from \cite[Theorem 12.4.1]{Bhatt_Lurie} and then \cite[Proposition 9.5.5]{Bhatt_Lurie} implies the general case by induction. In fact, this alternative argument works for a general base ring.} Also note that the perfect complex $T \otimes^\L_{\Z_p} W_n (\mO_C^{\flat})$ corresponds under the above equivalence to the constant sheaf $\ul{T_n}$ attached to $T\otimes^\L_{\Z_p}\Z/p^n$. Therefore, to construct the desired map, it suffices to construct a map of \'etale sheaves 
\[\text{Sol}((M_n)_{\perf})\to \ul{T_n}.\] Let $(M_n [1/d])_{\perf}$ denote the colimit perfection 
\[(M_n [1/d])_{\perf} \coloneqq \big(M\otimes^\L_{\Ainf}W_n (\mO_C^{\flat})[1/d]\big)_{\perf},\] then we can simply take the following composite
\[
\text{Sol}((M_n)_{\perf}) \to \text{Sol}((M_n [1/d])_{\perf}) \cong \ul{T_n}, 
\]
where the last identification comes from the \'etale comparison. 
Moreover, for the open immersion $j\colon \spec C^{\flat}\hookrightarrow \spec \mO_C^{\flat}$, we have a natural isomorphism
\[
Rj_! j^*\text{Sol}((M_n)_{\perf}) \cong Rj_!j^* \ul{T_n}
\]
and both correspond to $W(\fm^{\flat})\otimes^\L_{\Z_p}T$: it is easy to check that $\text{Sol} (W_n(\fm^{\flat})\otimes^\L_{\Z_p}T) \cong Rj_! j^* \ul{T_n}$. 
So, the natural maps $Rj_! j^*\text{Sol}((M_n)_{\perf})\to \text{Sol}((M_n)_{\perf})$ similarly induce the map
\[
W(\fm^{\flat})\otimes^\L_{\Z_p}T\to (M_{\perf})^{\wedge}_p. 
\]
To show that these maps are isomorphisms after inverting $\phi^{-1}(\mu)$, it suffices to show that the cones of their mod $p^n$ reductions are killed by $[\fm^{\flat}]$ as everything is $p$-complete here, cf. the proof of \cite[Theorem 5.7]{BMS1}. 
The cones of
\[
W_n(\fm^{\flat})\otimes^\L_{\Z_p}T \to (M_n)_{\perf}, \quad
(M_n)_{\perf} \to W_n (\mO_C^{\flat})\otimes^\L_{\Z_p}T
\]
are concentrated on $\spec \mO_C^{\flat}/\fm^{\flat}$ \cite[Theorem 5.3.1]{Bhatt_Lurie} (this can be checked degree-wise), and hence killed by $[\fm^{\flat}]$. 
\end{proof}

To complete the proof of Proposition \ref{inverting_mu}, it suffices to show the following. 

\begin{lemma} \label{lemma:inverting_mu_on_perfection}
Let $D$ denote the rank of $\Omega^1_{(X_0, M_{X_0})/(\mO_C, M_0)}$. 
There is a map 
\[
(M_{\perf})^{\wedge}_p \to \phi^{-1}(\mu)^{-D} \otimes_{A}^\L M
\]
such that the composite 
\[M\to (M_{\perf})^{\wedge}_p \to \phi^{-1}(\mu)^{-D} \otimes_A^\L M
\] is induced by $A = \Ainf\subset \phi^{-1}(\mu)^{-D}\Ainf$. Moreover, the multiplication by $\phi^{-1}(\mu)^D$ on $M_{\perf}$ factors as
\[
\phi^{-1}(\mu)^D\colon
M_{\perf} \to (M_{\perf})^{\wedge}_p \to \phi^{-1}(\mu)^{-D} \otimes_A^\L M \xrightarrow{\phi^{-1}(\mu)^D} M\to M_{\perf}. 
\]
In particular, we have canonical isomorphisms
\[
M[1/\phi^{-1}(\mu)] \cong M_{\perf} [1/\phi^{-1}(\mu)] \cong (M_{\perf})^{\wedge}_p [1/\phi^{-1}(\mu)].
\]
\end{lemma}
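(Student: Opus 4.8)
The statement asserts a map $(M_{\perf})^\wedge_p \to \phi^{-1}(\mu)^{-D}\otimes_A^\L M$ together with a factorization of multiplication by $\phi^{-1}(\mu)^D$ on $M_{\perf}$, with the upshot that inverting $\phi^{-1}(\mu)$ makes the natural maps $M \to M_{\perf} \to (M_{\perf})^\wedge_p$ into isomorphisms. The key geometric input is the uniform bound on the pole of the Frobenius established in Remark \ref{remark:uniform_bound_of_I_power}: \'etale locally, the linearized Frobenius $\phi\colon M\otimes_{A,\phi_A}A \to M$ admits an inverse up to $I^D$, where $D = \operatorname{rk}\Omega^1_{(X_0,M_{X_0})/(\mO_C,M_0)}$ (using here that $(X_0,M_{X_0})$ is smooth over $(\mO_C,M_0)$ with mod $p$ fiber of Cartier type, so that Corollary \ref{cor:image_Frob_global} and its global consequences in Corollary \ref{cor:Nygaard_complete_for_smooth} apply, $X_0$ being qcqs). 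The plan is to leverage this, together with the relation $\xi \mid \mu$ and $\phi(\xi) \mid \phi(\mu)$ hence $\phi^{-1}(\xi)=\xi'$ being an associate of $\mu/\phi^{-1}(\mu)$ up to units, to track how iterating the (partially defined) inverse Frobenius accumulates poles along $\phi^{-1}(\mu)$ but only boundedly so.

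More concretely, first I would set up the comparison between $M$ and its Frobenius twist. Let $V\colon M\otimes_A^\L I^D \to \phi_A^* M$ be the map from Corollary \ref{cor:image_Frob_global} (or its complex-level version \eqref{eq:V_i_on_complex}, taking the uniform version in Remark \ref{remark:uniform_bound_of_I_power}), so that $\phi\circ V$ and $V\circ(\phi\otimes 1)$ are both the natural inclusions $I^D\hookrightarrow A$. Iterating: $\phi^{\circ n}$ on $M$ differs from $V^{\circ n}$ by $I^{nD}$ in a suitable sense, so $\phi^{\circ n}\colon (\phi_A^n)^* M \to M$ becomes invertible up to $I^{(n-1)D}$ or so; passing to the colimit perfection $M_{\perf} = \operatorname{colim}_{\phi} M$ and keeping track of the twists, one gets that $M_{\perf}$ is obtained from $M$ by inverting a compatible system whose ``denominators'' are bounded by powers of $\phi^{-n}(I^D)$. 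Since $\phi^{-n}(\xi) = \mu/\phi^{-1}(\mu)\cdot(\text{unit stuff})$ telescopes — more precisely $\prod_{k=1}^{n}\phi^{-k}(\xi)$ divides $\mu$ up to a unit (this is the standard fact that $\mu = \xi\cdot\phi^{-1}(\xi)\cdot\phi^{-2}(\xi)\cdots$ in the limit, with each partial product dividing $\mu$) — the total pole introduced by passing to $M_{\perf}$ is bounded by $\phi^{-1}(\mu)^{?}$; but after one more application of $\phi^{-1}$ and using that $\phi^{-n}(\mu)$ for $n\ge 1$ all divide $\phi^{-1}(\mu)$ up to units (since $\phi^{-1}(\mu) = \phi^{-1}(\xi)\cdot\phi^{-2}(\xi)\cdots$ again), everything collapses to a single pole of order $D$ along $\phi^{-1}(\mu)$. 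This gives the map $(M_{\perf})_p^\wedge \to \phi^{-1}(\mu)^{-D}\otimes_A^\L M$; the $p$-completion is harmless since $M$ is already $p$-complete and the construction is compatible with mod $p^n$ reduction. The factorization of $\phi^{-1}(\mu)^D$ and the final isomorphisms after inverting $\phi^{-1}(\mu)$ are then formal consequences: the two composites $M_{\perf}\to\phi^{-1}(\mu)^{-D}\otimes M \xrightarrow{\phi^{-1}(\mu)^D} M \to M_{\perf}$ and the other direction are multiplication by $\phi^{-1}(\mu)^D$, so inverting $\phi^{-1}(\mu)$ yields inverse isomorphisms.

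The main obstacle I anticipate is the bookkeeping of Breuil--Kisin twists and Frobenius twists when passing to the colimit perfection: one must carefully identify $(\phi_A^n)^* M \otimes (\text{twist})$ with a subobject/overobject of $M_{\perf}$ and check that the accumulated denominators really are governed by the telescoping product $\prod_k \phi^{-k}(\xi)$ rather than something larger, and that the pole is \emph{uniform} in $n$ after dividing by $\phi^{-1}(\mu)^D$. A clean way to organize this is to work \'etale locally where the uniform bound $V$ of Remark \ref{remark:uniform_bound_of_I_power} exists on the sheaf $\Prism_{(X,M_X)/(A,M_A)}$ itself, construct the maps on $R\Gamma$ there, and then note that all the maps are canonical enough to glue (indeed $D$ can be taken globally as $\dim$ of the cohomology is bounded by properness). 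A secondary point to handle is compatibility of the composite $M\to (M_{\perf})^\wedge_p \to \phi^{-1}(\mu)^{-D}\otimes_A^\L M$ with the inclusion $A\hookrightarrow \phi^{-1}(\mu)^{-D}A$ — this should follow from the normalization $\phi\circ V = (\text{inclusion})$ at each stage, but one needs to verify the colimit of these identities is the claimed one. Once these are in place, the stated isomorphisms $M[1/\phi^{-1}(\mu)]\cong M_{\perf}[1/\phi^{-1}(\mu)]\cong (M_{\perf})^\wedge_p[1/\phi^{-1}(\mu)]$ drop out immediately, completing the proof of Proposition \ref{inverting_mu} when combined with Lemma \ref{lemma:RH}.
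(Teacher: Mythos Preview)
Your proposal is correct and follows essentially the same approach as the paper: both iterate the uniform bound on the inverse of Frobenius (the paper via the $L\eta_{\phi^{-1}(d)}$ isomorphism on the sheaf $\Prism_{(X_0,M_{X_0})/(A,M_0)}$, you via the map $V$ of Remark~\ref{remark:uniform_bound_of_I_power}, which is derived from the same $L\eta$ property), use the telescoping divisibility $\prod_{j=1}^N \phi^{-j}(\xi) \mid \phi^{-1}(\mu)$ to get a uniform pole of order $D$, and pass to the colimit and $p$-completion. The paper's execution is slightly cleaner in that it works directly at the sheaf level with $\phi_*^N\Prism \to \bigl(\prod_{j=1}^N \phi^{-j}(d)\bigr)^{-D}\otimes_A^\L \Prism$ before taking global sections, which sidesteps your worry about gluing; your version via Remark~\ref{remark:uniform_bound_of_I_power} amounts to the same thing since that remark already packages the sheaf-level statement.
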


\begin{proof}
Recall that $M = R\Gamma (X_{\ett}, \Prism_{(X_0, M_{X_0})/(A, M_0)})$ and 
\[
\Prism_{(X_0, M_{X_0})/(A, M_0)}\xrightarrow{\cong} L\eta_{\phi^{-1}(d)} \phi_*\Prism_{(X_0, M_{X_0})/(A, M_0)}. 
\]
By properties of $L\eta$, we have a map
\[
\phi_*\Prism_{(X_0, M_{X_0})/(A, M_0)} \to \phi^{-1}(d)^{-D} \otimes_A^\L \Prism_{(X_0, M_{X_0})/(A, M_0)} 
\]
with a factorization of the multiplication by $\phi^{-1}(\mu)^D$:
\begin{multline*} 
\qquad 
\phi_*\Prism_{(X_0, M_{X_0})/(A, M_0)} \to \phi^{-1}(d)^{-D} \otimes_A^\L \Prism_{(X_0, M_{X_0})/(A, M_0)} \\  \xrightarrow{\phi^{-1}(\mu)^D} \Prism_{(X_0, M_{X_0})/(A, M_0)} \to \phi_*\Prism_{(X_0, M_{X_0})/(A, M_0)}. \qquad 
\end{multline*} 
Note that $\phi^{-1}(d)$ divides $\phi^{-1}(\mu)$, so the middle map is well defined. 
Iterating this construction, we get a map
\[
\phi_*^{N} \Prism_{(X_0, M_{X_0})/(A, M_0)} \to
\bigl(\prod_{j=1}^N \phi^{-j}(d)\bigr)^{-D} \otimes_A^\L \Prism_{(X_0, M_{X_0})/(A, M_0)}
\]
with a factorization
\begin{multline*}
\qquad 
\phi^{-1}(\mu)^D\colon 
\phi^N_*\Prism_{(X_0, M_{X_0})/(A, M_0)} \to \bigl(\prod_{j=1}^N \phi^{-j}(d)\bigr)^{-D} \otimes_A^\L \Prism_{(X_0, M_{X_0})/(A, M_0)} \\
\xrightarrow{\phi^{-1}(\mu)^D} \Prism_{(X_0, M_{X_0})/(A, M_0)} \to \phi^N_*\Prism_{(X_0, M_{X_0})/(A, M_0)} \qquad 
\end{multline*}
for every $N\geq 1$; note again that the product $\bigl(\prod_{j=1}^N \phi^{-j}(d)\bigr)$ divides $\phi^{-1}(\mu)$ for every $N$. These are compatible with varying $N$, so, by passing to the colimit, we get a map
\begin{equation} \label{eq:iterated_map_scaling_mu}
(\Prism_{(X_0, M_{X_0})/(A, M_0)})_{\perf} \to \phi^{-1}(\mu)^{-D} \otimes^\L_{A} \Prism_{(X_0, M_{X_0})/(A, M_0)} 
\end{equation}
with a factorization
\begin{multline*}
\qquad 
\phi^{-1}(\mu)^{D} \colon (\Prism_{(X_0, M_{X_0})/(A, M_0)})_{\perf} \to \phi^{-1}(\mu)^{-D} \otimes_A^\L (\Prism_{(X_0, M_{X_0})/(A, M_0)}) \\
\xrightarrow{\phi^{-1}(\mu)^D} (\Prism_{(X_0, M_{X_0})/(A, M_0)})\to (\Prism_{(X_0, M_{X_0})/(A, M_0)})_{\perf}. \qquad 
\end{multline*}
As the target of (\ref{eq:iterated_map_scaling_mu}) is derived $p$-complete, this map factors through the derived $p$-completion. 
Taking the cohomology and passing to derived completion, we finally obtain a map
\[
(M_{\perf})^{\wedge}_p \to \phi^{-1}(\mu)^{-D} \otimes_A^\L M
\]
with a factorization
\[
\phi^{-1}(\mu)^D\colon
M_{\perf} \to \phi^{-1}(\mu)^{-D} \otimes_A^\L M \xrightarrow{\phi^{-1}(\mu)^D} M\to M_{\perf} 
\]
as desired. 
\end{proof}

\bproof[Proof of Proposition \ref{inverting_mu}] 
This follows immediately from Lemma \ref{lemma:RH} and Lemma \ref{lemma:inverting_mu_on_perfection}. 
\eproof 

\begin{remark}
Our proof of Proposition \ref{inverting_mu} also works in the following setting: let $R$ be a perfectoid ring $\mO_C$ that is $p$-torsionfree and integrally closed, and suppose that $\spa (R[1/p], R)$ is strictly totally disconnected perfectoid. Suppose that $(X_0, M_{X_0})$ is defined over $\spf (R, M_0)^a$ satisfying similar conditions as above. The log prismatic cohomology is taken over the pre-log prism $(\Ainf (R), M_0)$ (or with a suitable log structure). In the setting of Proposition \ref{etale_realization_commutes_with_pushforward}, such a generalization relates the values of 
$Rf_{*}\mO_{\Prism}[1/\phi^{-1}(\mu)]$ and $Rf_{\eta,*}\Z_p \otimes^\L_{\Z_p}\widehat{\mO}^+_Y [1/\phi^{-1}(\mu)]$. 

\end{remark}

\subsection{Comparison over $\Acrys$}  \noindent 

\noindent 
Next we prove Proposition \ref{crys_free}.  Let $(\Acrys, (p), M_{\crys})$ denote the ``log prism'' associated to the pre-log prism $M_0\to \Ainf\to\Acrys$, which is equipped with a  Frobenius:
\[
(\Acrys, (p), M_{\crys}) \xrightarrow{\phi} (\Acrys, (p), M_{\crys}). 
\]

\begin{proposition} 
Let $(X_{0}, M_{X_0})_{\mO_C/p}$ denote the base change of $(X_{0}, M_{X_0})$ along  
\[\spec (\mO_C/p, M_{\crys})^a \ra \spf (\mO_C, M_0)^a. \] 
There is a Frobenius equivariant isomorphism
\[
\phi^* R\Gamma_{\Prism} ((X_0, M_{X_0})/ (\Ainf, M_0)) \otimes^\L_{\Ainf}\Acrys\cong
R\Gamma_{\crys} ((X_{0}, M_{X_0})_{\mO_C/p}/ (\Acrys, M_{\crys})), 
\]
and the Frobenius becomes an isomorphism after inverting $p$:
\begin{multline} \label{eq:Frob_isom_on_crys_after_inverting_p}
\qquad  \phi[1/p]  \colon \: \:  
\phi^* R\Gamma_{\crys} ((X_{0}, M_{X_0})_{\mO_C/p}/ (\Acrys, M_{\crys}))[\frac{1}{p}] \\
\isom  R\Gamma_{\crys} ((X_{0}, M_{X_0})_{\mO_C/p}/ (\Acrys, M_{\crys}))[\frac{1}{p}].\qquad  \quad 
\end{multline}
\end{proposition}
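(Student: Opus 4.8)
The first assertion is the logarithmic crystalline comparison of \cite{Koshikawa}, applied over the base ``log prism'' $(\Acrys, (p), M_{\crys})$. Recall that the pre-log prism $(\Acrys, (p), M_0 \to \Ainf \to \Acrys)$ is the base change of $(\Ainf, (\xi), M_0)$ along $\Ainf \to \Acrys$, and that $(X_0, M_{X_0})_{\mO_C/p}$ is of Cartier type over $(\mO_C/p, M_{\crys})$ by hypothesis (the mod $p$ fiber of $\pi_0$ is of Cartier type, and being of Cartier type is preserved under base change by Remark \ref{remark:Cartier_type_saturated} and \cite[Proposition II.2.14]{Tsuji}). Therefore Theorem \ref{mainthm:comparisons}~Part~(3) (crystalline comparison, = \cite{Koshikawa}, Theorem~6.3) gives a canonical $\phi$-equivariant isomorphism
\[
R\Gamma_{\textup{logcrys}}((X_0, M_{X_0})_{\mO_C/p}/(\Acrys, M_{\crys})) \cong R\Gamma_{\Prism}((X_0, M_{X_0})_{\mO_C/p}/(\Acrys, M_{\crys})) \widehat \otimes^\L_{\Acrys, \phi} \Acrys.
\]
Combining this with the base change isomorphism of Theorem \ref{mainthm:comparisons}~Part~(2), which identifies $R\Gamma_{\Prism}((X_0, M_{X_0})_{\mO_C/p}/(\Acrys, M_{\crys}))$ with $R\Gamma_{\Prism}((X_0, M_{X_0})/(\Ainf, M_0)) \widehat\otimes^\L_{\Ainf} \Acrys$, and untangling the Frobenius twist, yields the displayed Frobenius-equivariant isomorphism
\[
\phi^* R\Gamma_{\Prism}((X_0, M_{X_0})/(\Ainf, M_0)) \otimes^\L_{\Ainf} \Acrys \cong R\Gamma_{\crys}((X_0, M_{X_0})_{\mO_C/p}/(\Acrys, M_{\crys})).
\]
Here there is a minor point, namely that the completed tensor product $\widehat\otimes^\L_{\Ainf}\Acrys$ agrees with the uncompleted one after passing to cohomology in our situation; this holds because $M = R\Gamma_{\Prism}$ is a perfect complex (as $X_0$ is proper over $\mO_C$) so that $M \otimes^\L_{\Ainf}\Acrys$ is already derived $p$-complete, and $(p, \xi)$-completion over $\Acrys$ is the same as $p$-completion since $\xi$ is a unit multiple of $p$-power-bounded denominators in $\Acrys$.

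For the second assertion, that the Frobenius $\phi$ becomes an isomorphism after inverting $p$, I would invoke Theorem \ref{mainthm:comparisons}~Part~(6) (the isogeny statement on Frobenius), which applies here because the mod $p$ fiber of $(X_0, M_{X_0})$ over $(\mO_C, M_0)$ is of Cartier type. Over the prism $(\Ainf, (\xi))$ this gives that the linearized Frobenius
\[
\phi \colon R\Gamma_{\Prism}((X_0, M_{X_0})/(\Ainf, M_0)) \widehat\otimes^\L_{\Ainf, \phi_{\Ainf}}\Ainf \lra R\Gamma_{\Prism}((X_0, M_{X_0})/(\Ainf, M_0))
\]
becomes an isomorphism after inverting $\xi$; more precisely, by Remark \ref{remark:uniform_bound_of_I_power} there are maps $V$ (in both directions) with $\phi \circ V = V \circ \phi = \xi^D$ where $D = \operatorname{rank} \Omega^1_{(X_0, M_{X_0})/(\mO_C, M_0)}$. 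Applying the base change $-\otimes^\L_{\Ainf}\Acrys$ (along $\Ainf \to \Acrys$, with the Frobenius twist), the same relations hold for the linearized Frobenius on $R\Gamma_{\crys}((X_0, M_{X_0})_{\mO_C/p}/(\Acrys, M_{\crys}))$, now with $\xi$ replaced by its image in $\Acrys$. Since $\xi$ and $p$ generate the same ideal up to a unit in $\Acrys[\frac{1}{p}]$ — indeed $\xi$ divides $p$ in $\Acrys$ and $p$ is invertible — inverting $p$ already inverts $\xi^D$, so $\phi[\frac{1}{p}]$ has a two-sided inverse (namely $\xi^{-D} V[\frac{1}{p}]$) and hence is an isomorphism. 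This proves (\ref{eq:Frob_isom_on_crys_after_inverting_p}).

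\textbf{Main obstacle.} The substantive content is almost entirely imported from the crystalline comparison and the Frobenius isogeny statement, so there is little genuine difficulty; the point requiring care is the bookkeeping of Frobenius twists and base changes — matching up $\phi^* R\Gamma_{\Prism}((X_0, M_{X_0})/(\Ainf, M_0)) \otimes_{\Ainf}\Acrys$ with $R\Gamma_{\Prism}((X_0, M_{X_0})_{\mO_C/p}/(\Acrys, M_{\crys}))$ requires tracking whether one first twists then base changes or vice versa, and confirming these commute (which they do, by functoriality of the base change isomorphism in Theorem \ref{mainthm:comparisons}~Part~(2) applied to the Frobenius of $\Ainf$). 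The only other thing to double-check is that the completions that appear are harmless, which follows from properness of $X_0$ (perfectness of $M$) as noted above. Everything else is formal.
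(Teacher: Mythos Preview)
Your overall strategy matches the paper's---crystalline comparison plus base change for the first isomorphism, and the Frobenius isogeny (Corollary \ref{cor:image_Frob_global}) for the second---but there is a genuine error in the algebra of the second step.

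You assert that ``$\xi$ divides $p$ in $\Acrys$'', hence that $\xi$ becomes a unit in $\Acrys[1/p]$. This is false: the map $\theta\colon \Acrys[1/p]=B_{\crys}^+ \to C$ kills $\xi$, so $\xi$ lies in the kernel of a nonzero ring homomorphism and cannot be a unit. (The PD-structure gives the opposite divisibility: $\xi^p\in p\Acrys$.) Consequently, transporting the relation $\phi\circ V=\xi^D$ to $\Acrys$ along the inclusion $\Ainf\hookrightarrow\Acrys$ does \emph{not} show that Frobenius is invertible after inverting $p$.

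The fix is exactly the Frobenius-twist bookkeeping you flagged as the ``main obstacle''. Writing $M=R\Gamma_{\Prism}((X_0,M_{X_0})/(\Ainf,M_0))$ and $N$ for the crystalline cohomology, the first isomorphism gives $N\cong \phi^*M\otimes_{\Ainf}\Acrys$, so the linearized Frobenius $\phi_N\colon \phi^*N\to N$ is $\phi^*(\phi_M)\otimes 1$. Pulling back the relation $\phi_M\circ V_M=\xi^D$ by $\phi^*$ yields $\phi^*(\phi_M)\circ\phi^*(V_M)=\phi(\xi)^D$, so the isogeny factor on $N$ is $\phi(\xi)^D$, not $\xi^D$. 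Now the relevant (and standard) fact is that $\phi(\xi)$ and $p$ generate the same ideal of $\Acrys$: from $\phi(\xi)=\xi^p+p\,\delta(\xi)$ and $\xi^p\in p\Acrys$ one gets $\phi(\xi)\in p\Acrys$, while $\theta(\phi(\xi)/p)=1$ together with $1+\ker\theta\subset\Acrys^\times$ (any $x\in\ker\theta$ has $x^p\in p\Acrys$, so $x^n\to 0$ $p$-adically) shows $\phi(\xi)/p\in\Acrys^\times$. Hence inverting $p$ inverts $\phi(\xi)^D$, and $\phi_N[1/p]$ is an isomorphism. With this correction your argument goes through and coincides with the paper's.
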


\bproof 
As the kernel of $\Acrys \to \mO_C/p$ has divided powers, this follows from the crystalline comparison \cite[Theorem 6.3]{Koshikawa}, base change, and Corollary \ref{cor:image_Frob_global}. 
\eproof 

Proposition \ref{crys_free} follows from the next assertion on a version of ``Hyodo--Kato isomorphism'', which is a logarithmic analogue of \cite[Proposition 13.21]{BMS1}. Let $k = \mO_C/\fm$ denote the residue field of $C$ and let $(k, N)$ denote the log ring associated with $(k, M_0)$; note that the image of $N\setminus N^{\times}$ is $0$ and $N^{\times} \cong k^{\times}$. 
Let $(Y, M_Y)$ denote the base change of $(X_0, M_{X_0})$ to $(k, N)$. 

\bp
Fix a section $k\to \mO_C/p$.  
There is an isomorphism
\[ 
 R\Gamma_{\crys}((Y, M_Y)/(W(k), N))\otimes^\L_{W(k)} \Acrys [\frac{1}{p}]
\cong 
R\Gamma_{\crys}( (X_{0}, M_{X_0})_{\mO_C/p}/(\Acrys, M_{\crys}))[\frac{1}{p}]. 
\]
In particular, each cohomology group 
$H^i_{\crys}((X_{0}, M_{X_0})_{\mO_C/p}/(\Acrys, M_{\crys}))[\frac{1}{p}]$
is a finite free $\Acrys[1/p]$-module. 
\ep 

\bproof 
Iterating the Frobenius map, we see that
\[
(\phi^n)^*  R\Gamma_{\crys} ((X_{0}, M_{X_0})_{\mO_C/p}/ (\Acrys, M_{\crys}))
\]
is the crystalline cohomology of the base change of $X_{0, \mO_C/p}$ along
\[
\phi^n \colon (\mO_C/p, M_{\crys}) \to (\mO_C/p^{1/p^n}, M_{\crys}) 
\xrightarrow{\phi^n} ( \mO_C/p, M_{\crys}). 
\]
For each $n \ge 1$, let $Y_{\mO_C / p^{1/p^n}}$ denote the base change of $Y$ across $(k, N)  \ra (\mO_C/p^{1/p^n}, N)^a$ (using the fixed section $k \ra \mO_C/p$).  As (the underlying scheme of) $X_{0, \mO_C/p}$ is qcqs and of finite presentation over $\mO_C/p$, its mod $p^{1/p^n}$ fiber is isomorphic to $Y_{\mO_C / p^{1/p^n}}$ for sufficiently large $n$. Moreover, for sufficiently large $n$, the image of $M_0\setminus M_0^{\times}$ is 0 in $\mO_C / p^{1/p^n}$ so that the section $k\to \mO_C / p^{1/p^n}$ can be upgraded to 
\[
(k, N) \to (\mO_C / p^{1/p^n}, N)^a \cong (\mO_C / p^{1/p^n}, M_0)^a. 
\]
Moreover, as $M_{X_0}$ has fine charts and $X_0$ is qcqs, again for sufficiently large $n$, we obtain an isomorphism of log schemes
\[
(Y, M_Y)_{(\mO_C / p^{1/p^n}, M_0)^a} \cong (X_0, M_{X_0})_{(\mO_C / p^{1/p^n}, M_0)^a}.  
\]
Therefore, (considering $n^{th}$ iterates of the map (\ref{eq:Frob_isom_on_crys_after_inverting_p})), we have the following natural isomorphism 
\[
R\Gamma_{\crys} ((Y,M_Y)_{\mO_C/ p} / (A_{\crys}, M_{\crys}))[\frac{1}{p}] \isom R\Gamma_{\crys} ((X_{0}, M_{X_0})_{\mO_C/p}/ (\Acrys, M_{\crys}))[\frac{1}{p}]. 
\]
Finally, note that there are maps of pre-log rings
\[
(W(k), N) \to (\Acrys, N) \xrightarrow{\phi^n} (\Acrys, M_{\crys}).  
\]
The proposition then follows by the base change of log crystalline cohomology.
\eproof 

This completes the proof of Proposition \ref{crys_free}, and hence that of Theorem \ref{theorem:BKF_module}.

\newpage 

\section{Some further applications} \label{section:apps}

In the final section
of the paper, let us prove the applications mentioned in the introduction. 

\subsection{Hodge polygon of the generic fiber} 
\noindent 

\noindent 
Let $K$ be a complete nonarchimedean extension of $\Q_p$ with ring of integers $\mO_K$ and residue field $k$. Let $C = C_K$ be a complete algebraic closure of $K$. Let $(\Ainf, (\xi), N_\infty)$ be a saturated perfect ``log prism''.  
Suppose that $\mO_K$ is equipped with an fs  pre-log structure $N \ra \mO_K$ and further suppose that $(\mO_K, N)$ admits a map 
\[
\iota\colon (\mO_K, N) \ra (\mO_C, N_\infty)^a.
\] Let $(X, M_X)$ be an fs log $p$-adic formal scheme, 
smooth and proper over $(\mO_K, N)$, such that its mod $p$ fiber is of Cartier type. Note that the base change $(X, M_X)_{\mO_C}$ of $(X, M_X)$ across the map 
\[(\spf \mO_C, N)^a \ra  (\spf \mO_K, N)^a
\] precisely fits into the setup of Section \ref{section:BKF_module}. 

Let us restate Theorem \ref{mainthm:Newton_Hodge} from the introduction. 
\bt  \label{theorem:Newton_Hodge} Let $(X, M_X)$ be as above. 
Let $X^{\log}_s \coloneqq (X, M_X)_{(k, N)^a}$ be the special fiber of $(X, M_X)$ over $\ul k = (k, N)$ and $X^{\log}_{\eta}$ be the (adic) generic fiber of $(X, M_X)$ over $\ul K = (K, N)$. Then for each $i$, the Newton polygon of the $F$-isocrystal 
\[H^i_{\textup{logcrys}}(X^{\log}_s/W(\ul k))[\frac{1}{p}] \] 
lies on or above the Hodge polygon of the $X_{\eta},$ defined by the Hodge numbers 
\[h^j \coloneqq \dim_K H^{i-j} (X^{\log}_{\eta}, \Omega^j_{X_\eta/\ul K}). \]
\et 

\bproof 
The proof is similar to the proof of  \cite[Theorem 2]{Yao_generic_fiber}. To ease notation, write $(Y, M_Y)$ for the base change of $(X, M_X)$ along $\spf (\mO_C, N)^a \ra \spf (\mO_K, N)^a$, 
and consider the Frobenius twisted prismatic cohomology group
\[
M \coloneqq \phi^* H^i_{\Prism} ((Y, M_Y)/(\Ainf, N_\infty)) \cong H^i (\phi^* \Prism_{(Y, M_Y)/(\Ainf, N_\infty)}). 
\]
This is a Breuil--Kisin--Fargues module by Theorem \ref{theorem:BKF_module}. In \cite[Section 3]{Yao_generic_fiber}, the second author associated to $M$ a Hodge polygon $\textup{Hdg}(M)$ and showed (using the crystalline comparison) that the Newton polygon $\textup{New} (\phi)$ of the $F$-isocrystal $H^i_{\textup{logcrys}}(X^{\log}_s/W(\ul k))[\frac{1}{p}]$ always lies on or above  $\textup{Hdg}(M)$. Thus it suffices to prove that $\textup{Hdg}(M)$ lies on or above the Hodge polygon $\textup{Hdg}(Y^{\log}_\eta) = \textup{Hdg} (X^{\log}_\eta)$ (in fact, the method proves that they coincide, see \cite[Remark 4.4]{Yao_generic_fiber}). To this end, the same proof of \cite[Theorem 4.2]{Yao_generic_fiber} carries over, using 
\be
\item The Nygaard filtration on $\phi^* \Prism_{(Y, M_Y)/(\Ainf, N_\infty)}$ satisfies 
\[
\grade_N^r (\phi^* \Prism_{(Y,M_Y)/(\Ainf, N_\infty)}) \cong \tau_{\le r} \cl \Prism_{(Y,M_Y)/(\Ainf, N_\infty)} \{r\}
\]
\item There exists a canonical isomorphism 
\[
\Omega^{r}_{(Y, M_Y)/(\mO_C, N_\infty)} \cong H^r (\cl \Prism_{(Y, M_Y)/(\Ainf, N_\infty)}) \{r\}.  
\]
\ee 
in place of Theorem 2.10 and Theorem 2.3 in \textit{loc. cit}. 
\eproof

\subsection{Torsion discrepancy} 
\noindent 

\noindent 
One of the original motivations of \cite{BMS1} to introduce prismatic cohomology over $\Ainf$ is to study relations between torsions in integral $p$-adic cohomologies. In this direction, let us record the following result on torsion relations, generalizing \cite{BMS1, CK_semistable}. 

Keep notations from the previous section. Let $(X, M_X)$ be a  proper fs log $p$-adic formal scheme which is smooth over $(\mO_K, N)$, such that its mod $p$ fiber is of Cartier type.  
Let $X_{\cl \eta}^{\log, \diamondsuit}$ denote the log diamond generic fiber of $(X, M_X)_{(\mO_C, N_\infty)}$ over $(\spd(C, \mO_C), N_\infty)^a$ and let $X^{\log}_s$ be the special fiber of $(X, M_X)$ over $\ul k = (k, N)$.
 
\bt \label{theorem:torsion}
 Then we have  
\be
\item For each $i$ and $n \ge 0$, we have 
\begin{align*} 
\textup{length}_{\Z_p} (H^i_{\qpket}(X_{\cl \eta}^{\log,\diamondsuit}, \Z_p)_{\textup{tor}}/p^n) & \le \textup{length}_{W(k)} (H^i_{\textup{logcris}}(X^{\log}_s/W(\ul{k}))_{\textup{tor}}/p^n) \\
\textup{length}_{\Z_p} (H^i_{\qpket}(X_{\cl \eta}^{\log, \diamondsuit}, \Z/p^n)) & \le \textup{length}_{W(k)} (H^i_{\textup{logcris}}(X^{\log}_s/W_n(\ul{k}))) 
\end{align*}
\item  For each $i$ and $n \ge 0$, we have 
\begin{align*} 
\textup{length}_{\Z_p} (H^i_{\qpket}(X_{\cl \eta}^{\log, \diamondsuit}, \Z_p)_{\textup{tor}}/p^n) & \le \textup{val}_{\mO_K} (H^i_{\textup{logdR}}((X, M_X) /(\mO_K, N))_{\textup{tor}}/p^n) \quad \\
\textup{length}_{\Z_p} (H^i_{\qpket}(X_{\cl \eta}^{\log, \diamondsuit}, \Z/p^n)) & \le  \textup{val}_{\mO_K} (H^i_{\textup{logdR}}((X, M_X)_{\mO_K/p^n}/(\mO_K/p^n, N)))
\end{align*}
where $\textup{val}_{\mO_K}$ denotes the normalized length defined in \cite[Subsection 7.10]{CK_semistable}. 
\item Moreover, for each $i$, the cohomology group $H^i_{\textup{logcris}}(X^{\log}_s/W(\ul{k}))$ is $p$-torsion free if and only if $H^i_{\textup{logdR}}((X, M_X) /(\mO_K, N))$ is $p$-torsion free, and (either) implies that $H^i_{\qpket}(X_{\cl \eta}^{\log, \diamondsuit}, \Z_p)$ is $p$-torsion free and that $H^i_{\Prism} ((X, M_X)_{(\mO_C, N_\infty)}/(\Ainf, N_\infty))$ is a free $\Ainf$-module. 
\ee
\et  

\bproof 
Without loss of generality we may assume that $K = C$ is algebraically closed. The proof of all parts are exactly the same as the proof \cite[Theorem 7.9, Theorem 7.12 and Theorem 7.7]{CK_semistable}, making use of Proposition \ref{inverting_mu}, together with the  de Rham and   crystalline comparison of log prismatic cohomology. 
\eproof 

\br 
Alternatively, for part (3) in Theorem \ref{theorem:torsion}, the if and only if statement follows from the claim that 
\[
\dim_{k} \big( H^i_{\textup{logdR}}((X, M_X) /\ul{\mO_C})_{\textup{tor}} \otimes_{\mO_C}k \big)  = \dim_{k} \big( H^i_{\textup{logcris}}(X^{\log}_s/W(\ul{k}))_{\textup{tor}} /p \big), 
\]
which in turn follows from the proof in \cite[Remark 7.8]{CK_semistable}. 
\er

\br A special case ($n = 1$) of part (1) in Theorem \ref{theorem:torsion} implies that 
\[
\dim_{\F_p} H^i_{\qpket}(X_{\cl \eta}^{\log, \diamondsuit}, \F_p) \le \dim_k H^i_{\textup{logdR}} (X^{\log}_s/\ul k).
\]
This inequality can be proven without using Proposition \ref{inverting_mu}, by the following sequence of inequalities 
\begin{align*}
    \dim_{\F_p} H^i_{\qpket}(X_{\cl \eta}^{\log, \diamondsuit}, \F_p) 
    & \le \dim_{C^\flat} H^i (R\Gamma_{\Prism} ((X,M_X)_{(\mO_C, N_\infty)}/(\Ainf, N_\infty))\otimes^\L_{\Ainf} C^\flat) \\
    & \le \dim_{  k} H^i (R\Gamma_{\Prism} ((X,M_X)_{(\mO_C, N_\infty)}/(\Ainf, N_\infty))\otimes^\L_{\Ainf}   k) \\
    & =  \dim_k H^i_{\textup{logdR}} (X^{\log}_s/\ul k).
\end{align*}
Here the last equality follows from the de Rham (or crystalline) comparison while the second inequality follows from the structure of finitely presented $\mO_C^\flat$-modules. For the first inequality, note that by the \'etale comparison (Theorem \ref{etale_comp_diaomnd_global}), we have 
\[
H^i_{\qpket}(X_{\cl \eta}^{\log, \diamondsuit}, \F_p) \cong H^i ((R\Gamma_{\Prism} ((X, M_X)_{(\mO_C, N_\infty)}/(\Ainf, N_\infty))\otimes^\L_{\Ainf} C^\flat)^{\phi = 1}),
\]
and for a perfect $C^\flat$-complex $L$ equipped with a Frobenius map, we always have \[\dim_{\F_p} H^i (L^{\phi = 1}) \le \dim_{C^\flat} H^i (L).
\]
\er

\subsection{$u^\infty$-torsion in log prismatic cohomology} \label{ss:boundary_u_torsion}  
\noindent

\noindent 
Finally, we discuss some results on $u^{\infty}$-torsion over the Breuil--Kisin prism in the logarithmic setup, and prove Theorem \ref{mainthm:BK_module_structure} from the introduction.   Recall the setup: 
\bi 
\item $K$ is a discretely valued extension of $\Q_p$ with perfect residue field $k$ with a  fixed uniformizer $\pi \in \mO_K$. Let $E$ be the Eisenstein polynomial of $\pi$. 
\item $\mathfrak S = W(k)[\![u]\!]$, equipped with a pre-log structure $\N \ra S$ sending $1 \mapsto u$.  $(\mathfrak S, \N)$ has the structure of a $\delta_{\log}$-ring of rank $1$ with the Frobenius structure sending $u \mapsto u^p$. This makes $(\mathfrak S, I, \N)$ a pre-log prism, with $I = (E)$. 
\item Let $(X, M_X)$ be a proper log $p$-adic formal scheme which is smooth over $(\mO_K, \pi^\N)$, such that its mod $p$ fiber is of Cartier type.
\item Let $C = C_K = \widehat{\cl K}$ be a complete algebraic closure of $K$. 
Let $X_{\cl \eta}^{\log, \diamondsuit}$ denote the log diamond generic fiber of $(X, M_X)$ over $\spd(C, \mO_{C})$ (equipped with the trivial log structure). 
\ei 

\br 
Note that the Frobenius $\phi_{\mathfrak S}: \mathfrak S \ra \mathfrak S$ makes the $\mathfrak S$ a finite free module over itself. Therefore, in our setup, the \'etale sheaf $\Prism_{(X, M_X)/(\mathfrak S, \N)}^{(1)}$ is isomorphic to the base change $\Prism_{(X, M_X)/(\mathfrak S, \N)} \otimes^{\L}_{\mathfrak S, \phi_{\mathfrak S}} \mathfrak S$ from the description in (\ref{eq:presheaf_twisted_by_Frob}), and $\Prism^{(1), \tu{pre}}$ is already a sheaf (compare with Proposition \ref{prop:decalage_global}).  
\er

 For each integer $n \in \Z_{\ge 1}$, write 
\[
\Prism_{n} := \Prism_{(X, M_X)/(\mathfrak S, \N)} \otimes_{\Z}^\L \Z/p^n
\]
for the derived mod $p^n$ reduction of the \'etale sheaf $\Prism_{(X, M_X)/(\mathfrak S, \N)}$. 
The following result is a generalization of \cite[Theorem 3.3]{LiuLi2}.

\bt \label{theorem:boundary_u_torsion}
Let $e$ be the ramification index of $K$. Fix an integer $i$. For each $n \in \Z_{\ge 1}$, let 
\[T_{u, n} = T^i_{u, n}\coloneqq H^i  (X_{\ett}, \Prism_n) [u^\infty]
\] 
be the $u^\infty$-torsion submodule in the $i^{th}$  cohomology of $\Prism_{n}$.  Then the following holds.
\be
\item Suppose that $e\cdot (i -1) < p-1$, then $T_{u, n} = 0$.
\item  Suppose that $e\cdot (i -1) = p-1$, then the annihilator  $\text{Ann} (T_{u, n}) \supset (p, u)$. 
\item Suppose that $e\cdot (i -1) < 2(p-1)$, then  $\text{Ann} (T_{u, n}) + (u)\supset (p^{i-1}, u)$
\ee
The same assertions hold for 
\[T_{u} = T^i_u\coloneqq H^i_{\Prism} ((X, M_X)/(\mathfrak S, \N)) [u^\infty]
\] 
in place of $T_{u, n}$. 
\et

Theorem \ref{mainthm:BK_module_structure} from the introduction follows as an immediate corollary. 

\bc Retain notations from above and suppose that 
$e \cdot (i -1) < p -1$, then there is a (non-canonical) isomorphism
\[
H^i_{\Prism}((X, M_X)/(\mathfrak S, \N)) \cong H^i_{\qpket}(X_{\cl \eta}^{\log, \diamondsuit}, \Z_p) \otimes_{\Z_p} \mathfrak S.
\]
Likewise, we have mod $p^n$ variant. In other words, we have an isomorphism 
\[
H^i(X_{\ett}, \Prism_n) \cong  H^i_{\qpket}(X_{\cl \eta}^{\log, \diamondsuit}, \Z/p^n) \otimes_{\Z_p} \mathfrak S.
\]
\ec 

\bproof 
For the first claim, the proof of \cite[Corollary 3.5]{LiuLi2} carries over \textit{verbatim}, making use of Theorem \ref{theorem:boundary_u_torsion} and the \'etale comparison Theorem \ref{etale_comp_diaomnd_global} and Corollary \ref{cor:etale_comparison_over_W(C_flat)} (see also Proposition \ref{inverting_mu}) in the logarithmic setting. The proof of the mod $p^n$ variant is similar. By Theorem \ref{theorem:boundary_u_torsion} and \cite[Proposition 2.6]{LiuLi2}, we know that 
\[
H^i(X_{\ett}, \Prism_n) \cong M^i_n \otimes_{\Z_p} \mathfrak S
\]
for some finitely generated $\Z_p$-module $M^i_n$. To identify $M_n^i$ with $ H^i_{\qpket}(X_{\cl \eta}^{\log, \diamondsuit}, \Z/p^n)$, we base change to $W(C^\flat)$ and apply the mod $p^n$ part of Corollary \ref{cor:etale_comparison_over_W(C_flat)}. 
\eproof

Now we turn to Theorem \ref{theorem:boundary_u_torsion}.  The proof of Theorem \ref{theorem:boundary_u_torsion} entirely parallels that of \cite{LiuLi2}, except we regard $\Prism_{(X, M_X)/(\mathfrak S, \N)}$ and the Nygaard filtration (on its Frobenius twist) as \'etale sheaves on $X$, while \cite{LiuLi2} makes use of the quasisyntomic site of $X$. We briefly explain how the proof works below. 

Let us start with a slightly more general setup. Let $(A, I, M_A)$ be a pre-log prism and $(X, M_X)$ be a log $p$-adic formal scheme that is smooth over $(A/I, M_A)$ with the mod $p$ fiber being of Cartier type. Recall the natural map 
\[
\phi\colon \textup{Fil}^i_{N} \Prism_{(X, M_X)/(A, M_A)}^{(1)}  \lra I^i \otimes^\L_{A} \Prism_{(X, M_X)/(A, M_A)}
\]
from (\ref{eq:Frob_on_global_Nygaard}). Composing with the map
\[
I^i \otimes^\L_{A} \Prism_{(X, M_X)/(A, M_A)} 
\xrightarrow{I^{-i} \otimes^\L } \Prism_{(X, M_X)/(A, M_A)}
\]
induced by tensoring with $I^{-i}$, we obtain the divided Frobenius 
\begin{equation} \label{eq:divided_Frob}
    \frac{\phi}{I^i}\colon \textup{Fil}^i_{N} \Prism_{(X, M_X)/(A, M_A)}^{(1)}  \lra  \Prism_{(X, M_X)/(A, M_A)}. 
\end{equation}

The following assertion is an analogue of \cite[Lemma 7.8]{LiuLi1}. 

\bl \label{lemma:isom_inj_for_nygaard_filtration}
The induced map on cohomology by the Frobenius $\phi$
\[
\psi(r, i)\colon  H^r (X_{\ett}, \textup{Fil}^i_{N} \Prism_{(X, M_X)/(A, M_A)}^{(1)}) \lra H^r(X_{\ett}, \Prism_{(X, M_X)/(A, M_A)}) \otimes_{A} I^i
\]
is injective when $r = i + 1$ and is an isomorphism when $r \le i$. The same assertions hold for their derived mod $p^n$ counterparts. More precisely, the induced map  
\[
H^r (X_{\ett}, \textup{Fil}^i_{N} \Prism_{(X, M_X)/(A, M_A)}^{(1)}/p^n) \lra H^r(X_{\ett}, \Prism_{(X, M_X)/(A, M_A)}/p^n) \otimes_{A} I^i
\]
is injective when $r = i + 1$ and is an isomorphism when $r \le i$. Here $-/p^n$ denotes the derived mod $p^n$ reduction. 
\el 

The proof is the same as in \cite[Lemma 7.8]{LiuLi1}. For completeness let us recall the argument. 

\bproof 
By Corollary \ref{cor:Nygaard_complete_for_smooth},  
$R\Gamma (X_{\ett}, \textup{Fil}^i_{N} \Prism_{(X, M_X)/(A, M_A)}^{(1)})$ is complete with respect to the Nygaard filtration $R\Gamma (X_{\ett}, \textup{Fil}^{i+\bullet}_{N} \Prism_{(X, M_X)/(A, M_A)}^{(1)})$. Consider the map 
\begin{equation} \label{eq:map_of_filtered_complex_prismatic}
R\Gamma (X_{\ett}, \textup{Fil}^i_{N} \Prism_{(X, M_X)/(A, M_A)}^{(1)}) \ra R \Gamma (X_{\ett}, \Prism_{(X, M_X)/(A, M_A)} \otimes^\L_{A} I^i)
\end{equation} 
as a map of filtered complexes (where the target is equipped with the $I$-adic filtration). It suffices to show that the cone of the $(i+k)^{\tu{th}}$-graded piece of the map (\ref{eq:map_of_filtered_complex_prismatic}) lives in $\mD^{> i+k} (A/I)$ for all $k$, but this follows from Corollry \ref{cor:global_Nygaard}. 
The mod $p^n$ version can be proved by a similar argument. 
\eproof

\bc 
The divided Frobenius (\ref{eq:divided_Frob}) induces an injection 
\[
H^{i} (X_{\ett}, \textup{Fil}^{i-1}_{N} \Prism_{(X, M_X)/(A, M_A)}^{(1)} / p^n) \hookrightarrow H^{i} (X_{\ett},   \Prism_{(X, M_X)/(A, M_A)}/p^n) 
\]
\ec 

\bproof 
This follows from Lemma \ref{lemma:isom_inj_for_nygaard_filtration}. 
\eproof 

Now let us return to the setup in the beginning of this subsection. In particular, we let $(A, I, M_A) = (\mathfrak S, (E), \N)$ for the rest of this subsection. Also retain notations from Theorem \ref{theorem:boundary_u_torsion}. Recall that we have fixed an integer $i$, and for each $n \ge 1$, we have the $u^{\infty}$-torsion submodule 
\[
T_{n, u}= H^i (X_{\ett},  \Prism_n) [u^\infty].
\]
The following is the analogue of  \cite[Proposition 3.1]{LiuLi2}. 
\bc \label{cor:image_of_I^(i+1)_on_Frob_twist}  Given the setup above,  we have  
\[
E^{i-1} \cdot \textup{Ann}(T_{n, u}) \subset \textup{Ann}(T_{n, u}) \otimes_{\mathfrak S, \phi_{\mathfrak S}} \mathfrak S. 
\]
In particular, the following assertion of \cite[Corollary 3.2]{LiuLi2} holds: suppose that \[\textup{Ann}(T_{n, u}) + (p) = (u^\alpha, p)\] for some $\alpha \in \N$, then 
\[\alpha(p-1) \le e (i-1).\]
\ec 

\bproof 
The second claim follows from the first by the exact same (formal algebra) argument of \cite[Corollary 3.2]{LiuLi2}. The first claim follows from the proof of \cite[Proposition 3.1]{LiuLi2}. More precisely, multiplication by $E^{i-1}$ on 
\[
H^i (X_{\ett},  \Prism_{(X, M_X)/(\mathfrak S, \N)}/p^n) \otimes_{\mathfrak S, \phi_{\mathfrak S}} \mathfrak S \cong   H^{i} (X_{\ett},  \Prism_{(X, M_X)/(\mathfrak S, \N)}^{(1)} / p^n) 
\]
factors as
\begin{multline} \label{eq:E^i-1_factors}
\quad     H^{i} (X_{\ett},  \Prism_{(X, M_X)/(\mathfrak S, \N)}^{(1)} / p^n) \otimes_{\mathfrak S} (E)^{i-1} \lra  \\ 
H^{i} (X_{\ett}, \textup{Fil}^{i-1}_{N} \Prism_{(X, M_X)/(\mathfrak S, \N)}^{(1)} / p^n) 
     \lra   H^{i} (X_{\ett},  \Prism_{(X, M_X)/(\mathfrak S, \N)}^{(1)} / p^n),
\end{multline}
induced from the maps of \'etale sheaves 
\[
 \Prism_{(X, M_X)/(\mathfrak S, \N)}^{(1)} \otimes_{\mathfrak S}^\L (E)^{i-1} \lra \textup{Fil}^{i-1}_{N} \Prism_{(X, M_X)/(\mathfrak S, \N)}^{(1)} \lra   \Prism_{(X, M_X)/(\mathfrak S, \N)}^{(1)}  
\]
(see Construction \ref{construction:global_Nygaard} for the first arrow).  
Note that $
H^{i} (X_{\ett}, \textup{Fil}^{i-1}_{N} \Prism_{(X, M_X)/(\mathfrak S, \N)}^{(1)} / p^n) $ is a submodule of $H^{i} (X_{\ett},   \Prism_{(X, M_X)/(\mathfrak S, \N)}/p^n)$ by 
Corollary \ref{cor:image_of_I^(i+1)_on_Frob_twist}. The assertion on annihilators then follows immediately.   
\eproof 

\bproof[Proof of Theorem \ref{theorem:boundary_u_torsion}] 
With the preparations above, the proof of \cite[Theorem 3.3]{LiuLi2} carries over \textit{verbatim}. 
\eproof



 \newpage

\bibliographystyle{plain}
\bibliography{ref}

\begin{thebibliography}{10}

\bibitem{Dori_Yao}
D.~Bejleri and Z.~Yao.
\newblock Logarithmic submersive topologies.
\newblock {\em Preprint}, in preparation.

\bibitem{Bhatt_descent}
B.~Bhatt.
\newblock Completions and derived de {Rham} cohomology.
\newblock arXiv:1207.6193, 2012.

\bibitem{Bhatt_dR}
B.~Bhatt.
\newblock $p$-adic derived de {R}ham cohomology.
\newblock arXiv:1204.6560, 2012.

\bibitem{Bhatt_notes}
B.~Bhatt.
\newblock Geometric aspects of $p$-adic {Hodge} theory: Prismatic cohomology
  (lecture notes).
\newblock
  \url{http://www-personal.umich.edu/~bhattb/teaching/prismatic-columbia/},
  2018.

\bibitem{Bhatt}
B.~Bhatt.
\newblock Specializing varieties and their cohomology from characteristic 0 to
  characteristic {{\(p\)}}.
\newblock In {\em Algebraic geometry: Salt Lake City 2015. 2015 summer research
  institute in algebraic geometry, University of Utah, Salt Lake City, UT, USA,
  July 13--31, 2015. Proceedings. Part 2}, pages 43--88. Providence, RI:
  American Mathematical Society (AMS); Cambridge, MA: Clay Mathematics
  Institute, 2018.

\bibitem{Bhatt_Lurie}
B.~Bhatt and J.~Lurie.
\newblock A {Riemann}-{Hilbert} correspondence in positive characteristic.
\newblock {\em Camb. J. Math.}, 7(1-2):71--217, 2019.

\bibitem{APC}
B.~Bhatt and J.~Lurie.
\newblock {A}bsolute prismatic cohomology.
\newblock arXiv:2201.06120, 2022.

\bibitem{prismatization}
B.~Bhatt and J.~Lurie.
\newblock The prismatization of $p$-adic formal schemes.
\newblock arXiv:2201.06124, 2022.

\bibitem{BM_arc}
B.~Bhatt and A.~Mathew.
\newblock The {{\(\operatorname{arc}\)}}-topology.
\newblock {\em Duke Math. J.}, 170(9):1899--1988, 2021.

\bibitem{BMS1}
B.~Bhatt, M.~Morrow, and P.~Scholze.
\newblock Integral {{\(p\)}}-adic {Hodge} theory.
\newblock {\em Publ. Math., Inst. Hautes {\'E}tud. Sci.}, 128:219--397, 2018.

\bibitem{BMS2}
B.~Bhatt, M.~Morrow, and P.~Scholze.
\newblock Topological {Hochschild} homology and integral {{\(p\)}}-adic {Hodge}
  theory.
\newblock {\em Publ. Math., Inst. Hautes {\'E}tud. Sci.}, 129:199--310, 2019.

\bibitem{BS_Witt}
B.~Bhatt and P.~Scholze.
\newblock Projectivity of the {Witt} vector affine {Grassmannian}.
\newblock {\em Invent. Math.}, 209(2):329--423, 2017.

\bibitem{BS_crystal}
B.~Bhatt and P.~Scholze.
\newblock Prismatic {$F$}-crystals and crystalline {Galois} representations.
\newblock arXiv:2106.14735, 2021.

\bibitem{BS}
B.~Bhatt and P.~Scholze.
\newblock Prisms and prismatic cohomology.
\newblock {\em Ann. Math. (2)}, 196(3):1135--1275, 2022.

\bibitem{CK_semistable}
K.~{\v{C}}esnavi{\v{c}}ius and T.~Koshikawa.
\newblock The {{\(A_{\text{inf}}\)}}-cohomology in the semistable case.
\newblock {\em Compos. Math.}, 155(11):2039--2128, 2019.

\bibitem{CS}
K.~{\v{C}}esnavi{\v{c}}ius and P.~Scholze.
\newblock Purity for flat cohomology.
\newblock arXiv:1912.10932, 2021.

\bibitem{DLLZ}
H.~Diao, K.~Lan, R.~Liu, and X.~Zhu.
\newblock {Logarithmic adic spaces: some foundational results}.
\newblock {\em Proceedings of the Simons Symposia on p-adic Hodge Theory},
  2022.

\bibitem{DY}
H.~Diao and Z.~Yao.
\newblock Logarithmic ${A}_{\text{inf}}$-cohomology.
\newblock {\em Preprint}, 2023.

\bibitem{Gillam_notes}
W.~D. Gillam.
\newblock Fans.
\newblock arXiv:1601.02421, 2016.

\bibitem{GP}
O.~Gwilliam and D.~Pavlov.
\newblock Enhancing the filtered derived category.
\newblock {\em J. Pure Appl. Algebra}, 222(11):3621--3674, 2018.

\bibitem{Huber}
R.~Huber.
\newblock {\em {\'E}tale cohomology of rigid analytic varieties and adic
  spaces}, volume E30 of {\em Aspects Math.}
\newblock Wiesbaden: Vieweg, 1996.

\bibitem{Kato}
K.~Kato.
\newblock Logarithmic structures of {Fontaine}-{Illusie}.
\newblock In {\em Algebraic analysis, geometry, and number theory: proceedings
  of the JAMI inaugural conference, held at Baltimore, MD, USA, May 16-19,
  1988}, pages 191--224. Baltimore: Johns Hopkins University Press, 1989.

\bibitem{Kato2}
K.~Kato.
\newblock Logarithmic structures of {Fontaine}-{Illusie}. {II}: {Logarithmic}
  flat topology.
\newblock {\em Tokyo J. Math.}, 44(1):125--155, 2021.

\bibitem{Kato_Nakayama}
K.~Kato and C.~Nakayama.
\newblock log {Betti} cohomology, log {\'e}tale cohomology, and log de {Rham}
  cohomology of log schemes over {{\(\mathbb{C}\)}}.
\newblock {\em Kodai Math. J.}, 22(2):161--186, 1999.

\bibitem{Kedlaya-Liu_II}
K.~Kedlaya and R.~Liu.
\newblock Relative $p$-adic {Hodge} theory, {II}: Imperfect period rings.
\newblock arXiv:1602.06899, 2019.

\bibitem{Koshikawa}
T.~Koshikawa.
\newblock Logarithmic prismatic cohomology, {I}.
\newblock arXiv:2007.14037, 2022.

\bibitem{LiuLi1}
S.~Li and T.~Liu.
\newblock {C}omparison of prismatic cohomology and derived de {R}ham
  cohomology.
\newblock arXiv:2012.14064, 2022.

\bibitem{LiuLi2}
S.~Li and T.~Liu.
\newblock On the $u^{\infty}$-torsion submodule of prismatic cohomology.
\newblock arXiv:2204.03411, 2022.

\bibitem{LiuZheng}
Y.~Liu and W.~Zheng.
\newblock {E}nhanced six operations and base change theorem for {A}rtin stacks.
\newblock arXiv:1211.5948, 2017.

\bibitem{Lurie}
J.~Lurie.
\newblock {\em Higher Topos Theory}, volume 170 of {\em Annals of Mathematics
  Studies}.
\newblock Princeton University Press, 2009.

\bibitem{Mann}
L.~Mann.
\newblock A $p$-adic 6-functor formalism in rigid-analytic geometry.
\newblock arXiv:2206.02022, 2022.

\bibitem{MannWerner}
L.~Mann and A.~Werner.
\newblock Local systems on diamonds and $p$-adic vector bundles.
\newblock arXiv:2005.06855, 2020.

\bibitem{Mathew}
A.~Mathew.
\newblock Faithfully flat descent of almost perfect complexes in rigid
  geometry.
\newblock {\em J. Pure Appl. Algebra}, 226(5):31, 2022.

\bibitem{Mathew_perfect}
A.~Mathew.
\newblock The mod $p$ {Riemann}-{Hilbert} correspondence and the perfect site.
\newblock arXiv:2205.11657, 2022.

\bibitem{Min}
Y.~Min.
\newblock Integral {{\(p\)}}-adic {Hodge} theory of formal schemes in low
  ramification.
\newblock {\em Algebra Number Theory}, 15(4):1043--1076, 2021.

\bibitem{Nakayama1}
C.~Nakayama.
\newblock Logarithmic {\'e}tale cohomology.
\newblock {\em Math. Ann.}, 308(3):365--404, 1997.

\bibitem{Nakayama2}
C.~Nakayama.
\newblock Logarithmic {\'e}tale cohomology. {II}.
\newblock {\em Adv. Math.}, 314:663--725, 2017.

\bibitem{Olsson_log}
M.~C. Olsson.
\newblock The logarithmic cotangent complex.
\newblock {\em Math. Ann.}, 333(4):859--931, 2005.

\bibitem{Pridham}
J.P. Pridham.
\newblock Derived log stacks after {O}lsson.
\newblock arXiv:1310.3845, 2013.

\bibitem{Rognes}
J.~Rognes.
\newblock Topological logarithmic structures.
\newblock In {\em New topological contexts for Galois theory and algebraic
  geometry (BIRS 2008). Proceedings of the workshop, Banff, Canada, March
  9--14, 2008}, pages 401--544. Coventry: Geometry \& Topology Publications,
  2009.

\bibitem{SSV}
S.~Sagave, T.~Sch{\"u}rg, and G.~Vezzosi.
\newblock Derived logarithmic geometry. {I}.
\newblock {\em J. Inst. Math. Jussieu}, 15(2):367--405, 2016.

\bibitem{diamond}
P.~Scholze.
\newblock Etale cohomology of diamonds.
\newblock arXiv:1709.07343, 2022.

\bibitem{Berkeley}
P.~Scholze and J.~Weinstein.
\newblock {\em Berkeley lectures on {{\(p\)}}-adic geometry}, volume 207 of
  {\em Ann. Math. Stud.}
\newblock Princeton, NJ: Princeton University Press, 2020.

\bibitem{SP}
The {Stacks Project Authors}.
\newblock \textit{Stacks Project}.
\newblock \url{https://stacks.math.columbia.edu}, 2022.

\bibitem{Tsuji_cst}
T.~Tsuji.
\newblock {{\(p\)}}-adic {\'e}tale cohomology and crystalline cohomology in the
  semi-stable reduction case.
\newblock {\em Invent. Math.}, 137(2):233--411, 1999.

\bibitem{Tsuji}
T.~Tsuji.
\newblock Saturated morphisms of logarithmic schemes.
\newblock {\em Tunisian Journal of Mathematics}, 1(2), 2019.

\bibitem{Yao_generic_fiber}
Z.~Yao.
\newblock Frobenius and the {Hodge} filtration of the generic fiber.
\newblock {\em IMRN}, 2022.

\end{thebibliography}

\end{document}